\documentclass[11pt,leqno]{article}
\usepackage{etex}
\usepackage[margin={1.2in}]{geometry}
\usepackage[titletoc]{appendix}
\usepackage[utf8]{inputenc}
\usepackage[T1]{fontenc}
\usepackage{amsmath,amsthm ,amsfonts,amssymb,amscd,mathtools}

\usepackage[english]{babel}
\usepackage{blindtext}
\usepackage{csquotes}
%%%%%%%%%%%%%%%%%%%%%%%%%%%%%%
%%%%%%%%%%%%%%%%%%%%%%%%%%%%%%
%%%%Bibitextsetting
\usepackage[backend=biber,style=numeric, giveninits=true]{biblatex}
\DeclareSourcemap{
  \maps[datatype=bibtex]{
    \map{
      \step[fieldsource=journal, match=\regexp{Journal\s+of\s+the\s+American\s+Mathematical\s+Society}, replace={J. Amer. Math. Soc.}]
      \step[fieldsource=journal, match=\regexp{Communications\s+on\s+Pure\s+and\s+Applied\s+Mathematics}, replace={Comm. Pure Appl. Math.}]
      % Add more mappings as needed
    }
  }
}
\DeclareFieldFormat{pages}{#1}
\addbibresource{QcohclusterA.bib}
\DeclareFieldFormat[article]{volume}{\mkbibbold{#1}} % Format volume
\DeclareFieldFormat[article]{year}{\mkbibparens{#1}}
\DeclareFieldFormat[article]{title}{#1}
\DeclareFieldFormat[incollection]{title}{#1}
\DeclareFieldFormat[book]{title}{#1}
\DeclareFieldFormat[article]{journaltitle}{\textrm{#1}}
\DeclareFieldFormat[book]{journaltitle}{\textrm{#1}}
\DeclareBibliographyDriver{article}{%
  \usebibmacro{bibindex}%
  \usebibmacro{begentry}%
\usebibmacro{author/editor}%
\setunit{\addcomma\space}
  \usebibmacro{title}%
  \setunit{\addcomma\space}
  \printfield{journaltitle}
  \setunit{\addcomma\space}
  \printfield{volume}%
  \printfield{year}%
  \setunit{\addcomma\space}
  \printfield{pages}%
  \usebibmacro{finentry}%
}

%%%%%%%%%%%%%%%%%%
%%%Graphic tools

\usepackage{tikz,tikz-cd}
\usetikzlibrary{positioning}
\usetikzlibrary{matrix}
\usetikzlibrary{shapes, arrows}
\usetikzlibrary{decorations.markings,patterns,bending}
\usepackage{eucal}
\usepackage{mathrsfs}
\usepackage{stmaryrd}
\usepackage{mathtools}
\usepackage{enumerate}
\usepackage{float}
\usepackage{subcaption}

\DeclarePairedDelimiter\abs{\lvert}{\rvert}%

% sideways tables and figures
\usepackage{rotating}

% tables that spill over multiple pages
\usepackage{longtable}

% fonts that are nicer than defaults
\usepackage[sc]{mathpazo}
\usepackage{courier}

% Use 8-bit encoding that has 256 glyphs, pretty please

%\usepackage{romannum}
% babel is required for blindtext, which generates random text

% math support

% Slightly tweak font spacing for aesthetics
\usepackage{microtype}

% ADDITONAL PACKAGES
\usepackage{amsmath}
\usepackage{amssymb}
\usepackage{amsthm}
\usepackage{color}
\usepackage{latexsym,extarrows}

% xymatrix

\usepackage[all]{xy}
%\usepackage{color}
%\usepackage[usenames,dvipsnames]{color}
%\usepackage{graphicx}
%\usepackage[all]{xy}
%\usepackage{tabularx}
%\usepackage{multirow}
%%%%%%%%%%%%%%%%%%%%%

 \setlength{\unitlength}{1mm}

% You need the footmisc package with the stable option if you want to have
% footnotes inside section titles, for example to say that a particular chapter
% has been co-authored with someone. The multiple option ensures that there is a
% comma between two consecutive footnotes
\usepackage[stable,multiple]{footmisc}

% Nicer captions
\RequirePackage[font=small,format=plain,labelfont=bf,textfont=it]{caption}
\addtolength{\abovecaptionskip}{1ex}
\addtolength{\belowcaptionskip}{1ex}

\makeatletter
\newsavebox{\@brx}
\newcommand{\llangle}[1][]{\savebox{\@brx}{\(\m@th{#1\langle}\)}%
  \mathopen{\copy\@brx\kern-0.5\wd\@brx\usebox{\@brx}}}
\newcommand{\rrangle}[1][]{\savebox{\@brx}{\(\m@th{#1\rangle}\)}%
  \mathclose{\copy\@brx\kern-0.5\wd\@brx\usebox{\@brx}}}
\makeatother

\usepackage{hyperref}
%\UseAllTwocells
\begin{document}
%%%%%%%%%%%%%%%%%%%%%%%%%%%%%%%%%%%%%%%%%%%%%%%%%%%%%%%%%%%%%%%%%%%%%%%%
%%%%%%%%%%%%%%%%%%%%%%%%%%     Macros      %%%%%%%%%%%%%%%%%%%%%%%%%%%%%
%%%%%%%%%%%%%%%%%%%%%%%%%%%%%%%%%%%%%%%%%%%%%%%%%%%%%%%%%%%%%%%%%%%%%%%%
\def\e#1\e{\begin{equation}#1\end{equation}}
\def\ea#1\ea{\begin{align}#1\end{align}}
\def\eq#1{{\rm(\ref{#1})}}
\theoremstyle{plain}% default
\newtheorem{thm}{Theorem}[section]
\newtheorem{lem}[thm]{Lemma}
\newtheorem{prop}[thm]{Proposition}
\newtheorem{cor}[thm]{Corollary}
\theoremstyle{definition}
\newtheorem{dfn}[thm]{Definition}
\newtheorem{ex}[thm]{Example}
\newtheorem{rem}[thm]{Remark}
\newtheorem{conjecture}[thm]{Conjecture}
\newtheorem{convention}[thm]{Convention}
\newtheorem{assumption}[thm]{Assumption}
\newtheorem{notation}[thm]{Notation}

%%%%%%%%%%%%%%%%%%%%%%%%%%%%
\newcommand{\op}{\operatorname}
\newcommand{\C}{\mathbb{C}}
\newcommand{\N}{\mathbb{N}}
\newcommand{\R}{\mathbb{R}}
\newcommand{\Q}{\mathbb{Q}}
\newcommand{\Z}{\mathbb{Z}}

\newcommand{\bl}{\textbf}
\newcommand{\mbf}{\mathbf}
\newcommand{\mbb}{\mathbb}
\newcommand{\mf}{\mathfrak}
\newcommand{\mc}{\mathcal}
\newcommand{\iso}{\cong}

%\makeatletter
%\RenewDocumentCommand{\title}{om}{%
%   \IfNoValueTF{#1}
%     {\gdef\shorttitle{My default running title}}
 %    {\gdef\shorttitle{#1}}%
%   \gdef\@title{#2}%
%}
%\makeatother

%\makeatletter
%\let\orig@afterheading\@afterheading
%\def\@afterheading{%
%   \@afterindenttrue
%  \orig@afterheading}
%\makeatother

\makeatletter
\newcommand*\bigcdot{\mathpalette\bigcdot@{.5}}
\newcommand*\bigcdot@[2]{\mathbin{\vcenter{\hbox{\scalebox{#2}{$\m@th#1\bullet$}}}}}
\makeatother

%%%%%%%%%%%%%%%%%%%%%%%%%%%%%%%%%%%%%%%%%%%%%%%%%%%%%%%%%%%%%%%%%%%%%%%%
%%%%%%%%%%%%%%%%%%%%%%%    Text of paper    %%%%%%%%%%%%%%%%%%%%%%%%%%%%
%%%%%%%%%%%%%%%%%%%%%%%%%%%%%%%%%%%%%%%%%%%%%%%%%%%%%%%%%%%%%%%%%%%%%%%%
%The following title can be changed according to needs.%
%\usepackage{hyperref}

\title{\bf Cluster algebra and quantum cohomology ring: the A-type case}
\author{Weiqiang He\footnote{Correspondence author is Weiqiang He: hewq@mail2.sysu.edu.cn}, Yingchun Zhang} 
\date{}
\maketitle
 %\keywords{}
\begin{abstract}
This work constructs a cluster algebra structure within the quantum cohomology ring of a quiver variety associated with an $A$-type quiver. 
Specifically, let $Fl:=Fl(N_1,\ldots,N_{n+1})$ denote a partial flag variety of length $n$, and $QH_S^*(Fl)[t]:=QH_S^*(Fl)\otimes \mathbb C[t]$ be its equivariant quantum cohomology ring extended by a formal variable $t$, regarded as a $\mathbb Q$-algebra. 
We establish an injective $\mathbb Q$-algebra homomorphism from the $A_n$-type cluster algebra to the algebra $QH_S^*(Fl)[t]$.
Furthermore, for a general quiver with potential, we propose a framework for constructing a homomorphism from the associated cluster algebra to the quantum cohomology ring of the corresponding quiver variety. 

The second main result addresses the conjecture of all-genus Seiberg duality for $A_n$-type quivers. 
For any quiver with potential mutation-equivalent to an $A_n$-type quiver, we consider the associated variety defined as the critical locus of the potential function.
We prove that all-genus Gromov-Witten invariants of such a variety coincide with those of the flag variety. 
 \end{abstract}

 %{\hypersetup{linkcolor=black}
 \setcounter{tocdepth}{2} \tableofcontents
 %}
\section{Introduction}
\subsection{Overview}
The theory of cluster algebras, developed in \cite{clusteralg:1, clusteralg:2,clusteralg:3,clusterpot1,clusterpot:2}, has revealed deep connections with enumerative geometry through extensive evidence. 
%For instance, Marsh and Rietsch constructed a Landau-Ginzburg B-model of Grassmannian $Gr(r,N)$ where the coordinate ring of the affine Calabi-Yau variety is a cluster algebra \cite{MR4072789}. 
Kontsevich and Soibelman established the link between Donaldson-Thomas theory and the cluster transformation in \cite{kontsevich2008stability},
while Nagao demonstrated the connection between Donaldson-Thomas theory and cluster algebras \cite{nagao2013donaldson}, proving five fundamental properties of cluster algebras using this relation.

Our work establishes a natural and direct connection between the cluster algebra and the Gromov-Witten theory. 
We consider a decorated quiver with potential $(\mathbf Q=(Q_f\subset Q_0,Q_1,W),\mathbf r)$, as defined in Definition \ref{dfn:quiver}, alongside a stability condition $\theta\in \chi(G)$. 
Let $Z:=\{dW=0\}$ denote the critical locus of $W$, and let $\mc Z:=Z \sslash_\theta G$ be the corresponding GIT quotient. 
The quantum cohomology ring of $\mc Z$, viewed as a $\mathbb Q$-algebra, is a central object of our study. 
Parallelly, a geometric cluster algebra $\mathscr A_{\mathbf Q}$ can be constructed for the given quiver, see Definitions \ref{dfn:seed}-\ref{dfn:cluster}.
We propose a construction of an injective $\mathbb Q$-algebra homomorphism from the cluster algebra $\mathscr A_{\mathbf Q}$ to the quantum cohomology ring of $\mc Z$ in Conjecture \ref{conj:clusteralgconj}.
This homomorphism maps the initial cluster variables to the Chern polynomials of  tautological bundles  over the variety $\mc Z$ and noninitial cluster variables to Chern polynomials of various quotient bundles. 
We explicitly verify this construction for $A$-type quivers, see Theorem \ref{thm:maincluster2}.

 This construction is motivated by the study of the Seiberg duality conjecture and the insights of \cite{benini2015cluster}. 
 Seiberg duality explores the relationship between the enumerative theories of two quivers with potentials related by a quiver mutation.  Given a decorated quiver with potential $(\mathbf Q=(Q_f\subset Q_0,Q_1,W),\mathbf r)$, 
 we perform a quiver mutation at a gauge node,  yielding another decorated quiver with potential $\mathbf Q'=((Q_f\subset Q_0, Q_1', W'),\mathbf r')$. 
 Let $\theta'$ denote the character of the gauge group $G'$ which is related with $\theta$ via Equation \eqref{eqn:changeofphasemu}, and  define the variety $\mc Z':=\{dW'=0\}\sslash_{\theta'}G'$ as the critical locus of the mutated potential function. 
The Seiberg duality conjecture asserts that Gromov-Witten theories of $\mc Z$ and $\mc Z'$ are equivalent, 
see \cite{Hori,HoriTong,benini2015cluster,gomis2016m2} for physics achievements, and \cite{nonabelianGLSM:YR} for the mathematical formulation.  
The authors' previous works, including \cite{zhang2021gromov, hezh2023seiberg} have proved the Seiberg duality conjecture for $A$-types, star-shaped, and $D$-type quivers. Similarly, Priddis-Wen-Shoemaker have verified the Seiberg duality conjecture for PXY and PAXY model in \cite{priddis2024seiberg}.
These works focus primarily on genus zero Seiberg duality conjecture and prove transformation formulas of quasimap small $I$-functions of $\mc Z$ and $\mc Z'$.
In this work, we extended the scope to higher-genus Gromov-Witten theory, proving all-genus Seiberg duality conjecture of quivers that are mutation-equivalent to an $A_n$-type quiver, see Theorem \ref{thm:GWinvarmu}. 

\subsection{Gromov-Witten theory and Seiberg Duality conjecture}
Consider a quiver with potential (QP) $\mathbf Q=(Q_f\subset Q_0,Q_1,W)$, as defined in Definition \ref{dfn:quiver}. 
Decorate the quiver with an integer vector $\mathbf r=(r_i)_{i\in Q_0}\in \mathbb Z_{>0}$ 
 and choose a character $\theta\in \chi (G)$.  This setup allows us to define the variety $\mc Z:=\{dW=0\}\sslash_{\theta}G$ which is the critical locus of the potential function. 
When $\mc Z$ is smooth, we can study its Gromov-Witten theory. For the definition of Gromov-Witten theory, see \cite{MR1366548, MR1483992, GW:LT}, with a comprehensive introduction in \cite{GW:mirrorsym}. 
When the potential is zero, the variety $\mc Z$ is simply the quiver variety.
We refer $\mc Z$ to the variety associated with the decorated QP  $(\mathbf Q,\mathbf r)$. 

Applying a quiver mutation to the decorated QP as in Definition \ref{dfn:mutation} and Equation \eqref{eqn:Nchange}, results in a new decorated QP $(\mathbf Q'=(Q_f\subset Q_0,Q_1',W'),\mathbf r')$, see Definition \ref{dfn:mutation} and Equation \eqref{eqn:Nchange}. 
Choosing a character $\theta'$ of the new gauge group $G'$ related to the character $\theta$ of $G$ via Equation \eqref{eqn:changeofphasemu}, we define the critical locus $\mc Z'$ of $W'$. 
 The Seiberg duality conjecture posits that the Gromov-Witten invariants of $\mc Z$ and $\mc Z'$ are equivalent, see Conjecture \ref{conj:seibergdualityconj}.
 In particular, let $QH^*_S(\mc Z):=H^*_S(\mc Z)\otimes \mathbb Q[[q]]$ be the equivariant quantum cohomology ring of $\mc Z$ where $q=(q_i)_{i\in Q_0\backslash Q_f}$ are K\"ahler variables. 
 A genus zero version of the Seiberg duality conjecture asserts that $QH^*_S(\mc Z)$ and $QH^*_S(\mc Z')$ are isomorphic, see Conjecture \ref{conj:seiberginqcoh}. 
 
Consider the $A_n$-type quiver in Figure \ref{fig:Anqv}, decorated by integers $N_1<\ldots<N_{n+1}$. 
 Let $\Omega_n$ denote the set of all QPs mutation-equivalent to the decorated $A_n$-type quiver, 
 see Lemma \ref{lem:omegan} and \ref{lem:recursivepattern} for the descriptions of QPs in this set and Lemma \ref{lem:decorationsofQinP} for the decoration of each QP in $\Omega_n$.

 Let $\mathbf Q$ and $\mathbf Q'$ be two QPs in $\Omega_n$ that are related by a quiver mutation $\mu_v$ at a gauge node $v$ and $\mc Z$ and $\mc Z'$ be their critical loci. 
 The torus $S=(\mathbb C^*)^{N_{n+1}}$ acts on both $\mc Z$ and $\mc Z'$ such that the torus fixed loci $\mc Z^S$ and $(\mc Z')^S$ are finite and of equal cardinality.
 Moreover, Lemma  \ref{lem:Sfixedpointsmu} provides a canonical bijection between the two torus fixed loci
    \[\varphi: \mc Z^S\rightarrow (\mc Z')^S.\] 
This induces a canonical isomorphism between the equivariant cohomology rings of $\mc Z$ and $\mc Z'$, see Proposition \ref{prop:equicohmu}. 

 Using the localization theorem \cite{GW:GPequiv,liu2011localization}, we analyze the one-dimensional graphs in $\mc Z$ and $\mc Z'$ and their contributions to Gromov-Witten invariants. 
 This leads to the proof of the equivalence of all-genus Gromov-Witten invariants of $\mc Z$ and $\mc Z'$. 
 \begin{thm}[Theorem \ref{thm:GWinvarmu}]\label{introdthm:seibergdualityAn}
      Let $\gamma_1,\ldots,\gamma_m\in H^*_S(\mc Z)$ and $\gamma_1',\ldots,\gamma_m'\in H^*_S(\mc Z')$ such that for each $P\in \mc Z^S$
\begin{equation*}
    \gamma_i\big{|}_{P}= \gamma_i'\big{|}_{\varphi(P)}\,.
\end{equation*}
Then for any $a_i\in \mathbb Z_{\geq 0}$ and effective curve class $\beta\in H_2(\mc Z)$ and $\beta'\in H_2(\mc Z')$ that are related as in Proposition \ref{prop:transformeff},
we have 
\begin{equation*}
    \langle \tau_{a_1}\gamma_1,\ldots,\tau_{a_m}\gamma_m \rangle_{g,m,\beta}^{\mc Z,S}=
    \langle \tau_{a_1}\gamma_1',\ldots,\tau_{a_m}\gamma_m' \rangle_{g,m,\beta'}^{\mc Z',S}.
\end{equation*}
 \end{thm}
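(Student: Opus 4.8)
The plan is to prove the theorem by the virtual localization technique for the torus $S=(\mathbb C^*)^{N_{n+1}}$, reducing both sides to a sum over decorated graphs and then matching the graph contributions via the bijection $\varphi\colon\mc Z^S\to(\mc Z')^S$. First I would set up the fixed-point data: by hypothesis $\mc Z^S$ and $(\mc Z')^S$ are finite, so the fixed loci of $\overline{\mathcal M}_{g,m}(\mc Z,\beta)$ and $\overline{\mathcal M}_{g,m}(\mc Z',\beta')$ are indexed by the usual decorated graphs $\Gamma$ (vertices labelled by fixed points and genera, edges by degrees along $S$-invariant curves, legs by marked points). The key geometric input is a matching of the \emph{one-dimensional $S$-orbits}: I would show that for each $\mathbf Q$ in $\Omega_n$, the $S$-invariant curves through a fixed point $P$, together with their $S$-weights, correspond under $\varphi$ to the $S$-invariant curves through $\varphi(P)$, with the same weights. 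Combined with Proposition \ref{prop:equicohmu} (the canonical isomorphism of equivariant cohomology rings restricting compatibly on fixed points) and Proposition \ref{prop:transformeff} (the correspondence $\beta\leftrightarrow\beta'$ of effective classes, which should send the degree of each invariant curve to the degree of its image), this gives a degree-preserving, weight-preserving bijection on the set of decorated graphs.

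Next I would compare the localization contributions graph by graph. A contribution of $\Gamma$ is a product over vertices, edges, and flags of tangent/normal weights at fixed points, of automorphism factors, and of Hodge-integral vertex terms at positive-genus vertices, against which the insertions $\tau_{a_i}\gamma_i$ contribute $\psi_i^{a_i}$ times $\gamma_i|_{P}$ at the vertex carrying leg $i$. The vertex Hodge integrals and the $\psi$-classes depend only on the abstract dual graph, not on the target, so they are literally the same on both sides. Therefore it suffices to check that, edge by edge and flag by flag, the $S$-weights of the tangent space $T_P\mc Z$ decompose in the same way as those of $T_{\varphi(P)}\mc Z'$ — i.e. that the local structure of $\mc Z$ near an $S$-fixed curve agrees with that of $\mc Z'$ near its image. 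Since $\gamma_i|_P=\gamma_i'|_{\varphi(P)}$ by assumption, the insertion factors also match, and the two localization sums become term-by-term equal.

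The main obstacle — and the place where the $A_n$ hypothesis is genuinely used — is establishing the weight-by-weight comparison of the $S$-invariant curves and their normal bundles under $\varphi$. Even though $\mc Z$ and $\mc Z'$ have canonically isomorphic equivariant cohomology, this does not a priori force their \emph{one-dimensional} $S$-orbit structure (which curves connect which fixed points, and with which tangent weights and degrees) to agree; one must use the explicit recursive description of the QPs in $\Omega_n$ from Lemmas \ref{lem:omegan}, \ref{lem:recursivepattern}, \ref{lem:decorationsofQinP}, together with the explicit form of $\varphi$ in Lemma \ref{lem:Sfixedpointsmu}, to verify this combinatorially. Concretely, I expect to reduce to a single elementary mutation $\mu_v$ at a gauge node and analyze the two local models $\mc Z$ and $\mc Z'$ near a fixed point directly — showing that the mutation only reshuffles the tautological/quotient bundle data in a way that preserves all $S$-weights along invariant curves — and then the general case follows since any $\mathbf Q,\mathbf Q'\in\Omega_n$ are connected by such mutations. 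Once this local comparison is in hand, summing over graphs and invoking the virtual localization formula in the form of \cite{GW:GPequiv,liu2011localization} completes the proof; care must be taken that $\mc Z$ and $\mc Z'$ are smooth (or that the virtual classes are handled correctly) so that localization applies, but this is part of the standing setup for $\Omega_n$.
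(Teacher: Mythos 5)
Your proposal follows essentially the same route as the paper's proof: reduce to virtual localization, establish a bijection on decorated graphs induced by $\varphi$, and then match contributions term by term by showing that the $S$-weights of tangent lines along one-dimensional orbits are preserved under mutation. The paper executes exactly this plan, first proving GKM-ness and classifying one-dimensional orbits for Grassmannians, then partial flag varieties, then general $\mc Z\in\Omega_n^A$ (Lemma \ref{1 dim Gr}, Lemma \ref{lem:1dimorbflag}, Proposition \ref{prop:oneskeletonAnmutation}), then showing $\varphi$ maps orbit closures to orbit closures (Proposition \ref{prop:torusfixedgraphmu}) with matching weights (Proposition \ref{prop:weightsmu}), and finally assembling these into a graph-by-graph comparison.

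One detail worth flagging, which your sketch glosses over: the edge factors $\mathbf h(\tau,d)$ in the localization formula depend not only on $\mathbf w(\tau,\sigma)$ but on the integers $a_i$ of equation \eqref{eqn:ai}, which encode how the tangent lines at one end of the orbit closure parallel-transport to the other. Your claim that ``the vertex Hodge integrals and the $\psi$-classes depend only on the abstract dual graph'' is a slight oversimplification, since $\mathbf h(\sigma_v,g_v)$ and $\mathbf h(\tau_e,d_e)$ both depend on the weights. The paper handles this cleanly by observing that once the weights at all flags are shown to be preserved by $\varphi$, the $a_i$ are forced to be preserved as well because they are determined by the weight data via \eqref{eqn:ai}; your argument would need this observation made explicit to close the loop. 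Apart from that, the structure and key lemmas you identify are the ones the paper uses.
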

As a corollary of the Theorem \ref{introdthm:seibergdualityAn}, the quantum cohomology rings $QH^*_S(\mc Z)$ and $QH^*_S(\mc Z')$  are isomorphic up to a change of K\"ahler variables, see Corollary \ref{cor:qcohisomorphic}.
 
\subsection{Cluster algebra conjecture}
Let $(\mathbf Q, \mathbf x=(x_v)_{v\in Q_0})$ be a seed which consists of a QP $\mathbf Q=(Q_0,Q_1,W)$ and a set of algebraically independent variables $\mathbf x=(x_v)_{v\in Q_0}$. There is an associated cluster algebra $\mathscr A_{\mathbf Q}$ as described in Definition \ref{sec:clusteralgebras}, which is a sub-algebra in the rational function field $\mathbb Q(x_1,\ldots,x_{n})$, where $n$ is the number of nodes in $Q_0$. 
Let $(\mathbf Q, \mathbf x)$ and $(\mathbf Q',\mathbf x'=(x_1',\cdots, x_n'))$ be two seeds that are related by a seed mutation at node $v\in Q_0$, 
then the two sets $\mathbf x$ and $\mathbf x'$ coincide except for the variables $x_v$ and $x_v'$, 
which satisfy the cluster exchange relation: 
\begin{equation*}
    x_vx_v'=\prod_{u\in Q_0}x_u^{[b_{uv}]_+}+\prod_{w\in Q_0}x_w^{[b_{vw}]_+}\,.
\end{equation*}

For the above mentioned QPs $\mathbf Q$ and $\mathbf Q'$, we decorate them by $\mathbf r$ and $\mathbf r'$ such that $r_u$ and $r_u'$ coincide and $r_v$ and $r_v'$ are related by \eqref{eqn:Nchange}. 
Let $\mc Z$ and $\mc Z'$ be their critical loci which admit a common torus action $S=\prod_{u\in Q_f}(\mathbb C^*)^{r_u}$ from frozen nodes.
We consider the $\mathbb Q$-algebra $QH^*_S(\mc Z)[t]:=QH^*_S(\mc Z)\otimes \mathbb Q[t]$ which we refer to as the quantum cohomology ring of $\mc Z$ by abuse of terminology.
The equivariant Chern polynomials of tautological bundles and quotient bundles $c_t^S(S_k)$ and $c_t^S(Q_k)$ lie in $QH^*_S(\mc Z)[t]$ for each node, as detailed in Section \ref{sec:seibergduality}. 
We conjecture that the quantum product of $c_t^S(S_k)$ and $c_t^S(Q_k)$ satisfies the quantum cohomological cluster exchange relation, 
see Conjecture \ref{conj:clusterexchagenrelation} which is an analogue of the cluster exchange relation. 
Therefore, we expect the existence of a $\mathbb Q$-algebra homomorphism from $\mathscr A_{\mathbf Q}$ to $QH_S^*(\mc Z)[t]$, which sends the initial cluster variables $\mathbf x$ to the Chern polynomials of tautological bundles.
The challenge is how to define the image of the non-initial cluster variables. 
We propose an explicit construction in the following Conjecture.
\begin{conjecture}[Conjecture \ref{conj:clusteralgconj}]\label{introdcoj:clusteralgconj}
Introduce a new set of formal variables $\{\xi_i\}_{i\in Q_0}$ and let $q_k=(-1)^{N_k+N_f}\prod_{j\in Q_0}\xi_j^{b_{jk}}$.
There exists a well-defined $\mathbb Q$-algebra homomorphism 
    \begin{equation*}
        \psi:\mathscr A_{\mathbf Q}\rightarrow QH^*_S(\mc Z)[t]
    \end{equation*}
    such that 
    \begin{enumerate}
        \item $\psi(x_i)=(-1)^{N_k}\xi_kc_t^S(S_k)$
        \item  
        \begin{equation*}
            \psi(x_i')=\begin{cases}
            (-1)^{N_f-N_k}\xi_k^{-1}\prod_{k\rightarrow j}\xi_jc_t^S(Q_k) \text{ when } N_f(k)>N_a(k)\\
            (-1)^{N_a-N_k}\xi_k^{-1}\prod_{k\rightarrow j}\xi_jc_t^S(Q_k) \text{ when } N_f(k)>N_a(k)
            \end{cases}
        \end{equation*}
        \item For a non-initial cluster variable $\tilde x_i$, assume that $\tilde x_i$ lies in a seed $(\tilde {\mathbf Q}, {\tilde {\mathbf x}})$ which can be obtained by a sequence of quiver mutations $\mathbf Q\xhookrightarrow{\mu_{k_1}}\mathbf Q^{1} \xrightarrow{\mu_{k_2}} \ldots\xrightarrow{\mu_{i}} \tilde {\mathbf Q}$. 
        The conjecture \ref{conj:seiberginqcoh} asserts that the quantum cohomology rings of the varieties of these QPs are isomorphic. 
        Let $\tilde\Phi: QH^*_S(\mc Z) \rightarrow QH^*_S(\tilde{\mc Z})$ be the composite of those isomorphisms.
        Let $\tilde S_i$ be the tautological bundle of the variety $\tilde{\mc Z}$. 
        Then the image of $\tilde x_i$ is a scaling of pullback of $c_t^S(\tilde{S}_i^\vee)$ by $\pm 1$ and $\xi_j$, and those scalars can be determined recursively in order to make $\psi$ a ring homomorphism. 
    \end{enumerate}
\end{conjecture}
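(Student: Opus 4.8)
The plan is to prove the conjecture for the quivers in $\Omega_n$, that is, the $A_n$-type case asserted in Theorem~\ref{thm:maincluster2}; the general quiver-with-potential statement seems out of reach by present methods and should be left open. The backbone is the quantum cohomological cluster exchange relation of Conjecture~\ref{conj:clusterexchagenrelation}, which I would establish first for every QP in $\Omega_n$. Concretely, for each gauge node $v$ this is a quantum product formula in $QH^*_S(\mc Z)[t]$, with signs $\varepsilon_1,\varepsilon_2\in\{\pm1\}$, of the shape
\[
c_t^S(S_v)\star c_t^S(Q_v)=\varepsilon_1\, q_v\!\!\prod_{u}c_t^S(S_u)^{[b_{uv}]_+}+\varepsilon_2\!\!\prod_{w}c_t^S(S_w)^{[b_{vw}]_+},
\]
whose classical ($q_v\to 0$) specialisation is the Whitney relation of the tautological exact sequence at $v$, and in which exactly one of the two monomials carries the K\"ahler parameter. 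Since the varieties attached to QPs in $\Omega_n$ are, by Lemmas~\ref{lem:omegan} and~\ref{lem:recursivepattern} (with decorations from Lemma~\ref{lem:decorationsofQinP}), again built from flag varieties, I would prove this identity either from the known presentation of $QH^*_S(Fl)$, or---more robustly for the sign bookkeeping---by restriction to the finitely many $S$-fixed points: a class in $H^*_S(\mc Z)[t]$ is determined by its fixed-point restrictions, the classical part is the Whitney relation above, and the one-dimensional $S$-orbits control the single $q_v$-correction.

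Granting the exchange relation, I would define $\psi$ on the initial cluster and on the once-mutated variables by items~(1)--(2) and observe that, after the substitution $q_k=(-1)^{N_k+N_f}\prod_j\xi_j^{b_{jk}}$, the identity above becomes precisely the $\psi$-image of the cluster exchange relation $x_vx_v'=\prod_u x_u^{[b_{uv}]_+}+\prod_w x_w^{[b_{vw}]_+}$; this makes $\psi$ a well-defined $\mathbb Q$-algebra map on the subalgebra generated by all first-generation variables, and it forces the two sign cases in~(2) according to whether the larger decoration sits on the incoming or the outgoing side of $v$, i.e.\ to $N_f(k)\gtrless N_a(k)$. The remaining cluster variables are treated by induction on the distance in the exchange graph from the initial seed: if $\tilde x_i$ is produced by a mutation $\mu_v$ from a seed $(\tilde{\mathbf Q}^-,\tilde{\mathbf x}^-)$ on which $\psi$ is already defined, the exchange relation at $v$ reads $y\,\tilde x_i=M^++M^-$ for the variable $y$ of $\tilde{\mathbf Q}^-$ at $v$ and monomials $M^\pm$ in the other variables, and the point is that $\psi(\tilde x_i):=\bigl(\psi(M^+)+\psi(M^-)\bigr)/\psi(y)$ is an honest element of $QH^*_S(\mc Z)[t]$ rather than a formal fraction. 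This is where I would use Corollary~\ref{cor:qcohisomorphic}: the composite isomorphism $\tilde\Phi\colon QH^*_S(\mc Z)\xrightarrow{\sim}QH^*_S(\tilde{\mc Z})$ identifies the exchange relation on $\tilde{\mc Z}$ at $v$ with the one we need, and transporting it back by $\tilde\Phi^{-1}$ exhibits $\psi(\tilde x_i)$ as $\tilde\Phi^{-1}$ applied to $c_t^S(\tilde S_i^\vee)$ (equivalently $c_t^S(\tilde Q_i)$), rescaled by a monomial in $\pm1$ and the $\xi_j$; that monomial is the scalar determined recursively in item~(3), and the dual $\vee$ appears because mutation swaps sub- and quotient bundles, as in the Grassmannian duality $S\leftrightarrow Q^\vee$.

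Next I would check that $\psi$ does not depend on the chosen mutation sequence, so that it descends to a homomorphism on all of $\mathscr A_{\mathbf Q}$. Since $\mathbf Q$ is mutation-equivalent to $A_n$, the mutable part of $\mathscr A_{\mathbf Q}$ is of finite type $A_n$ and its exchange graph is the $n$-dimensional associahedron, whose $2$-faces are squares and pentagons; as every relation invoked above is an exchange relation, it suffices to verify compatibility of $\psi$ around one square (two commuting mutations) and one pentagon (the $A_2$ case), each a finite computation fed by the previous two steps. Finally, for injectivity it suffices that $\psi(x_1),\dots,\psi(x_n)$ be algebraically independent over $\mathbb Q$: then $\psi$ extends to a $\mathbb Q$-embedding of $\mathbb Q(x_1,\dots,x_n)$ which restricts to $\psi$ on $\mathscr A_{\mathbf Q}$ (using the Laurent phenomenon $\mathscr A_{\mathbf Q}\subseteq\mathbb Q[x_1^{\pm1},\dots,x_n^{\pm1}]$), hence $\psi$ is injective. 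Algebraic independence is checked after specialising the torus-equivariant parameters to generic values, where $QH^*_S(\mc Z)$ becomes semisimple and the distinct $t$-degrees $N_1<\dots<N_n$ of the $c_t^S(S_k)$ together with the independent formal factors $\xi_k$ rule out any nontrivial relation.

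The main obstacle is the first step together with the identification used in the second: pinning down the quantum product $c_t^S(S_v)\star c_t^S(Q_v)$ with all signs and normalisations correct uniformly across the family $\Omega_n$, and then computing how $\tilde\Phi$ acts on the tautological Chern polynomials so that the recursively defined elements are recognised as (pullbacks of) genuine Chern polynomials. Once the exchange relation is under control, the extension to non-initial variables, the well-definedness check on the $2$-faces of the associahedron, and injectivity are comparatively formal.
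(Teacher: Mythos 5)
The crux of your plan—the quantum cohomological cluster exchange relation—is not adequately addressed, and this is precisely where the paper does its real work. You propose to obtain the relation ``by restriction to the finitely many $S$-fixed points'' together with the observation that ``the one-dimensional $S$-orbits control the single $q_v$-correction,'' but this is not enough: the $1$-skeleton of $\mc Z$ determines the equivariant \emph{classical} cohomology and the possible curve classes, not the actual structure constants of the small quantum product, which require computing genus-zero three-point Gromov--Witten invariants. Your fallback of reading the identity off a ``known presentation of $QH^*_S(Fl)$'' is also not developed, and the paper itself notes that its ring representation differs from the existing ones (e.g.\ Ciocan-Fontanine's); the needed identities for $c_t^S(S_j/S_i)$ are not standard. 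What the paper actually does (Sections~5.1--5.5) is carry out the abelian/nonabelian correspondence of Ciocan-Fontanine--Kim--Sabbah and Webb, lifting $c_t^S(S_j/S_i)\cup\omega$ to Schur/Vandermonde combinations in $H^*_S(V\sslash_\theta T)$ and then reading off the quantum correction from the Batyrev relations in the abelian quotient—this is the substantive input that your sketch replaces with a hope.

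Your second structural choice is also different, and costlier, than the paper's. You define $\psi$ recursively along mutation paths and then must prove path-independence by checking the square and pentagon $2$-faces of the associahedron. The paper short-circuits this entirely: it gives a closed formula $\psi(x_v')=(-1)^{N_j-N_i}\xi_j\xi_i^{-1}c_t^S(S_j/S_l)$ directly (Theorem~\ref{thm:maincluster2}), so well-definedness is automatic and the only thing to verify is that $\psi$ respects the exchange relations (Theorem~\ref{thm:quantumcohexrel}(2)). This explicit formula is then reconciled with item~(3) of the conjecture \emph{a posteriori} in Corollary~\ref{cor:Aclustertrue}, where the quotient-bundle Chern polynomials are shown, by a fixed-point comparison under the map $\varphi$, to equal pullbacks of dual tautological Chern polynomials on the mutated varieties $\tilde{\mc Z}$. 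Your injectivity argument (algebraic independence of the $\psi(x_i)$ plus the Laurent phenomenon plus the observation that $c_t^S(S_i)$ is not a zero divisor in $QH^*_S(Fl)[t]$) is structurally close to the paper's, which uses the Laurent phenomenon and then a leading-$t$-coefficient argument with a partial order on exponent vectors; both are fine, though your appeal to generic semisimplicity still needs to be turned into an actual independence proof. In short: your outer skeleton is reasonable, but the heavy lifting—proving relation \eqref{eqn:quantumcohexrel0} in $QH^*_S(Fl)[t]$—is the step you have waved away, and it cannot be obtained by the localization-to-the-$1$-skeleton heuristic you describe.
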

\subsection{Cluster algebra conjecture for A-type quiver}
We begin by considering the $A$-type quiver as described in Example \ref{ex:An}. 
Denote the corresponding cluster algebra by $\mathscr A_n\subset \mathbb Q(x_1,\ldots,x_{n+1})$. 
The associated quiver variety is a partial flag variety $Fl:=Fl(N_1,\ldots,N_{n+1})$. 
The Conjecture  \ref{introdcoj:clusteralgconj} holds for $A$-type quivers.
\begin{thm}[Theorem \ref{thm:maincluster2}]\label{intrthm:clustermap}
    Define a map 
    \begin{equation*}
        \psi:\mathscr A_{n}\rightarrow QH^*_S(Fl)[t]
    \end{equation*} 
    as follows.
    \begin{enumerate}
        \item The map $\psi$ sends each initial cluster variable $x_i$ to the equivariant Chern polynomial of tautological bundle: $\psi(x_i)=(-1)^{N_i}\xi_ic_t^S(S_i)$ for $i=1,\ldots, n+1$.
        \item For each non-initial cluster variable $x_v'$, let $(\mathbf Q'=(Q_0,Q_1,W),\mathbf x')$ be one non-initial seed that contains the cluster variable $x_v'$,
        and then the decorated integers  $(r_v)_{v\in Q_0}$ assigned to the QP is determined. 
        By Lemma \ref{lem:decorationsofQinP}, each integer $r_v$ is $N_j-N_i$ for some $1\leq i<j\leq n+1$.
        The map $\psi$ sends the cluster variable $x_v'$ to $(-1)^{N_j-N_i}\xi_j\xi_i^{-1}c_t^S(S_j\slash S_i)$. 
    \end{enumerate}
    The map $\psi$ is a well-defined injective $\mathbb Q$-algebra homomorphism if we let $q_i=(-1)^{N_{i+1}+N_{i}}\xi_{i+1}^{-1}\xi_{i-1}$.
\end{thm}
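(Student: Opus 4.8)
\emph{Proof strategy.} The plan is to use the Fomin--Zelevinsky model of the finite-type cluster algebra $\mathscr{A}_n$: its cluster variables are the coordinates attached to the diagonals of a convex polygon $P$, its clusters are the triangulations of $P$, each mutation is a diagonal flip, and the corresponding exchange relation is the associated Ptolemy relation; moreover, as for the Plücker relations of the Grassmannian $\mathrm{Gr}(2,n+3)$, the ideal of relations among all diagonal coordinates is generated by the Ptolemy relations. Thus $\psi$ is determined by its values on cluster variables, and to obtain a $\mathbb{Q}$-algebra homomorphism it suffices (i)~to attach to each cluster variable an element of $QH^*_S(Fl)[t]$ independently of the seed in which it is displayed, and (ii)~to verify that each Ptolemy relation is sent to a valid identity in $QH^*_S(Fl)[t]$ after the substitution $q_i=(-1)^{N_{i+1}+N_i}\xi_{i+1}^{-1}\xi_{i-1}$. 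For~(i), Lemmas~\ref{lem:omegan}, \ref{lem:recursivepattern} and~\ref{lem:decorationsofQinP} (with the conventions $N_0=0$, $S_0=0$, $\xi_0=1$) attach to each diagonal of $P$ a pair $0\le i<j\le n+1$ with intrinsically determined endpoints, and identify the tautological bundle at the mutated node of the corresponding quiver of $\Omega_n$ with the subquotient $S_j/S_i$ of the tautological flag of $Fl$ (with $S_{n+1}/S_i=Q_i$); so the prescription $\psi(x_{(i,j)}):=(-1)^{N_j-N_i}\,\xi_j\xi_i^{-1}\,c_t^S(S_j/S_i)$ is well posed and restricts to $\psi(x_i)=(-1)^{N_i}\xi_i c_t^S(S_i)$ on the initial cluster.

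The heart of the matter is~(ii), which is precisely the quantum cohomological cluster exchange relation of Conjecture~\ref{conj:clusterexchagenrelation} for the one-step mutations inside $\Omega_n$; by the genus-zero Seiberg duality of Corollary~\ref{cor:qcohisomorphic}, once such a relation is proved for a quiver $\mathbf Q'\in\Omega_n$ it transports isomorphically to $QH^*_S(Fl)[t]$. Concretely, a Ptolemy relation $x_{(i,k)}x_{(j,l)}=x_{(i,j)}x_{(k,l)}+x_{(j,k)}x_{(i,l)}$ (vertices $i<j<k<l$ of $P$) becomes, once one checks that the three induced sign factors agree modulo~$2$ and clears the $\xi$-monomials, an identity of the shape
\begin{equation*}
c_t^S(S_k/S_i)\ast c_t^S(S_l/S_j)=\big(\text{a monomial in the }q_i\big)\cdot c_t^S(S_j/S_i)\ast c_t^S(S_l/S_k)+c_t^S(S_k/S_j)\ast c_t^S(S_l/S_i)
\end{equation*}
in $QH^*_S(Fl)[t]$, where $\ast$ is the quantum product. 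At $q=0$ the two sides coincide by the classical multiplicativity of Chern polynomials along the short exact sequences relating the nested subbundles $S_i\subset S_j\subset S_k\subset S_l$, so all the content lies in the quantum corrections; I would extract these from the Givental--Kim / Ciocan-Fontanine presentation of the equivariant quantum cohomology of a partial flag variety, the key input being the quantum-corrected splitting $c_t^S(S_j)=c_t^S(S_i)\ast c_t^S(S_j/S_i)+(\text{explicit }q\text{-term})$ attached to $0\to S_i\to S_j\to S_j/S_i\to 0$ --- the geometric form of the recursion for quantum elementary symmetric polynomials --- iterated along the chain of tautological subbundles and combined with the quantum Pieri rule. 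I expect this to be the main obstacle: one must control, uniformly over all the recursively described quivers of $\Omega_n$, the signs, the precise $q$-monomial, and the fact that only the K\"ahler parameters of the flipped node occur (which reflects that mutation at a node of the linear $A_n$ quiver touches only its two neighbours). I would organize it as an induction on the mutation distance from the initial seed, using Lemma~\ref{lem:recursivepattern} to reduce a general one-step relation to a base case and Corollary~\ref{cor:qcohisomorphic} to pass between the quiver varieties involved.

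Injectivity is comparatively soft. By the Laurent phenomenon every nonzero $f\in\mathscr{A}_n$ becomes, after multiplication by a suitable monomial $m$ in the initial cluster $\mathbf x=(x_1,\dots,x_{n+1})$, a nonzero element $mf$ of $\mathbb{Q}[x_1,\dots,x_{n+1}]$. Let $\rho\colon QH^*_S(Fl)[t]\to\mathbb{Q}[\lambda_1,\dots,\lambda_{N_{n+1}}][t]$ (extended over the $\xi_i$) be the ring homomorphism obtained by setting $q=0$ and restricting to a torus-fixed point $P\in Fl^S$. Then $\rho(\psi(x_i))=(-1)^{N_i}\xi_i\prod_{a\in P_i}(t+\lambda_a)$, and these $n+1$ elements are algebraically independent over $\mathbb{Q}$ because the $\xi_i$ are distinct free variables and each $\prod_{a\in P_i}(t+\lambda_a)$ is a nonzero polynomial; hence $\rho\circ\psi$ is injective on $\mathbb{Q}[x_1,\dots,x_{n+1}]$, so $\rho(\psi(m))\,\rho(\psi(f))=\rho(\psi(mf))\ne 0$ and therefore $\psi(f)\ne 0$. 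This shows $\ker\psi=0$ and completes the plan.
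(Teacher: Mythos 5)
Your proposal correctly identifies the skeleton of the argument: reduce well-definedness and the homomorphism property to Ptolemy-type exchange relations, pin down the assignment $x_{(i,j)}\mapsto(-1)^{N_j-N_i}\xi_j\xi_i^{-1}c_t^S(S_j/S_i)$ via Lemma~\ref{lem:decorationsofQinP}, and then appeal to the Laurent phenomenon to reduce injectivity to polynomials in the initial cluster. This is exactly how the paper organizes the proof of Theorem~\ref{thm:quantumcohexrel}(2). The key technical input you gesture at---a quantum-corrected splitting $c_t^S(S_j)=c_t^S(S_i)*c_t^S(S_j/S_i)+(\text{explicit }q\text{-term})$---is precisely Proposition~\ref{lem:special}, namely $c_t^S(S_k)*c_t^S(S_l/S_k)=c_t^S(S_l)+(-1)^{N_k+N_{k-1}}q_k\,c_t^S(S_l/S_{k+1})*c_t^S(S_{k-1})$, and the paper does iterate it along the chain of tautological bundles to get the general four-node identity (Lemmas~\ref{lem:quantumclsuter1}, \ref{lem:node2case2}, \ref{lem:3nodeseqn}, \ref{lem:node4val}), in the same inductive spirit you propose. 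But you leave this entire step unproved: the paper derives Proposition~\ref{lem:special} from the abelian/nonabelian correspondence for small quantum cohomology (Ciocan-Fontanine--Kim--Sabbah and Webb), together with a delicate determinantal computation in $H^*_S(V\sslash_\theta T)$ to control the lift $\widetilde{c_t^S(S_l/S_k)}$ and the quantum corrections it produces; this occupies all of Sections~\ref{sec:abnonabqcoh}--\ref{sec:proofofmaintheorem} and is the bulk of the work.

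The genuine gap is in the injectivity argument. Once the K\"ahler variables are specialized by $q_i=(-1)^{N_{i+1}+N_i}\xi_{i+1}^{-1}\xi_{i-1}$, each $q_i$ is a unit (a Laurent monomial in the $\xi$'s) rather than a free Novikov parameter, so \emph{setting $q=0$ is not a ring homomorphism out of the target of $\psi$}: it would force some $\xi_{i-1}$ to vanish, while the very formulas defining $\psi$ require the $\xi_i$ to be invertible. Consequently your map $\rho$ is not compatible with the quantum product $*$ that $\psi$ uses, and the equality $\rho(\psi(mf))=\rho(\psi(m))\,\rho(\psi(f))$ is not justified. The paper sidesteps this with a leading-term argument: it introduces a partial order on exponent vectors $\mathbf{i}$, declaring $\mathbf{i}\geq\mathbf{i}'$ precisely when $\xi^{\mathbf{i}}/\xi^{\mathbf{i}'}=\pm\prod_j q_j^{\geq 0}$, observes that quantum corrections can only move a $\xi$-monomial upward in this order, and then for a minimal $\mathbf{i}_0$ in the support of $P(x_1,\dots,x_{n+1})$ reads off the coefficient of $\xi^{\mathbf{i}_0}t^{\mathbf{r}\cdot\mathbf{i}_0}$ in $\psi(P)$ as the literal coefficient $a_{\mathbf{i}_0}$. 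Your intuition (kill quantum corrections to see the classical product, then localize at a fixed point) is the right one, but it must be implemented as this $\xi$-graded leading-term extraction rather than as the nonexistent specialization $q=0$.
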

Quiver mutation is a local operation, which means it only affects the nodes and arrows adjacent to the mutated node. 
The most general local behavior of the quiver mutation is illustrated in Figure \ref{fig:localbehavior}. 
We let the integers assigned to those nodes be $r_v=N_k-N_l,r_{v_1}=N_m-N_l, N_{v_3}=N_p-N_{l}$ with $m>k>p\geq l$, and then $r_{v_2}=N_m-N_{k+1}, r_{v_4}=r_{k}-r_{p+1}$, see Figure \ref{fig:localbehavior} (a).

To prove Theorem \ref{intrthm:clustermap}, we need to verify that the map $\psi$ preserves the cluster exchange relations.
Based on the definition of the map $\psi$ and the assumptions outlined in the above paragraph, we only need to prove the following relations in the quantum cohomology ring of the partial flag variety $Fl$.
\begin{thm}\label{introdthm:quantumcohexrel}
\begin{enumerate}
    \item In the quantum cohomology ring of a flag variety $QH^*_S(Fl)[t]$, for $m> k> p\geq l\geq 0$, we have the following 
    \textit{quantum cohomological cluster exchange relations}. \begin{align}\label{introdeqn:quantumcohexrel0}
        c_t^S(S_m/S_{p+1})*c_t^S(S_k/S_l)&=c_t^S(S_m/S_l)*c_t^S(S_k/S_{p+1}) \nonumber\\
        &+\prod_{a=p+1}^{k}(-1)^{N_a+N_{a-1}}q_ac_t^S(S_m/S_{k+1})*c_t^S(S_p/S_l)\,.
    \end{align}
\item The map $\psi$ in Theorem \ref{thm:maincluster2} is an injective ring homomorphism if the quantum cohomological cluster exchange relations hold.
\end{enumerate}
\end{thm}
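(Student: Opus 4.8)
The plan is to treat items (1) and (2) in turn; (2) will be a formal consequence of (1) together with the Laurent phenomenon. For item (1) I would first observe that the classical limit $q\to 0$ is automatic: writing $A=S_{p+1}/S_l$, $B=S_k/S_{p+1}$, $C=S_m/S_k$ and applying the Whitney sum formula to the filtration $S_l\subseteq S_{p+1}\subseteq S_k\subseteq S_m$, both $q$-free products in \eqref{introdeqn:quantumcohexrel0} equal $c_t(A)c_t(B)^2c_t(C)$, so the entire content of the identity is the quantum correction. The key input is the quantum Whitney relation of the partial flag variety,
\begin{equation*}
c_t^S(S_m/S_j)*c_t^S(S_j/S_l)=c_t^S(S_m/S_l)+(-1)^{N_j+N_{j-1}}q_j\,c_t^S(S_m/S_{j+1})*c_t^S(S_{j-1}/S_l),\qquad m>j>l\ge 0,
\end{equation*}
which is exactly the case $k=j$, $p=j-1$ of \eqref{introdeqn:quantumcohexrel0} (using $c_t^S(S_j/S_j)=1$). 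This relation belongs to the presentation of $QH^*_S(Fl)$ and, in the GLSM framework of this paper, can be extracted from the Birkhoff factorisation of the quasimap $I$-function of $Fl$ that already underlies the authors' genus-zero results.

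Granting the quantum Whitney relations, I would prove \eqref{introdeqn:quantumcohexrel0} for all $m>k>p\ge l$ by induction on $k-p$, the base case $k-p=1$ being the relation just displayed. In the inductive step one expands $c_t^S(S_k/S_l)$ by the quantum Whitney relation split at $p+1$, multiplies through by $c_t^S(S_m/S_{p+1})$, and expands the product $c_t^S(S_m/S_{p+1})*c_t^S(S_{p+1}/S_l)$ by a second quantum Whitney relation; using commutativity and associativity of $*$ everything collapses onto the desired right-hand side except for a residual factor, which is supplied by the case $k-p-1$ of \eqref{introdeqn:quantumcohexrel0} applied to the configuration $(m,k,p+1,p+1)$ and produces exactly $\prod_{a=p+2}^{k}(-1)^{N_a+N_{a-1}}q_a$. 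I expect the real obstacle to be isolating the precise quantum Whitney relation for \emph{partial} (as opposed to complete) flags --- the sign $(-1)^{N_j+N_{j-1}}$ and the $q_j$-prefactor, which are cleaner in the complete-flag case --- after which both the induction for (1) and the matching needed for (2) are essentially forced. A more computational alternative is to verify \eqref{introdeqn:quantumcohexrel0} directly against an explicit presentation of $QH^*_S(Fl)[t]$, reducing it to a polynomial identity in Chern roots and equivariant parameters.

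For item (2) I would first show that $\psi(x_1),\dots,\psi(x_{n+1})$ are algebraically independent over $\mathbb Q$. Restricting to a torus fixed point $P\in Fl^S$ --- a coordinate flag $P_1\subset\cdots\subset P_{n+1}$ with $|P_i|=N_i$ --- one has $\psi(x_i)\big|_P=(-1)^{N_i}\xi_i\prod_{a\in P_i}(t+\lambda_a)$, a polynomial in $t$ of degree $N_i$ with leading coefficient $\pm\xi_i$; since $N_1<\cdots<N_{n+1}$ and the $\xi_i$ are independent variables, distinct monomials in the $\psi(x_i)\big|_P$ have distinct top-$t$-degree terms, which are distinct monomials in $(\xi_\bullet,t)$, so no algebraic relation can cancel. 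Consequently $x_i\mapsto\psi(x_i)$ extends to a field embedding $\mathbb Q(x_1,\dots,x_{n+1})\hookrightarrow\mathrm{Frac}\!\big(QH^*_S(Fl)[t]\big)$ (with the $\xi_i$ adjoined as units and $q_i$ given by the stated substitution); by the Laurent phenomenon $\mathscr A_n\subseteq\mathbb Q[x_1^{\pm},\dots,x_{n+1}^{\pm}]$, so $\psi$ is the restriction of this embedding and is automatically an injective ring homomorphism.

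It remains to check that $\psi$ actually lands in $QH^*_S(Fl)[t]$ and has the prescribed value on every non-initial cluster variable; I would prove both by induction on the mutation distance from the initial seed. A mutation at a node $v$ is local (Figure \ref{fig:localbehavior}) and, by Lemma \ref{lem:decorationsofQinP}, its decorations have the form $r_v=N_k-N_l$, $r_{v_1}=N_m-N_l$, $r_{v_3}=N_p-N_l$, $r_{v_2}=N_m-N_{k+1}$, $r_{v_4}=N_k-N_{p+1}$ with $m>k>p\ge l$; substituting the already-known values of $\psi$ on the variables of the current seed into the exchange relation $x_vx_v'=\prod x_u^{[b_{uv}]_+}+\prod x_w^{[b_{vw}]_+}$ and clearing a common $\xi_\bullet$-monomial turns it into exactly \eqref{introdeqn:quantumcohexrel0}, the factor $\prod_{a=p+1}^{k}(-1)^{N_a+N_{a-1}}q_a$ appearing precisely because the $q_a$ are the prescribed $\xi_\bullet$-monomials. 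Item (1) then forces $\psi(x_v')=(-1)^{N_m-N_{p+1}}\xi_m\xi_{p+1}^{-1}c_t^S(S_m/S_{p+1})\in QH^*_S(Fl)[t]$, which is the required formula for the mutated variable. Since the cluster variables generate $\mathscr A_n$ as a $\mathbb Q$-algebra, this gives $\psi(\mathscr A_n)\subseteq QH^*_S(Fl)[t]$ and completes item (2).
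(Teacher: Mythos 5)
Your proposal correctly identifies the overall structure — reduce item (1) to a two–term "quantum Whitney'' relation and then propagate by induction, and reduce item (2) to the exchange relations plus the Laurent phenomenon — but there is a genuine gap in the base case of (1), which is where essentially all of the paper's work in Section~5 lives.

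The relation you call the quantum Whitney relation,
\[
c_t^S(S_m/S_j)*c_t^S(S_j/S_l)=c_t^S(S_m/S_l)+(-1)^{N_j+N_{j-1}}q_j\,c_t^S(S_m/S_{j+1})*c_t^S(S_{j-1}/S_l),
\]
is not a stock fact that can be read off a presentation or from Birkhoff factorisation of the quasimap $I$-function: as you yourself note, it is the case $k=j$, $p=j-1$ of \eqref{introdeqn:quantumcohexrel0}, so invoking it as input makes the argument circular unless it is proved independently. In the paper this is exactly what happens. The primitive input, established in Proposition~\ref{lem:special} via the abelian/nonabelian correspondence of Ciocan-Fontanine--Kim--Sabbah and Webb (Proposition~\ref{prop:ab-nab for small Qcoh}), is only the $l=0$ case $c_t^S(S_k)*c_t^S(S_l/S_k)=c_t^S(S_l)+(-1)^{N_k+N_{k-1}}q_k\,c_t^S(S_l/S_{k+1})*c_t^S(S_{k-1})$, and the proof of even that requires the explicit lift \eqref{eqn:standard for quotient} of $c_t^S(S_l/S_k)$ in terms of Chern roots and a careful Schur/Vandermonde manipulation to isolate the quantum correction. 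Passing from $l=0$ to the ``Whitney'' relation for general $l$ and $m$ is then a chain of two further inductions (Lemma~\ref{lem:node2case2} for the two-node cases, Lemma~\ref{lem:3nodeseqn} for the three-node cases). The paper also explicitly remarks that its ring representation differs from the standard ones (Ciocan-Fontanine's), so appealing to ``the presentation of $QH^*_S(Fl)$'' is not available. Once the full two-term relation is in hand, your induction on $k-p$ — expanding $c_t^S(S_k/S_l)$ at $p+1$ and using the $(m,k,p+1,p+1)$ instance of \eqref{introdeqn:quantumcohexrel0} — does run correctly and is essentially the same calculation as the paper's Lemma~\ref{lem:node4val} (which instead inducts on $m$). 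Your classical-limit sanity check is correct but is not needed in the paper's derivation.

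For item (2), your argument is sound and is a mild variant of the paper's. The paper avoids invoking a fraction field of $QH^*_S(Fl)[t]$ (whose domain property is not discussed) by multiplying $\alpha$ by a monomial $Q$ in the $x_i$ to clear denominators, using that $\mathscr A_n$ is a domain, and then extracting the coefficient of $\xi^{\mathbf i_0}t^{\mathbf r\cdot\mathbf i_0}$ for a minimal $\mathbf i_0$ with respect to a partial order on $\xi$-exponents by divisibility by $q$-monomials. Your top-$t$-degree argument delivers the same conclusion (quantum corrections strictly drop $t$-degree, so the top coefficient is the $q$-free sum $\sum_{\sum i_kN_k=d}a_{\mathbf i}\prod(\pm\xi_k)^{i_k}$, a linear combination of distinct $\xi$-monomials) and does not actually need the restriction to a fixed point; phrasing it through the Laurent phenomenon plus clearing denominators, as the paper does, sidesteps the need for $\operatorname{Frac}(QH^*_S(Fl)[t])$ to make sense. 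The matching of exchange relations via Lemma~\ref{lem:decorationsofQinP} and the $\xi$-monomial bookkeeping is the same in both your sketch and the paper.
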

One can find that the relations \eqref{introdeqn:quantumcohexrel0} Chern polynomials $c_t^S(S_j/S_i)$ satisfy are very close to the cluster exchange relation except for the factor $\prod_{a=p+1}^{k}(-1)^{N_a+N_{a-1}}q_a$. 
The parameters $(\xi_i)_{i\in Q_0}$ are introduced in order to cancel such factors.

Although we have assumed that the node $v$ has four adjacent nodes, it can be reduced to cases where the node $v$ has fewer adjacent nodes by taking special values for integers $m,k,p,l$, see Remark \ref{rem:integersmkpl} (1) and (2). 

In Conjecture \ref{introdcoj:clusteralgconj}, the images of cluster variables are pullbacks of the Chern polynomials of the dual tautological bundles over varieties of QPs that are mutation-equivalent to an initial QP $\mathbf Q$. However, in Theorem \ref{introdthm:quantumcohexrel}, the cluster variables are mapped to Chern polynomials of quotient bundles $\{c_t^S(S_j/S_i)\}_{j>i}$. 
Actually, Chern polynomials of these quotient bundles are pullbacks of Chern polynomials of the dual of tautological bundles over varieties of QPs in $\Omega_n$. See Corollary \ref{cor:Aclustertrue}.

Fontanine has established a ring representation in \cite{qcohflag:CF99} for the quantum cohomology ring of a partial flag variety. 
Our work provides explicit quantum products for a broader range of cohomology classes. 

In the proof of Theorem \ref{introdthm:quantumcohexrel}, we have utilized  results of Fontanine-Kim-Sabbah  \cite{abelian/nonabelian:CKB} on the abelian/nonabelian correspondence for Frobenius manifolds, as well as Webb's work on abelian/nonabelian correspondence for quasimap $I$-functions in \cite{abeliannonabelian:Webb, abelianizationlef:Webb}.

\subsection{Future directions}
There are three directions to develop the relation of cluster algebra and enumerative geometry. 
First, we will prove the cluster algebra conjecture for more quivers. 
In our upcoming work, we will prove that the cluster algebra conjecture \ref{introdcoj:clusteralgconj} is true for $D$-type quivers. 
Secondly, we propose that there is also a cluster algebra structure in the quantum K ring, which is in progress.  
Thirdly, it is natural to consider the quantum difference D-module, and we expect that it admits a quantum cluster algebra structure.
\subsection{Outline of the paper}
We will make the work self-contained. In Section \ref{sec:quiversandclusteralg}, we will introduce the basic definitions of cluster algebras and quiver varieties. In particular, in Section \ref{subsec:Anmutationequivalentqivers} we will describe all quivers that are mutation-equivalent to the $A_n$-type quiver and in Section \ref{subsec:quivervarmuteqtoAn} we will describe the corresponding varieties. 

In Section \ref{sec:GWandqcoh}, we will review the Gromov-Witten theory of quiver varieties. In Section \ref{sec:seibergduality} we will review the all genus Seiberg duality conjecture in invariants level and in quantum cohomology level. 
In Section \ref{sec:clusteralgebraconj}, we will propose the cluster algebra conjecture and we will explicitly give the cluster algebra structure in the quantum cohomology ring for a general quiver. 

In Section \ref{sec:proofofseibergduality}, we will prove the all-genus Seiberg duality conjecture for any QP in $\Omega_n$ that is mutation-equivalent to $A_n$ by utilizing the localization theorem. 
We will describe the torus fixed points, one-dimensional orbits, and contributions of decorated graphs. 
Then, we will compare the contribution of each decorated graph to the Gromov-Witten invariants of two quiver varieties in $\Omega_n$ that are related by a quiver mutation. 

In Section \ref{sec:proofclusteralgconj}, we will prove the cluster algebra conjecture for $A_n$-type quivers by using abelian/nonabelian correspondence based on \cite{abelian/nonabelian:CKB, abeliannonabelian:Webb, abelianizationlef:Webb}. 

\subsection*{Acknowledgement}
The first author thanks Changzheng Li, Jiayu Song, Yaoxiong Wen, and Mingzhi Yang for many helpful discussions on quantum cohomology rings of flag varieties.
Part of the progress was made when the first author was visiting BIMSA, YMSC, and IASM; he is grateful for their hospitality. 
The second author expresses sincere gratitude to Peng Zhao for suggesting a possible connection between Gromov-Witten theory and cluster algebras. She also thanks Professor Yongbin Ruan for his valuable discussions, encouragement to complete this work, and for providing a supportive and enjoyable working environment. 
She is also thankful to Wei Gu, Yaoxiong Wen, Jie Zhou, and Zijun Zhou for discussions on nonabelian mirror symmetry and quantum cohomology of nonabelian GIT quotients. 
Special thanks go to Professor Fang Li for helpful discussions on cluster algebras, and to Professor Sergey Fomin, Jiarui Fei, Fan Qin, and Cheng Shu for their interest in this work and for suggesting related problems.
Most of the work was carried out while the second author was at Zhejiang University, and she is deeply grateful for the support provided by the university.
The first author is supported by the National Key R\&D Program of China
(Project No. 2023YFA1009802) and by NSFC. 12422104. The second author is supported by NSFC.12301080.

\section{An introduction to quivers and cluster algebras}\label{sec:quiversandclusteralg}
\subsection{Quivers with potentials}
\begin{dfn}[\cite{quiver}]\label{dfn:quiver}
A  \textit {quiver with potential} (QP) is a finite oriented graph with a function $\mathbf Q=(Q_f\subset Q_0, Q_1, W)$ where
\begin{enumerate}
    \item $Q_0$ is the set of vertices among which $Q_f$ is the set of framing (frozen) nodes, denoted by $\framebox(3,3){}$ in the graph, and $Q_0\backslash Q_f$ is the set of gauge nodes, denoted by $\bigcirc $;
    \item $Q_1$ is the set of arrows; an arrow from nodes $u$ to $v$ is denoted by $u\xrightarrow{a} v\in Q_1$,  and the number of such arrows is denoted by $b_{uv}$; There are two maps $s,t:Q_1\rightarrow Q_0$, where $s$ sends an arrows $u\xrightarrow {a} $ to its source $u$ and $t$ sends an arrow to its target $v$;
    \item $W$ is a potential function, defined as a function on cycles in the quiver diagram. 
\end{enumerate}
\end{dfn}
A loop of a quiver is an arrow $a$ such that $s(a)=t(a)$, and a 2-cycle is a pair of opposite arrows. 
We make an additional condition that there are no loops and 2-cycles, and such quivers are called cluster quivers. 
For an integer $b\in \mathbb Z$, denote $[b]_+=\max(b,0)$.
\begin{dfn}\label{dfn:mutation}
    Fix a QP $\mathbf Q=(Q_f\subset Q_0, Q_1, W)$. A quiver mutation at a gauge node $v$, denoted by $\mu_v$, is defined by the following steps:
    \begin{enumerate}
    \item For each path $u\xrightarrow{a} v\xrightarrow {b}w$ passing through the node $v$, add a new arrow $u\xrightarrow{c} w$. 
    Invert all arrows starting from or ending at $v$, and denote the new arrows by $w\xrightarrow{b^*} v, v\xrightarrow{a^*} u$.
    \item  Delete all 2-cycles arising from the first step. 
    \item Replace the factor $ab$ in the potential function $W$ by the new one $c$ and add all cubic terms $cb^*a^*$ to the potential. 
    Denote the resulting potential function by $\tilde W$, and denote the resulting QP by $\tilde {\mathbf Q}=(Q_f\subset Q_0,\tilde Q_1,\tilde W)$. 
    \item Let ${\mathbf Q'}=(Q_f\subset Q_0, Q_1',  W')$ be the reduced QP of $\tilde{\mathbf Q}$, see  \cite[Theorem 4.6] {clusterpot1}. 
    This is the new QP we get by performing the quiver mutation $\mu_v$ to the original $\mathbf Q$.
    \end{enumerate}
\end{dfn}
We call two QPs mutation-equivalent if we can obtain one by performing a sequence of quiver mutations to the other one. 
For all examples in this work that are mutation equivalent to the $A_n$-type quiver, $\mathbf Q'=\tilde{\mathbf Q}$. Note that the set of nodes is preserved by quiver mutations up to order.
\begin{ex}
 Figure \ref{fig:A2mut} shows a quiver mutation at the node $2$. 
\begin{figure}[ht]
\centering
\begin{tikzpicture}
	\node[draw,
	circle,
	minimum size=1cm,
	] (node1) at (-6,0){};
    \node[draw,
	circle,
	minimum size=1cm,
	] (node2) at (-4,0){};
	\node[draw,
	minimum width=1cm,
	minimum height=1cm,
	] (node3) at (-2,0){};
	\draw[-stealth, dashed] (node1) to [bend left] (node3);
    %\draw[-stealth, dashed] (node3) to [bend right] (node1);
    \begin{scope}[transform canvas={yshift=.2em}]
  \draw [-stealth, dashed] (node3) to [bend right] (node1);
\end{scope}
	\draw[-stealth] (node1) -- (node2);
	\draw[-stealth] (node2) -- (node3);
    \node at (0,0.3){$\mu_2$};
    \node at (-6,-0.7){$1$};
    \node at (-4,-0.7){$2$};
    \node at (-2,-0.7){$3$};
    %right hand side
    \node[draw,
	circle,
	minimum size=1cm,
	] (node4) at (2,0){};
    \node[draw,
	circle,
	minimum size=1cm,
	] (node5) at (4,0){};
	\node[draw,
	minimum width=1cm,
	minimum height=1cm,
	] (node6) at (6,0){};
	\draw[-stealth] (node6)--(node5);
    \draw[-stealth] (node4) to [bend left] (node6);
	\draw[-stealth] (node5) -- (node4);
    \draw[implies-implies,double equal sign distance](-0.5,0)--(0.5,0);
    \node at (6,-0.7){$3$};
    \node at (4,-0.7){$2$};
    \node at (2,-0.7){$1$};
    \node at (4,0.9){$c$};
    \node at (3,-0.2){$a$};
    \node at (5,-0.2){$b$};
\end{tikzpicture}
\caption{The two quiver with potentials are related by a quiver mutation at the center node. The dashed arrows in the left QP are canceled. The right one has a potential function $W=bac$.} 
\label{fig:A2mut}
\end{figure}
\iffalse
\begin{figure}[H]
    \centering
    \includegraphics [width=8cm]{a3.png}
    \caption{}
    \label{fig:A2mut}
\end{figure}
\fi
\end{ex}
\subsection{Cluster algebras}\label{sec:clusteralgebras}
We introduce the definition of  cluster algebras of geometric type. We refer readers to the original works for the theory of cluster algebras \cite{clusteralg:1, clusteralg:2,clusteralg:3,clusteralg:4,clusterpot1,clusterpot:2} and the introductory book \cite{fomin:cluster1-3,fomin:cluster4-5,fomin:cluster6,fomin:cluster7}. 
\begin{dfn}\label{dfn:seed}
    A \textit{seed} is a pair $(\mathbf Q, \mathbf x)$ where 
    \begin{enumerate}
        \item $\mathbf Q=(Q_f\subset Q_0,Q_1,W)$ is a QP whose underlying quiver admits $n:=\abs{Q_0}$ nodes,
        \item $\mathbf x=(x_1,\ldots,x_n)$ is a set of n algebraically independently variables and let $\mathscr F=\mathbb Q(x_1,\ldots,x_n)$.
    \end{enumerate}
The set $\mathbf x$ is called a \textit{cluster} and variables $x_1,\ldots,x_n$ are called \textit{cluster variables} of the seed $(\mathbf Q, \mathbf x)$.
\end{dfn}

\begin{dfn}\label{dfn:seedmutation}
   A seed mutation at a gauge node $v\in Q_0\backslash Q_f$ to $(\mathbf Q,\mathbf x)$, which is also denoted by $\mu_v$, is defined as follows.
    \begin{enumerate}
        \item Perform the quiver mutation $\mu_v$ to the quiver $\mathbf Q$ and get a new quiver $\mathbf Q'$. 
        \item The mutation $\mu_v$ maps the cluster $\mathbf x$ to $\mathbf x'=(x_1',\cdots,x_n')$ such that $x_u'=x_u$ for $u\neq v$, and 
        $x_v'$ is determined by the \textit{cluster exchange relation}
        \begin{equation}\label{eqn:clusterrelation}
          x_vx_v'=\prod_{u\in Q_0}x_u^{[b_{uv}]_+}+\prod_{w\in Q_0}x_w^{[b_{vw}]_+}\,.
        \end{equation}
    \end{enumerate}
\end{dfn}

We call two seeds $(\mathbf Q, \mathbf x)$ and $({\mathbf Q'}, {\mathbf x'})$ mutation-equivalent if one can be obtained from the other by quiver mutations. 
\begin{dfn}\label{dfn:cluster}
    Start from a seed $(\mathbf Q,\mathbf x=(x_1,\ldots,x_n))$,
    and let $\mathcal C$ be the collection of all cluster variables of all seeds that are mutation-equivalent to $(\mathbf Q,\mathbf x=(x_1,\ldots,x_n))$ via various seed mutations at gauge nodes. 
    A cluster algebra is defined to be $\mathscr A_{\mathbf Q}=\mathbb Q[\mathcal C]$, generated by all cluster variables, with the exchange relations. 
    The seed $(\mathbf Q,\mathbf x=(x_1,\ldots,x_n))$ is called the initial seed, $\mathbf x$ is called the initial cluster and variables $x_1,\ldots,x_n$ are called initial cluster variables. 
\end{dfn}

Cluster algebra $\mathscr{A}_{\mathbf Q}$ is a commutative ring embedded in an ambient field $\mathscr{F}=\mathbb Q(x_1,\cdots, x_n)$. 
One can check that choosing another seed $(\tilde{\mathbf Q},\tilde{\mathbf x}=(\tilde x_1,\ldots,\tilde x_n))$ that is mutation-equivalent to $(\mathbf Q,\mathbf x)$ as an initial seed. The resulting cluster algebra is isomorphic to the one constructed via $(\mathbf Q,\mathbf x)$.

Notice that we work with unlabeled seeds, which means we identify two seeds $(\mathbf Q,\mathbf x)$ and $(\mathbf Q',\mathbf x')$
if the QPs and clusters differ by relabeling. 

 In the theory of cluster algebra, 
 the cluster algebra associated to a QP with $n$ nodes among which there are  $n-m$ frozen nodes is viewed as a ring in $\mathbb Q[x_{m+1},\ldots,x_n](x_1,\ldots,x_{m})$ 
 where the cluster variables $x_{m+1},\cdots,x_{n}$ attached to the frozen nodes are viewed as coefficients. 
 However, in our setting, we view the cluster variables associated to the frozen nodes as usual cluster variables, and we don't perform quiver mutations at those frozen nodes.
\begin{ex}\label{ex:A3}
    Consider an $A_2$ quiver with one frozen node in Figure \ref{fig:clusterA3}.
\begin{figure}[ht]
\centering
\begin{tikzpicture}
	\node[draw,
	circle,
	minimum size=1cm,
	] (node1) at (-2,0){};
    \node[draw,
	circle,
	minimum size=1cm,
	] (node2) at (0,0){};
	\node[draw,
	minimum width=1cm,
	minimum height=1cm,
	] (node3) at (2,0){};
	\draw[-stealth] (node1) -- (node2);
	\draw[-stealth] (node2) -- (node3);
    \node at (-2,-0.7){$1$};
    \node at (0,-0.7){$2$};
    \node at (2,-0.7){$3$};
\end{tikzpicture}
\caption{An $A_2$ quiver with one frozen node.} 
\label{fig:clusterA3}
\end{figure}
    Since the node $3$ is frozen, we only perform mutations at nodes $1$ and $2$. Let the initial variables be $\mathbf x=(x_1,x_2,x_3)$. 
    Then under seed mutations, cluster variables change as follows.
    \begin{align*}
       & (x_1,x_2,x_3)\xrightarrow[]{\mu_1}(\frac{x_2+1}{x_1},x_2,x_3)\xrightarrow[]{\mu_2}(\frac{x_2+1}{x_1},\frac{x_1+x_3+x_2x_3}{x_1x_2},x_3)\xrightarrow[]{\mu_1}\nonumber\\
       &(\frac{x_1+x_3}{x_2},\frac{x_1+x_3+x_2x_3}{x_1x_2},x_3)\xrightarrow[]{\mu_2}(\frac{x_1+x_3}{x_2},x_1,x_3)\xrightarrow[]{\mu_1}(x_2,x_1,x_3).
    \end{align*}
    Notice that there are $6$ distinct cluster variables, and the corresponding cluster algebra is 
    $\mathscr A=\displaystyle\mathbb Q\left[x_1,x_2,x_3, \frac{x_2+1}{x_1}, \frac{x_1+x_3+x_2x_3}{x_1x_2}, \frac{x_1+x_3}{x_2}\right]$ with cluster exchange relations. 
\end{ex}
In the above example, $(1)$ all non-initial cluster variables can be written as rational functions of the initial variables whose denominators are monomials of initial cluster variables and $(2)$ numerators are polynomials of initial cluster variables with positive integer coefficients. 
The two observations $(1)$ and $(2)$ are called the Laurent phenomenon and the positivity conjecture in cluster algebra theory. 
The Laurent phenomenon is proved in \cite{clusteralg:1} where the positivity conjecture is proposed. 
The positivity conjecture is proved in \cite{cluster:positivityQin,cluster:positivityMSL1,cluster:positivityLSR2}.
\subsection{A-mutation-equivalent quivers}\label{subsec:Anmutationequivalentqivers}

Consider an $A_n$-type quiver with $n$ gauge nodes and one frozen node in
Figure \ref{fig:undecoratedAn}. 
When we talk about $A_n$-type quiver, we always refer to Figure \ref{fig:undecoratedAn}. Denote the corresponding cluster algebra by $\mathscr A_n\subset \mathbb Q(x_1,\ldots,x_{n+1})$.
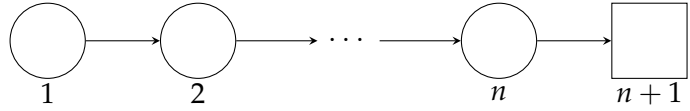
\begin{figure}[ht]
\centering
\begin{tikzpicture}
	\node[draw,
	circle,
	minimum size=1cm,
	] (node1) at (-4,0){};
    \node[draw,
	circle,
	minimum size=1cm,
	] (node2) at (-2,0){};
    \node[draw,
	circle,
	minimum size=1cm,
	] (node4) at (2,0){};
     \node(node3) at (0,0){$\cdots$};
	\node[draw,
	minimum width=1cm,
	minimum height=1cm,
	] (node5) at (4,0){};
	\draw[-stealth] (node1) -- (node2);
	\draw[-stealth] (node2) -- (node3);
	\draw[-stealth] (node3) -- (node4);
	\draw[-stealth] (node4) -- (node5);
    \node at (-4,-0.7){$1$};
    \node at (-2,-0.7){$2$};
    \node at (2,-0.7){$n$};
    \node at (4,-0.7){$n+1$};
\end{tikzpicture}
\caption{An $A_n$-type quiver with one frozen node.} 
\label{fig:undecoratedAn}
\end{figure}
\begin{lem}\label{lem:Andirections}
    All orientations of $A_n$-type quivers in Figure \ref{fig:undecoratedAn} are mutation-equivalent. 
\end{lem}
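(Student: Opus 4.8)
The claim is that any two orientations of the linear $A_n$ quiver (on the fixed vertex set $\{1,\dots,n,n+1\}$ with $n+1$ frozen and the underlying graph a path) are connected by a sequence of mutations at gauge nodes. The plan is an induction on $n$, reducing the problem to a local rearrangement argument. Since mutation at an interior gauge node $v$ of a path (a node with exactly one incoming and one outgoing arrow, or two incoming, or two outgoing) only reverses the two arrows at $v$ and possibly creates a single arrow between the two neighbours of $v$ which is immediately cancelled as a $2$-cycle once we are back on a path-shaped quiver — one checks that for the linear $A_n$ shape the mutation $\mu_v$ at an interior node simply flips the orientation of the two edges incident to $v$ and leaves the underlying graph a path. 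So mutation acts on the set of orientations of the path by these elementary ``flip at a vertex'' moves.

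\textbf{Key steps.} First I would record the local effect of a single mutation: if $v$ is an interior vertex with neighbours $u,w$, then $\mu_v$ sends the orientation pattern on the edges $\{uv,vw\}$ as follows: a ``source'' at $v$ ($v\to u$, $v\to w$) becomes a ``sink'' at $v$ and vice versa; a ``through'' configuration ($u\to v\to w$) becomes the reversed ``through'' ($w\to v\to u$), because the arrow $u\to w$ created in step (1) of Definition \ref{dfn:mutation} together with the reversed arrows forms $2$-cycles that get deleted in step (2), or — in the cases where it does not immediately cancel — we note it is cancelled at the next mutation; in every case the result is again a path orientation. (Here I should also note that if $v$ is the leftmost gauge node $1$, it has a single arrow and $\mu_1$ just reverses that arrow; similarly the frozen node $n+1$ is never mutated, so the edge $n\,\text{--}\,(n+1)$ can only have its orientation changed by mutating at $n$.) Second, with these moves in hand, I would argue that one can bring any orientation to a fixed reference orientation, say the ``standard'' one $1\to 2\to\cdots\to n\to n+1$ of Figure \ref{fig:undecoratedAn}. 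Concretely: orient the edge $n\,\text{--}\,(n+1)$ correctly by mutating at $n$ if needed; then work leftwards — having fixed the orientation of all edges to the right of vertex $j$, use a mutation at a suitable gauge node $\le j$ to correct the orientation of the edge $(j-1)\,\text{--}\,j$ without disturbing the already-fixed edges (a ``bubble sort'' of orientations: a single misoriented edge adjacent to a correctly oriented neighbour can be flipped by one mutation, and one shows this process terminates). Since mutations are invertible, any two orientations are then mutation-equivalent through the standard one. For the inductive packaging one can instead delete the frozen node $n+1$ and the adjacent gauge node $n$: an orientation of $A_n$ restricts to an orientation of $A_{n-1}$ on $\{1,\dots,n-1\}$ plus a choice of orientation on the last edge; by induction the $A_{n-1}$ part is mutation-equivalent to standard via mutations at nodes $1,\dots,n-2$ (which do not touch vertex $n$ or $n+1$), and then a single $\mu_n$ if necessary fixes the last edge.

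\textbf{Main obstacle.} The only genuinely delicate point is the bookkeeping in the mutation step (1)--(2) when $v$ is in a ``through'' configuration: the new arrow between the two neighbours $u$ and $w$ is genuinely there after step (1), and one must be careful that it is removed as a $2$-cycle (because on a path the edge $uw$ does not pre-exist, so naively it is \emph{not} a $2$-cycle and would make the quiver non-linear). The resolution is that for the linear $A_n$ quiver with the potential $W$ as in the paper, the relevant cubic terms and the reduction to the reduced QP (step (4) of Definition \ref{dfn:mutation}, via \cite{clusterpot1}) do cancel this arrow, exactly as illustrated by Figure \ref{fig:A2mut}; so the honest statement is that after fully carrying out Definition \ref{dfn:mutation} including reduction, $\mu_v$ on an $A_n$-orientation yields an $A_n$-orientation with the two edges at $v$ reversed. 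Once this local fact is nailed down, the global connectivity is the routine sorting argument above. I would present the local computation carefully (perhaps citing Figure \ref{fig:A2mut} as the prototype and Lemma \ref{lem:recursivepattern} for the general recursive pattern of these QPs) and then dispatch the induction quickly.
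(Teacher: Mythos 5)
There is a genuine error in your local analysis, and it propagates through the whole argument. You assert that mutating at an interior ``through'' node $u\to v\to w$ of the linear $A_n$ quiver ``simply flips the orientation of the two edges incident to $v$ and leaves the underlying graph a path,'' and you propose that the new arrow $u\to w$ created in step (1) of Definition \ref{dfn:mutation} is eliminated, either as a $2$-cycle or via the reduction in step (4). Neither mechanism applies. Starting from a linear $A_n$ quiver with $W=0$, mutation at a through node produces the arrows $v\to u$, $w\to v$, $u\to w$ and the cubic potential $\tilde W = c\,b^*a^*$ on this $3$-cycle; there is no $2$-cycle to delete, and since $\tilde W$ has no quadratic part there is nothing for the reduction of \cite{clusterpot1} to cancel. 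Figure \ref{fig:A2mut} in fact displays exactly this: the right-hand QP (a $3$-cycle with $W=bac$) is what $\mu_2$ produces from $1\to 2\to 3$, so mutating at a through node leaves the class of path quivers. Your ``Main obstacle'' paragraph correctly flags the danger but then resolves it incorrectly. Because of this, the bubble-sort step — ``a single misoriented edge adjacent to a correctly oriented neighbour can be flipped by one mutation'' — is also false: flipping a source or sink node reverses \emph{both} incident edges, so a single such move fixes one edge at the cost of breaking its neighbour, and flipping a through node breaks linearity altogether.

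The paper's proof sidesteps this by choosing a sequence of mutations that never hits a through node. Starting from the standard orientation $1\to 2\to\cdots\to n+1$, the sequence $\mu_1,\mu_2,\dots,\mu_i$ mutates first at the leaf $1$ (one arrow, just flips), after which node $2$ is a source; then each $\mu_j$ mutates at a source, creating no new arrows, and leaves node $j+1$ a source. The cumulative effect is to reverse only the edge between $i$ and $i+1$ while preserving all others and keeping the quiver linear throughout. Composing such moves (processing the edges to be reversed in decreasing order of index, so that the cascade always begins from a standard-looking prefix) reaches any orientation from the standard one, and invertibility of mutation gives the full equivalence. The missing idea in your write-up is precisely this choice of a source/sink sequence — the Bernstein--Gelfand--Ponomarev reflection-functor picture — rather than arbitrary local flips, and without it the inductive packaging you describe does not go through.
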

\begin{proof}
We only have to prove that we can invert any arrow $i\rightarrow i+1$ in the Figure and preserve all remaining arrows by several quiver mutations. 
This can be achieved by performing quiver mutations $\mu_1\rightarrow \mu_2\rightarrow\cdots\rightarrow \mu_i$. 
\end{proof}

The work \cite{Amutationequivalence} has  given a precise description of quivers that are mutation-equivalent to the $A_{n}$ quiver without frozen nodes. 
Our situation is similar to that in \cite{Amutationequivalence} with some modifications. 
Let $\Omega_n^A$ denote the set of QPs that are mutation-equivalent to the $A_n$-type quiver in Figure \ref{fig:undecoratedAn}. 
Let $v_f$ denote the only frozen node, which we refer to as the root of the quiver.
\begin{lem}[\cite{Amutationequivalence}]\label{lem:omegan}
A QP $\mathbf Q=(Q_0,Q_1,W)$ lying in $\Omega_n^A$ can be described as follows.
\begin{enumerate}
    \item All cycles are of length 3, and any arrow is contained in at most one $3$-cycle.
    \item The frozen node $v_f$ has at most two adjacent arrows, 
    and when it has exactly two adjacent arrows, 
    they belong to a 3-cycle.
    \item A gauge node has at most four adjacent arrows.  
    If a gauge node $v$ has only two arrows, then it can be a sink, source or there is a path passing through $v$. 
    If a gauge node has exactly three adjacent arrows, 
    then two of them are contained in a 3-cycle, and the third one doesn't belong to any 3-cycle. 
    If a gauge node has four adjacent arrows, then two of them belong to a 3-cycle and the remaining two belong to another 3-cycle.
    \item The potential function $W$ is the sum of cubic monomials of all 3-cycles. 
    \end{enumerate}
\end{lem}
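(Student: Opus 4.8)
The plan is to run everything through the classical combinatorial model of type-$A$ cluster combinatorics. Forgetting which node is frozen, the $A_n$-type quiver of Figure \ref{fig:undecoratedAn} is a linear quiver on $n+1$ vertices, so the members of $\Omega_n^A$, regarded as plain quivers, are exactly those mutation-equivalent to a type $A_{n+1}$ quiver; by \cite{Amutationequivalence} these are precisely the adjacency quivers of triangulations of a convex polygon $P$ with $n+4$ vertices (one arrow between two diagonals for each internal triangle they bound, oriented by the usual clockwise rule), mutation at a node being the flip of the corresponding diagonal. The canonical potential, the sum of the $3$-cycles coming from internal triangles, is the one produced by QP-mutation \cite{clusterpot1}, so the QP decoration in (4) is automatic. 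Now the linear quiver itself is a fan triangulation of $P$, and under the dictionary the frozen node $v_f$ is the fan diagonal $D_0$ that cuts off an ``ear'' (a triangle with two boundary sides). Since $v_f$ is never mutated, $D_0$ is never flipped, so it remains this fixed ear diagonal in every triangulation that occurs in $\Omega_n^A$.

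Granting this, properties (1), (4) and the gauge-node statements of (3) are read off directly from \cite{Amutationequivalence} at the gauge nodes: in any triangulation quiver every cycle is a $3$-cycle coming from an internal triangle, an arrow (a pair of diagonals bounding a common triangle) lies in at most one such $3$-cycle, a diagonal meets at most two triangles and hence has degree at most $4$, and the three- and four-arrow shapes are forced by how many of the other sides of each incident triangle are diagonals. The one point not contained in \cite{Amutationequivalence} is (2). In any triangulation of $P$ containing $D_0$, the diagonal $D_0$ lies in exactly two triangles: the fixed ear, whose other two sides are boundary segments and which therefore contributes no arrow, and one further triangle $T$, whose two sides other than $D_0$ are edges of the sub-$(n+3)$-gon cut off by $D_0$, each of which is a diagonal or a boundary segment. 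If both are diagonals, $v_f$ has exactly two adjacent arrows and they close up into the $3$-cycle of $T$; otherwise $v_f$ has at most one adjacent arrow. This is exactly (2). (If one prefers to avoid the polygon model, (2) can instead be proved by induction on the length of a mutation sequence avoiding $v_f$: a mutation at a gauge node adjacent to $v_f$, carried out in each of the local shapes permitted by (3) via the moves of Definition \ref{dfn:mutation}, creates and cancels arrows at $v_f$ so as to preserve the property ``degree $\le 2$, and inside a $3$-cycle when the degree is $2$'', while the initial quiver has $\deg v_f = 1$.)

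For the converse, let $\mathbf Q$ be a QP satisfying (1)--(4). Since the gauge nodes satisfy (3) and $v_f$ satisfies (2), all nodes of $\mathbf Q$ satisfy the degree-$\le 4$ triangle conditions of \cite{Amutationequivalence}, so, ignoring frozenness, $\mathbf Q$ is mutation-equivalent to a linear quiver and is the adjacency quiver of a triangulation of $P$. Condition (2) forces the diagonal attached to $v_f$ to have degree at most $2$ and to sit in a $3$-cycle when the degree is $2$, which for $n\ge 1$ happens exactly when that diagonal is an ear diagonal $D_0$. The triangulations of $P$ containing a fixed ear diagonal are in bijection with the triangulations of the sub-$(n+3)$-gon it cuts off, and the flip graph of an $(n+3)$-gon is connected, so $\mathbf Q$ can be transported to the fan triangulation — that is, to the $A_n$-type quiver — by flips that never touch $D_0$, i.e.\ by mutations at gauge nodes only. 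Hence $\mathbf Q\in\Omega_n^A$, and the QPs in $\Omega_n^A$ are exactly those satisfying (1)--(4).

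The step I expect to be the real work is (2): making precise the identification of a decorated QP with a triangulation of $P$ — in particular fixing orientation conventions and justifying that $v_f$ corresponds to the ear diagonal — or, if one argues directly instead, carrying out the full local case analysis at a gauge node adjacent to $v_f$ and checking that no admissible mutation ever produces a third arrow at $v_f$ or a degree-$2$ configuration at $v_f$ outside a $3$-cycle. This is exactly the ``modification'' of \cite{Amutationequivalence} referred to in the text; everything else is a transcription of the unframed classification.
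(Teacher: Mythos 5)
The paper does not prove this lemma: it cites \cite{Amutationequivalence} (which classifies the mutation class of an $A$-type quiver \emph{without} a frozen node by exactly the degree-and-cycle conditions in items (1), (3), (4)) and says only that ``our situation is similar \ldots with some modifications,'' leaving the reader to supply the frozen-node statement (2) and to check that (1), (3), (4) survive when mutations are restricted to gauge nodes. Your proof via the $(n+4)$-gon triangulation model fills exactly this gap, and it is correct: forbidding mutation at $v_f$ is the same as fixing the diagonal $D_0$, the initial fan triangulation makes $D_0$ an ear diagonal, flips never touch $D_0$, and an ear diagonal contributes at most two arrows which, when there are two, belong to the $3$-cycle of its one internal triangle. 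Your converse, reducing to triangulations of the sub-$(n+3)$-gon and using flip-connectivity, also correctly shows that any QP satisfying (1)--(4) is reachable by gauge-node mutations. Two small points to tighten if you wrote this up: (i) after transporting to a fan triangulation you should invoke Lemma~\ref{lem:Andirections} to fix the orientation of the resulting linear quiver, since the fan triangulation's quiver need not match Figure~\ref{fig:undecoratedAn} on the nose; and (ii) your ``if one prefers to avoid the polygon model'' alternative is only a sketch, and the genuine work there (the local case analysis of QP-mutation at a node adjacent to $v_f$ in each of the shapes of (3)) is not carried out — but the triangulation argument you do give makes it superfluous.
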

 For our purpose, we give a recursive description of QPs in $\Omega^A_n$. 
\begin{lem}\label{lem:recursivepattern}
The underlying quiver diagrams of QPs in $\Omega^A_n$ can be described recursively as follows.
    \begin{enumerate}
        \item Step one, start from the root. 
        At most two nodes are adjacent to the root, 
        one going out, denoted by $v_2$ and the other coming in, denoted by $v_1$.
        If there are exactly two adjacent nodes, they form a 3-cycle $v_1\rightarrow v_f\rightarrow v_2\rightarrow v_1$, as illustrated in Figure \ref{fig:root}.
        \item Step two, if $v_f$ has exactly two adjacent nodes, 
        the sub-quiver $\mathbf Q\backslash \{v_f,v_2\rightarrow v_1\}$ is a union of two quivers $\mathbf Q_1\sqcup \mathbf Q_2$ where $\mathbf Q_1\in \Omega_k^A$ and $\mathbf Q_2\in \Omega_l^A$ with $k+l=n-2$. Here we view $v_1$ as the root of the quiver $\mathbf Q_1$ and $v_2$ as the root of the quiver $\mathbf Q_2$. 
        If $v_f$ has only one adjacent node $v_i$ $(i=1,2)$, then the sub-quiver $\mathbf Q_1:=\mathbf Q\backslash v_f$ is a quiver in $\Omega_{n-1}^A$ if we view the node $v_i$ as the root of the new quiver.  
        \item 
        Recursively construct the sub-quivers $\mathbf Q_i$ attached to $v_i$ as Step one and two.
        Since there are only $n+1$ nodes, this recursion terminates when all nodes are exhausted. 
    \end{enumerate}
\end{lem}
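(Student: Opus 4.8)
The plan is to deduce the recursive description directly from the structural characterization of QPs in $\Omega_n^A$ given in Lemma \ref{lem:omegan}, together with the observation that quiver mutation is a local operation. First I would set up the base cases: for $n=0$ the quiver is a single frozen node (the root), and for $n=1$ the quiver is either $v_f \to v_1$ or $v_1 \to v_f$, which matches Figure \ref{fig:undecoratedAn} up to the direction-reversal permitted by Lemma \ref{lem:Andirections}. This anchors the induction on $n = |Q_0| - 1$.

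For the inductive step, I would argue as follows. By Lemma \ref{lem:omegan}(2), the root $v_f$ has at most two adjacent arrows, and if it has two they lie in a common $3$-cycle; since by Lemma \ref{lem:omegan}(1) all cycles have length $3$, this cycle must have the form $v_1 \to v_f \to v_2 \to v_1$ after suitably naming the neighbours $v_1$ (incoming) and $v_2$ (outgoing). This establishes Step one. For Step two, consider the sub-quiver obtained by deleting $v_f$ and, in the two-neighbour case, also the arrow $v_2 \to v_1$. The key point is that this deletion does not create or destroy any $3$-cycles among the remaining nodes (the only $3$-cycle through $v_f$ was the one we removed, and by Lemma \ref{lem:omegan}(1) each arrow lies in at most one $3$-cycle, so removing $v_2 \to v_1$ affects no other cycle), and it does not raise the valence of any remaining node. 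Hence each connected component inherits properties (1)--(4) of Lemma \ref{lem:omegan} with $v_1$ (resp. $v_2$) playing the role of the root — one checks that $v_1$ now has at most two adjacent arrows in its component, etc. The one-neighbour case is the degenerate version where one of the two components is empty.

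The remaining task is to verify that the components so obtained are themselves mutation-equivalent to $A$-type quivers of the appropriate sizes, i.e. that $\mathbf Q_1 \in \Omega_k^A$ and $\mathbf Q_2 \in \Omega_l^A$ with $k + l = n - 2$ (resp. $\mathbf Q_1 \in \Omega_{n-1}^A$). Here I would invoke the converse direction implicit in \cite{Amutationequivalence}: a connected QP satisfying (1)--(4) of Lemma \ref{lem:omegan} with a designated root of valence $\leq 2$ is mutation-equivalent to the $A_m$-quiver on its node set. (Alternatively, one can reconstruct a mutation sequence back to the straight-line orientation of Figure \ref{fig:undecoratedAn} by repeatedly mutating at a leaf-adjacent node to "unfold" each $3$-cycle, exactly as in the proof of Lemma \ref{lem:Andirections}; this is the explicit route if one does not wish to cite \cite{Amutationequivalence} as a black box.) The node count $k + l = n - 2$ is bookkeeping: the full quiver has $n+1$ nodes, we removed $v_f$, and $v_1, v_2$ are retained as roots of the two pieces, leaving $n - 2$ non-root gauge nodes to distribute. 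Finally, Step three is just the statement that this analysis may be reapplied to each $\mathbf Q_i$, and termination is clear since $|Q_0|$ strictly decreases.

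I expect the main obstacle to be the claim that deleting $v_f$ (and the arrow $v_2 \to v_1$) yields pieces that genuinely lie in $\Omega_k^A$ rather than merely satisfying the combinatorial conditions (1)--(4): one needs the full equivalence "satisfies (1)--(4) $\iff$ mutation-equivalent to $A$-type", which is the substance of \cite{Amutationequivalence} adapted to the framed setting. Checking that the framing/root conventions are compatible — in particular that the new root $v_i$ has the correct valence and cycle structure after the deletion, and that no spurious $2$-cycles appear — is the delicate bookkeeping step, but it is forced by Lemma \ref{lem:omegan} and should go through without surprises.
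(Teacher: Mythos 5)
Your argument is correct but runs in the opposite direction from the paper's. The paper's proof is a one-sentence closure argument: it takes the \emph{recursively constructed} family for granted, asserts that this family is closed under quiver mutation and that each of its members is mutation-equivalent to the $A_n$-quiver of Figure \ref{fig:undecoratedAn}, and concludes that the family coincides with $\Omega_n^A$ (the first claim gives $\Omega_n^A \subseteq$ recursive set, the second the reverse inclusion). You instead start from an \emph{arbitrary} QP in $\Omega_n^A$, feed it through the structural characterization of Lemma \ref{lem:omegan}, and show that removing the root (and the $3$-cycle arrow $v_2\to v_1$, when present) yields components that again satisfy conditions (1)--(4), whence by the converse of that characterization the components lie in $\Omega^A_k$ and $\Omega^A_l$. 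This is a decomposition, or ``top-down,'' argument where the paper's is a ``bottom-up'' closure argument; yours makes the dependence on Lemma \ref{lem:omegan} explicit and justifies the valence and $3$-cycle bookkeeping that the paper leaves to the reader, at the cost of needing the full ``if and only if'' content of \cite{Amutationequivalence} (conditions (1)--(4) are equivalent to lying in $\Omega_m^A$), which you acknowledge. One thing worth making explicit if you polish this: your inductive step establishes $\Omega_n^A \subseteq$ (recursive set), i.e.\ that every QP in $\Omega_n^A$ fits the description; for the Lemma to be usable as a construction (as in Lemma \ref{lem:decorationsofQinP}) one also wants the reverse containment, that gluing two arbitrary members of $\Omega_k^A$ and $\Omega_l^A$ onto a root as in Figure \ref{fig:root} lands back in $\Omega_n^A$. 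This also follows from the equivalence in \cite{Amutationequivalence} after verifying that the glued quiver satisfies (1)--(4), but it is a separate check you only touch on implicitly; the paper's closure argument handles this direction directly.
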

\begin{figure}[ht]
\centering
\begin{tikzpicture}
	\node[draw,
	circle,
	minimum size=1cm,
	] (node1) at (0,0){};
    \node at (0.7,0){$v_2$};
    \node at (0.4,1.2){$v_f$};
    \node[draw,
	minimum width=1cm,
	minimum height=1cm,
	] (nodev1) at (0,2){};
    \node[draw,
	circle,
	minimum size=1cm,
	] (node2) at (3,0){};
    \node[draw,
	minimum width=1cm,
	minimum height=1cm,
	] (nodev2) at (3,2){};
    \node[draw,
	circle,
	minimum size=1cm,
	] (node3) at (-3,0){};
    \node[draw,
	circle,
	minimum size=1cm,
	] (node4) at (-5,0){};
	\node[draw,
	minimum width=1cm,
	minimum height=1cm,
	] (nodev3) at (-4,2){};
	\draw[-stealth] (nodev1) -- (node1);
	\draw[-stealth] (node2) -- (nodev2);
	\draw[-stealth] (node3) -- (node4);
	\draw[-stealth] (node4) -- (nodev3);
	\draw[-stealth] (nodev3) -- (node3);
    \node at (3.7,0){$v_1$};
    \node at (3.4,1.2){$v_f$};
    \node at (-2.3,-0.2){$v_2$};
    \node at (-4.2,-0.2){$v_1$};
    \node at (-3.3,1.2){$v_f$};
     \node at (-4,-1){$(1)$};
     \node at (0,-1){$(2)$};
     \node at (3,-1){$(3)$};
\end{tikzpicture}
\caption{} 
\label{fig:root}
\end{figure}
\begin{proof}
One can easily check that this recursively constructed set is closed under quiver mutations and they are all mutation-equivalent to the $A_n$-quiver in Figure \ref{fig:undecoratedAn}. Hence it is equal to the $\Omega^A_n$.
\end{proof}

\begin{dfn}
    Let $\mathbf Q=(Q_0,Q_1,W)$ be an arbitrary QP in $\Omega^A_n$.
    For each node $v$, we define the \textit{distance} of $v$ and the frozen node $v_f$ to be the length of the shortest un-oriented path connecting the $v$ and the root $v_f$.  
    Let $v,v'\in Q_0$. We define $v\prec v'$ if the node $v'$ is on the shortest un-oriented path connecting the $v$ and the root $v_f$.
\end{dfn}
 It is obvious that a node $v$ has at most two maximal nodes that are smaller than it.
 When there are exactly two such maximal nodes, they form a 3-cycle.
 A node $v$ has at most one minimal node that is greater than $v$.

\subsection{Quiver varieties}
 Now we move to the definition of quiver varieties. We refer readers to the book \cite{quiver}. 
Fix a QP $\mathbf Q=(Q_f\subset Q_0,Q_1,W)$, and decorate it with integers $\mathbf r=(r_v)_{v\in Q_0}$. 
\begin{dfn}
The decorated QP gives input data for a GIT quotient:
\begin{enumerate}
    \item $V=\oplus_{a\in Q_1}\mathbb C^{r_{s(a)}\times r_{t(a)}}$ is the representation space of the quiver, and we denote a general element in $V$ by $(A_{s(a)\rightarrow t(a)})_{a\in Q_1}$ where each $A_{s(a)\rightarrow t(a)}$ is a $r_{s(a)}\times r_{t(a)}$ matrix;
    \item $G=\prod_{v\in Q_0\backslash Q_f}GL(r_v)$ is the gauge group of the quiver;
    \item the group $G$ acts on $V$ in the following way
    \begin{equation}\label{eqn:GonV}
        (g_{v})_{v\in Q_0\backslash Q_f}\cdot (A_{s(a)\rightarrow t(a)})_{a\in Q_1}=( g_{s(a)}A_{s(a)\rightarrow t(a)}g_{t(a)}^{-1})_{a\in Q_1},
    \end{equation}
    where we make $g_v$ be the identity matrix when $v\in Q_f$;
    \item $\theta\in \chi(G):=\op{Hom}(G, \mathbb C^*)$ is a character of the gauge group.
\end{enumerate}
Let $V^{ss}_{\theta}(G),\,V^s_{\theta}(G)$ be the semistable and stable locus of $V$ under the action of $G$.
The quiver variety is defined  by the GIT quotient $\mathcal X=V\sslash_{\theta} G:=V^{ss}_{\theta}\slash G$. 
When the potential $W$ is nontrivial, we need to consider the critical locus $Z=\{dW=0\}\subset V$. We can similarly define $Z^{ss}_\theta(G), Z^{s}_{\theta}(G)$ and the GIT quotient $\mathcal Z=\{dW=0\}\sslash_{\theta} G$. We call the GIT quotient $\mc Z$ \textit{the variety of the decorated QP}. 
\end{dfn}
In the above definition, we always assume that $V^{ss}_\theta(G)=V^s_{\theta}(G)$ and $Z^{ss}_\theta(G)=Z^s_\theta(G)$.
\begin{dfn}
    For a decorated QP $(\mathbf Q=(Q_f\subset Q_0, Q_1,W), \mathbf r)$, define the \textit{outgoing} of a gauge node $v$ to be $N_f(v)=\sum_{a\in Q_1,s(a)=v}r_{t(a)}$, and the \textit{incoming} to be $N_a(v)=\sum_{a\in Q_1,t(a)=v}r_{s(a)}$.
\end{dfn}
\begin{ex}\label{ex:An}
    Consider an $A_n$-type quiver with decorations $N_1<N_2<\ldots<N_{n+1}$.
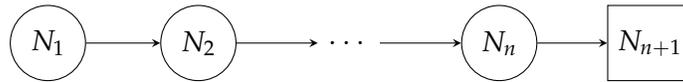
\begin{figure}[H]
\centering
\begin{tikzpicture}
	\node[draw,
	circle,
	minimum size=1cm,
	] (node1) at (-4,0){$N_1$};
    \node[draw,
	circle,
	minimum size=1cm,
	] (node2) at (-2,0){$N_2$};
    \node[draw,
	circle,
	minimum size=1cm,
	] (node4) at (2,0){$N_n$};
     \node(node3) at (0,0){$\cdots$};
	\node[draw,
	minimum width=1cm,
	minimum height=1cm,
	] (node5) at (4,0){$N_{n+1}$};
	\draw[-stealth] (node1) -- (node2);
	\draw[-stealth] (node2) -- (node3);
	\draw[-stealth] (node3) -- (node4);
	\draw[-stealth] (node4) -- (node5);
 %   \node at (-3,-0.6){$1$};
 %   \node at (-1.5,-0.6){$2$};
 %   \node at (1.5,-0.6){$n$};
 %   \node at (3,-0.6){$n+1$};
\end{tikzpicture}
\caption{A decorated $A_n$-type quiver with one frozen node.} 
\label{fig:Anqv}
\end{figure}
\iffalse
    \begin{figure}[H]
        \centering
        \includegraphics[width=2.8in]{An.png}
       \caption{}
        \label{fig:Anqv}
    \end{figure}
\fi
\noindent
The input data for the QP includes $V=\oplus_{i=1}^{n}\mathbb C^{N_i\times N_{i+1}}$, $G=\prod_{i=1}^{n}GL(N_i)$, and $G$ acts on $V$ as \eqref{eqn:GonV}. 
Choose the character $\theta=\prod_{i=1}^{n}\det(g_i)^{\sigma_i}$ for $\sigma_i>0$. Then $$V_\theta^{ss}(G)=\{\text{each } A_{s(a)\rightarrow t(a)} \text{ is nondegenerate}\}.$$ Hence $V\sslash_\theta G$ parameterizes subspaces of dimensions $N_1<N_2\ldots<N_{n+1}$ in $\mathbb C^{N_{n+1}}$, 
which is a partial flag variety and denoted by $Fl:=Fl(N_1,\ldots,N_{n+1})$.
\end{ex}
\subsection{Quiver mutations for decorated quivers}\label{sec:quivervarietieswithmu}
Fix a decorated QP $(\mathbf Q=(Q_f\subset Q_0, Q_1, W),\mathbf r=(r_v)_{v\in Q_0})$. 
Performing a quiver mutation $\mu_v$ at a gauge node $v$ to the decorated QP $(\mathbf Q,\mathbf r)$, 
the underlying QP changes as Definition \ref{dfn:mutation} and the integers change as follows, 
\begin{equation}\label{eqn:Nchange}
r_{u}'=
    \begin{cases}
        \max\{N_f(v),N_a(v)\}-r_v &\text{ if } u=v \\
       r_u &\text{ otherwise}.
    \end{cases}
\end{equation}
\begin{lem}\label{lem:decorationsofQinP}
For any decorated QP $(\mathbf Q=(Q_0,Q_1,W),\mathbf r)\in \Omega^A_n$, the associated integers are definite and can be determined recursively as follows. 
\begin{enumerate}
    \item Step one, we start from the root $v_f$ whose assigned integer is always $N_{n+1}$. 
    By Lemma \ref{lem:recursivepattern}, there are at most $2$ nodes attached to it which are denoted by $v_1,v_2$ with arrows being shown in Figure \ref{fig:root}. 
    The integer assigned to $v_1$ is $r_{v_1}=N_l$ and the integer assigned to $v_2$ is $r_{v_2}=N_{n+1}-N_{l+1}$ for $n\geq l\geq 0$. We formally set $N_0=0$. 
    When $l=0$, then there is only one node $v_2$ attached to $v_f$. When $l=n$, then $N_{n+1}-N_l=0$ and there is only one node $v_1$ attached to $v_f$, see Figure \ref{fig:root} $(2)$ and $(3)$.
    \item Step two, if there are exactly two nodes $v_1\rightarrow v_f\rightarrow v_2\rightarrow v_1$, 
    $\mathbf Q\backslash 
    \{v_f,v_2\rightarrow v_1\}=\mathbf Q_1\sqcup \mathbf Q_2$. The subquiver $\mathbf Q_1$ lies in $\Omega^A_{l-1}$ and is  mutation-equivalent to the $A_{l-1}$-type quiver with decorations $N_1<N_2<\cdots<N_l$. The other one $\mathbf Q_2$ is mutation-equivalent to $A_{n-l-1}$ with decorations $N_{l+2}-N_{l+1}<\cdots<N_{n+1}-N_{l+1}$.
    The cases when there is only one node $v_1\,(v_2)$ adjacent to the $v_f$ can be obtained by letting $l=n\, (l=0)$.
    \item Step three, recursively construct the decorations of the quivers $\mathbf Q_i$ in Step two by viewing $v_i$ as root of the new quivers $\mathbf Q_i$ and utilizing the steps one and two. 
\end{enumerate} 
\end{lem}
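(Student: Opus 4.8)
The plan is to prove both halves of the lemma at once: that the decoration $\mathbf r$ of a QP $\mathbf Q\in\Omega^A_n$ is ``definite'', i.e.\ independent of the mutation sequence used to reach $\mathbf Q$ from the standard decorated $A_n$-quiver of Example \ref{ex:An}, and that it is given by the stated recursion. The starting observation is that this recursion extracts its integers from the combinatorics of $\mathbf Q$ \emph{alone}: by Lemma \ref{lem:recursivepattern}, the frozen node $v_f$, its (at most two) neighbours together with the directions of the incident arrows, and the connected components obtained after deleting $v_f$ and the closing arrow of the root $3$-cycle, are all intrinsic to $\mathbf Q$; in particular the size $l$ of the branch rooted at $v_1$ --- which is what forces the decorations $N_1<\dots<N_l$ onto $\mathbf Q_1$ and $N_{l+2}-N_{l+1}<\dots<N_{n+1}-N_{l+1}$ onto $\mathbf Q_2$ --- is intrinsic, and unwinding the recursion down the levels of Lemma \ref{lem:recursivepattern} therefore assigns to $\mathbf Q$ a well-defined decoration, call it $\rho(\mathbf Q)$. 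It then suffices to show that, along \emph{any} mutation path from the standard $A_n$-quiver to $\mathbf Q$, the decoration produced by the rule \eqref{eqn:Nchange} coincides with $\rho(\mathbf Q)$; I would prove this by induction on the length of the path, and ``definiteness'' follows.

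For the base case one traces the recursion through the standard quiver $1\to 2\to\dots\to n+1$: the root $v_f=n+1$ receives $N_{n+1}$, its unique neighbour $n$ is incoming, so this is the case $l=n$, giving $r_n=N_n$ and a branch $\mathbf Q_1=\mathbf Q\setminus v_f$ which is the standard $A_{n-1}$-quiver decorated by $N_1<\dots<N_n$; unwinding (or an inner induction on $n$) yields $\rho(\mathbf Q)=(N_1,\dots,N_{n+1})$, so $\rho$ agrees with the genuine decoration at the start of every path.

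For the inductive step, assume $\mathbf Q\in\Omega^A_n$ carries the decoration $\rho(\mathbf Q)$ and set $\mathbf Q'=\mu_v(\mathbf Q)$ for a gauge node $v$. Since \eqref{eqn:Nchange} alters only $r_v$, I must check (i) that $\max\{N_f(v),N_a(v)\}-r_v$ equals $\rho(\mathbf Q')_v$ and (ii) that $\rho(\mathbf Q')_u=\rho(\mathbf Q)_u$ for every $u\neq v$. Both are local: mutation is local, and by Lemma \ref{lem:omegan} the shape of $\mathbf Q$ near $v$ is one of finitely many. Up to relabelling, the generic case is the configuration of Figure \ref{fig:localbehavior}(a), in which $v$ lies in a $3$-cycle with its root-ward neighbour $v_1$ and a sibling $v_2$, and in a second $3$-cycle with its down-neighbours $v_3$ (incoming) and $v_4$ (outgoing), the arrows at $v$ being $v\to v_1$, $v_2\to v$, $v_3\to v$, $v\to v_4$; the remaining shapes are its degenerations ($m=k+1$, $k=p+1$, or $p=l$; cf.\ Remark \ref{rem:integersmkpl}) and the ``mutated'' configuration with these arrows reversed, and each of them is handled by the same short computation, so I spell out only the generic one. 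Writing the $\rho$-decoration in the normalized form of the Introduction, $r_v=N_k-N_l$, $r_{v_1}=N_m-N_l$, $r_{v_3}=N_p-N_l$, $r_{v_2}=N_m-N_{k+1}$, $r_{v_4}=N_k-N_{p+1}$ with $m>k>p\geq l\geq 0$, one computes $N_f(v)=r_{v_1}+r_{v_4}=(N_m-N_l)+(N_k-N_{p+1})$ and $N_a(v)=r_{v_2}+r_{v_3}=(N_m-N_{k+1})+(N_p-N_l)$, so $N_f(v)-N_a(v)=(N_k-N_{p+1})+(N_{k+1}-N_p)\geq 0$ because $k\geq p+1$; hence $\max\{N_f(v),N_a(v)\}=N_f(v)$ and $r_v'=N_f(v)-r_v=N_m-N_{p+1}$. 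It remains to read off the recursive block structure of $\mathbf Q'=\mu_v(\mathbf Q)$ near $v$ --- after the mutation, $v$ becomes the \emph{outgoing} child of $v_1$ with $v_3$ promoted to the role of $v_1$'s incoming child and $v_4$ to that of $v$'s incoming child --- and to check that the recursion of Lemma \ref{lem:decorationsofQinP} then assigns to $v$ the decoration $(N_m-N_l)-(N_{p+1}-N_l)=N_m-N_{p+1}$, exactly the value above, while the sub-quivers attached to $v_1,v_2,v_3,v_4$ and all nodes farther from $v$ are carried over unchanged as decorated quivers. This gives (i) and (ii), closes the induction, and with it the lemma.

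The main obstacle I anticipate is precisely this last matching step: in each allowed local shape of Lemma \ref{lem:omegan} and its degenerations one must keep track of how the canonical block decomposition of Lemma \ref{lem:recursivepattern} is reorganized by $\mu_v$ --- which branch is reattached to which node and how the base $N_\bullet$ of each branch's decoration sequence shifts --- and confirm that the recursion of Lemma \ref{lem:decorationsofQinP} reproduces $N_m-N_{p+1}$ at $v$ and the old values elsewhere. The arithmetic itself is elementary; essentially all the work is this combinatorial bookkeeping, for which Lemma \ref{lem:recursivepattern} and the classification in Lemma \ref{lem:omegan} supply the structural backbone.
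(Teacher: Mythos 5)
Your proposal is correct and takes essentially the same approach as the paper's: the paper's own proof is a one-line assertion that the decorated QPs produced by the stated recursion are closed under quiver mutations and contain the standard decorated $A_n$-quiver as the base case. Your argument makes the closure check explicit in the generic local configuration of Figure \ref{fig:localbehavior}(a), correctly computing $N_f(v)-N_a(v)\geq 0$ and $r_v'=N_m-N_{p+1}$, and rightly acknowledges that the degenerate configurations of Lemma \ref{lem:omegan} must be handled by the same short computation.
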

\begin{proof}
    One can check that the decorated QPs described above are closed under quiver mutations and mutation equivalent to the decorated $A_n$-quiver in Figure \ref{fig:Anqv}. 
\end{proof}
From the above recursive relation, one can find that if $v\prec v'$, the $r_v<r_{v'}$. 
One can also easily get that for any node $v\in Q_0$, the assigned integer can only be $r_v=N_k-N_l$ for some $0\leq l<k\leq n+1$.

By abuse of notation, we still denote the set of decorated QPs that are mutation-equivalent to the decorated $A_n$ by $\Omega^A_n$.

For an arbitrary decorated QP $(\mathbf Q, \mathbf r)\in \Omega_n^A$,
combining the local description in Lemma \ref{lem:omegan} and the recursive description for decorated integers in Lemma \ref{lem:decorationsofQinP}, we can describe decorations of nodes adjacent to any gauge node $v$. 
The node $v$ has at most four adjacent arrows which we assume to be $v_1,\cdots, v_4$.
There exist integers $m> k>p\geq l\geq 0$ such that $r_v=N_k-N_l,\, r_{v_1}=N_m-N_l, \, r_{v_3}=N_p-N_l, \, \,
r_{v_2}=N_m-N_{k+1},\, 
r_{v_4}=N_k-N_{p+1}$.
Assume that $N_f(v)>N_a(v)$.
\begin{enumerate}
\item When the node $v$ has exactly four adjacent arrows, which we denote by $v_1,\cdots,v_4$, the local picture can be described as Figure \ref{fig:localbehavior} (a) for $m-1>k>p+1\geq l\geq 0$.
We formally let $N_0=0$.

The case when $N_f(v)<N_a(v)$ is in Figure \ref{fig:localbehavior} (b) and one can find it is mutation-equivalent to the left-hand side one. 
\item 
The case when the node $v$ has only three adjacent nodes can be realized by taking special values for $m,k,p,l$. 
\begin{itemize}
\item 
When $p=l$, $m-1>k>l+1$, then $N_p-N_l=0$. In this case, we delete the node $v_3$, and the node $v$ has $3$ adjacent nodes $v_1,v_2,v_4$. 
\item When $p=k-1$, similar as above, the node $v_4$ vanishes. 
\item When $k=m-1$, $N_m-N_{k+1}=0$ and the node $v_2$ vanishes.
\end{itemize}
\item The case when the node $v$ has only two adjacent arrows can also be obtained by taking special values for $m,k,p,l$. 
When $p=l,k=l+1$, the node $v$ has two adjacent nodes $v_1,v_2$ and they form a three-cycle. When $m=k+1, p=k-1$, the node $v$ has two adjacent nodes $v_1,v_3$.  When $m=k+1, p=l$, the node $v$ is a source and has two adjacent $v_1,v_4$. 
\item When $m=k+1=l+2$, the node $v$ has only one adjacent node.
\end{enumerate}
Therefore, the case where the node $v$ has four adjacent arrows is typically used to represent the local structure of a gauge node $v$. It is important to note, however, that this representation implicitly includes other special cases.
\begin{figure}[H]
\centering
\begin{tikzpicture}
	\node[draw,
	circle,
	minimum size=1cm,
	] (nodev) at (-3,0){$r_v$};
    \node at (-3,-0.7){$v$};
    \node[draw,
	circle,
	minimum size=1cm,
	] (nodev1) at (-4.5,1.5){$r_{v_1}$};
    \node at (-4.5,0.8){$v_1$};
    \node[draw,
	circle,
	minimum size=1cm,
	] (nodev2) at (-1.5,1.5){$r_{v_2}$};
    \node at (-1.4,0.8){$v_2$};
    \node at (-4.5,-2.2){$v_3$};
    \node at (-1.5,-2.2){$v_4$};
    \node at (-2,0.6){$B_{v_1}$};
    \node at (-3,-1.7){$B_{v}$};
    \node at (1.3,0.8){$v_1$};
    \node at (4.7,0.8){$v_2$};
    \node at (1.5,-2.2){$v_3$};
    \node at (4.5,-2.2){$v_4$};
    \node at (4.7,0){$B_{v}'$};
    \node at (2.1,-0.6){$B_{v_1}'$};
    \node at (-3,-2.8){$(a)$};
    \node at (3,-2.8){$(b)$};
    \node at (3,-0.7){$v$};
    \draw[implies-implies,double equal sign distance](-0.5,0)--(0.5,0);
    \node at (0,0.2){$\mu_v$};
    \node[draw,
	circle,
	minimum size=1cm,
	] (nodev3) at (-4.5,-1.5){$r_{v_3}$};
    \node[draw,
	circle,
	minimum size=1cm,
	] (nodev4) at (-1.5,-1.5){$r_{v_4}$};
	\draw[-stealth] (nodev) -- (nodev1);
	\draw[-stealth] (nodev1) -- (nodev2);
	\draw[-stealth] (nodev2) -- (nodev);
	\draw[-stealth] (nodev) -- (nodev4);
	\draw[-stealth] (nodev4) -- (nodev3);
	\draw[-stealth] (nodev3) -- (nodev);
    %right hand side
	\node[draw,
	circle,
	minimum size=1cm,
	] (nodev) at (3,0){$r_v'$};
    \node[draw,
	circle,
	minimum size=1cm,
	] (nodev2) at (4.5,1.5){$r_{v_2}$};
    \node[draw,
	circle,
	minimum size=1cm,
	] (nodev1) at (1.5,1.5){$r_{v_1}$};
    \node[draw,
	circle,
	minimum size=1cm,
	] (nodev4) at (4.5,-1.5){$r_{v_4}$};
    \node[draw,
	circle,
	minimum size=1cm,
	] (nodev3) at (1.5,-1.5){$r_{v_3}$};
	\draw[-stealth] (nodev1) -- (nodev);
	\draw[-stealth] (nodev) -- (nodev3);
	\draw[-stealth] (nodev3) -- (nodev1);
	\draw[-stealth] (nodev) -- (nodev2);
	\draw[-stealth] (nodev2) -- (nodev4);
	\draw[-stealth] (nodev4) -- (nodev);
\end{tikzpicture}
\caption{The local picture near a gauge node $v$ and its quiver mutation. There exist integers $m> k>p\geq l\geq 0$ such that $r_v=N_k-N_l, r_{v_1}=N_m-N_l, r_{v_3}=N_p-N_l, r_{v_2}=N_{m}-N_{k+1}, r_{v_4}=N_k-N_{p+1}$, $r_{v}'=N_m-N_{p+1}$ if we assume that $N_f(v)>N_a(v)$.} 
\label{fig:localbehavior} 
\end{figure}
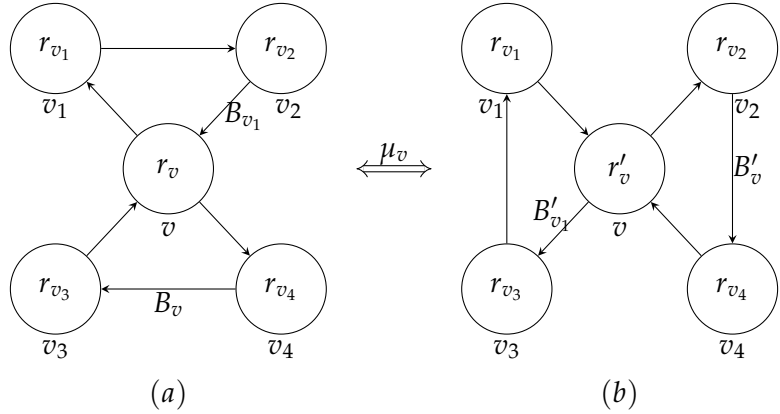
\iffalse
\begin{figure}[H]
    \centering
    \includegraphics[width=0.7\linewidth]{localproof.png}
    \caption{The local picture near a gauge node $v$ with $m>k>p\geq l\geq 0$.}
    \label{fig:localbehavior}
\end{figure}
\fi
\subsection{Varieties of QPs mutation equivalent to A-type}\label{subsec:quivervarmuteqtoAn}
 Let $(\mathbf Q=(Q_0,Q_1,W), \mathbf r)\in \Omega^A_n$ be an arbitrary decorated QP in $\Omega^A_n$. 
 Denote the corresponding input data by $(V, G, \theta)$ where $G=\prod_{i\in Q_0\backslash Q_f}GL(r_i)$ and $\theta(g)=\prod_{i\in Q_0\backslash Q_f}\det(g_i)^{\sigma_i}$.
\begin{lem}\label{lem:phasevargeneralquiver}
    Then the only valid stability condition $\theta$ can be described as follows. 
    \begin{enumerate}
    \item
        We start from the two nodes $v_1,v_2$ in Lemma \ref{lem:recursivepattern} that are attached to the root. 
       Let $v_1=w_0\rightarrow w_1\rightarrow w_2\rightarrow\cdots\rightarrow w_h$ be the longest oriented path attached to $v_1$ and $w_i$ is the minimal node that is greater than $w_{i+1}$. 
        Let $v_2=u_0\leftarrow u_1\leftarrow u_2\leftarrow \cdots\leftarrow u_{m}$ be the longest oriented path attached to $v_2$ such that $u_{i}$ is the minimal node that is greater than the node $u_{i+1}$. 
        The $\sigma_{v_i}$ for $i=1,2$ satisfy the following conditions
        \begin{align}\label{eqn:phasesforgeneral}
            &\sigma_{v_1}>0,\,\sigma_{v_2}<0
            \nonumber\\
            &\sigma_{v_1}+ \sum_{i=1}^{h}\sigma_{w_i}>0,\,\,
           \sigma_{v_2}+ \sum_{i=1}^{m}\sigma_{u_i}<0\,.
        \end{align}
        \item We move to a smaller node $v_i$ which is regarded as the root of the subquiver $\mathbf Q_i$ in Lemma \ref{lem:decorationsofQinP}. 
        Let $\tilde v_1$ and $\tilde v_2$ be the two nodes that are smaller than $v_i$, and then the conditions for $\sigma_{\tilde{v_1}}$ and $\sigma_{\tilde v_2}$ are similar as \eqref{eqn:phasesforgeneral}.
    \end{enumerate}
    In this phase, the semistable locus of $Z=\{dW=0\}$ can be described as follows. 
    For each node $v$, there are at most two maximal nodes $v_1$ and $v_2$ such that $v_i\prec v$ with directions of arrows being the same as Figure \ref{fig:root}$(1)$. We denote the matrices by $v_1\xrightarrow[]{A_{v_1}} v \xrightarrow []{A_{v_2}}v_2\xrightarrow[]{B_v}v_1$. 
    \begin{enumerate}
        \item If there is only one maximal node that is smaller than $v$, $v_1\xrightarrow[]{A_{v_1}} v$ or $v_2\xleftarrow[]{A_{v_2}}v$, then in the phase described above, $A_{v_i}$ $i=1$ or $i=2$ is non-degenerate. 
        \item If there are exactly two maximal nodes smaller than $v$, in the phase described above, the two matrices $A_{v_i},i=1,2$ are non-degenerate, $B_v=0$, and $A_{v_1}A_{v_2}=0$.
    \end{enumerate}

We consider a new quiver $\bar{\mathbf Q}$ by deleting all arrows $v_2\rightarrow v_1$.  
Then $\bar{\mathbf Q}$ is a binary tree. 
Let $\mc Y$ be the corresponding quiver variety under the phase above. 
Then our critical locus $\mc Z=Z\sslash_{\theta}G$ is a complete intersection in $\mc Y$ defined by $A_{v_1}A_{v_2}=0$ for those matrices representing two nodes $v_1,v_2$ that are maximal nodes smaller than a node $v$. 
\end{lem}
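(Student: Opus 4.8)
The plan is to argue by induction on the number $n+1$ of nodes, using the recursive description of $\Omega^A_n$ in Lemma \ref{lem:recursivepattern} and the decoration rule of Lemma \ref{lem:decorationsofQinP}; the base case (no gauge nodes, $\mc Z=\mc Y$ a point) is trivial. Two things must be done: (i) determine the chamber of admissible $\theta$ and the resulting semistable locus of $Z=\{dW=0\}$, and (ii) identify $\mc Z$ as a complete intersection inside the binary-tree quiver variety $\mc Y$.

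For (i) I would invoke the Hilbert--Mumford numerical criterion for GIT-stability of quiver representations (King's criterion): for $\theta(g)=\prod_i\det(g_i)^{\sigma_i}$ the GIT quotient is nonempty with $V^{ss}_\theta(G)=V^s_\theta(G)$ precisely when a general representation is $\theta$-stable, i.e.\ every nonzero proper subrepresentation $W$ has negative $\theta$-weight $\sum_i\sigma_i\dim W_i$ after normalising the total weight to $0$, and some such stable point exists. Since by Lemma \ref{lem:decorationsofQinP} the dimension $r_v$ is strictly increasing along $\prec$ toward the root, a general representation forces every matrix on the maximal oriented paths of part $(1)$ to be injective or surjective according to whether it points toward or away from the root. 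Testing $\theta$-stability against the subrepresentations supported on terminal segments of the path $v_1=w_0\to w_1\to\cdots\to w_h$, and against the quotient representations supported on terminal segments of $v_2=u_0\leftarrow u_1\leftarrow\cdots\leftarrow u_m$, produces exactly the inequalities \eqref{eqn:phasesforgeneral}; the analogous inequalities lower in the tree come from applying the inductive hypothesis to the subquivers $\mathbf Q_i$, and conversely one checks that in the resulting chamber the semistable locus is the locus on which all these path-matrices are non-degenerate, with no strictly semistable points. This already gives the stated form of $V^{ss}_\theta(\{dW=0\})$: the non-degeneracy of the $A_{v_i}$ is the stability statement just proved, while $B_v=0$, $A_{v_1}A_{v_2}=0$ and $A_{v_2}B_v=0$ (resp.\ $B_vA_{v_1}=0$) are the Jacobian equations $dW=0$ obtained by differentiating the cubic monomial $\mathrm{tr}(A_{v_1}A_{v_2}B_v)$ of the $3$-cycle $v_1\to v\to v_2\to v_1$ — recall that $W$ is the sum of such monomials over all $3$-cycles (Lemma \ref{lem:omegan}$(4)$) and distinct $3$-cycles share no arrow. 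Injectivity of whichever of $A_{v_1},A_{v_2}$ is forced to have full rank then makes $B_v=0$ the only solution of $A_{v_2}B_v=0$ (or $B_vA_{v_1}=0$), leaving $A_{v_1}A_{v_2}=0$.

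For (ii), deleting every back-arrow $v_2\to v_1$ removes exactly the variables $B_v$ and leaves a connected acyclic graph in which every gauge node has at most one adjacent node of larger dimension, i.e.\ a rooted binary tree $\bar{\mathbf Q}$. With the same $\theta$, now attached to a quiver with zero potential, the same King-criterion computation shows that $\mc Y$ is the quotient of the locus where all surviving matrices are non-degenerate; in particular $\mc Y$ is smooth, being an iterated tower of flag/Grassmann bundles over a point, and on it each out-arrow $A_{v_2}$ of a $3$-cycle is injective. Because every $B_v$ vanishes on $Z^{ss}_\theta$, the inclusion $Z^{ss}_\theta\hookrightarrow \bar V^{ss}_\theta$ of the semistable loci is $G$-equivariant and descends to a closed embedding $\mc Z\hookrightarrow\mc Y$ whose image is cut out exactly by the equations $A_{v_1}A_{v_2}=0$, one per $3$-cycle. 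Finally, with the injective out-arrow $A_{v_2}$ held fixed, $A_{v_1}A_{v_2}=0$ is the linear condition that $A_{v_1}$ kill the subspace $\mathrm{im}\,A_{v_2}$, so it cuts out a smooth subvariety of codimension $r_{v_1}r_{v_2}$; since distinct $3$-cycles involve disjoint sets of variables, $\mc Z$ is the transverse zero locus of a section of a direct sum of homomorphism bundles on $\mc Y$, one summand of rank $r_{v_1}r_{v_2}$ per $3$-cycle, hence a complete intersection of the expected codimension $\sum r_{v_1}r_{v_2}$. Equivalently, one closes the induction: removing one $3$-cycle's relation realises $\mc Z$ over a strictly smaller quiver in $\Omega^A_{n'}$.

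I expect the principal obstacle to be the \emph{uniqueness} half of (i) — ruling out every other sign chamber for $\theta$ and proving $V^{ss}_\theta=V^s_\theta$ in the asserted one — because the subrepresentation bookkeeping must be carried out uniformly over the whole recursively built tree rather than one $3$-cycle at a time, and the interplay between a $3$-cycle relation and the stability of its neighbours has to be tracked carefully; once the non-degeneracy of the path-matrices on $\mc Y$ is in hand, the complete-intersection claim in (ii) is comparatively routine.
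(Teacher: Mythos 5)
The paper does not give a proof of this lemma: it explicitly declares the proof ``overly intricate,'' omits it, and substitutes the worked Example~\ref{ex:A2variety}. So there is nothing in the text to compare your argument against; I can only assess it on its own terms.

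Your route via King's (Hilbert--Mumford) criterion for $\theta$-stability of quiver representations is the natural and almost certainly intended approach, and the skeleton is sound: induct on the recursive tree structure from Lemma~\ref{lem:recursivepattern}/\ref{lem:decorationsofQinP}, test stability against subrepresentations (resp.\ quotients) supported on terminal segments of the maximal descending paths attached to $v_1$ (resp.\ $v_2$) to read off the chamber \eqref{eqn:phasesforgeneral}, use the injectivity/surjectivity of the $A$-matrices to force $B_v=0$ from the Jacobian equations $A_{v_2}B_v=0$ and $B_vA_{v_1}=0$, and then cut $\mc Z$ out of the binary-tree quiver variety $\mc Y$ by the remaining relations $A_{v_1}A_{v_2}=0$. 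The complete-intersection step is also correct as stated: on the locus where $A_{v_2}$ is injective, $A_{v_1}A_{v_2}=0$ is a linear condition on $A_{v_1}$ of codimension $r_{v_1}r_{v_2}$, compatible with surjectivity of $A_{v_1}$ because the decoration rule of Lemma~\ref{lem:decorationsofQinP} guarantees $r_{v_1}+r_{v_2}\le r_v$, and distinct $3$-cycles touch disjoint matrices. Two small glitches in phrasing: first, $B_v=0$ is not itself one of the Jacobian equations of $\operatorname{tr}(A_{v_1}A_{v_2}B_v)$ --- the three Jacobian equations are $A_{v_1}A_{v_2}=0$, $A_{v_2}B_v=0$, $B_vA_{v_1}=0$, and $B_v=0$ is a derived consequence on the semistable locus, as you do in fact argue in the next clause; second, the inclusion you write as $Z^{ss}_\theta\hookrightarrow\bar V^{ss}_\theta$ should really be the image of $Z^{ss}_\theta$ under the forgetful projection $V\to\bar V$ that drops the $B_v$-coordinates, which is well defined precisely because $B_v\equiv 0$ on $Z^{ss}_\theta$.

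The substantive gap is exactly the one you flag. Showing the chamber \eqref{eqn:phasesforgeneral} is the \emph{only} one for which $Z^{ss}_\theta\neq\emptyset$ and $Z^{ss}_\theta=Z^{s}_\theta$ requires ruling out all other sign assignments of $(\sigma_u)$, not just verifying that the displayed inequalities make the path-matrices non-degenerate. The one-parameter-subgroup computations that do this must simultaneously respect every $3$-cycle in the tree, and the destabilizing subgroup one writes down for one cycle can re-stabilize a neighbouring one, so the case analysis is genuinely nonlocal. You would also need to check explicitly that the stability conditions imposed by \eqref{eqn:phasesforgeneral} imply $\sigma_{v_i}$ has a definite sign at every level (in Example~\ref{ex:A2variety}, $\sigma_3>0$ is not listed but follows from $\sigma_2<0$ and $\sigma_3+\sigma_2>0$ --- this kind of redundancy must be verified, not assumed). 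Until that bookkeeping is done the argument is an outline, not a proof, which matches the authors' reason for omitting it.
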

This proof of this Lemma is overly intricate, so we prefer to omit it and instead consider a concrete $A_{3}$-mutation-equivalent quiver in the next example.
\begin{ex}\label{ex:A2variety}
    Consider a quiver in $\Omega^A_3$ as Figure \ref{fig:a3quiver} which is obtained via a quiver mutation $\mu_2$ to $A_3$ quiver at the gauge node $2$. 
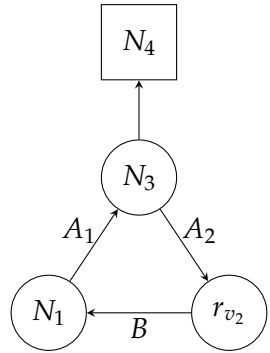
\begin{figure}[H]
\centering
\begin{tikzpicture}
	\node[draw,
	circle,
	minimum size=1cm,
	] (node3) at (0,0){$N_3$};
   % \node at (0.5,0.1){$3$};
    \node[draw,
	circle,
	minimum size=1cm,
	] (node1) at (-1.2,-1.8){$N_1$};
    \node at (0,-2){$B$};
    \node[draw,
	circle,
	minimum size=1cm,
	] (node2) at (1.2,-1.8){$r_{v_2}$};
    \node at (-0.8,-0.7){$A_1$};
    \node at (0.8,-0.7){$A_2$};
    \node[draw,
	minimum width=1cm,
	minimum height=1cm,
	] (nodef) at (0,1.8){$N_4$};
	\draw[-stealth] (node1) -- (node3);
	\draw[-stealth] (node3) -- (node2);
	\draw[-stealth] (node2) -- (node1);
	\draw[-stealth] (node3) -- (nodef);
\end{tikzpicture}
\caption{$r_{v_2}=N_3-N_2$.} 
\label{fig:a3quiver}
\end{figure}
\noindent
    The only valid stability condition $\theta(g)=\prod_{i=1}^3\det(g_i)^{\sigma_i}$ is 
    \begin{equation*}
        \sigma_1>0,\,\sigma_2<0,\,\sigma_3+\sigma_2>0\,.
    \end{equation*}
    Let $Z:=\{dW=0\}=\{A_1A_2=0,\,A_2B=0,\,BA_1=0\}$ be the critical locus of the potential. 
    The semistable locus under the stability condition is 
    \begin{equation*}
        Z^{ss}_\theta(G)=\{B=0, A_1A_2=0\,|\,\text{matrices }A_1,A_2,A_3 \text{ nondegenerate}\}.
    \end{equation*}
    By deleting the arrow $v_2\rightarrow v_1$, we get a new quiver which is a binary tree. The critical locus $\mc Z:=Z\sslash_\theta G$ is a subvariety defined by $\{A_1A_2=0\}$ of the quiver variety of this binary tree.
\end{ex}
\iffalse
\begin{figure}[H]
    \centering
    \includegraphics[width=1in]{a3quiver.png}
    \caption{}
    \label{fig:a3quiver}
\end{figure}
\fi

\section{Seiberg duality conjecture and cluster algebra conjecture}\label{sec:GWandqcoh}
\subsection{Gromov-Witten theory and quantum cohomology ring}
We refer to the nice book \cite{GW:mirrorsym} about the fundamental properties of Gromov-Witten(GW) theory. 
\begin{dfn}
Let $\mc X$ be a smooth projective variety.
A stable map to $\mc X$ denoted by $(C, p_1,\ldots,p_n;f)$ consists of the following data:
\begin{enumerate}
    \item
    a nodal curve $(C,p_1,\ldots,p_n)$ with $n\geq 0$ distinct nonsingular markings $(p_1,\ldots,p_n)$, 
    \item a \textit{stable map} 
    $f:(C,p_1,\ldots,p_n)\rightarrow \mc X$ such that every component of $C$ of genus 0, which is contracted by $f$, must have at least three special (marked or singular) points, and every component of $C$ of genus one which is contracted by $f$, must have at least one special point. 
    \end{enumerate}
\end{dfn}
The degree of a stable map $(C, p_1,\ldots, p_n; f)$ is defined as the homology class of the image $\beta=f_*[C]$.
For a fixed curve class $\beta\in H_2(\mc X, \Z)$, let $\overline{M}_{g,n}(\mc X, \beta)$ denote the stack of stable maps from the genus-g curves $C$ with $n$ markings to $\mc X$ such that $f_*[C]=\beta$.
When $\mc X$ is projective, $\overline{M}_{g,n}(\mc X, \beta)$ is a proper and separated DM stack and admits a perfect obstruction theory. 
Hence, we can construct the virtual fundamental class $[\overline{M}_{g,n}(\mc X, \beta)]^{vir}\in A_{\text{vdim}}(\overline{M}_{g,n}(\mc X, \beta))$ 
where $\text{vdim}=\int_{\beta}c_1(\mc X)+(\dim(\mc X)-3)(1-g)+n$. See \cite{GW:LT,GW:BFintrinsic,GW:B}.

Let
$\pi:\mc C_{g,n}\rightarrow\overline{M}_{g,n}(\mc X,\beta)\,$ be the universal curve and $s_i$ be sections of $\pi$ for each marking $p_i$. 
Let $\omega_\pi$ be the relative dualizing sheaf and $\mc P_i=s_i^*(\omega_\pi)$ be the cotangent bundle at the $i$-th marking.
Define the $\psi$-class by $\psi_i:=c_1(\mc P_i) \in H^2(\overline M_{g,n}(\mc X,\beta))$. 
Define evaluation maps by
\begin{align*}
   ev_i:\overline M_{g,n}(\mc X,\beta)&\longmapsto \mc X \nonumber\\
   (C, p_1,\ldots,p_n;f)&\longmapsto f(p_i)\,.
\end{align*}
Let $\gamma_1,\ldots,\gamma_n\in H^*(\mc X,\mathbb Q)$ be cohomology classes and $a_i$ $i=1,\ldots,n$ be positive integers. The GW invariant is defined as
\begin{equation}\label{eqn:GWinv}  \langle\tau_{a_1}\gamma_1,\ldots,\tau_{a_n}\gamma_n \rangle_{g,n,\beta}^{\mc X}:=\int_{[\overline M_{g,n}(\mc X, \beta)]^{vir}}\prod_{i=1}^{n}\psi_i^{a_i}ev_i^*(\gamma_i)\,.
\end{equation}
\iffalse
Let $\alpha_0=1,\alpha_1,\ldots, \alpha_m\in H^*(\mc X,\mathbb Q)$ be a set of generators of the cohomology group, and $\alpha^0,\alpha^1,\ldots, \alpha^m\in H^*(\mc X,\mathbb Q)$ be the Poincar\'e dual. 
The small $\mc J$-function of $\mc X$, which comprises genus-zero GW invariants, is defined by
\begin{equation}\label{eqn:GWJ}
    \mc J^{\mc X}(q,\mathbf{t},u)=
  \sum_{i=0}^{m} \sum_{(k\geq 0,\beta )} \alpha^i \langle \frac{\alpha_i}{u(u-\psi_{\bullet})}\mathbf{t}\ldots \mathbf{t}\rangle_{0,k+1,\beta}^{\mc X}\frac{q^{\beta}}{k!} \,.
\end{equation}
where $\mathbf{t}\in H^{\leq 2}(\mc X,\mathbb Q)$.
\fi
\begin{dfn}
    Let $\gamma_1, \gamma_2,\gamma_3\in H^*(\mc X)$ be arbitrary elements. The small quantum product $*$ is defined by the relation
    \begin{equation*}
        (\gamma_1*\gamma_2,\gamma_3)= \sum_{\beta}\langle\gamma_1,\gamma_2,\gamma_3\rangle_{0,3,\beta}^{\mc X}{q^\beta}
    \end{equation*}
    where $(,)$ is the Poincar\'e pairing of $H^*(\mc X)$. 
    Let $R=\mathbb Q[[q^{\beta}]]$ be the ring of formal power series in $q^{\beta}$ 
  with coefficients in $\mathbb Q$.
 The ring $H^*(\mc X, \mathbb Q)\otimes R$, equipped with the product $*$, forms a commutative associative ring called the small quantum cohomology ring, denoted by $QH^*(\mc X)$. 
\end{dfn}
Assume that $\mc X$ admits a torus action, denoted by $S=(\mathbb C^*)^n$. 
Let $H^*_S(\mc X):=H^*(\mc X_G\times EG)$ be the equivariant cohomology ring of $\mc X$, and $H^*_S(pt,\mathbb Q)=\mathbb Q[\lambda_1,\ldots,\lambda_n]$. 
The torus action induces an action on $\overline M_{g,n}(\mc X, \beta)$ by sending a stable map $(C, p_1,\ldots, p_n;\,f)$ to $(C, p_1,\ldots, p_n;\, s\circ f)$ for each $s\in S$. 
There is an equivariant perfect obstruction theory in $\overline M_{g,n}(\mc X, \beta)$ and equivariant virtual fundamental class $[\overline M_{g,n}(\mc X, \beta)]^{vir,S}$. For $\omega_i\in H^*_S(\mc X)$, the equivariant Gromov-Witten invariants valued in $H^*_S(pt)$ are defined as
\begin{equation*}
   \langle\tau_{a_1}\omega_1,\ldots,\tau_{a_n}\omega_n \rangle_{g,n,\beta}^{\mc X,S}=
   \int_{[\overline M_{g,n}(\mc X, \beta)]^{vir,S}}\prod_{i=1}^{n}(\psi_i^S)^{a_i}ev_i^*(\omega_i)\,
\end{equation*}
where $\psi_i^S\in H^*_S(\overline M_{g,n}(\mc X, \beta))$ are lifts of $\psi_i$.
Let $F\subset \overline M_{g,n}(\mc X, \beta)$ denote a torus-fixed locus. 
There is an induced equivariant perfect obstruction theory on $F$ and 
\begin{equation*}  
\langle\tau_{a_1}\omega_1,\ldots,\tau_{a_n}\omega_n \rangle_{g,n,\beta}^{\mc X,S}=\sum_{F}\int_{F}  \frac{i_F^*\left(\prod_{i=1}^{n}(\psi_i^S)^{a_i}ev_i^*(\omega_i)\right)}{e^S(N^{vir}_F)}\,.
\end{equation*}
The summation is over all torus fixed loci $F$, the map $i_F: F\rightarrow \overline M_{g,n}(\mc X, \beta)$ is an embedding, and $N_F^{vir}$ is the virtual normal bundle of $F$.
Suppose $\mc X$ is projective and $\omega's$ are lifting of $\gamma_i's$ in  $H^*_S(\mc X,\mathbb Q)$. 
Then the nonequivariant limit of $\langle\tau_{a_1}\omega_1,\ldots,\tau_{a_n}\omega_n \rangle_{g,n,\beta}^{\mc X,S}$ is equal to the regular Gromov-Witten invariant $\langle\tau_{a_1}\gamma_1,\ldots,\tau_{a_n}\gamma_n \rangle_{g,n,\beta}^{\mc X}$.
See \cite{GW:GPequiv}.

We can similarly define an equivariant quantum product. 
For $\omega_1,\omega_2,\omega_3\in H^*_S(\mc X)$, we define the small quantum product $*$ as follows,
\begin{equation*}
    (\omega_1*\omega_2,\omega_3)= \sum_{\beta}\langle\omega_1,\omega_2,\omega_3\rangle^{\mc X,S}_{0,3,\beta}q^{\beta}.
\end{equation*}
Denote the equivariant small quantum cohomology ring by $QH^*_S(\mc X,\mathbb Q)$.

\subsection{Seiberg duality conjecture}\label{sec:seibergduality}
Let $(\mathbf Q=(Q_f\subset Q_0,Q_1,W), \mathbf r)$ and $(\mathbf Q'=(Q_f\subset Q_0, Q_1',W'),\mathbf r')$ be two decorated QPs that are related by a quiver mutation $\mu_v$ at the gauge node $v$. 
Let $(V,G,\theta)$ and $(V',G',\theta')$ be the input data of the associated GIT quotients, and $Z:=\{dW\}$ and $Z':=\{dW'=0\}$ be critical loci of potential functions $W$ and $W'$. 
 Write their stability conditions as $\theta(g)=\prod_{u\in Q_0\backslash Q_f}\det(g_u)^{\sigma_u}\in \chi(G)$ and $\theta'(g')=\prod_{u\in Q_0\backslash Q_f}\det(g_u')^{\sigma_u'}\in \chi(G')$. 
 Assume that the two phases $(\sigma_u)_{u\in Q_0\backslash Q_f}$ and $(\sigma_u')_{u\in Q_0\backslash Q_f}$ are related as follows,
 \begin{equation}\label{eqn:changeofphasemu}
 \theta_u'=
     \begin{cases}
         -\theta_v & \text{ for }u=v\\
         \theta_u+[sign(\sigma_v)b_{vu}]_+\theta_v &\text{ for }u\neq v
     \end{cases}
 \end{equation}
Let $\mc Z=Z\sslash_{\theta}G$ and $\mc Z'=Z'\sslash_{\theta'}G'$ be the associated varieties. 
Let $S=\prod_{a\in Q_f}(\mathbb C^*)^{N_a}$ be a torus acting on $\mc Z$ and $\mc Z'$ naturally.
The genus-zero Seiberg duality conjecture asserts that there is a ring isomorphism
\begin{equation*}
  \phi : H^*_S(\mc Z)\rightarrow H^*_S(\mc Z').
\end{equation*}
Let $\phi_*:H_2(\mc Z)\rightarrow H_2(\mc Z')$ be a group homomorphism such that for each $\alpha\in H^2(\mc Z)$, we have 
\begin{equation}\label{eqn:homologicalchange}
    \int_{\beta}\alpha=
    \int_{\phi_*(\beta)}\phi(\alpha).
\end{equation}
\begin{conjecture}[\cite{nonabelianGLSM:YR}]\label{conj:seibergdualityconj}
Let $\omega_1,\ldots,\omega_m\in H^*_S(\mc Z)$ be a set of cohomology classes and $\omega_i'=\phi(\omega_i)$.  
The all-genus Seiberg duality conjecture asserts that for $\beta'=\phi_*(\beta)$
\begin{equation*}
<\tau_{a_1}\omega_1,\ldots,\tau_{a_m}\omega_m>^{\mc Z,S}_{g,n,\beta} =
<\tau_{a_1}\omega'_1,\ldots,\tau_{a_m}\omega_m'>^{\mc Z',S}_{g,n,\beta'}\,.
\end{equation*}
\end{conjecture}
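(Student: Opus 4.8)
The plan is to establish this conjecture for every pair of quivers in $\Omega_n^A$, i.e.\ to prove Theorem~\ref{thm:GWinvarmu} (equivalently Theorem~\ref{introdthm:seibergdualityAn}); the genus-zero statement of Conjecture~\ref{conj:seiberginqcoh} and the ring isomorphism of Corollary~\ref{cor:qcohisomorphic} then come out by specializing to $g=0$, $m=3$, $a_i=0$ and unwinding the Poincar\'e pairing. Since the isomorphism $\phi$ relating two members of $\Omega_n^A$ is the composite of the ones attached to the intermediate single mutations, it is enough to fix one mutation $\mu_v$ at a gauge node $v$, relating $(\mathbf Q,\mathbf r)$ to $(\mathbf Q',\mathbf r')$ with critical loci $\mc Z$, $\mc Z'$, and to prove the Gromov--Witten identity directly for this step. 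The engine throughout is the Graber--Pandharipande virtual localization formula \cite{GW:GPequiv,liu2011localization}, applied to $\overline M_{g,m}(\mc Z,\beta)$ and $\overline M_{g,m}(\mc Z',\beta')$ for the big torus $S=(\mathbb C^*)^{N_{n+1}}$.

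First I would pin down the $S$-fixed geometry. Starting from the explicit GIT description of varieties in $\Omega_n^A$ (Lemma~\ref{lem:phasevargeneralquiver}, illustrated by Example~\ref{ex:A2variety}), one checks that $\mc Z^S$ is a finite set of isolated reduced points, constructs the bijection $\varphi:\mc Z^S\to(\mc Z')^S$ of Lemma~\ref{lem:Sfixedpointsmu}, and verifies it is compatible with the cohomology isomorphism $\phi$ of Proposition~\ref{prop:equicohmu}. The decisive input is a bijection between the one-dimensional $S$-orbits through a point $P\in\mc Z^S$ and those through $\varphi(P)$ under which the corresponding $S$-weights coincide: a one-dimensional orbit amounts to switching on a single entry of one of the structure matrices $A_{v_1},A_{v_2},B_v$ attached to $P$, and the mutation merely relabels which matrix that entry belongs to. This weight matching is exactly what makes the effective-class correspondence $\beta\mapsto\beta'$ of Proposition~\ref{prop:transformeff} — a cluster-type substitution of the K\"ahler variable $q_v$ — preserve both effectivity and $S$-equivariant degree.

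Next I would run localization. The $S$-fixed loci of $\overline M_{g,m}(\mc Z,\beta)$ are indexed by decorated graphs $\Gamma$: vertices carry a genus and map to a point of $\mc Z^S$, edges are multiple covers of one-dimensional $S$-orbits, and the fixed locus itself is a product of moduli spaces $\overline M_{g_v,n_v}$. The contribution of $\Gamma$ factors as a product of vertex factors (built from $\psi$-classes, the Hodge bundle and the $S$-character of $T_P\mc Z$), edge factors (multiple-cover terms depending only on the orbit weights), and node/flag gluing factors. Using the hypothesis $\gamma_i|_P=\gamma_i'|_{\varphi(P)}$ together with the weight matching above, I would set up a bijection $\Gamma\leftrightarrow\Gamma'$ between the decorated graphs for $(\mc Z,\beta)$ and for $(\mc Z',\beta')$ under which every factor is preserved: edge and node factors because the orbit weights agree, and vertex factors because $T_P\mc Z$ and $T_{\varphi(P)}\mc Z'$ have the same $S$-character. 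Summing over all $\Gamma$ yields the asserted equality of invariants.

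The step I expect to be the main obstacle is precisely this last equality of $S$-characters of tangent spaces, and more generally the identification of the virtual normal bundles $N^{vir}_\Gamma$. Because $\mc Z$ is only a complete intersection inside the binary-tree quiver variety $\mc Y$ cut out by the relations $A_{v_1}A_{v_2}=0$ (Lemma~\ref{lem:phasevargeneralquiver}), the character of $T_P\mc Z$ has to be assembled from the representation space $V$, the Lie algebra of $G$ and the relation bundle, and one must show that the mutation — which deletes some arrows, inverts others, and replaces $r_v$ by $\max\{N_f(v),N_a(v)\}-r_v$ — changes this character only through the substitution dictated by $\varphi$, with the deformation and obstruction pieces transforming consistently and the local factors along each branch of the tree (Lemmas~\ref{lem:recursivepattern} and \ref{lem:decorationsofQinP}) bookkept uniformly over all of $\Omega_n^A$. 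Once this character computation is nailed down, the graph-by-graph comparison and the summation over graphs are formal.
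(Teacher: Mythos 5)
Your proposal follows the paper's proof of Theorem \ref{thm:GWinvarmu} step for step: reduce to a single mutation $\mu_v$, construct the bijection $\varphi:\mc Z^S\to(\mc Z')^S$ (Lemma \ref{lem:Sfixedpointsmu}) compatibly with $\phi$ (Proposition \ref{prop:equicohmu}), match one-dimensional orbits and flag weights (Propositions \ref{prop:torusfixedgraphmu} and \ref{prop:weightsmu}), and then compare the decorated-graph contributions in the GKM localization formula factor by factor, with the curve-class correspondence of Proposition \ref{prop:transformeff}. The one simplification you underuse is that because each $\mc Z$ is GKM, the $S$-character of $T_P\mc Z$ is exactly the sum of the flag weights $\mathbf w(\tau,\sigma)$ at $P$, so the tangent-character equality you single out as the main obstacle (and propose to reassemble from $V$, $\mathfrak g$, and the relation bundle) is already a formal consequence of the flag-weight matching in Proposition \ref{prop:weightsmu}; the paper relies on precisely this to avoid any direct virtual normal bundle computation, needing only the extra observation that the pairing constants $a_i$ of \eqref{eqn:ai} are also preserved so that the edge factors $\mathbf h(\tau,d)$ agree.
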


\begin{dfn}\label{dfn:tautbundle}
    For each node $u\in Q_0\backslash \{v_f\}$, we define its tautological bundle $S_u$ to be $Z\times \mathbb C^{r_u}/G$, where $G$ acts on $Z$ in the usual way and $G$ acts on $\mathbb C^{r_u}$ as follows: 
        \begin{equation*}
          g\cdot x= g_u^{-sign(\sigma_u)}x, \, x\in \mathbb C^{r_u}, g=(g_v)_{v\in Q_0\backslash Q_f}\in G, g_u\in GL(r_{u}),
        \end{equation*}
         where we view the vector $x$ as a column vector. 
         For each frozen node $f\in Q_f$, we formally define the tautological bundle $S_f$ to be a trivial bundle $\mathbb C^{r_f}$. 
\end{dfn}

Define bundles over gauge nodes
\begin{equation*}
    S_v^*=
    \begin{cases}
        S_v &\text{if } \sigma_v>0\\
        S_v^\vee& \text{ if }\sigma_v<0\,.\\
    \end{cases}
\end{equation*}
\begin{dfn}\label{dfn:quotbundle}
    For each node $u\in Q_0\backslash\{v_f\}$, we define the quotient bundle $Q_u$ over $\mc Z$ as follows. 
    If $\sigma_u>0$, the quotient bundle $Q_u$ is defined as a quotient via the following short exact sequence 
    \begin{equation*}
        0\rightarrow S_u\rightarrow \oplus_{u\rightarrow w\in Q_1}S_w^*\rightarrow Q_u\rightarrow 0.
    \end{equation*} 
    If $\sigma_u<0$, the quotient bundle $Q_u$ is defined via the following short exact sequence
    \begin{equation*}
    0\rightarrow S_v\rightarrow (\oplus_{u\rightarrow v\in Q_1}S_u^*)^\vee
    \rightarrow Q_v\rightarrow 0.
\end{equation*}
\end{dfn}
 
For each bundle $E$ of rank $r$ over the variety $\mc Z$, we define its equivariant Chern polynomial to be 
\begin{equation}\label{eqn:chernpolynomial}
    c_t^S(E)=\sum_{i=0}^{r}(-1)^ic_i^S(E)t^{r-i}
\end{equation}
where $t$ is a formal variable and $c_i^S(E)$ are the equivariant $i$-th Chern classes. 
Consider a $\mathbb Q$-algebra $QH^*_S(\mc Z)[t]:=QH^*_S(\mc Z)\otimes \mathbb Q[t]$ which we still call quantum cohomology ring by abuse of terminology. 
Then the Chern polynomials of the quotient bundles and tautological bundles $c_t^S(S_v)$ and $c_t^S(Q_v)$ lie in $QH^*_S(\mc Z)[t]$. 
In genus zero, the Seiberg Duality conjecture above implies the following isomorphism, which can be viewed as the Seiberg Duality conjecture at the quantum cohomology level. 
\begin{conjecture}\label{conj:seiberginqcoh}
The following two equivariant small quantum cohomology rings are isomorphic
    \begin{equation*}
  \Phi : QH^*_S(\mc Z)[t]\rightarrow QH^*_S(\mc Z')[t]
\end{equation*}
up to change of K\"ahler variables $q_v'=q_v^{-1}$ 
and 
    $q_u'=
        q_uq_v^{[\op{sign}(\sigma_v)b_{vu}]_+} $.
In particular, if the two QPs are related by a quiver mutation at the gauge node $v$, then $\Phi(c_t^S(Q_v))=c_t^S((S_v')^\vee)$ where $S_v'$ is the tautological bundle over $\mc Z'$ at the gauge node $v$.
\end{conjecture}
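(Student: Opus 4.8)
The plan is to not prove the stated conjectures (Conjecture \ref{conj:seibergdualityconj} through Conjecture \ref{conj:seiberginqcoh}) in this section at all, since they are stated as \emph{conjectures} rather than theorems; instead, we will later establish the $A_n$-type case of all the above assertions by independent means (localization for Theorem \ref{introdthm:seibergdualityAn}, and the abelian/nonabelian correspondence for Theorem \ref{intrthm:clustermap}). Nonetheless, it is worth recording here the natural chain of implications that motivates the conjectural framework, so the reader sees why the quantum-cohomology statement follows formally from the invariant-level statement once the matching of tautological data is in place.

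First I would show that Conjecture \ref{conj:seibergdualityconj} implies the existence of the ring isomorphism $\Phi$ of Conjecture \ref{conj:seiberginqcoh}. The input is the cohomology isomorphism $\phi\colon H^*_S(\mc Z)\to H^*_S(\mc Z')$ together with the homology comparison \eqref{eqn:homologicalchange} and its induced map $\phi_*$ on $H_2$; extend $\phi$ to $\phi\otimes \mathrm{id}\colon H^*_S(\mc Z)[t]\to H^*_S(\mc Z')[t]$, and twist the K\"ahler variables by $q_v\mapsto q_v^{-1}$, $q_u\mapsto q_u q_v^{[\mathrm{sign}(\sigma_v)b_{vu}]_+}$ so that the substitution $\beta\mapsto\phi_*(\beta)$ is recorded as $q^\beta\mapsto q^{\phi_*(\beta)}$. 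That $\Phi$ so defined intertwines the quantum products is then exactly the three-point specialization (genus $0$, $n=3$) of Conjecture \ref{conj:seibergdualityconj}: for $\omega_i\in H^*_S(\mc Z)$ one has
\begin{equation*}
(\Phi(\omega_1)*'\Phi(\omega_2),\Phi(\omega_3))_{\mc Z'}=\sum_{\beta'}\langle\phi(\omega_1),\phi(\omega_2),\phi(\omega_3)\rangle^{\mc Z',S}_{0,3,\beta'}q^{\beta'}=\sum_{\beta}\langle\omega_1,\omega_2,\omega_3\rangle^{\mc Z,S}_{0,3,\beta}q^{\beta}=(\omega_1*\omega_2,\omega_3)_{\mc Z},
\end{equation*}
using that $\phi$ is a Poincar\'e-pairing isometry (which follows from \eqref{eqn:homologicalchange} together with $\phi$ being a ring isomorphism) and that $\phi_*$ is a bijection on effective classes as in Proposition \ref{prop:transformeff}. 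Nondegeneracy of the equivariant Poincar\'e pairing over the localized coefficient ring then upgrades this to $\Phi(\omega_1*\omega_2)=\Phi(\omega_1)*'\Phi(\omega_2)$, so $\Phi$ is a $\mathbb Q$-algebra isomorphism.

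The remaining, more substantive point is the last sentence: that under the mutation $\mu_v$ the isomorphism $\Phi$ sends $c_t^S(Q_v)$ to $c_t^S((S_v')^\vee)$. Here I would argue geometrically rather than via invariants. Recall from Lemma \ref{lem:phasevargeneralquiver} and the construction in Section \ref{sec:quivervarietieswithmu} that mutation at $v$ replaces the space $\mathbb C^{r_v}$ sitting at $v$ by its ``complementary'' space of dimension $\max\{N_f(v),N_a(v)\}-r_v$, realized inside the fixed ambient spaces at the adjacent nodes; tracing through Definitions \ref{dfn:tautbundle} and \ref{dfn:quotbundle}, the short exact sequence $0\to S_v\to \oplus_{v\to w}S_w^*\to Q_v\to 0$ on $\mc Z$ becomes, on $\mc Z'$, precisely the sequence exhibiting $(S_v')^\vee$ as the cokernel, because the middle term $\oplus_{v\to w}S_w^*$ is built from bundles at nodes $w\neq v$ which are \emph{unchanged} by $\mu_v$ (mutation is local, and $\phi$ restricts to the identity on those tautological classes after the identification of fixed loci in Lemma \ref{lem:Sfixedpointsmu} / Proposition \ref{prop:equicohmu}). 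Hence $c_t^S(Q_v)$ and $c_t^S((S_v')^\vee)$ agree after restriction to every $S$-fixed point, and by the localization theorem they are equal in $H^*_S$; since $\Phi$ is characterized on $H^*_S$ by its effect on fixed-point restrictions, $\Phi(c_t^S(Q_v))=c_t^S((S_v')^\vee)$.

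The main obstacle is precisely that the first two steps are only \emph{conditional}: Conjecture \ref{conj:seibergdualityconj} is not known in general, so the genuine content—the unconditional verification for $A_n$-type quivers—must wait for Sections \ref{sec:proofofseibergduality} and \ref{sec:proofclusteralgconj}, where the localization computation of one-dimensional orbit contributions (for the all-genus statement) and the abelian/nonabelian correspondence for quasimap $I$-functions (for the quantum-product identities underlying the cluster map) replace the appeal to the conjecture. The bookkeeping of signs and of the $\xi_i$-twist in Conjecture \ref{introdcoj:clusteralgconj} is the part most prone to error, and matching the K\"ahler-variable change \eqref{eqn:changeofphasemu} with the $q$-substitution in Theorem \ref{thm:maincluster2} is where I would be most careful.
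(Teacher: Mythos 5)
Your reading is correct: the statement is explicitly labeled a \emph{conjecture}, and the paper does not prove it in general---it merely records, in the sentence preceding the conjecture, that the genus-zero case of Conjecture \ref{conj:seibergdualityconj} implies it, which is precisely the formal implication you spell out (including the observation that the Poincar\'e pairing is itself a $\beta=0$ three-point invariant, so it is automatically respected). The $A_n$ case is then established unconditionally in the paper exactly as you describe, with Corollary \ref{cor:qcohisomorphic} supplying the ring isomorphism together with the K\"ahler-variable change via Proposition \ref{prop:transformeff}, and Proposition \ref{prop:equicohmu} supplying the matching $\phi(c_k^S(Q_v)) = c_k^S((S_v')^\vee)$ by comparing restrictions to the bijectively identified discrete $S$-fixed loci (Lemma \ref{lem:Sfixedpointsmu}), which is the same fixed-point argument you sketch.
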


\subsection{Cluster algebra conjecture}\label{sec:clusteralgebraconj} 
In the quantum cohomology ring $QH^*_S(\mc Z)[t]$,  we establish the following relations, referred to as the quantum cohomological cluster exchange relations.
\begin{conjecture}\label{conj:clusterexchagenrelation}
For each node $v\in Q_0\backslash Q_f$,
if $N_f(v)>N_a(v)$, we have
\begin{equation}\label{eqn:conjNf>Na}
        c_t^S(S_v)*c_t^S(Q_v)= \prod_{v\rightarrow u}c_t^S(S_u^*)+(-1)^{r_v+N_a(v)}q_v\prod_{w\rightarrow v\in Q_1}c_t^S(S_w^*);
    \end{equation}
and if $N_f(v)<N_a(v)$, we have
\begin{equation}\label{eqn:conjNf<Na}
        c_t^S(S_v)*c_t^S(Q_v)= \prod_{u\rightarrow v}c_t^S((S_u^*)^\vee)+(-1)^{r_v+N_f(v)}q_v^{-1}\prod_{v\rightarrow w\in Q_1}c_t^S((S_w^*)^\vee).
    \end{equation}
\end{conjecture}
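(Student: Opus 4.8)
The plan is to reduce the statement to the $A_n$-case computation carried out in Theorem \ref{introdthm:quantumcohexrel}, and then to prove that case by abelian/nonabelian correspondence. First I would observe that the conjecture is local: the bundles $S_v$, $Q_v$ and $S_w^*$, $S_u^*$ attached to the arrows at $v$, together with the Chern polynomial identity, only involve the nodes adjacent to $v$ and the K\"ahler parameter $q_v$. Using Lemma \ref{lem:omegan} and Lemma \ref{lem:decorationsofQinP}, for a QP in $\Omega_n^A$ I can choose integers $m>k>p\geq l\geq 0$ with $r_v=N_k-N_l$, $r_{v_1}=N_m-N_l$, $r_{v_3}=N_p-N_l$, $r_{v_2}=N_m-N_{k+1}$, $r_{v_4}=N_k-N_{p+1}$, so that $Q_v$ is identified with $S_m/S_{p+1}$-type quotient and the defining short exact sequences of Definition \ref{dfn:quotbundle} become the quotient-bundle sequences on the flag variety $Fl$. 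The case $N_f(v)>N_a(v)$ then becomes exactly the identity \eqref{introdeqn:quantumcohexrel0}, after absorbing the sign $(-1)^{r_v+N_a(v)}$ and the product of signs $\prod_{a=p+1}^{k}(-1)^{N_a+N_{a-1}}$ into the relabeling of $q_v$; the case $N_f(v)<N_a(v)$ is handled by the dual sequence, which corresponds (via Figure \ref{fig:localbehavior}(b)) to a mutation-equivalent configuration, so the two cases are exchanged under $\mu_v$ and it suffices to prove one of them.

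Next I would establish \eqref{introdeqn:quantumcohexrel0} on $Fl=Fl(N_1,\dots,N_{n+1})$. The strategy is to pull the relation back from a product of Grassmannians via the abelian/nonabelian correspondence. Concretely, $Fl$ is a nonabelian GIT quotient whose associated abelian quotient is a product of projective spaces / Grassmannians, and by Fontanine–Kim–Sabbah \cite{abelian/nonabelian:CKB} and Webb \cite{abeliannonabelian:Webb, abelianizationlef:Webb} the quantum product on $QH^*_S(Fl)$ is determined, through the quasimap $I$-function and the Weyl-anti-invariant part, by that of the abelian quotient. On the abelian side the classes $c_t^S(S_j/S_i)$ become products of Chern roots, and the sought identity reduces to an identity among elementary symmetric polynomials in these roots together with the explicit small quantum corrections recorded by Ciocan-Fontanine \cite{qcohflag:CF99} (the only quantum correction to $c_1(S_j/S_i)$-type classes involves a single $q_a$). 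I would verify the classical part of \eqref{introdeqn:quantumcohexrel0} as a Whitney-sum identity for the exact sequences $0\to S_i\to S_j\to S_j/S_i\to 0$, and then check that the quantum corrections on both sides match the $\prod_{a=p+1}^k q_a$ term; this is where the abelian/nonabelian comparison does the real work, since it pins down which quantum terms appear.

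The main obstacle I anticipate is the bookkeeping of signs and K\"ahler variables: matching $(-1)^{r_v+N_a(v)}q_v$ on the geometric side with $\prod_{a=p+1}^{k}(-1)^{N_a+N_{a-1}}q_a$ on the flag side, and ensuring this is consistent simultaneously across a whole chain of mutations (so that the substitution $q_i=(-1)^{N_{i+1}+N_i}\xi_{i+1}^{-1}\xi_{i-1}$ of Theorem \ref{thm:maincluster2} makes all the factors cancel globally, not just locally). A secondary difficulty is that the abelian/nonabelian results are naturally phrased for $I$-functions and Frobenius structures rather than directly for structure constants of the small quantum product, so I would need a lemma extracting the degree-reasons that only the listed quantum terms can contribute — effective curve classes of anticanonical degree large enough to produce extra $q$'s are excluded because the relevant insertions $c_t^S(S_j/S_i)$ have bounded cohomological degree. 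Once these two points are settled, assembling the local identities into Conjecture \ref{conj:clusterexchagenrelation}, and hence (by part (2) of Theorem \ref{introdthm:quantumcohexrel}) the ring homomorphism $\psi$, is formal.
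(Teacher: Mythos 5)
Two issues, one structural and one about the heart of the argument.

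First, you cannot ``reduce the statement to the $A_n$-case.'' Conjecture~\ref{conj:clusterexchagenrelation} is stated for an arbitrary cluster quiver with potential, and the paper itself only proves it for $A$-type (it remains a conjecture in general; the text says exactly this right after the statement). Your locality claim --- that the identity only involves bundles at the node $v$ and its neighbors --- is true, but the local picture of a general quiver near a gauge node is not captured by the $\Omega_n^A$ description you invoke: Lemma~\ref{lem:omegan} and Lemma~\ref{lem:decorationsofQinP} are specific structure theorems for $A$-mutation-equivalent quivers (at most four adjacent arrows, all decorations of the form $N_j-N_i$, etc.). A $D$-type or wild quiver need not embed locally into an $A$-type quiver variety, so the substitution $r_v=N_k-N_l$, $Q_v\cong S_m/S_{p+1}$, etc.\ has no meaning there. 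If you only intend to prove the $A$-type specialization, say so; as written, the first paragraph asserts a reduction that fails.

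Second, for the $A$-type case, your strategy (abelian/nonabelian correspondence over the toric abelianization, classical part by Whitney sums, quantum corrections matched by degree reasons) is indeed the paper's starting point, but the proposal leaves out what actually makes the argument close. The paper does \emph{not} attempt to compute the quantum corrections of the full four-adjacent-node identity~\eqref{introdeqn:quantumcohexrel0} directly from the $I$-function. Instead it isolates one fundamental relation, Proposition~\ref{lem:special},
\begin{equation*}
  c^S_t(S_k)* c^S_t(S_l\slash S_k)=c^S_t(S_l)+(-1)^{N_k+N_{k-1}}q_k\, c^S_t(S_l\slash S_{k+1})* c^S_t(S_{k-1}),
\end{equation*}
proves only that one by an explicit Batyrev-type computation on $V\sslash_\theta T$ under the specialization $q_{k,i}\mapsto(-1)^{N_k-1}q_k$, and then derives \emph{all} the exchange relations (Lemmas~\ref{lem:node1eqn}--\ref{lem:node4val}) by an elementary but lengthy induction in $QH^*_S(Fl)[t]$. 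Your ``single $q_a$'' heuristic is right for that fundamental relation, but the $\prod_{a=p+1}^k q_a$ factor in~\eqref{introdeqn:quantumcohexrel0} arises from iterating it, not from a one-shot degree count. Without that reduction to a single computable case, the ``quantum corrections match'' step is not a proof --- you would have to control all the quantum terms of a general product of two quotient-bundle Chern polynomials, which the abelian/nonabelian correspondence by itself does not give you in closed form. Adding the fundamental-relation-plus-induction step to your outline would bring it in line with the paper; as it stands, the second paragraph describes the correct machinery but not the actual argument that uses it.
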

We have proved this conjecture for $A$-type quivers which can be found in the last section. Because of this observation, we believe that there is a relation between the cluster algebra and the quantum cohomology ring. 

Given a cluster quiver $\mathbf Q=(Q_0,Q_1,W)$, 
one can construct a cluster algebra which is called a geometric cluster algebra as in Section \ref{sec:clusteralgebras}. 
Denote the corresponding cluster algebra by $\mathscr A_{\mathbf Q}$. 
Let $\mathbf x=(x_u)_{u\in Q_0}$ be the initial cluster variables.

We introduce another set of formal variables $\{\xi_i\}_{i\in Q_0}$ and
let $q_v=(-1)^{N_f(v)-N_a(v)}\prod_{u\in Q_0}\xi_u^{b_{uv}}$.
Assume that Conjecture \ref{conj:clusterexchagenrelation} holds. 
Combining the Seiberg duality conjecture and the relation of tautological bundles and quotient bundles in Conjecture \ref{conj:seiberginqcoh}, we propose the following construction for a $\mathbb Q$ algebra homomorphism from the cluster algebra to the equivariant quantum cohomology ring of the associated variety of decorated QP $QH^*_S(\mc Z)[t] $.
\begin{conjecture}\label{conj:clusteralgconj}
    There is a well-defined $\mathbb Q$-algebra homomorphism 
    \begin{equation*}
        \psi:\mathscr A_{\mathbf Q}\rightarrow QH^*_S(\mc Z)[t]
    \end{equation*}
    such that 
    \begin{enumerate}
        \item $\psi(x_v)=(-1)^{r_v}\xi_vc_t^S(S_v)$
        \item  
        \begin{equation*}
            \psi(x_v')=\begin{cases}
            (-1)^{N_f(v)-r_v}\xi_v^{-1}\prod_{v\rightarrow u}\xi_uc_t^S(Q_v) \text{ when } N_f(v)>N_a(v)\\
            (-1)^{N_a(v)-r_v}\xi_v^{-1}\prod_{w\rightarrow v}\xi_wc_t^S(Q_v) \text{ when } N_f(v)<N_a(v)
            \end{cases}
        \end{equation*}
        \item 
        For other non-initial cluster variables $\tilde x_v$, 
        assume that $\tilde x_v$ lies in a seed $(\tilde {\mathbf Q},  \tilde {\mathbf{x}})$ that can be obtained by a sequence of quiver mutations from the initial seed $(\mathbf Q,\mathbf x)\xhookrightarrow{\mu_{v_i}\cdots\mu_{v_1}} ( \tilde {\mathbf Q},\tilde{\mathbf x})$. 
        The conjecture \ref{conj:seiberginqcoh} asserts that the quantum cohomology rings of varieties of these QPs are isomorphic. 
        Let $\tilde\Phi: QH^*_S(\mc Z) \rightarrow QH^*_S(\tilde{\mc Z})$ be the composite of these isomorphisms.
        Let $\{\tilde S_u\}_{u\in Q_0}$ be tautological bundles over the variety $\tilde{\mc Z}$ of the decorated QP $(\tilde{\mathbf Q},\tilde{\mathbf r})$. 
        Then the image of $\tilde x_v$ is the pullback of $c_t^S(\tilde{S}_v)$ if $\tilde \sigma_v>0$ and the pullback of $c_t^S(\tilde{S}_v^\vee)$ if $\tilde \sigma_v<0$ scaled by $\xi_v$, and these scalars can be determined recursively to make $\psi$ a ring homomorphism. 
    \end{enumerate}
    The image of $\tilde x_v$ for noninitial cluster variables in the last item does not depend on the seed we choose and does not depend on the sequence of quiver mutations we obtain. 
\end{conjecture}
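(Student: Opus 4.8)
The plan is to separate the two sources of ambiguity in the recipe of item~(3): the choice of mutation path $(\mathbf Q,\mathbf x)\hookrightarrow\cdots\hookrightarrow(\tilde{\mathbf Q},\tilde{\mathbf x})$, and the choice of target seed $(\tilde{\mathbf Q},\tilde{\mathbf x})$ among all seeds containing $\tilde x_v$. For the first, one needs $\tilde\Phi$ to be a \emph{canonical} isomorphism depending only on the two endpoint seeds. In the $A_n$-type situation this is forced: by Lemma~\ref{lem:Sfixedpointsmu} and Proposition~\ref{prop:equicohmu} each elementary Seiberg-duality isomorphism $\Phi_{\mu_w}$ is, on equivariant cohomology, the unique ring isomorphism intertwining the canonical bijection $\varphi_{\mu_w}$ of the finite torus-fixed loci; canonical bijections of fixed loci compose to canonical bijections, so $\tilde\Phi$ is path-independent, and it carries the K\"ahler change of variables prescribed by Conjecture~\ref{conj:seiberginqcoh}. (For a general quiver one would first have to upgrade Conjecture~\ref{conj:seiberginqcoh} so that the elementary isomorphisms are coherent --- see the last paragraph.)

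For the second source, I would invoke the classical combinatorics of the $A_n$ cluster algebra: its non-initial cluster variables are labelled by the diagonals of an $(n+3)$-gon, i.e.\ by pairs $(i,j)$ with $0\le i<j\le n+1$ not among the frozen/initial ones; seeds correspond to triangulations, and the exchange relations are Ptolemy relations. Propagating the decoration rule \eqref{eqn:Nchange} through mutations and matching it against the recursive description of $\Omega^A_n$-decorations in Lemma~\ref{lem:decorationsofQinP}, one shows that whenever the cluster variable labelled $(i,j)$ sits at a gauge node $v$ of a seed in $\Omega^A_n$ its decoration is $r_v=N_j-N_i$, and that under $\tilde\Phi$ followed by the canonical identification $QH^*_S(\tilde{\mc Z})[t]\cong QH^*_S(Fl)[t]$ the class $c_t^S(\tilde S_v)$ (resp.\ $c_t^S(\tilde S_v^\vee)$, according to $\op{sign}(\tilde\sigma_v)$) is sent to $c_t^S(S_j/S_i)$. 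Hence the $\xi$-correction is pinned to $\xi_j\xi_i^{-1}$, and $\psi(\tilde x_v)=(-1)^{N_j-N_i}\xi_j\xi_i^{-1}c_t^S(S_j/S_i)$ depends only on $(i,j)$, i.e.\ only on $\tilde x_v$. This is exactly the assignment of Theorem~\ref{thm:maincluster2}(2), so in the $A$-type case independence is subsumed by that theorem; in general it is the same bookkeeping run through the recursion of Lemmas~\ref{lem:omegan}--\ref{lem:recursivepattern}.

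With both ambiguities removed, I would check that these seed-independent assignments assemble into a $\mathbb Q$-algebra homomorphism, i.e.\ that $\psi$ respects every exchange relation \eqref{eqn:clusterrelation}; by locality of quiver mutation this collapses to a single local identity, namely the quantum cohomological cluster exchange relation of Theorem~\ref{introdthm:quantumcohexrel}(1), into which the $\xi$-variables were built so that after the substitution $q_i=(-1)^{N_{i+1}+N_i}\xi_{i+1}^{-1}\xi_{i-1}$ the factor $\prod_{a=p+1}^{k}(-1)^{N_a+N_{a-1}}q_a$ cancels and the relation becomes the honest Ptolemy relation. I expect the main obstacle to be the coherence invoked in the first paragraph: Conjecture~\ref{conj:seiberginqcoh} only asserts the \emph{existence} of each elementary isomorphism, not a pentagon/cocycle compatibility, so a priori one cannot compose them along a path; the localization picture supplies the needed rigidity for $A_n$, but for a general quiver proving that the $\Phi_{\mu_w}$ glue over the mutation groupoid --- presumably via the abelian/nonabelian correspondence of \cite{abelian/nonabelian:CKB,abeliannonabelian:Webb} --- is the genuinely hard step.
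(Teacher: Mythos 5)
What you set out to ``prove'' is a \emph{conjecture} in the paper, and the paper does not prove it. The only text the paper attaches to it is the one-sentence observation that if Conjecture~\ref{conj:clusterexchagenrelation} holds then $\psi$ preserves the exchange relations; the actual theorems in the paper are Theorem~\ref{thm:maincluster2} (the explicit $A_n$ map) and Corollary~\ref{cor:Aclustertrue} (which verifies that the $A_n$ assignment $c_t^S(S_j/S_i)$ really is the pullback of $c_t^S(\tilde S_v)$ or $c_t^S(\tilde S_v^\vee)$ demanded by the general conjecture). Your sketch is consistent with this: you reduce everything to Theorem~\ref{introdthm:quantumcohexrel}(1) plus the identification of images, which is exactly the route the paper takes for $A$-type, and your closing remark that Conjecture~\ref{conj:seiberginqcoh} only asserts existence of the $\Phi_{\mu_w}$'s without any cocycle compatibility is precisely the obstruction the paper leaves open for general quivers.

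The weak point in your argument is the path-independence claim in your first paragraph. ``Canonical bijections of fixed loci compose to canonical bijections'' is true but beside the point: you need the composite $\varphi_{\mu_{v_i}}\circ\cdots\circ\varphi_{\mu_{v_1}}\colon Fl^S\to\tilde{\mc Z}^S$ to depend only on the two endpoint seeds, not on the path, and that is a nontrivial combinatorial statement about the union/complement operations of Lemma~\ref{lem:Sfixedpointsmu}. The paper's Corollary~\ref{cor:Aclustertrue} actually settles this, and does so by a different (and cleaner) device than what you propose: the image set $\mc I_v'$ is expressible by a fixed formula in unions and complements of the $\mc I_i$ that is independent of the numerical values $N_1<\cdots<N_{n+1}$, and since by Lemma~\ref{lem:decorationsofQinP} its cardinality is $N_j-N_i$ for all choices of the $N_i$, the only possible such formula is $\mc I_v'=\mc I_j\setminus\mc I_i$. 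This pins down the fixed-point bijection and the scalar normalization simultaneously, whereas your argument assumes coherence before deducing the formula. Your second paragraph's appeal to polygon/Ptolemy combinatorics lands on the same answer $\psi(\tilde x_v)=(-1)^{N_j-N_i}\xi_j\xi_i^{-1}c_t^S(S_j/S_i)$ but is doing seed-labelling work rather than establishing that the Seiberg-duality isomorphisms glue; those are logically separate, and only the paper's cardinality argument closes the glueing gap in the $A_n$ case.
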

One can check that when Conjecture \ref{conj:clusterexchagenrelation} holds, the map $\psi$ defined above preserves all cluster exchange relations.

\section{Proof for all-genus Seiberg duality Conjecture }\label{sec:proofofseibergduality}
We will utilize the localization theorem to prove that the Seiberg duality conjecture of all genus \ref{conj:seibergdualityconj} holds for all decorated QPs $(\mathbf Q,\mathbf r)\in \Omega^A_n $ that are mutation equivalent to $A_n$. 
Throughout this section, we will let $(\mathbf Q,\mathbf r)$ and $(\mathbf Q',\mathbf r')$ be two decorated QPs that are related by a quiver mutation $\mu_v$ in $\Omega^A_n$ and let $\mc Z$ and $\mc Z'$ be the varieties of the two QPs. 

\begin{thm}\label{thm:GWinvarmu}
   There exists a $H^*_S(pt)$-linear map $\phi: H^*_S(\mc Z)\rightarrow H^*_S(\mc Z')$ and a morphism $\phi_*: H_2(\mc Z, \Z)\rightarrow H_2(\mc Z', \Z)$  such that, 
 for arbitrary $\gamma_1, \ldots,\gamma_m\in H^*_S(\mc Z)$,  $a_i\in \mathbb Z_{\geq 0}$ and $\beta\in H_2(\mc Z)$, we have 
\begin{equation*}
    \langle \tau_{a_1}\gamma_1,\ldots,\tau_{a_m}\gamma_m \rangle_{g,m,\beta}^{\mc Z,S}=
    \langle \tau_{a_1} \phi(\gamma_1),\ldots,\tau_{a_m}\phi(\gamma_m) \rangle_{g,m,\phi_*(\beta)}^{\mc Z',S}. 
\end{equation*}
See Proposition \ref{prop:equicohmu} and Proposition \ref{prop:transformeff} for the constructions of maps $\phi$ and $\phi_*$.
\end{thm}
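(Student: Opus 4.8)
The plan is to prove the theorem by the virtual localization formula of Graber--Pandharipande \cite{GW:GPequiv}, comparing the fixed-loci contributions to the Gromov--Witten invariants of $\mc Z$ and $\mc Z'$ term by term. First I would set up the torus action: since $S=\prod_{a\in Q_f}(\mathbb C^*)^{r_a}$ acts on the representation spaces only through the frozen nodes, both $\mc Z$ and $\mc Z'$ carry compatible $S$-actions, and by Lemma \ref{lem:Sfixedpointsmu} the fixed loci $\mc Z^S$ and $(\mc Z')^S$ are finite and identified by the canonical bijection $\varphi$. I would then verify that $\varphi$ extends to an $H^*_S(pt)$-linear isomorphism $\phi\colon H^*_S(\mc Z)\to H^*_S(\mc Z')$ matching restrictions to fixed points (Proposition \ref{prop:equicohmu}), and construct $\phi_*$ on $H_2$ by dualizing the change of K\"ahler variables in \eqref{eqn:changeofphasemu}--\eqref{eqn:homologicalchange}, which is the content of Proposition \ref{prop:transformeff}. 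Because $\mu_v$ is a local operation, the identification only alters data attached to the node $v$ and its (at most four) neighbors, so the comparison reduces to the local model of Figure \ref{fig:localbehavior}.

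Next I would describe the $S$-fixed stable maps to $\mc Z$ (resp.\ $\mc Z'$). By the standard analysis, each such map is encoded by a decorated graph $\Gamma$: vertices carry fixed points $P\in\mc Z^S$, edges carry multiple covers of $S$-invariant $\mathbb P^1$'s (the one-dimensional orbits of $\mc Z$), together with genus and marking assignments. The key geometric input is a classification of the one-dimensional $S$-orbits of $\mc Z$ and of $\mc Z'$ and their tangent weights at the two endpoints; this I expect to follow from the description of $\mc Z$ as a complete intersection in a binary-tree quiver variety (Lemma \ref{lem:phasevargeneralquiver}) together with the recursive decoration of Lemma \ref{lem:decorationsofQinP}. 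The goal is to produce a bijection between decorated graphs for $\mc Z$ and for $\mc Z'$, compatible with $\varphi$ on vertices and with $\phi_*$ on the total degree, under which the vertex/edge contributions to $\frac{1}{e^S(N^{vir}_\Gamma)}$ agree. Since $\phi$ preserves fixed-point restrictions, the insertions $ev_i^*(\gamma_i)$ and the $\psi$-classes contribute identically on matched graphs, so the theorem follows once the virtual normal bundle contributions are shown to match.

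The heart of the argument — and the main obstacle — is the edge-by-edge comparison of the virtual normal bundle weights. The virtual tangent complex of $\overline M_{g,m}(\mc Z,\beta)$ differs from that of $\overline M_{g,m}(\mc Z',\beta')$ precisely in the deformation/obstruction theory along the mutated node $v$: passing from $\mathbf Q$ to $\mathbf Q'$ replaces $S_v$ by (roughly) the cokernel bundle $Q_v$, inverts the arrows at $v$, and trades some deformations for obstructions coming from the new cubic terms $cb^*a^*$ in $W'$. One must check that, after restricting to a fixed point and to an invariant $\mathbb P^1$, the $H^0-H^1$ of the relevant line bundles on the cover reorganize so that the extra factors in $e^S(N^{vir})$ on the two sides cancel, using the numerical relation $r_v'=\max\{N_f(v),N_a(v)\}-r_v$ from \eqref{eqn:Nchange} and the sign bookkeeping $(-1)^{r_v+N_a(v)}$ that already appears in Conjecture \ref{conj:clusterexchagenrelation}. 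Concretely this amounts to an Euler-class identity for the exact sequences defining $Q_v$ twisted by $\mathcal O_{\mathbb P^1}(d)$, summed over the cover degree $d$; the genus enters only through the Hodge bundle factor, which is the same on both sides. I would organize this as: (i) reduce to a single edge by the multiplicativity of vertex/edge contributions; (ii) for a single edge, compute both normal-bundle weights explicitly in the local model; (iii) match them using the decoration constraints. Once the single-edge identity is in place, multiplicativity over $\Gamma$ and summation over all graphs yields the stated equality of all-genus invariants, and the corollary $QH^*_S(\mc Z)\cong QH^*_S(\mc Z')$ (Corollary \ref{cor:qcohisomorphic}) is the genus-zero, three-point specialization.
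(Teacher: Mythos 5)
Your overall strategy --- virtual localization, decorated graphs, the bijection $\varphi$ of Lemma~\ref{lem:Sfixedpointsmu}, the isomorphism $\phi$ of Proposition~\ref{prop:equicohmu}, and reduction of everything to the local model of Figure~\ref{fig:localbehavior} --- is exactly what the paper does, and your observation that the bulk of the work is geometric input about one-dimensional $S$-orbits and tangent weights is also on target (Sections 5.3--5.5).

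Where you diverge is in what you identify as the ``heart of the argument.'' You propose to compare the virtual normal bundles of $\overline M_{g,m}(\mc Z,\beta)$ and $\overline M_{g,m}(\mc Z',\beta')$ directly, via a deformation-obstruction analysis at the mutated node: tracking how passing from $S_v$ to $Q_v$ reorganizes $H^0-H^1$ on edge covers, and looking for an Euler-class cancellation coming from the exact sequence defining $Q_v$, the cubic term $cb^*a^*$, and the sign $(-1)^{r_v+N_a(v)}$. The paper avoids all of this. It treats $\mc Z$ and $\mc Z'$ as two separate smooth GKM spaces and invokes the GKM form of the localization formula (Theorem~\ref{thm:localization}, following \cite{GW:GPequiv,liu2011localization}), in which \emph{every} ingredient --- the edge factor $\mathbf h(\tau_e,d_e)$, the vertex factor $\mathbf h(\sigma_v,g_v)$, the integers $a_i$ of \eqref{eqn:ai} --- is a function of the GKM graph data alone, i.e.\ of the fixed points, the one-dimensional orbits, and the tangent weights $\mathbf w(\tau,\sigma)$ at flags. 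The actual key lemmas are therefore Proposition~\ref{prop:torusfixedgraphmu} (the bijection $\varphi$ on fixed points extends to a bijection $\mc Z^1 \to (\mc Z')^1$ of one-skeleta) and Proposition~\ref{prop:weightsmu} (the flag weights are preserved by $\varphi$), both proved by explicit matrix representatives of one-dimensional orbits in the local model. Once these hold, the $a_i$ are preserved because they are determined by the weights via \eqref{eqn:ai}, and every term in Theorem~\ref{thm:localization} matches graph by graph with no further computation. The Euler-class identity you envision would amount to re-deriving the GKM localization formula inside the proof, and worse, it imports a red herring: there is no exact sequence relating $T\mc Z$ and $T\mc Z'$ to cancel against --- these are unrelated smooth varieties whose moduli spaces of stable maps are being compared only at the level of localization sums. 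If you pursue your route you will likely get stuck looking for a cancellation that is not there; the thing to prove instead is the weight-matching of Proposition~\ref{prop:weightsmu}.

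One smaller point: you should also verify that $\varphi_*:\mc Z^1\to(\mc Z')^1$ induces on $H_2$ the same map as the $\phi_*$ built from \eqref{eqn:homologicalchange}; the paper checks this at the end of the proof by applying localization to $\int_{[\ell]}\gamma$ for each one-dimensional orbit $\ell$ and invoking Poincar\'e duality. Your proposal builds $\phi_*$ ``by dualizing the change of K\"ahler variables,'' which is Proposition~\ref{prop:transformeff}, but does not flag that this consistency check is needed to legitimately substitute $\phi_*(\beta)$ in the degree constraint of the decorated graphs.
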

As discussed in Section \ref{sec:seibergduality}, one quick corollary of the all genus Seiberg duality conjecture is that the equivariant quantum cohomology rings of $\mc Z$ and $\mc Z'$ are isomorphic. 
\begin{cor}\label{cor:qcohisomorphic}
    The small quantum cohomology rings $QH^*_S(\mc Z):=H^*_S(\mc Z)\otimes \mathbb Q[q]$ and $QH^*_S(\mc Z'):=H^*_S(\mc Z')\otimes \mathbb Q[q']$ are isomorphic under the change of K\"ahler variables
    \begin{align*}
        &q_v'=q_v^{-1}\,,\nonumber\\
        &q_u'=q_v
        \prod_{u\in Q_0\backslash Q_f}(q_u)^{[\op{sign}(\sigma_v)b_{vu}]_+}\,.
    \end{align*}
\end{cor}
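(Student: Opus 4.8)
The plan is to deduce the corollary formally from the genus-zero, three-point case of Theorem~\ref{thm:GWinvarmu}. By Proposition~\ref{prop:equicohmu} the linear map $\phi$ is the canonical isomorphism of classical equivariant cohomology rings coming from the fixed-point bijection, so in particular $\phi$ is bijective and $\phi(1)=1$; these are the only properties of $\phi$ I will use beyond Theorem~\ref{thm:GWinvarmu} itself. First I would assemble $\phi$ and $\phi_*$ into one $\mathbb Q$-linear map
\[
\Phi\colon QH^*_S(\mc Z)\longrightarrow QH^*_S(\mc Z'),\qquad \Phi(\gamma\otimes q^{\beta})=\phi(\gamma)\otimes (q')^{\phi_*(\beta)},
\]
and identify the substitution $q^{\beta}\mapsto (q')^{\phi_*(\beta)}$ with the change of Kähler variables in the statement: the $q_u$ are the monomial coordinates on $H_2(\mc Z)$ dual to the basis of $H^2_S(\mc Z)$ given by the equivariant first Chern classes of the tautological bundles, so the substitution is the transpose of the action of $\phi$ on $H^2$, which one reads off from \eqref{eqn:homologicalchange} and Proposition~\ref{prop:transformeff}. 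Because $q_v'=q_v^{-1}$, this substitution only makes sense after inverting the Kähler parameters, so the asserted isomorphism is understood over $\mathbb Q[q^{\pm 1}]$, resp. $\mathbb Q[{q'}^{\pm 1}]$ (equivalently, one uses that for these flag-type varieties the quantum structure constants are polynomial in $q$); this is the one genuine bookkeeping subtlety.

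Next I would record two facts. (i) \emph{$\phi$ is an isometry for the equivariant Poincaré pairings.} Since $\langle 1,\eta,\delta\rangle^{\mc Z,S}_{0,3,0}=(\eta,\delta)_{\mc Z}$ and $\phi_*(0)=0$, applying Theorem~\ref{thm:GWinvarmu} with $g=0$, $m=3$, $\beta=0$ and using $\phi(1)=1$ gives $(\eta,\delta)_{\mc Z}=(\phi\eta,\phi\delta)_{\mc Z'}$ for all $\eta,\delta$ (after localizing the equivariant parameters the pairings are perfect, since $\mc Z^S$ and $(\mc Z')^S$ are finite). (ii) \emph{$\Phi$ intertwines the quantum products.} Writing $\gamma_1*_{\mc Z}\gamma_2=\sum_{\beta}c_{\beta}\,q^{\beta}$, the definition of the quantum product gives $(c_{\beta},\gamma_3)_{\mc Z}=\langle\gamma_1,\gamma_2,\gamma_3\rangle^{\mc Z,S}_{0,3,\beta}$, so by (i), then Theorem~\ref{thm:GWinvarmu} with $g=0,m=3$, then reindexing along the bijection $\beta\mapsto\phi_*(\beta)$ on effective classes (Proposition~\ref{prop:transformeff}),
\[
\bigl(\Phi(\gamma_1*_{\mc Z}\gamma_2),\phi\gamma_3\bigr)_{\mc Z'}=\sum_{\beta}\langle\phi\gamma_1,\phi\gamma_2,\phi\gamma_3\rangle^{\mc Z',S}_{0,3,\phi_*\beta}\,(q')^{\phi_*\beta}=\sum_{\beta'}\langle\phi\gamma_1,\phi\gamma_2,\phi\gamma_3\rangle^{\mc Z',S}_{0,3,\beta'}\,(q')^{\beta'},
\]
which equals $\bigl(\Phi(\gamma_1)*_{\mc Z'}\Phi(\gamma_2),\phi\gamma_3\bigr)_{\mc Z'}$ by the definition of the quantum product on $\mc Z'$. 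As $\phi$ is surjective and the $\mc Z'$-pairing nondegenerate, this forces $\Phi(\gamma_1*_{\mc Z}\gamma_2)=\Phi(\gamma_1)*_{\mc Z'}\Phi(\gamma_2)$.

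Finally, $\Phi$ is a ring homomorphism: it is additive and $\mathbb Q$-linear, it sends $1$ to $1$, and it respects the quantum product by (ii) (the $\beta=0$ part of (ii) re-proves that $\Phi$ respects the cup product, consistently with Proposition~\ref{prop:equicohmu}); and it is bijective because $\phi$ is a linear isomorphism and the change of Kähler variables is invertible — equivalently because the same construction applied to the mutation $\mu_v$ at $v$ (quiver mutation being an involution) yields a two-sided inverse. This gives the claimed isomorphism $QH^*_S(\mc Z)\cong QH^*_S(\mc Z')$, extended trivially by $\mathbb Q[t]$ if the $t$-version is wanted. I do not anticipate a serious obstacle: the statement is a formal consequence of the three-point case of Theorem~\ref{thm:GWinvarmu} and its own $\beta=0$ specialization, so the real work is confined to the Kähler-variable bookkeeping flagged above and to checking that the effective-cone correspondence of Proposition~\ref{prop:transformeff} is the restriction of the homological map $\phi_*$, so that the reindexing in (ii) is legitimate.
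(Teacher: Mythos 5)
Your proposal is correct and follows essentially the same route as the paper, which simply cites Theorem \ref{thm:GWinvarmu} (in its genus-zero, three-point specialization) together with Proposition \ref{prop:transformeff} for the change of K\"ahler variables; your write-up is just a fully spelled-out version of that one-line argument. The Laurent-polynomial subtlety you flag around $q_v'=q_v^{-1}$ is real and is left implicit in the paper, so noting it is a point in your favor rather than a deviation.
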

\begin{proof}
    The isomorphism between the quantum cohomology rings is due to Theorem \ref{thm:GWinvarmu} and the transformation of K\"ahler variables is a result of the relation of curve classes in Proposition \ref{prop:transformeff}.
\end{proof}

We will study the ingredients in the localization Theorem \ref{thm:localization} for $\mc Z$ and $\mc Z'$ in detail and then compare Gromov-Witten invariants of $\mc Z$ and $\mc Z'$.
\subsection{Localization theorem}
Localization is a very efficient tool for computing Gromov-Witten invariants of a GKM space, and we review it in this section following \cite{GW:GPequiv,liu2011localization}. 
A variety $\mc X$ with torus action $S$ is a GKM space if the set of zero-dimensional $S$-orbits is of zero dimension and the set of one-dimensional $S$-orbits is of one dimension.
Under this assumption, the torus fixed locus $\mc X^S$ is discrete and the closure of any irreducible 1-dimensional $S$-orbit in $\mc X$ is isomorphic to $\mathbb{P}^1$, with $0$ and $\infty$ being $S$-fixed. 

We define the 1 skeleton of $\mc X$ as the union of all closures of one-dimensional orbits, which we denote by $\mc X^1$. 
\subsubsection{Torus fixed graphs}\label{lem:loctorusfixedgraph}
We assign a graph (a compact, connected 1-dimensional CW complex) $\Upsilon$ to the 1-skeleton $\mc X^1$.
The vertices $\sigma$ in $\Upsilon$ are $S$-fixed points $p_{\sigma}\in \mc X^S$, and edges ${\tau}$ in $\Upsilon$ are irreducible 1-dimensional $S$-orbits $\ell_\tau$. 
A vertex $\sigma$ is contained in an edge $\tau$ if and only if the fixed point $p_\sigma$ is contained in the closure of the 1-dimensional $S$-orbit $\ell_\tau$.
A flag in $\Upsilon$ is defined as a pair $(\tau,\sigma)\in \mc X^S\times \mc X^1$ where the vertex $\sigma$ is contained in the edge $\tau$. 
We denote the sets of vertices, edges, and flags of $\Upsilon$ by $V(\Upsilon)$, $E(\Upsilon)$, and $F(\Upsilon)$.

For a flag $(\tau, \sigma)\in F(\Upsilon)$, we denote by $\mathbf{w}(\tau, \sigma)$  the weight of $S$-action on the tangent line $T_{p_\sigma}\ell_\tau$,
\begin{align}\label{eqn:weights}
	\mathbf{w}(\tau, \sigma)=c_1^S(T_{p_\sigma}\ell_\tau)\in H^{2}_S(pt).
\end{align}
This map gives rise to a map $\mathbf{w}: F(\Upsilon)\rightarrow H^{2}_S(pt)$ satisfying the following properties. Let $\sigma, \sigma`$ be two vertices connected by an edge $\tau$. 
\begin{itemize}
	\item We have equality $\mathbf{w}(\tau, \sigma)+\mathbf{w}(\tau, \sigma`)=0$.
	\item Let $E_\sigma=\{\tau\in E(\Upsilon)| \sigma\in \tau\}=\{\tau_1, \tau_2, \ldots, \tau_r=\tau\}$ be the set of edges that contain the vertex $\sigma$. Then for any $\tau_i\in E_\sigma$, there exists a unique $\tau_i`\in E_{\sigma'}$ and $a_i\in \Z$ such that
     \begin{equation}\label{eqn:ai}
         \mathbf{w}(\tau_i`, \sigma`)=\mathbf{w}(\tau_i, \sigma)-a_i\mathbf{w}(\tau, \sigma).
     \end{equation}
	 In particular, $\tau_r`=\tau_r=\tau$ and $a_r=2$.
\end{itemize}
\subsubsection{Decorated graphs} 
In this section, we deal with torus fixed points in the moduli stack of stable maps $\overline{\mc M}_{g,n}(\mc X,\beta)$.
Given a stable map $f:(C,x_1,\cdots, x_n)\rightarrow \mc X$ such that $[f:(C,x_1,\cdots, x_n)\rightarrow \mc X]\in \overline{\mc M}_{g,n}(\mc X,\beta)^S$, we will associate with it a decorated graph $\overrightarrow \Gamma$. 
We first give a formal definition.
\begin{dfn}\label{decorated graph}
	A decorated graph $\overrightarrow{\Gamma}=(\Gamma, \overrightarrow{f}, \overrightarrow{d}, \overrightarrow{g}, \overrightarrow{s})$ for $n$-pointed, genus $g$, degree $\beta$ stable maps to $\mc X$ consists of the following data.
	\begin{enumerate}
		\item $\Gamma$ is a compact, connected 1 dimensional CW complex. We denote the set of vertices (resp. edges) in $\Gamma$ by $V(\Gamma)$ (resp. $E(\Gamma)$, and the set of  flags by 
		 $F(\Gamma)=\{(e, v)\in E(\Gamma)\times V(\Gamma)|v\in e\}$.
		 \item The label map $\overrightarrow{f}: 
         V(\Gamma)\cup E(\Gamma)\rightarrow V(\Upsilon) \cup E(\Upsilon)$ sends a vertex $v\in V(\Gamma)$ to a vertex $\sigma_v\in V(\Upsilon)$, 
         and sends an edge $e\in E(\Gamma)$ to an edge $\tau_e\in E(\Upsilon)$. 
         Moreover, if $(e, v)$ is a flag of $\Gamma$, then $(\tau_e, \sigma_v)$ is also a flag of $\Upsilon$.
		 \item The degree map $\overrightarrow{d}: E(\Gamma)\rightarrow\Z_{\geq 0}$ sends an edge $e\in E(\Gamma)$ to a positive integer $d_e$.
		 \item The genus map $\overrightarrow{g}: V(\Gamma)\rightarrow \Z_{\geq 0}$ sends a vertex $v\in V(\Gamma)$ to a nonnegative integer $g_v$.
		 \item The marking map $\overrightarrow{s}: \{1, 2, \ldots, n\}\rightarrow V(\Gamma)$ is defined if $n>0$.
		\end{enumerate}
		The above maps satisfy the following two constraints:
    \begin{itemize}
    \item  $\sum_{v\in V(\Gamma)}g_v+|E(\Gamma)|-|V(\Gamma)|+1=g$.
    \item $\sum_{e\in E(\Gamma)}d_e[\ell_{\tau_e}]=\beta$.
		\end{itemize}
		
		Let $G_{g, n}(\mc X, \beta)$ be the set of all decorated graphs $\overrightarrow{\Gamma}=(\Gamma, \overrightarrow{f}, \overrightarrow{d}, \overrightarrow{g}, \overrightarrow{s})$ that satisfy the above constraints.
\end{dfn}

For later convenience, we introduce some definitions.
\begin{dfn}
	Given a vertex $v\in V(\Gamma)$, we define 
	$$E_v=\{e\in E(\Gamma)|(e, v)\in F(\Gamma)\},$$
	the set of edges emanating from $v$ and define $S_v=\overrightarrow{s}^{-1}(v)\subset \{1, \ldots, n\}$. 
    The valency of $v$ is defined by $val(v)=|E_v|+|S_v|$. 
\end{dfn}

Let $f:(C, x_1,\cdots, x_n)\in \overline{\mc M}_{g,n}(\mc X,\beta)^S$ be a stable map fixed by the $S$-action. Then for an irreducible component $C'\subset C$, $f\rvert_{C'}$ either contracts to a point $p_\sigma\in \mc X^S$ or $C'\iso \mathbb P^1$ and $f(C')=\ell_{\tau}$. 

A decorated graph $\overrightarrow{\Gamma}$ associated with $f: (C, x_1, \ldots, x_n)\rightarrow \mc X$ can be defined as follows.
\begin{enumerate}
	\item We assign a vertex $v$ to each connected component of $f^{-1}(\mc X^S)$, and denote the corresponding component by $C_v$.
     Assume that $f(C_v)=\{p_{\sigma}\}$, then the label map $\overrightarrow f(v)=\sigma$, and the genus $\overrightarrow{g}(v)=g_v$ is the arithmetic genus of the component $C_v$. 
     The marking map $\overrightarrow s$ sends the marking points on the component $C_v$ to $v\in V(\Gamma)$.
     
	 \item We assign an edge $e$ to each connected component $f^{-1}(\mc X^1\backslash \mc X^S)$. 
  Let $C_e\iso \mathbb P^1$ be the closure of one connected component in $f^{-1}(\mc X^1\backslash \mc X^S)$ and assume that $f(C_e)=\ell_\tau$. We define $\overrightarrow f(e)=\tau$.
  \item The set of flags is defined by 
  \[ F(\Gamma)=\{ (e,v)\in E(\Gamma)\times V(\Gamma)|\,C_e\cap C_v\neq \emptyset\}.\]
\end{enumerate}
Then we get a decorated graph $\overrightarrow{\Gamma}=(\Gamma, \overrightarrow{f}, \overrightarrow{d}, \overrightarrow{g}, \overrightarrow{s})\in G_{g,\beta}(\mc X, \beta)$. 

The construction above gives a map from $\overline{\mathcal{M}}_{g, n}(\mc X, \beta)^S$ to the discrete set $\overrightarrow{\Gamma}\in G_{g, n}(\mc X, \beta)$. 
Let $\mathcal{F}_{\overrightarrow{\Gamma}}\subset \overline{\mathcal{M}}_{g, n}(\mc X, \beta)^S$ denote the preimage of $\overrightarrow{\Gamma}$. 
Then 
$$\overline{\mathcal{M}}_{g, n}(\mc X, \beta)^S=\displaystyle\sqcup_{\overrightarrow{\Gamma}\in G_{g, n}(\mc X, \beta)}\mathcal{F}_{\overrightarrow{\Gamma}}$$
where the right hand side is a disjoint union of connected components. 
\subsubsection{Localization formula on Gromov-Witten theory}
Define 
\begin{align*}
	&\mathbf{w}(\sigma)=\prod_{(\tau, \sigma)\in F(\Upsilon)}\mathbf{w}(\tau, \sigma)\in H^{\dim {\mc X}}_S(pt),\\
	&\mathbf{h}(\sigma, g)=\prod_{(\tau, \sigma)\in F(\Upsilon)}\frac{\Lambda^{\vee}_g(\mathbf{w}(\tau, \sigma))}{\mathbf{w}(\tau, \sigma)}\in H^{2r(g-1)}_S(\overline{\mathcal{M}}_{g, n}),
\end{align*}
	where $\Lambda^{\vee}_g(u)=\Sigma_{i=0}^g(-1)^i\lambda_iu^{g-i}$ and $\lambda_i$ are $\lambda$-classes, see the definition in \cite[Sec 3.1]{liu2011localization}.
Furthermore, we define 
\begin{align*}
	\mathbf{h}(\tau, d)=\frac{(-1)^dd^{2d}}{(d!)^2\mathbf{w}(\tau, \sigma)^{2d}}\prod_{i=1}^{r-1}b(\frac{\mathbf{w}(\tau, \sigma)}{d},\mathbf{w}(\tau_i, \sigma), da_i)
\end{align*}
where  $a_i$ are defined in \eqref{eqn:ai}, and
\begin{align*}
	b(u, w, a)=    
   \begin{cases}
      \prod_{j=0}^a(w-ju)^{-1}  &a\in \mathbb Z, a\geq 0, \\ 
       \prod_{j=1}^{-a-1}(w+ju) & a\in \mathbb Z, a< 0.
    \end{cases}
\end{align*}
\begin{thm}(\cite[Theorem 73]{liu2011localization}, \cite{GW:GPequiv})\label{thm:localization}
\begin{align*}
		&\langle \tau_{a_1}(\gamma_1)\cdots \tau_{a_n}(\gamma_n)\rangle^{\mc X,S}_{g, n,\beta}\\
	=&\sum_{\overrightarrow{\Gamma}\in G_{g, n}(\mc X, \beta)}\frac{1}{|Aut(\overrightarrow{\Gamma})|}	\prod_{e\in E(\Gamma)}\frac{\mathbf{h}(\tau_e, d_e)}{d_e}\prod_{v\in V(\Gamma)}\left(\mathbf{w}(\sigma_v))^{val(v)}\prod_{i\in S_v}i^*_{\sigma_v}\gamma_i\right)\\
	&\cdot \prod_{v\in V(\Gamma)}\int_{\overline{\mathcal{M}}_{g, E_v\cup S_v}}\frac{\mathbf{h}(\sigma_v, g_v)\prod_{i\in S_v}\psi_{i}^{a_i}}{\prod_{e\in E_v}(\mathbf{w}(\tau_e, \sigma_v)/d_e-\psi_{(e, v)})},
\end{align*}
where we have the following convention for unstable curves:
\begin{align*}
	&\int_{\overline{\mathcal{M}}_{0, 1}}\frac{1}{w_1-\psi_2}=w_1, \quad \int_{\overline{\mathcal{M}}_{0, 2}}\frac{1}{(w_1-\psi_1)(w_2-\psi_2)}=\frac{1}{w_1+w_2},\\
&\int_{\overline{\mathcal{M}}_{0, 2}}\frac{\psi_2^a}{w_1-\psi_1}=(-w_1)^a, a\in \Z_{\geq 0}.
\end{align*}
\end{thm}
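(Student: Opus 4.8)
The plan is to deduce this formula from the Graber--Pandharipande virtual localization theorem \cite{GW:GPequiv}, applied to the $S$-action on $\overline{\mc M}_{g,n}(\mc X,\beta)$ induced by the action on $\mc X$; the combinatorial reorganization of the resulting fixed-point sum into the displayed expression is the computation carried out in \cite[Theorem 73]{liu2011localization}, which I would ultimately invoke. First I would use the GKM hypothesis: $\mc X^S$ is discrete and the $1$-skeleton $\mc X^1$ is a union of $\mathbb P^1$'s, each with $S$-fixed points $0$ and $\infty$. Hence any $S$-fixed stable map $f\colon(C,x_1,\dots,x_n)\to\mc X$ has image in $\mc X^1$: each non-contracted component of $C$ maps to some $\ell_{\tau_e}$ as a degree-$d_e$ cover totally ramified over the two fixed points, and each maximal contracted subcurve maps to a fixed point $p_{\sigma_v}$. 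This data is precisely a decorated graph $\overrightarrow\Gamma\in G_{g,n}(\mc X,\beta)$, yielding $\overline{\mc M}_{g,n}(\mc X,\beta)^S=\bigsqcup_{\overrightarrow\Gamma}\mathcal F_{\overrightarrow\Gamma}$ as in \S\ref{lem:loctorusfixedgraph}.

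Next I would identify each fixed component. Up to the automorphism group $\mathrm{Aut}(\overrightarrow\Gamma)$ of the graph and the cyclic automorphisms $\mathbb Z/d_e$ of the edge-covers (which produce the explicit factors $1/d_e$), one has $\mathcal F_{\overrightarrow\Gamma}\cong\prod_{v\in V(\Gamma)}\overline{\mc M}_{g_v,E_v\cup S_v}$, where an unstable factor with $2g_v-2+\mathrm{val}(v)\le 0$ is interpreted as a point subject to the stated $\overline{\mc M}_{0,1},\overline{\mc M}_{0,2}$ conventions. Under this identification $ev_i$ restricts, for $i\in S_v$, to the constant map $p_{\sigma_v}$, and $\psi_i^S$ restricts to the ordinary $\psi_i$ on the factor $\overline{\mc M}_{g_v,E_v\cup S_v}$; so the restriction of $\prod_i(\psi_i^S)^{a_i}ev_i^*(\gamma_i)$ to $\mathcal F_{\overrightarrow\Gamma}$ equals $\prod_v\prod_{i\in S_v}\psi_i^{a_i}\,i_{\sigma_v}^*\gamma_i$, which together with the weight factors $\mathbf w(\sigma_v)^{\mathrm{val}(v)}$ from the contracted-component deformation theory accounts for the vertex part of the formula.

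The main obstacle is computing the inverse equivariant Euler class $e^S(N^{\mathrm{vir}}_{\mathcal F_{\overrightarrow\Gamma}})^{-1}$. Reading off $N^{\mathrm{vir}}$ from the normalization sequence of the domain curve, I would split it into three kinds of terms: (i) smoothing of the nodes where an edge-$\mathbb P^1$ meets a contracted component, giving $\prod_{e\in E_v}(\mathbf w(\tau_e,\sigma_v)/d_e-\psi_{(e,v)})^{-1}$; (ii) deformations and obstructions of a contracted genus-$g_v$ component, which via the Hodge bundle on $\overline{\mc M}_{g_v,\bullet}$ (Mumford's relation $c(\mathbb E)c(\mathbb E^\vee)=1$) and the identification of the $S$-weights on $T_{p_{\sigma_v}}\mc X$ with the $\mathbf w(\tau,\sigma_v)$ produce the factor $\mathbf h(\sigma_v,g_v)$; and (iii) deformations of the degree-$d_e$ cover $\mathbb P^1\to\ell_{\tau_e}$ together with the motion of $\ell_{\tau_e}$ in $\mc X$, which by an explicit weight computation of $H^0-H^1$ for the pertinent line bundles on $\mathbb P^1$---their $S$-weights being controlled by $\mathbf w(\tau_e,\sigma)$, the neighbouring weights $\mathbf w(\tau_i,\sigma)$, and the integers $a_i$ of \eqref{eqn:ai}---yield $\mathbf h(\tau_e,d_e)$, the function $b(u,w,a)$ being exactly the record of those weights. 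The delicate point is bookkeeping of signs and of the two orientations of each edge, using $\mathbf w(\tau,\sigma)+\mathbf w(\tau,\sigma')=0$, so that the edge and vertex factors glue consistently at every flag. Assembling (i)--(iii) over all vertices and edges, dividing by $|\mathrm{Aut}(\overrightarrow\Gamma)|$ and the $d_e$'s, and summing over $G_{g,n}(\mc X,\beta)$ yields the stated formula.
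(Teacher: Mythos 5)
The paper does not contain a proof of this statement; it is imported verbatim from the cited references \cite{GW:GPequiv} and \cite[Theorem 73]{liu2011localization}, and the theorem environment is purely a quotation. Your sketch is a correct outline of the standard argument those references carry out---Graber--Pandharipande virtual localization applied to the induced $S$-action on $\overline{\mc M}_{g,n}(\mc X,\beta)$, identification of fixed loci with products of $\overline{\mc M}_{g_v,E_v\cup S_v}$ indexed by decorated graphs, and the splitting of $e^S(N^{\mathrm{vir}})^{-1}$ into node-smoothing, contracted-component (Hodge), and edge-cover contributions yielding the factors $\prod_e(\mathbf w(\tau_e,\sigma_v)/d_e-\psi_{(e,v)})^{-1}$, $\mathbf h(\sigma_v,g_v)$, and $\mathbf h(\tau_e,d_e)$ respectively---and you correctly defer the delicate weight and sign bookkeeping to Liu's Theorem 73, which is precisely what the paper itself does by citing rather than proving.
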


\subsection{Torus fixed points}\label{sec:torusfixedpoints}
\iffalse
We first consider a simple case to illustrate the idea.  When $n=1$, we get  the Grassmannian $Gr(N_1,N_2)$. 
It has a torus action $S=(\mathbb C^*)^{N_2}$, 
\begin{equation*}
    t\cdot A=At^{-1}, A\in Gr(N_1,N_2),\, t\in (\mathbb C^*)^{N_2}\,.
\end{equation*}
We can find matrix representations in $G$-orbits of torus fixed points in reduced row echelon forms whose entries all vanish except for pivots. 
Such matrices are uniquely determined by the columns where pivots lie in, and therefore we use those column numbers $c_1<\ldots<c_{N_1}$ to represent such matrices. 

Similarly, a matrix of size $N_2\times (N_2-N_1)$ can represent a point in the dual Grassmannian $Gr(N_2-N_1,N_2)$, and the torus fixed points can be represented by matrices in reduced column echelon forms whose entries vanish except for pivots. 
We use the row numbers $l_1<\ldots<l_{N_2-N_1}$ of pivots to denote such points. 
\fi

We first consider the flag variety $Fl:=Fl(N_1,\ldots,N_{n+1})$ in Example \ref{ex:An} which admits a torus action  
$S=(\mathbb C^*)^{N_{n+1}}$,
\begin{equation*}
    t\cdot(A_1,\ldots, A_n)=(A_1,\ldots,A_nt^{-1}), t\in S.\nonumber
\end{equation*}
The torus fixed points can be represented by matrices $(A_1,\ldots,A_n)$ such that each $A_i$ is in reduced row echelon form whose entries vanish except for pivots. 
Hence, torus fixed points in $Fl$ can be described as follows.
\begin{equation}\label{eqn:Sfixedpointsinflag}
   (Fl)^S= \{\vec I=(I_1,\cdots,I_n)| I_1\subset I_2\subset \cdots\subset [N_{n+1}], \abs{I_i}=N_i, i=1,\cdots, n \}\,,
\end{equation}
where $[N_{n+1}]=\{1,2,\cdots,N_{n+1}\}$. 
The cardinality of the set is 
\begin{equation}\label{eqn:cardinalitySfixedpoint}
    \abs{(Fl)^S}= \prod_{i=1}^{n}\binom{N_{i+1}}{N_i}.
\end{equation}

For an arbitrary decorated QP $(\mathbf Q=(Q_0,Q_1,W),\mathbf r)$ in $\Omega^A_n$, let $\mc Z=\{dW=0\}\sslash_\theta G$ be the associated variety described in Lemma \ref{lem:phasevargeneralquiver}. 
Denote the frozen node by $v_f$ and $r_{v_f}=N_{n+1}$. 
By the description of quivers in Section \ref{sec:clusteralgebras}, 
there are at most two maximal nodes that are smaller than $v_f$, which we denote by $v_1$ and $v_2$. Assume that $v_1\xrightarrow[]{A_{v_1}}v_f\xrightarrow[]{A_{v_2}} v_2\xrightarrow[]{B_{v_f}} v_1$ form a 3-cycle. 
The torus $S=(\mathbb C^*)^{N_{n+1}}$ acts on a representative of a point as $A_{v_1}t^{-1},tA_{v_2}$ and trivially on other matrices, for $t\in S$. 
\iffalse
We will adopt the description in Lemma \ref{lem:phasevargeneralquiver} 
for $Z^{ss}(G)$. The variety $\mc Z$ can be embedded into another quiver variety $\mc X$, which is obtained as follows. 

For any vertex $v\in Q_0$, if there are two maximal nodes $v_1,v_2$ that are smaller than $v$ and we know that in $\mathbf Q$ there is a 3-cycle $v_1\rightarrow v\rightarrow v_2\rightarrow v_1$, then delete the arrow $v_2\rightarrow v_1$. We thus obtain a new quiver denoted by $\tilde{\mathbf Q}=(Q_0,\tilde Q_1)$ without cycles and potential. The embedding quiver variety $\mc X$ is the variety defined by such a quiver in the phase given in Lemma \ref{lem:phasevargeneralquiver}. 
\fi
\begin{lem}\label{lem:torusfixedpointgeneral}
    The torus fixed locus $\mc Z^S$ is a set $\{\vec I=(I_{v})_{v\in Q_0\backslash v_f}\}$ satisfying the following conditions. 
    \begin{enumerate}
        \item The cardinality of the set $I_{v}$ is equal to the integer $r_v$.
        \item If $v\prec w$, then $I_v\subset I_w$. 
        \item If there are two maximal nodes $v_1\prec v, v_2\prec v$, then $I_{v_1}\cap I_{v_2}=\emptyset$. 
    \end{enumerate}
\end{lem}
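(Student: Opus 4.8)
The plan is to describe the torus fixed locus $\mc Z^S$ explicitly by lifting fixed points to the level of quiver representations, and then matching the combinatorial data that survives. First I would work with the embedding described at the end of Lemma \ref{lem:phasevargeneralquiver}: the variety $\mc Z$ sits inside the quiver variety $\mc Y$ of the binary tree $\bar{\mathbf Q}$ obtained by deleting all the back-arrows $v_2\to v_1$ in the $3$-cycles, cut out by the equations $A_{v_1}A_{v_2}=0$ for each pair of maximal nodes $v_1,v_2$ smaller than a common node $v$. In the phase of Lemma \ref{lem:phasevargeneralquiver}, each arrow $A_{v_i}$ (for $v_i$ a maximal node below $v$) is nondegenerate, so a point of $\mc Y$ is a choice, for each node $w$, of an $r_w$-dimensional subspace of $\C^{N_{n+1}}$, with the nesting dictated by the arrows of $\bar{\mathbf Q}$ toward the root. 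Since $S=(\C^*)^{N_{n+1}}$ acts diagonally on $\C^{N_{n+1}}$ and trivially on all the internal matrices (only the two arrows incident to $v_f$ carry the $S$-action, as noted before the lemma), the $S$-fixed points of $\mc Y$ are exactly the collections of coordinate subspaces: to each node $w$ we assign a subset $I_w\subset[N_{n+1}]$ with $|I_w|=r_w$, and nestedness along arrows toward the root translates into $I_v\subset I_w$ whenever $v\prec w$. This gives conditions (1) and (2).

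Next I would impose the defining equations of $\mc Z$ inside $\mc Y$ on such a fixed point. At a node $v$ with two maximal nodes $v_1\prec v$ and $v_2\prec v$, the subspaces $I_{v_1}$ and $I_{v_2}$ both sit inside $I_v$, and the constraint coming from the $3$-cycle — concretely $A_{v_1}A_{v_2}=0$ together with the semistability forcing $B_v=0$ — says precisely that the composite $\C^{r_{v_2}}\xrightarrow{A_{v_2}}\C^{r_v}\xrightarrow{A_{v_1}}\C^{r_{v_1}}$ vanishes; at a coordinate fixed point this forces the coordinate subspaces $I_{v_1}$ and $I_{v_2}$ to be complementary inside $I_v$, hence in particular $I_{v_1}\cap I_{v_2}=\emptyset$. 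Since $|I_{v_1}|+|I_{v_2}|=r_{v_1}+r_{v_2}=r_v=|I_v|$ by the recursive decoration of Lemma \ref{lem:decorationsofQinP}, disjointness is equivalent to $I_{v_1}\sqcup I_{v_2}=I_v$, so condition (3) is exactly the content of the $3$-cycle relation at fixed points. This yields the three stated conditions, and conversely any $\vec I$ satisfying (1)--(3) gives a coordinate point of $\mc Y$ satisfying the complete-intersection equations, hence a point of $\mc Z^S$; so the description is an equality of sets.

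I would then close the loop by a short induction along $\prec$ to check that every element of $\mc Z^S$ arises this way (no non-isolated fixed components), using that $\mc Z$ is smooth and that the fixed locus of the ambient $\mc Y$ is already discrete, so $\mc Z^S=\mc Z\cap\mc Y^S$ is discrete and given by the coordinate points surviving the cut. The main obstacle I anticipate is the bookkeeping at a node $v$ with four adjacent arrows, where $v$ simultaneously plays the role of the "upper" node of one $3$-cycle and of a maximal node below another: one must verify the nesting and disjointness constraints are mutually compatible and do not over-determine the subsets $I_w$. This is handled by the recursive structure of Lemma \ref{lem:recursivepattern} and the fact that every arrow lies in at most one $3$-cycle (Lemma \ref{lem:omegan}(1)), so the constraints decouple into the independent binary branches of the tree; I would spell this out on the four-valent local model of Figure \ref{fig:localbehavior}(a) rather than in full generality, as in Example \ref{ex:A2variety}.
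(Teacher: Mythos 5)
Your overall strategy — lift to the binary tree variety $\mc Y$, identify $S$-fixed points of $\mc Y$ with collections of coordinate subspaces indexed by subsets $I_v\subset[N_{n+1}]$ nested along $\prec$, then impose the cut $A_{v_1}A_{v_2}=0$ — is essentially the approach the paper takes: the paper's proof puts each matrix into reduced row/column echelon form with vanishing non-pivot entries and records pivot indices, which is the same coordinate-subspace picture.

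There is, however, a factual error in the middle that should be excised. You write that at a coordinate fixed point the relation $A_{v_1}A_{v_2}=0$ forces $I_{v_1}$ and $I_{v_2}$ to be \emph{complementary} inside $I_v$, and justify this with ``$|I_{v_1}|+|I_{v_2}|=r_{v_1}+r_{v_2}=r_v=|I_v|$.'' This identity is false. If $v_1,v_2$ are the two maximal nodes below $v$ with (using the notation at the end of Section \ref{sec:quivervarietieswithmu}) $r_v=N_k-N_l$, $r_{v_1}=N_p-N_l$ and $r_{v_2}=N_k-N_{p+1}$, then
\[
r_{v_1}+r_{v_2}=(N_k-N_l)-(N_{p+1}-N_p)=r_v-(N_{p+1}-N_p)<r_v,
\]
strictly, since $N_p<N_{p+1}$. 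So $I_{v_1}\sqcup I_{v_2}\subsetneq I_v$ always. What the vanishing of the composite actually enforces at a coordinate fixed point is $\op{Im}(A_{v_2})\subseteq\ker(A_{v_1})$, i.e.\ $I_{v_2}\subseteq[r_v]\setminus I_{v_1}$, i.e.\ \emph{disjointness} — which is exactly condition (3), no more. The stated conclusion therefore survives; just replace ``complementary'' with ``disjoint'' and delete the spurious cardinality step, which is neither true nor needed.
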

\begin{proof}
One can check that in matrices formulae, the torus fixed points can be described as follows: 
        \begin{enumerate}
        \item If a node $v$ has only one maximal node $w$ smaller than it, the associated matrix of the arrow is non-degenerate and in reduced row/column echelon form whose non-pivots entries vanish.  
        \item If a node $v$ has two maximal nodes $v_1,v_2$ smaller than it, 
       we assume they form a 3-cycle $v_1\xrightarrow{A_{v_1}} v\xrightarrow{A_{v_2}} v_2\xrightarrow{B_v} v_1$. 
        Then the two matrices $A_{v_1}$  and $A_{v_2}$ are in reduced row and column echelon forms with non-pivots entries zero and $B=0,\,A_{v_1}A_{v_2}=0$. 
    \end{enumerate}
We use the number of rows/columns of pivots of each non-degenerate matrix in reduced column/row echelon form to represent the matrix and get a set of sets satisfying the conditions in the Lemma. 
\end{proof}

Let $(\mathbf Q=(Q_0,Q_1,W),\mathbf r)$ and $(\mathbf Q'=(Q_0',Q_1',W'),\mathbf r')$ be two decorated QPs related by a quiver mutation at a gauge node $v$. 
According to the description for the local picture around a node at the end of Section \ref{sec:quivervarietieswithmu},
a gauge node $v$ has at most four adjacent nodes which we denote by $v_1,v_2,v_3,v_4$. 
We assume the arrows among them are $v\rightarrow v_1\rightarrow v_2\rightarrow v$, $v_3\rightarrow v\rightarrow v_4\rightarrow v_3$. 
Denote the integers assigned to those nodes by $r_{v_i}, i=1,\cdots 4$. 
We assume that the QP in Figure \ref{fig:localbehavior} (a) is $(\mathbf Q,\mathbf r)$ and the one in \ref{fig:localbehavior} (b) is $(\mathbf Q',\mathbf r')$. 
Let $\mc Z$ and $\mc Z'$ be their associated varieties. 
\begin{lem}\label{lem:Sfixedpointsmu}
    There is a natural bijection
    \[\varphi: \mc Z^S\rightarrow (\mc Z')^S\] 
   defined as follows. 
    Let $\vec I=(I_u)_{u\in Q_0\backslash v_f}$ be an arbitrary point in $\mc Z^S$.  
    \begin{enumerate}
        \item If $N_f(v)>N_a(v)$, the map $\varphi$ maps $I_v$ to $(I_{v_1}\sqcup I_{v_4})\backslash I_{v}$ and keeps all other $I_u$. 
        \item 
        If $N_f(v)<N_a(v)$, the map $\varphi$ maps $I_{v}$ to $(I_{v_2}\sqcup I_{v_3})\backslash I_v$ and keeps all other $I_u$.
    \end{enumerate}
\end{lem}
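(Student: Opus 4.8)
I would prove the lemma by reducing it to the combinatorial description of the two fixed loci given in Lemma~\ref{lem:torusfixedpointgeneral} and then checking directly that the prescribed move on the $v$-coordinate is a bijection between the resulting sets of admissible tuples. Recall from Lemma~\ref{lem:torusfixedpointgeneral} that a point of $\mc Z^S$ is a tuple $(I_u)_{u\in Q_0\setminus v_f}$ of subsets of $[N_{n+1}]$ with $|I_u|=r_u$, such that $I_u\subseteq I_w$ whenever $u\prec w$, and $I_{u_1}\cap I_{u_2}=\emptyset$ whenever $u_1,u_2$ are the two maximal nodes below a common node; the same holds for $(\mc Z')^S$ with the decorations $\mathbf r'$ and the order $\prec'$ of $\mathbf Q'$. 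Since $\mu_v$ changes only $r_v$ (to $r_v'$ via~\eqref{eqn:Nchange}) and the arrows incident to the neighbours $v_1,\dots,v_4$, the orders $\prec$ and $\prec'$ differ only near $v$. In the case $N_f(v)>N_a(v)$ and the notation of Figure~\ref{fig:localbehavior}, using the decorations $r_v=N_k-N_l$, $r_{v_1}=N_m-N_l$, $r_{v_2}=N_m-N_{k+1}$, $r_{v_3}=N_p-N_l$, $r_{v_4}=N_k-N_{p+1}$, $r_v'=N_m-N_{p+1}$ and the fact that $u\prec w$ implies $r_u<r_w$ (so the minimal node above is the one of largest decoration in a $3$-cycle), one sees that in $\mathbf Q$ the minimal node above $v$ is $v_1$, the other maximal node below $v_1$ is $v_2$, and the two maximal nodes below $v$ are $v_3,v_4$; in $\mathbf Q'$ the minimal node above $v$ is still $v_1$, but the other maximal node below $v_1$ is now $v_3$, and the two maximal nodes below $v$ are $v_2,v_4$.

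Consequently, an admissible tuple for $\mathbf Q'$ whose coordinates other than $I_v$ are inherited from an admissible tuple for $\mathbf Q$ is constrained only by finitely many conditions involving the five nodes $v,v_1,\dots,v_4$, namely $|I_v'|=r_v'$, $I_v'\subseteq I_{v_1}$, $I_v'\cap I_{v_3}=\emptyset$, $I_{v_2}\subseteq I_v'$, $I_{v_4}\subseteq I_v'$, and $I_{v_2}\cap I_{v_4}=\emptyset$; all remaining constraints of $\mathbf Q'$ either coincide with those of $\mathbf Q$ or follow from these six by transitivity of inclusion. I would then interpret the set $(I_{v_1}\sqcup I_{v_4})\setminus I_v$ of the statement: from admissibility of $(I_u)$ for $\mathbf Q$ one has $I_{v_3}\sqcup I_{v_4}\subseteq I_v\subseteq I_{v_1}$, $I_v\sqcup I_{v_2}\subseteq I_{v_1}$ and $I_{v_3}\cap I_{v_4}=\emptyset$, so in particular $I_{v_4}\subseteq I_v\subseteq I_{v_1}$ and the set in question is the disjoint union $(I_{v_1}\setminus I_v)\sqcup I_{v_4}$. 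It has $(r_{v_1}-r_v)+r_{v_4}=(N_m-N_k)+(N_k-N_{p+1})=N_m-N_{p+1}=r_v'$ elements; it contains $I_{v_4}$ by construction and contains $I_{v_2}$ because $I_{v_2}\subseteq I_{v_1}\setminus I_v$; it lies in $I_{v_1}$; and it is disjoint from $I_{v_3}$ because $I_{v_3}\subseteq I_v$ and $I_{v_3}\cap I_{v_4}=\emptyset$. Finally $I_{v_2}\cap I_{v_4}=\emptyset$ follows from $I_{v_2}\cap I_v=\emptyset$ and $I_{v_4}\subseteq I_v$. This shows $\varphi$ maps $\mc Z^S$ into $(\mc Z')^S$.

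For bijectivity I would exhibit the inverse, which is a move of exactly the same shape: $\mu_v$ is an involution, so applying the construction to $\mathbf Q'$ returns $\mathbf Q$ and sends $(I_u')\in(\mc Z')^S$ to the tuple whose $v$-coordinate is $(I_{v_1}\setminus I_v')\sqcup I_{v_4}$, all other coordinates unchanged. A short set-theoretic computation then shows the two maps are mutually inverse: since $I_{v_1}\setminus\bigl((I_{v_1}\setminus I_v)\sqcup I_{v_4}\bigr)=I_v\setminus I_{v_4}$, one gets $\bigl(I_{v_1}\setminus\varphi(I)_v\bigr)\sqcup I_{v_4}=(I_v\setminus I_{v_4})\sqcup I_{v_4}=I_v$, and symmetrically for the other composite. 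The case $N_f(v)<N_a(v)$ is handled by the same argument after interchanging the incoming and outgoing stars of $v$, or equivalently by observing that Figure~\ref{fig:localbehavior}(b) is of the same type as (a) after relabelling; this is exactly why the formula there uses $I_{v_2}\sqcup I_{v_3}$ in place of $I_{v_1}\sqcup I_{v_4}$.

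The part demanding the most care is the bookkeeping rather than a genuine obstacle. One must verify that, beyond the six local conditions, every nesting constraint of $\mathbf Q'$ coming from the subtrees hanging off $v_2$ and $v_3$ — whose position in the binary tree $\bar{\mathbf Q}$ of Lemma~\ref{lem:phasevargeneralquiver} is altered by $\mu_v$ — still holds; these all reduce to transitivity of inclusion together with the containment $I_{v_2}\subseteq I_v'$. One also has to check that the formulas degenerate correctly when $v$ has fewer than four adjacent nodes (the specializations $p=l$, $p=k-1$, $k=m-1$, or $m=k+1$ discussed after Figure~\ref{fig:localbehavior}), in which case some of the sets $I_{v_i}$ are empty and the corresponding nodes are absent, so that the same formulas continue to define bijections.
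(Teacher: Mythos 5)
Your proof is correct and takes essentially the same route as the paper: reduce to the combinatorial description of $\mc Z^S$ and $(\mc Z')^S$ from Lemma~\ref{lem:torusfixedpointgeneral}, verify the local nesting and disjointness conditions around $v$ after the move on $I_v$, and use that $\mu_v$ is an involution to produce the inverse. You go somewhat further than the published argument in two useful respects: you carry out the explicit set-theoretic computation showing the two $\varphi$'s are mutually inverse (the paper only remarks that the two cases are inverses), and you flag and dispatch the non-local constraints coming from the subtrees hanging off $v_2$ and $v_3$, whose position in the tree order changes under $\mu_v$; the paper leaves this implicit.
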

\begin{proof} 
The map $\varphi$ is naturally bijective if we can prove that it is a well-defined map. 
One can find that the two situations are inverse maps to each other.  For the node $v$, if $N_f(v)>N_a(v)$, then after quiver mutation, at the node $v$ in $(\mathbf Q', \mathbf r')$, the outgoing is obviously smaller than the incoming. 

To prove that the map $\varphi$ is well-defined, we need to prove that the images lie in $(\mc Z')^S$.
We will prove the situation when the node $v$ has four adjacent nodes, and one needs to note that other cases can be reduced from this. 

When $N_f(v)>N_a(v)$ for $(\mathbf Q, \mathbf r)$, then one of $v_1$ and $v_4$ is greater than $v$ and we assume $v_4\prec v\prec v_1$. 
Let $\vec I=(I_u)\in \mc Z^S$.
Denote by $\vec I'=(I_u')_{u\in Q_0}=\varphi(\vec I)$. 
The cardinality of $I_v'$ is $\abs{(I_{v_1}\sqcup I_{v_4})\backslash I_v}=\abs{(I_{v_1}\backslash I_v)\cup I_{v_4}}=r_{v_1}-r_{v}+r_{v_4}$ which is $r_v'$ since $I_{v_4}\subset I_{v}\subset I_{v_1}$. 
Hence the item $(1)$ of Lemma \ref{lem:torusfixedpointgeneral} is true. 

One can easily check that $I_v'\subset I_{v_1}'=I_{v_1}$ and $I_{v_2}'\subset I_{v}'$ since $I_{v_2}'\cap I_{v}=\emptyset$. 
For the QP in Figure \ref{fig:localbehavior} (b), nodes $v_3$ and $v$ are two maximal nodes that are smaller than $v_1$, and nodes $v_1$ and $v_4$ are two maximal nodes that are smaller than $v$.
One can find $I_{v_3}'\cap I_v'=\emptyset$ since $I_{v_3}'=I_{v_3}\subset I_v\subset I_{v_1}$, and $I_{v_2}'\cap I_{v_4}'=\emptyset$ since $I_{v_2}\cap I_v=\emptyset$ and $I_{v_4}\subset I_{v}$, so the Lemma \ref{lem:torusfixedpointgeneral} (2)(3) are checked.

The situation when $N_f(v)<N_a(v)$ is similar and we omit it. 
\iffalse
\begin{figure}[H]
    \centering
    \includegraphics[width=3.6in]{local.png}
    \caption{The two quivers are related by a quiver mutation at the center node $v$, with $r_v'=r_{v_1}+r_{v_2}-r_v$.}
    \label{fig:local}
\end{figure}
\fi
\end{proof}
Combine the Equation \eqref{eqn:cardinalitySfixedpoint} and the Lemma \ref{lem:Sfixedpointsmu}, and we obtain the following Corollary. 
\begin{cor}\label{cor:torusfixedpointscard}
    For an arbitrary decorated QP $(\mathbf Q=(Q_0,Q_1,W),\mathbf r)\in \Omega^A_n$, let $\mc Z$ be the critical locus of $W$.
    The cardinality of the torus fixed locus is $$\abs{\mc Z^S}= \abs{(Fl)^S}= \prod_{i=1}^{n}\binom{N_{i+1}}{N_i}.$$
\end{cor}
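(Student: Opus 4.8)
The plan is to reduce the general case to the flag-variety case along a mutation sequence. By definition of $\Omega^A_n$, the decorated QP $(\mathbf Q,\mathbf r)$ is obtained from the decorated $A_n$-type quiver of Example \ref{ex:An} by a finite chain of quiver mutations at gauge nodes,
\[
(\mathbf Q_0,\mathbf r_0) \xrightarrow{\mu_{v_1}} (\mathbf Q_1,\mathbf r_1) \xrightarrow{\mu_{v_2}} \cdots \xrightarrow{\mu_{v_N}} (\mathbf Q_N,\mathbf r_N)=(\mathbf Q,\mathbf r),
\]
where $(\mathbf Q_0,\mathbf r_0)$ is the $A_n$ quiver with decoration $N_1<\cdots<N_{n+1}$, so that its associated variety is $\mc Z_0 = Fl(N_1,\ldots,N_{n+1})$, and each intermediate $(\mathbf Q_j,\mathbf r_j)$ again lies in $\Omega^A_n$ by Lemma \ref{lem:recursivepattern} and Lemma \ref{lem:decorationsofQinP}. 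Write $\mc Z_j$ for the variety of $(\mathbf Q_j,\mathbf r_j)$, so that $\mc Z_N=\mc Z$.

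First I would apply Lemma \ref{lem:Sfixedpointsmu} to each individual step. That lemma already supplies, for the mutation $\mu_{v_{j+1}}:(\mathbf Q_j,\mathbf r_j)\to(\mathbf Q_{j+1},\mathbf r_{j+1})$, an explicit bijection $\varphi_j:\mc Z_j^S\to\mc Z_{j+1}^S$ between the torus fixed loci, built on the combinatorial model of $\mc Z_j^S$ from Lemma \ref{lem:torusfixedpointgeneral}. In particular $|\mc Z_j^S| = |\mc Z_{j+1}^S|$ for every $j$. Composing, $\varphi_{N-1}\circ\cdots\circ\varphi_0$ is a bijection $(Fl)^S\xrightarrow{\sim}\mc Z^S$, whence $|\mc Z^S| = |(Fl)^S|$; and the right-hand side is evaluated in Equation \eqref{eqn:cardinalitySfixedpoint} as $\prod_{i=1}^{n}\binom{N_{i+1}}{N_i}$, which completes the argument. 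So the whole proof is essentially a telescoping composition, once Lemma \ref{lem:Sfixedpointsmu} and Equation \eqref{eqn:cardinalitySfixedpoint} are in hand; the only structural choice is to phrase the argument as an induction on the length $N$ of the mutation sequence.

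The one point that deserves attention — more bookkeeping than genuine obstacle — is that Lemma \ref{lem:Sfixedpointsmu} is stated for a node $v$ with four adjacent nodes in the local picture of Figure \ref{fig:localbehavior}, so at each step one must check that it still applies when the mutated node has fewer neighbours. This is handled by specializing the integers $m,k,p,l$ exactly as in Remark \ref{rem:integersmkpl}: the degenerate configurations are limiting cases of the four-valent one, and no separate treatment is required. Consequently I do not anticipate any real difficulty beyond making this reduction explicit and stating the induction cleanly; everything substantive has already been done in Lemma \ref{lem:Sfixedpointsmu}.
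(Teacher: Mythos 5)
Your proposal is correct and follows exactly the route the paper intends: the corollary is stated as a direct consequence of combining Equation \eqref{eqn:cardinalitySfixedpoint} with Lemma \ref{lem:Sfixedpointsmu}, which is precisely the chain-and-compose argument you spell out. Your remark about the degenerate (fewer than four neighbours) cases is also already covered in the proof of Lemma \ref{lem:Sfixedpointsmu}, so there is nothing further to add.
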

According to the Atiyah-Bott localization theorem  \cite{ATIYAH19841}, 
\begin{equation}\label{eqn:atiyahbottlocalization}
    H_S^*(\mc Z)\iso \oplus_{p\in \mc Z^S}H^*_S(p)\,,
\end{equation}
we get the following Seiberg duality on the equivariant cohomology level. 

\begin{prop}\label{prop:equicohmu}
Suppose two varieties $\mc Z$ and $\mc Z'$ are related by a quiver mutation at the gauge node $v$. 
There is a group isomorphism  
\begin{equation*}
    \phi: H^*_S(\mc Z)\xrightarrow{\sim}H^*_S(\mc Z')\,. 
\end{equation*}
such that, for arbitrary $\gamma\in H^*_S(\mc Z)$, $P\in \mc Z^S$
\begin{equation}\label{eqn:gammaresP}
 \gamma\big{|}_{P}= \phi(\gamma)\big{|}_{\varphi(P)},
\end{equation}
In particular, $\phi$ maps the Chern classes  $c_k^S(Q_v)$ over $\mc Z$ to those of the dual of the tautological bundle $c_k^S(S_v'^\vee)$ over $\mc Z'$, $c_k^S(S_v^\vee)$ to $c_k^S(Q_v')$ over $\mc Z'$, 
 Chern classes $c_k^S(S_u^\vee)$ to $c_k^S(S_u')$ if $\op{sign}(\sigma_u')\op{sign}(\sigma_u)=-1$, and  preserves Chern classes $c_k^S(S_w)$ of other tautological bundles.
\end{prop}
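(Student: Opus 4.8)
The plan is to realize both equivariant cohomology rings as rings of functions on GKM graphs and to transport one onto the other along the bijection $\varphi$ of Lemma \ref{lem:Sfixedpointsmu}. First I would record that $\mc Z$ and $\mc Z'$ are equivariantly formal GKM spaces for the common torus $S=\prod_{a\in Q_f}(\mathbb C^*)^{N_a}$: under the standing assumption $V^{ss}_\theta(G)=V^s_\theta(G)$ and the structure theorem of Lemma \ref{lem:phasevargeneralquiver} one checks, exactly as for the flag variety, that $\mc Z$ is smooth with isolated $S$-fixed points and one-dimensional $1$-skeleton. Hence the restriction map $H^*_S(\mc Z)\hookrightarrow\bigoplus_{P\in\mc Z^S}H^*_S(P)$ of \eqref{eqn:atiyahbottlocalization} is an injective ring homomorphism whose image is the set of tuples $(f_P)_P$ such that $f_P-f_{P'}$ is divisible by $\mathbf w(\ell)$ whenever $P,P'$ are joined by a one-dimensional $S$-orbit $\ell$ of weight $\mathbf w(\ell)\in H^2_S(pt)$; and similarly for $\mc Z'$.

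Next I would let $\phi$ be the $H^*_S(pt)$-linear isomorphism $\bigoplus_{P\in\mc Z^S}H^*_S(P)\xrightarrow{\sim}\bigoplus_{P'\in(\mc Z')^S}H^*_S(P')$ permuting the summands according to $\varphi$, so that \eqref{eqn:gammaresP} holds by construction. To see that $\phi$ carries the image of $H^*_S(\mc Z)$ isomorphically onto the image of $H^*_S(\mc Z')$, it suffices to upgrade $\varphi$ to an isomorphism of \emph{labeled} GKM graphs: every one-dimensional $S$-orbit $\ell$ of $\mc Z$ joining $P$ and $P'$ should correspond to a one-dimensional $S$-orbit $\ell'$ of $\mc Z'$ joining $\varphi(P)$ and $\varphi(P')$ with $\mathbf w(\ell')=\mathbf w(\ell)$, and conversely. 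Granting this, $\phi$ preserves the GKM divisibility conditions, hence restricts to a ring isomorphism $H^*_S(\mc Z)\xrightarrow{\sim}H^*_S(\mc Z')$, which in particular is the asserted group isomorphism satisfying \eqref{eqn:gammaresP}.

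For the remaining (``in particular'') assertions I would compute fixed-point restrictions directly. At $P=(I_u)_u$ as in Lemma \ref{lem:torusfixedpointgeneral}, the fiber of $S_u$ at $P$ carries $S$-weights $\{\epsilon_u\lambda_i: i\in I_u\}$, where $\epsilon_u\in\{\pm1\}$ depends only on $\op{sign}(\sigma_u)$ (Definition \ref{dfn:tautbundle}), so $c^S_k(S_u^\vee)\big|_P$ and $c^S_k(S_u)\big|_P$ are, respectively, the $k$-th elementary symmetric polynomials in $\{-\epsilon_u\lambda_i\}_{i\in I_u}$ and $\{\epsilon_u\lambda_i\}_{i\in I_u}$; the exact sequences of Definition \ref{dfn:quotbundle} then identify $Q_v\big|_P$ with the complementary weights, indexed by $(\bigsqcup_{v\to w}I_w)\setminus I_v$ when $\sigma_v>0$. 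Using the explicit formula for $\varphi$ in Lemma \ref{lem:Sfixedpointsmu}, when $N_f(v)>N_a(v)$ this complement equals $(I_{v_1}\sqcup I_{v_4})\setminus I_v=I_v'$, the very index set describing the fiber of $(S_v')^\vee$ at $\varphi(P)$, so $c^S_k(Q_v)\big|_P=c^S_k((S_v')^\vee)\big|_{\varphi(P)}$; dually $I_v=(I_{v_1}\sqcup I_{v_4})\setminus I_v'$ describes $Q_v'\big|_{\varphi(P)}$, giving $c^S_k(S^\vee_v)\big|_P=c^S_k(Q_v')\big|_{\varphi(P)}$; for a node $u\neq v$ with $\op{sign}(\sigma'_u)\op{sign}(\sigma_u)=-1$ the set $I_u$ is unchanged by $\varphi$ while $\epsilon_u$ changes sign, so $c^S_k(S^\vee_u)\big|_P=c^S_k(S'_u)\big|_{\varphi(P)}$; and for every other tautological bundle both the index set and $\epsilon_u$ are unchanged, so its Chern classes are preserved. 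By \eqref{eqn:gammaresP} and the injectivity of restriction to $\mc Z^S$ these fiberwise equalities upgrade to the claimed identities of classes in $H^*_S(\mc Z')$; the case $N_f(v)<N_a(v)$ is symmetric.

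The hard part will be the labeled GKM-graph isomorphism used in the second step, i.e.\ describing the one-dimensional $S$-orbits of $\mc Z$ and $\mc Z'$ together with their weights and matching them under $\varphi$. I would handle this via the embedding of $\mc Z$ as the complete intersection $\{A_{v_1}A_{v_2}=0\}$ inside the binary-tree quiver variety $\mc Y$ of Lemma \ref{lem:phasevargeneralquiver}: read off the $1$-skeleton of $\mc Y$ from its one-dimensional $S$-orbits, then intersect with the defining equations. Since a quiver mutation alters only the local picture near the mutated node $v$ (Figure \ref{fig:localbehavior}), the comparison reduces to a finite local case analysis around $v$. This is exactly the one-dimensional-orbit analysis carried out in Section \ref{sec:proofofseibergduality}, on which the proof of Theorem \ref{thm:GWinvarmu} also rests.
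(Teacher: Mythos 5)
Your proposal takes essentially the same route as the paper's proof — Atiyah–Bott/GKM localization to reduce to fixed-point data, followed by explicit computation of the fixed-point restrictions of the relevant Chern classes — and your restriction formulas (e.g.\ $c_k^S(Q_v)|_P=c_k^S((S_v')^\vee)|_{\varphi(P)}$, equivalently $(-1)^k c_k^S(S_v')|_{\varphi(P)}$) match the paper's. You are, however, more careful at one point: the paper asserts the isomorphism directly from \eqref{eqn:atiyahbottlocalization} together with the cardinality count of Corollary \ref{cor:torusfixedpointscard}, whereas you correctly observe that the permutation map on the fixed-point direct sums restricts to an isomorphism of the subrings $H^*_S(\mc Z)\to H^*_S(\mc Z')$ only once one has matched the labeled GKM graphs (one-dimensional orbits together with their tangent weights). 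The paper does establish that matching, but only later, in Propositions \ref{prop:torusfixedgraphmu} and \ref{prop:weightsmu}, without referring back to Proposition \ref{prop:equicohmu}; your plan to defer to exactly that one-dimensional-orbit analysis is sound and makes the logical dependence explicit. Overall the proposal is correct and is a modest sharpening of the paper's argument rather than a different one.
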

\begin{proof}
   It is obvious that the two equivariant cohomology groups are isomorphic by Atiyah-Bott localization theorem \eqref{eqn:atiyahbottlocalization} and the Corollary \ref{cor:torusfixedpointscard}. 
   One can find the mutation $\mu_v$ maps $c_k^S(S_u^\vee)$ to $c_k^S(S_u')$ if $\sigma_u$ changes its sign according to the Definition \ref{dfn:tautbundle}.

    In order to prove that $\phi$ maps Chern classes $c_k^S(Q_v)$ to those of $(S_v')^\vee$, we only have to prove that for each point $p_{\vec I}\in \mc Z^S$, the following equation holds, 
    \begin{equation*}
        c_k^S(Q_v)|_{ p_{\vec  I}}=(-1)^kc_k^S(S_v')|_{\varphi(p_{\vec I})}.
    \end{equation*}
    Without loss of generality, we assume that there are four nodes adjacent to the node $v$ as shown in Figure \ref{fig:localbehavior} (a)  with $r_{v_1}+r_{v_4}>r_{v_2}+ r_{v_3}$ and $v_3\prec v\prec v_1$. 
    We also assume that $\sigma_{v_1}>0$ and $\sigma_{v_4}<0$.
    Then we have a short exact sequence
    \begin{equation*}
        0\rightarrow S_v\rightarrow S_{v_1}\oplus S_{v_4}^\vee\rightarrow Q_v \rightarrow 0,
    \end{equation*} 
    and a relation in equivariant cohomology ring $H_S^*(\mc Z)$
    \begin{equation*}
        c_t^S(S_v)c_t^S(Q_v)= c_t^S(S_{v_1})c_t^S(S_{v_4}^\vee).
    \end{equation*}
    Without loss of generality, we assume that $I_{v_4}= 
    \{1<2<\cdots<r_{v_4}\} \subset I_{v}=
    \{1<2<\cdots<r_v\}
    \subset I_{v_1}=
    \{1<2<\cdots<r_{v_1}\}$. 
    Then $c_t^S(S_{v_i}^*)|_{p_{\vec I}}=\prod_{j=1}^{r_i}(t-\lambda_{j})$ for $i=1,4$, 
    and $c_t^S(S_v)|_{p_{\vec I}}=
    \prod_{j=1}^{r_v}(t-\lambda_j)$. 
    One can easily compute  
    $c_t^S(Q_v)|_ {p_{\vec  I}}=\prod_{j=r_v+1}^{r_{v_1}}(t-\lambda_{j})\prod_{j=1}^{r_{v_4}}(t-\lambda_{j})$. 
    On the other hand, after the quiver mutation $\mu_v$, we let $p_{\vec I'}=\varphi(p_{\vec I})$ with $I_{u}'=I_{u}$ for $u\neq v$ and $I_{v}'=(I_{v_1}\backslash I_v)\cup I_{v_{v_4}}=\{1<2<\cdots r_{v_4}<r_{v}+1<\cdots r_{v_1}\}$. 
    One can easily find $c_t^S(S_v')|_{p_{\vec  I'}}= \prod_{j=r_v+1}^{r_{v_1}}(t+\lambda_{j})\prod_{j=1}^{r_{v_4}}(t+\lambda_{j})$. 
    Hence we have proved that $c_k^S(Q_v)|_{p_{\vec I}}=(-1)^{k}c_k^S(S_v')|_{p_{\vec I'}}$. 
    One can similarly prove that $c_k^S(S_v)=(-1)^{k}c_k^S(Q_v')$ and we omit its proof. 
\end{proof}

For an arbitrary decorated QP $\mathbf Q\in \Omega^A_n$ and the corresponding $\mc Z$, we choose a basis $\{\beta_u\}_{u\in Q_0\backslash Q_f}$ of $H_2(\mc Z,\mathbb Z)$ such that 
\begin{equation}\label{eqn:basisofeff}
    \int_{\beta_u}c_1(S_w)=-\op{sign}(\sigma_w)\delta_{uw}\,.
\end{equation}
Then we have 
$H_2(\mc Z,\mathbb Z)\iso\mathbb Z^{n}$\,.
For two varieties $\mc Z$ and $\mc Z'$ related by a quiver mutation at the gauge node $v$, we present a transformation of their effective curve classes inspired by \eqref{eqn:homologicalchange}.
\begin{prop}\label{prop:transformeff}
   Let $\beta=(\beta_u)_{u\in Q_0\backslash Q_f}$ and $\beta'=(\beta'_u)_{u\in Q_0\backslash Q_f}$ be the bases of $H_2(\mc Z,\mathbb Z)$ and $H_2(\mc Z',\mathbb Z)$, respectively, chosen as \eqref{eqn:basisofeff}.
   Then their transformation is given as follows. 
    If $\sigma_v>0$, which means $N_f(v)>N_a(v)$, then 
\begin{equation}\label{eqn:transforeff1}
    \phi_*(\beta_u)=
        \begin{cases}
            -\beta_u', &\text{ for }u= v\\
           \beta_u'+\beta_v' &\text { for }v\rightarrow u\\
           \beta_u' &\text { otherwise }\,.
        \end{cases}
    \end{equation}
    If $\sigma_v<0$, we have 
    \begin{equation}\label{eqn:transforeff2}
    \phi_*(\beta_u)=
        \begin{cases}
            -\beta_u', &\text{ for }u= v\\
           \beta_u'+\beta_v' &\text { for }u\rightarrow v\\
           \beta_u' &\text { otherwise }\,.
        \end{cases}
    \end{equation}
\end{prop}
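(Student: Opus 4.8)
The plan is to identify $\phi_*$ with the adjoint of $\phi$ under the (modulo torsion) perfect pairing $\int\colon H_2(\mc Z,\Z)\times H^2(\mc Z,\Z)\to\Z$: such a $\phi_*$ exists by \eqref{eqn:homologicalchange}, is unique, and is an isomorphism since $\phi$ is, so the whole content is to evaluate it on the basis $\{\beta_u\}$. By \eqref{eqn:basisofeff} the matrix $\bigl(\int_{\beta'_{u'}}c_1(S'_w)\bigr)_{u',w}=\bigl(-\op{sign}(\sigma'_w)\,\delta_{u'w}\bigr)_{u',w}$ is diagonal with entries $\pm1$, hence invertible; consequently a class $\gamma\in H_2(\mc Z',\Z)$ is pinned down by the integers $\int_\gamma c_1(S'_w)$ as $w$ runs over the gauge nodes. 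I would therefore compute, for each gauge node $w$,
\begin{equation*}
\int_{\phi_*(\beta_u)}c_1(S'_w)\;=\;\int_{\beta_u}\phi^{-1}\bigl(c_1(S'_w)\bigr),
\end{equation*}
which is \eqref{eqn:homologicalchange} applied to $\alpha=\phi^{-1}(c_1(S'_w))$, and then read off the $\{\beta'_w\}$-coordinates of $\phi_*(\beta_u)$.

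The next step is to compute $\phi^{-1}(c_1(S'_w))$ by taking $k=1$ in Proposition~\ref{prop:equicohmu} and passing to non-equivariant cohomology. For $w\neq v$ the mutation changes $S_w$ only through the sign convention of Definition~\ref{dfn:tautbundle}, so $\phi\bigl(c_1(S_w)\bigr)=\op{sign}(\sigma_w)\op{sign}(\sigma'_w)\,c_1(S'_w)$ and hence $\phi^{-1}\bigl(c_1(S'_w)\bigr)=\op{sign}(\sigma_w)\op{sign}(\sigma'_w)\,c_1(S_w)$. For $w=v$, Proposition~\ref{prop:equicohmu} gives $\phi\bigl(c_1(Q_v)\bigr)=c_1\bigl((S'_v)^\vee\bigr)=-c_1(S'_v)$, so $\phi^{-1}\bigl(c_1(S'_v)\bigr)=-c_1(Q_v)$; expanding $c_1(Q_v)$ from the defining short exact sequence of Definition~\ref{dfn:quotbundle} and using $c_1(S_w^*)=\op{sign}(\sigma_w)\,c_1(S_w)$ yields
\begin{equation*}
\phi^{-1}\bigl(c_1(S'_v)\bigr)=
\begin{cases}
c_1(S_v)-\sum_{v\to w}\op{sign}(\sigma_w)\,c_1(S_w) & \text{if }\sigma_v>0,\\
c_1(S_v)+\sum_{w\to v}\op{sign}(\sigma_w)\,c_1(S_w) & \text{if }\sigma_v<0.
\end{cases}
\end{equation*}

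Finally I would pair these classes with $\beta_u$ via $\int_{\beta_u}c_1(S_w)=-\op{sign}(\sigma_w)\,\delta_{uw}$ and collect signs. For $w\neq v$ the two $\op{sign}$-factors and the normalization sign cancel, giving $\int_{\phi_*(\beta_u)}c_1(S'_w)=-\op{sign}(\sigma'_w)\,\delta_{uw}$, i.e. the $\beta'_w$-coordinate of $\phi_*(\beta_u)$ equals $\delta_{uw}$. For $w=v$, using $\op{sign}(\sigma'_v)=-\op{sign}(\sigma_v)$, the absence of loops in the quiver, and that every arrow multiplicity $b_{vu}$ in a quiver of $\Omega^A_n$ is $0$ or $1$, the $\beta'_v$-coordinate of $\phi_*(\beta_u)$ comes out to be $-1$ when $u=v$, to be $1$ when $v\to u$ (resp. $u\to v$) in the case $\sigma_v>0$ (resp. $\sigma_v<0$), and to be $0$ otherwise. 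Assembling these coordinates reproduces exactly \eqref{eqn:transforeff1} and \eqref{eqn:transforeff2}.

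The computation is essentially bookkeeping; the one genuinely delicate part is keeping the sign conventions straight — $S_w^*$ versus $S_w$ versus $S_w^\vee$, the normalization sign $-\op{sign}(\sigma_w)$ in \eqref{eqn:basisofeff}, and the flip $\op{sign}(\sigma'_v)=-\op{sign}(\sigma_v)$ at the mutated node — together with a routine check that Proposition~\ref{prop:equicohmu}, which is stated equivariantly, descends to the non-equivariant classes appearing here and that the frozen summands $S_w$ in the short exact sequence defining $Q_v$ do not matter (their non-equivariant $c_1$ vanishes, and in any case they pair trivially with the $\beta_u$, which are indexed by gauge nodes). All the substantive geometric input is already packaged in Proposition~\ref{prop:equicohmu}.
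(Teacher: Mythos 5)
Your proposal is correct and uses exactly the three ingredients the paper invokes — equation \eqref{eqn:homologicalchange}, Proposition \ref{prop:equicohmu}, and the basis normalization \eqref{eqn:basisofeff} — so it is essentially the same argument; the only cosmetic difference is that you solve for $\phi_*(\beta_u)$ by pairing against the spanning set $\{c_1(S'_w)\}$ and reading off coordinates, whereas the paper states the formula and verifies it on that spanning set. Your extra care with the sign conventions (the $\op{sign}(\sigma'_v)=-\op{sign}(\sigma_v)$ flip at $v$, the $S_w^*$ versus $S_w$ normalization, and the observation that $b_{vu}\in\{0,1\}$ in $\Omega^A_n$) fills in precisely the bookkeeping the paper elides with ``one can check directly.''
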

\begin{proof}
Notice that $(c_1(S_u))_{u\in  Q_0\backslash Q_f}$ spans $H^2(\mc Z)$. One can check directly that the $\phi_*$ defined above satisfies \eqref{eqn:homologicalchange}, via Proposition \ref{prop:equicohmu} and \eqref{eqn:basisofeff}.
\end{proof}

\subsection{One-dimensional torus-fixed orbits}
\subsubsection{Grassmannian}
In this section, we will study the one-dimensional $S$-orbits in Grassmannian $\mc X=Gr(N_1, N_2)$ to prepare for that in flag variety with $S=(\mathbb C^*)^{N_2}$. 
We will prove that $Gr(N_1, N_2)$ is a GKM space and describe its one-dimensional $S$-orbits.
We already know that the $S$-fixed points can be represented by sets $\{I\subset [N_2]\}$, and we denote the point by $p_I$.
\begin{lem}\label{1 dim Gr}
	The set of one-dimensional $S$-orbits in $\mc X=Gr(N_1,N_2)$ is one-dimensional. 
    The closure of a one-dimensional $S$-orbit contains exactly two $S$-fixed points.
    Two fixed points $p_{I}$ and $p_J$ are connected by the closure of a one-dimensional $S$-orbit $\ell$ if and only if $|I\cap J|=N_1-1$.
	\end{lem}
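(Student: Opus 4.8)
The plan is to exploit the standard affine-chart description of the Grassmannian $Gr(N_1,N_2)$ near a torus-fixed point, and to read off one-dimensional orbits directly from the torus action on the coordinates of that chart. First I would set up notation: a fixed point $p_I$, with $I=\{i_1<\cdots<i_{N_1}\}\subset [N_2]$, is represented by the span of the standard basis vectors $e_{i_1},\dots,e_{i_{N_1}}$; equivalently, by the $N_1\times N_2$ matrix in reduced row echelon form whose pivots lie in the columns indexed by $I$ and whose other entries vanish. The affine chart $U_I$ around $p_I$ consists of all $N_1\times N_2$ matrices in row echelon form with pivot columns $I$; the free coordinates are the entries $z_{a,j}$ with $a\in\{1,\dots,N_1\}$ indexing a pivot row and $j\in[N_2]\setminus I$ indexing a non-pivot column (with the usual triangularity constraint that $z_{a,j}$ exists only for $j$ in the appropriate range, but this does not affect the weight computation). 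Thus $U_I\cong\mathbb{A}^{N_1(N_2-N_1)}$, which already gives $\dim Gr(N_1,N_2)=N_1(N_2-N_1)$ and shows $p_I$ is an isolated fixed point, hence $(\mc X)^S$ is finite.

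Next I would compute the $S=(\mathbb{C}^*)^{N_2}$-weights on $T_{p_I}\mc X = T_{p_I}U_I$. Since $t\cdot A = A t^{-1}$, scaling the $j$-th column by $t_j^{-1}$ while the pivot in row $a$ sits in column $i_a$, the coordinate $z_{a,j}$ transforms with weight $\lambda_{i_a}-\lambda_j$ (in additive notation in $H^2_S(pt)=\mathbb{Q}[\lambda_1,\dots,\lambda_{N_2}]$). In particular every tangent weight at $p_I$ is of the form $\lambda_i-\lambda_j$ with $i\in I$, $j\notin I$, and these $N_1(N_2-N_1)$ weights are pairwise distinct and nonzero. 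This is exactly the GKM condition: the number of one-dimensional $S$-orbits through $p_I$ equals the number of nonzero tangent weights, and a one-dimensional orbit corresponds to moving along a single coordinate line $z_{a,j}$, i.e. to the curve $\ell$ joining $p_I$ to the fixed point obtained by replacing $i_a\in I$ by $j\notin I$. That target fixed point is $p_J$ with $J=(I\setminus\{i_a\})\cup\{j\}$, so $|I\cap J|=N_1-1$. The closure of such a line is a $\mathbb{P}^1$ (the coordinate $z_{a,j}$ ranging over $\mathbb{P}^1$), and it contains precisely the two fixed points $p_I$ (at $z_{a,j}=0$) and $p_J$ (at $z_{a,j}=\infty$), with $0$ and $\infty$ both $S$-fixed. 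Summing over all $p_I$ and all coordinate directions covers all one-dimensional orbits (each such orbit lies in some chart $U_I$ where $I$ is one of its two fixed points), so the set of one-dimensional orbits has dimension one, establishing the GKM property.

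For the converse direction of the last claim, given $p_I, p_J$ with $|I\cap J|=N_1-1$, write $I=(I\cap J)\cup\{i\}$, $J=(I\cap J)\cup\{j\}$; then $J=(I\setminus\{i\})\cup\{j\}$ arises exactly from the coordinate line in $U_I$ indexed by the pivot row carrying $i$ and the non-pivot column $j$ (noting $j\notin I$, $i\notin J$), so $p_I$ and $p_J$ are joined by the closure of a one-dimensional orbit; and conversely, if $p_I,p_J$ lie on such a closure $\bar\ell\cong\mathbb{P}^1$, then $\bar\ell\subset U_I$ is one of the coordinate $\mathbb{P}^1$'s, forcing $J$ to differ from $I$ in exactly one element. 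I expect the main technical nuisance — not a genuine obstacle — to be bookkeeping the row-echelon triangularity constraints on the chart coordinates and checking that the $\mathbb{P}^1$ joining $p_I$ and $p_J$ really is the full orbit closure (rather than a larger rational curve); this is handled by observing that the one-parameter subgroup $t\mapsto t^{\,e_j-e_i}$ of $S$ acts on the line $z_{a,j}$ with nonzero weight and fixes all other coordinates, so its orbit closure in $U_I$ is exactly that coordinate $\mathbb{P}^1$.
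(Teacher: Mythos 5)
Your proof is correct, and it takes a genuinely different route from the paper's. The paper factors through the Pl\"ucker embedding $\iota: Gr(N_1,N_2)\hookrightarrow \mathbb{P}^{\binom{N_2}{N_1}-1}$, observes that $\iota$ is $S$-equivariant and the target is GKM, and then case-analyzes $|I\cap J|$: the case $|I\cap J|\le N_1-2$ is ruled out by a rank/linear-dependence contradiction on Pl\"ucker coordinates, and the case $|I\cap J|=N_1-1$ is handled by exhibiting an explicit matrix family $A_t$ for the orbit. Your approach instead works in the affine chart $U_I$ around a fixed point, computes the $N_1(N_2-N_1)$ tangent weights $\lambda_{i_a}-\lambda_j$ ($i_a\in I$, $j\notin I$), notes they are pairwise distinct and nonzero, and reads off the one-dimensional orbits as the coordinate axes. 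This is more local and direct, and it dispatches both the GKM property and the classification of orbit closures in a single weight computation, with no case split and no auxiliary contradiction argument. The trade-off is that the paper's explicit parametrization $A_t$ in \eqref{1 dim orb} is not a throwaway: it is quoted verbatim in the induction step of Lemma \ref{lem:1dimorbflag} (the flag-variety case), where the matrix $A_{t_0}$ is used to describe the $S'$-action on a fiber. Your argument recovers the $\mathbb{P}^1$ as a coordinate axis in $U_I$, from which that same parametrization can be read off, but if one adopts your route one should still record the chart coordinates explicitly so the later lemma has something concrete to cite. The step you flag as a ``technical nuisance'' --- that the orbit closure is exactly the coordinate $\mathbb{P}^1$ rather than a larger rational curve --- is fine as you argue it: the stabilizer of a generic point of $\ell$ is a codimension-one subtorus $T'\subset S$, and since the weights $\lambda_{i}-\lambda_{j}$ are pairwise non-proportional, the $T'$-fixed locus of $U_I$ is precisely the single coordinate axis, forcing $\ell$ to lie on it.
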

\begin{proof}
	Consider the Pl\"ucker embedding $\iota: \mc X\rightarrow \mathbb{P}^{\binom{N_2}{N_1}-1}$, which sends a matrix $A$ to the determinants of submatrices of size $N_1\times N_1$.
    We denote the homogeneous coordinates of $\mathbb{P}^{C_{N_2}^{N_1}-1}$ by $(x_I)$, where $I$ denote the number of columns of the submatrix.
    The induced  $S$-action on $\mathbb{P}^{C_{N_2}^{N_1}-1}$
	$$(t_1, t_2, \ldots, t_{N_2})\cdot (x_I)=((\prod_{i\in I}t_i)x_I)$$ 
	makes the Pl\"uker embedding an $S$-equivariant map. 
   The $S$-fixed points in $\mathbb{P}^{C_{N_2}^{N_1}-1}$ are:
	$$q_I=[0:\ldots:0: x_I: 0:\ldots: 0]$$
    which is the image of $S$-fixed point $p_I\in \mc X$, so $\iota$ induces an one-to-one correspondence between  $\mc X^S$ and $(\mathbb{P}^{C_{N_2}^{N_1}-1})^S$.
	
	Suppose $\ell$ is an one-dimensional $S$-orbit closure in $\mc X$, then $\iota(\ell)$ is an one-dimensional $S$-orbit closure in $\mathbb{P}^{C_{N_2}^{N_1}-1}$ as well. 
    Since $\mathbb{P}^{C_{N_2}^{N_1}-1}$ is a GKM space, $\mc X$ is also a GKM space. 
    The one-dimensional $S$-orbit closure $\ell$ contains two $S$-fixed points $p_I$ and $p_J$ in $\mc X$ if $\iota(\ell)$ contains two fixed points $q_I$ and $q_J$ in $\mathbb{P}^{C_{N_2}^{N_1}-1}$. 
    
    We will prove that two such different points $p_I,p_J$ are contained in an one-dimensional $S$-orbit $\ell$ if and only if they satisfy $|I\cap J|=N_1-1$.
	\begin{enumerate}
		\item If $|I\cap J|=N_1$, then $q_I=q_J$, which is clearly impossible.
		\item If $|I\cap J|\leq N_1-2$, we will prove it is impossible by contradiction.
        We first can find two distinct $i_1, i_2\in I\backslash J$. 
        Choose an arbitrary point $p\in \ell\backslash\{p_I, p_J\}$, whose matrix representation can be written as
		\begin{equation*}
        A=
        \begin{bmatrix}
            \vec v_1& \vec v_2& \cdots & \vec v_{N_2}
        \end{bmatrix}.
    \end{equation*}
    Consider the image $\iota(\ell)$, the one-dimensional $S$-orbit in $\mathbb P^{C_{N_2}^{N_1}-1}$, and $\iota(p)$, a point in $\iota(\ell)$. 
    Since $\iota(p)\in \iota(\ell)$, its homogeneous coordinates $x_K$ vanish if and only if $K\neq I, J$. 
    The condition $x_I\neq 0$ implies that column vectors $\{\vec v_i|i\in I\}$ are linear independent, so are $\{\vec v_j|j\in J\}$. 
    For any $j\in J\backslash I$, $(\vec v_i)_{i\in I\backslash \{i_1\}\cup \{j\}}$ are linear dependent according to $x_K=0$ for $K\neq I,J$, so $\vec v_{j}$ is a linear combination of $\{\vec v_i|i\in I\backslash\{i_1\}\}$. 
    This implies that all $N_1$ vectors $\{\vec v_j|j\in J\}$ are linear combinations of $r-1$ vectors $\{\vec v_i|i\in (I-\{i_1\})\}$, so $\{\vec v_j|j\in J\}$ are linear dependent. 
    Then we get a contradiction.
    \item If $|I\cap J|= N_1-1$, and assume that $I\backslash J=i_s$ for some $s\in [N_1]$ and $J\backslash I=j_s$, then one can easily prove that such an $\ell$ exists and is parameterized by $t\in \C\cup \{\infty\}$ with a matrix representation $A_t$ constructed as follow. 
    Denote $I\cap J=\{f_1<f_{s-1}<f_{s+1}\cdots<f_{N_1}\}$. 
     \begin{equation}\label{1 dim orb}
     \begin{cases}
        (A_{t})_{ab}=\delta_{f_a b} \text{ for } a\neq s\\
        (A_t)_{s i_s}=\frac{1}{1+t}\\
        (A_t)_{sj_s}= \frac{t}{t+1} 
     \end{cases}
        \end{equation}
	\end{enumerate}
\end{proof}
\subsubsection{flag varieties}
In this subsection, we let $\mc X=Fl(N_1, N_2, \ldots, N_{n+1})$. 
\begin{lem}\label{lem:1dimorbflag}
	The set of one-dimensional $S$-orbits of $\mc X$ is one-dimensional. 
    Each one-dimensional $S$-orbit closure contains exactly two fixed points. 
    Two fixed points $p_{\vec {I}}$ and $p_{\vec {J}}$ are connected by a one-dimensional $S$-orbit closure $\ell$ if and only if there exists $k\in [n]$, such that the following three conditions are satisfied.
	\begin{enumerate}
	\item $|I_k\cap J_k|=N_k-1$,
	\item $I_l=J_l$, for $l>k$,
	\item $I_l=J_l$ or $I_k\backslash I_l=J_k\backslash J_l$,  for $l<k$.
\end{enumerate}
\end{lem}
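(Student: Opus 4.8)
The plan is to reduce everything to the Grassmannian case already handled in Lemma \ref{1 dim Gr}. Recall the tautological closed $S$-equivariant embedding $\iota\colon\mc X=Fl(N_1,\dots,N_{n+1})\hookrightarrow\prod_{k=1}^{n}Gr(N_k,N_{n+1})$ onto the incidence variety $\{(W_1,\dots,W_n):W_1\subseteq\cdots\subseteq W_n\}$, which on fixed points reads $p_{\vec I}\mapsto(p_{I_1},\dots,p_{I_n})$. I will use $\iota$ to transport the one-dimensional orbit information from the factors to $\mc X$, and the explicit parametrization \eqref{1 dim orb} to build the connecting curves back.

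First I would settle the qualitative statements. Each $Gr(N_k,N_{n+1})$ is a GKM space with isolated fixed points by Lemma \ref{1 dim Gr}, hence so is the product, and its $1$-skeleton is one-dimensional. Since $\iota$ is a closed embedding, the closure of any $\le 1$-dimensional $S$-orbit of $\mc X$ maps to an $S$-orbit closure of the product, i.e.\ to a point or a $\mathbb P^1$; injectivity of $\iota$ then forces $\dim\mc X^1\le 1$. Moreover any one-dimensional orbit closure $\ell$ is an $S$-invariant rational curve, so $\ell\cong\mathbb P^1$ with $S$ acting through a one-dimensional quotient torus via a primitive character $\chi\in\op{Hom}(S,\mathbb C^*)$, and hence $\ell$ contains exactly the two $S$-fixed points lying over $0$ and $\infty$; call them $p_{\vec I}$ and $p_{\vec J}$.

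The heart of the argument is the weight bookkeeping, which I expect to be the main obstacle. For each $k$, the projection $\op{pr}_k\circ\iota(\ell)$ is either a point — whence $I_k=J_k$ — or a one-dimensional $S$-orbit closure in $Gr(N_k,N_{n+1})$, which by Lemma \ref{1 dim Gr} joins $p_{I_k}$ and $p_{J_k}$, satisfies $|I_k\cap J_k|=N_k-1$, and by \eqref{1 dim orb} carries the primitive $S$-weight $\pm(\lambda_a-\lambda_b)$ with $\{a,b\}=I_k\triangle J_k$. Comparing this with $\chi$ through the $S$-equivariant map $\ell\to\op{pr}_k\iota(\ell)$ — a standard degree/primitivity computation, since on a $\mathbb P^1$ such a map has the form $t\mapsto ct^{\pm d}$ and $\lambda_a-\lambda_b$ is primitive — shows the map has degree one and that $\chi=\pm(\lambda_a-\lambda_b)$ with the \emph{same} pair $\{a,b\}$ for every $k$ with $I_k\neq J_k$. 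Using $I_l\subseteq I_m$ and $J_l\subseteq J_m$ one checks the sign is consistent, say $a\in I_k$ and $b\in J_k$ whenever $I_k\neq J_k$; then elementary set manipulations show $\{k:I_k\neq J_k\}$ is an interval $[l_0,m_0]$ (the element $a$ enters the flag $\vec I$ at step $l_0$ and $b$ at step $m_0+1$, with the roles of $a,b$ exchanged in $\vec J$). Finally, taking $k=m_0$ yields condition (2) at once, condition (1) is $m_0\in\{k:I_k\neq J_k\}$, and condition (3) follows from $I_k\setminus I_l=(I_k\cap J_k)\setminus(I_l\cap J_l)=J_k\setminus J_l$ for $l_0\le l<k$, together with $I_l=J_l$ for $l<l_0$.

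For the converse I would run the same combinatorics in reverse: if $p_{\vec I},p_{\vec J}$ satisfy (1)–(3) for some $k$, then (2) gives $a,b\in I_{k+1}=J_{k+1}$ for $\{a,b\}=I_k\triangle J_k$, and (3) forces $\{l:I_l\neq J_l\}$ to be an interval $[l_0,k]$ on which $\vec J$ is obtained from $\vec I$ by swapping the entry times of $a$ and $b$. I then exhibit the connecting curve explicitly: for $t\in\mathbb C\cup\{\infty\}$ let $\ell_t$ be the flag whose $l$-th subspace is $\mathbb C^{I_l}=\mathbb C^{J_l}$ for $l\notin[l_0,k]$ and $\mathbb C^{I_l\cap J_l}\oplus\mathbb C\bigl\langle\tfrac{1}{1+t}e_a+\tfrac{t}{1+t}e_b\bigr\rangle$ for $l\in[l_0,k]$, mirroring \eqref{1 dim orb}. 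The nesting within $[l_0,k]$ is clear, and the compatibilities at the two boundaries $l_0-1$ and $k+1$ reduce to $\mathbb C^{I_{l_0-1}}\subseteq\mathbb C^{I_{l_0}\cap J_{l_0}}\oplus\mathbb C\langle\cdots\rangle$ and $\mathbb C^{I_k\cap J_k}\oplus\mathbb C\langle\cdots\rangle\subseteq\mathbb C^{I_{k+1}}$, both immediate; the resulting morphism $\mathbb P^1\to\mc X$ is $S$-equivariant, non-constant, with $\ell_0=p_{\vec I}$ and $\ell_\infty=p_{\vec J}$, so its image is the desired one-dimensional $S$-orbit closure. The only genuinely delicate point throughout is the third paragraph — converting "$\ell$ has a single primitive weight" into the precise assertions that the differing indices form an interval and that condition (3) holds, while keeping the degree-one comparison with the Grassmannian projections honest.
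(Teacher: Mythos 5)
Your approach is genuinely different from the paper's. The paper proves the lemma by induction on $n$ via the fibration \eqref{Gr-fiber} onto $Gr(N_n,N_{n+1})$, splitting into the case where $\pi(\ell)$ is a point (reduce to the fiber, which is a shorter flag variety) and the case where $\pi(\ell)$ is a Grassmannian edge (use the auxiliary subtorus $S'\subset S$ acting on the fiber together with a continuity argument). You instead embed $Fl$ into $\prod_{k}Gr(N_k,N_{n+1})$, observe that each projection $\op{pr}_k\circ\iota|_{\ell}$ is either constant or a degree-one $S$-equivariant map onto a Grassmannian edge (because the edge weight $\lambda_a-\lambda_b$ is primitive), and conclude that the symmetric difference $I_k\,\triangle\,J_k$ is the same two-element set $\{a,b\}$ for every $k$ where it is nonempty. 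This treats all steps of the flag symmetrically rather than singling out the last one. Your bookkeeping is correct: nesting of $\vec I$ and $\vec J$ forces $\{k:I_k\neq J_k\}$ to be an interval $[l_0,m_0]$; taking $k=m_0$ gives (1)--(2), and (3) follows from $I_k\setminus I_l=(I_k\cap J_k)\setminus(I_l\cap J_l)=J_k\setminus J_l$ for $l_0\le l<k$. The converse, via the explicit family $\ell_t$ mirroring \eqref{1 dim orb}, is also correct.

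There is, however, one genuine gap in your first paragraph. The product $\prod_{k}Gr(N_k,N_{n+1})$, with the diagonal action of $S=(\mathbb C^*)^{N_{n+1}}$, is \emph{not} a GKM space: already in $Gr(1,3)\times Gr(2,3)$ the tangent weight $\lambda_1-\lambda_3$ occurs with multiplicity two at the fixed point $(\{1\},\{1,2\})$, once from each factor, so the one-skeleton of the product through that point has dimension two. The inference ``each $Gr(N_k,N_{n+1})$ is GKM $\Rightarrow$ the product is GKM $\Rightarrow\dim\mc X^1\le 1$'' therefore fails at the first implication. What you actually need is that the tangent $S$-weights of $Fl$ itself at each fixed point $p_{\vec I}$ are pairwise non-proportional. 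This is true because $T_{p_{\vec I}}Fl\cong\mf{gl}_{N_{n+1}}/\mf p_{\vec I}$ has as its $S$-weights precisely the $\lambda_a-\lambda_b$ with $a$ entering the flag $\vec I$ strictly before $b$, each appearing once; and for a smooth projective $S$-variety with isolated fixed points, pairwise non-proportionality of the tangent weights at every fixed point yields $\dim\mc X^1\le 1$. Alternatively, your own weight analysis already shows that the one-dimensional orbit closures through $p_{\vec I}$ are classified by the finitely many pairs $\{a,b\}$, and that finiteness bounds $\mc X^1$ directly. Either patch repairs the argument; as stated, the product-is-GKM claim must be withdrawn.
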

\begin{proof}
	We prove by induction on $n$. 
    For $n=1$, this is Lemma \ref{1 dim Gr}. 
    Assume that the statement holds for $n-1$.  
    We will prove the statement for length $n$ flag variety.  
    Consider the following fiber bundle structure
    \begin{equation}\label{Gr-fiber}
    \begin{tikzcd}
    Fl(N_1, \ldots, N_n)\arrow[hookrightarrow]{r}{} 
     & Fl(N_1, \ldots, N_n, N_{n+1}) \arrow[]{d}{\pi}\\
    & Gr(N_n, N_{n+1})
     \end{tikzcd}
     \end{equation}
	Suppose $\ell$ is an one-dimensional $S$-orbit closure in $Fl(N_1, N_2, \ldots, N_n, N_{n+1})$, we consider $\pi(\ell)\subset Gr(N_n, N_{n+1})$. 
	\begin{enumerate}
		\item If $\pi(\ell)$ is a point $q\in Gr(N_n, N_{n+1})$, then $\ell$ lies entirely in the fiber $\pi^{-1}(q)\cong Fl(N_1, N_2, \ldots, N_n)$. 
        Then we are done by the induction assumption.
		\item If $\pi(\ell)$ is a line $\ell_b\subset Gr(N_n,N_{n+1})$, then $\ell_b$ is an one-dimensional $S$-orbit closure in $Gr(N_n, N_{n+1})$ since $\pi$ is $S$-equivariant. 
        By Lemma \ref{1 dim Gr}, $\ell_b$ contains two $S$-fixed points $q_I, q_J\in Gr(N_n, N_{n+1})^S$, satisfying $|I\cap J|=N_n-1$. Without loss of generality, we assume that $I\backslash J=\{N_{n+1}-1\}$, $J\backslash I=\{N_{n+1}\}$. 
        Fix a point $q\in \ell_b\backslash \{q_I, q_J\}$, and we assume that its matrix representation is given by $A_{t_0}$ in \eqref{1 dim orb} for some fixed $t_0$. 
        The matrix representation of the fiber $\pi^{-1}(q)$ can be $(A_1, A_2, \ldots, A_{n-1}, A_{t_0})$. 
        Now one can check that there is a $S'=(\C^*)^{N_{n+1}-1}$-action on $\pi^{-1}(q)$, where the inclusion $S'\hookrightarrow S$  is given by $(t_1, t_2, \ldots, t_{N-2}, t, t)$. 
        Actually, this action can be viewed as an extension of the natural $(\C^*)^{N_n}$-action on $Fl(N_1, N_2, \ldots, N_n)$ to $\pi^{-1}(q)$.
        
        The point $p\in \ell\cap \pi^{-1}(q)$ must be a $S'$-fixed point in $Fl(N_1, N_2, \ldots, N_n)$. Otherwise the whole $S$-action on $p$ will result in a larger than one dimensional $S$-orbit, which contradicts the assumption that $\ell$ is one-dimensional.
        By continuity, when $q$ varies, $p$ should have the same matrix representation. In the formula of \eqref{eqn:Sfixedpointsinflag}, either  $I_l=J_l$, or $I_l$ contains the $N_{n+1}-1$  and $J_l$  contains the $N_{n+1}$. 
        Hence we have proved the induction in case $n$.
		\end{enumerate}
\end{proof}
\subsubsection{general case}
We first consider the quiver in Figure \ref{fig:oneskeleton-case1} which is mutation-equivalent to $A_2$. Then the variety $\mc Z$ is a complete intersection in $Gr(r_1, N)\times Gr(r_2, N)$ with defining equation $A_1A_2=0$. 
\begin{figure}[ht]
\centering
\begin{tikzpicture}
    \node[draw,
	circle,
	minimum size=1cm,
	] (node1) at (-1.2,0){$r_1$};
    \node at (0,-0.2){$B$};
    \node[draw,
	circle,
	minimum size=1cm,
	] (node2) at (1.2,0){$r_{2}$};
    \node at (-0.9,1){$A_1$};
    \node at (0.9,1){$A_2$};
    \node[draw,
	minimum width=1cm,
	minimum height=1cm,
	] (nodef) at (0,1.8){$N$};
	\draw[-stealth] (node1) -- (nodef);
	\draw[-stealth] (nodef) -- (node2);
	\draw[-stealth] (node2) -- (node1);
\end{tikzpicture}
\caption{} 
\label{fig:oneskeleton-case1}
\end{figure}
\iffalse
\begin{figure}[H]
    \centering
    \includegraphics[width=1in]{oneskeleton-case1.png}
    \caption{}
    \label{fig:oneskeleton-case1}
\end{figure}
\fi
As Lemma \ref{lem:torusfixedpointgeneral}, the $S$-fixed points $\mc Z$ are parameterized as follows
\begin{equation*}
	\{(I_1, I_2)\big|I_1, I_2\subset [N], |I_1|=r_1, |I_2|=r_2, I_1\cap I_2=\emptyset\}.\nonumber
\end{equation*}
Now we investigate the one-dimensional $S$-orbits in $\mc Z$. \begin{lem}\label{lem:1 dim mut}
	The union of one-dimensional $S$-orbits in $\mc Z$ is one-dimensional. 
    Each one-dimensional $S$-orbit closure contains exactly two $S$-fixed points, and two fixed points $p_{(I_1, I_2)}$ and $p_{(J_1, J_2)}$ are connected by a one-dimensional $S$-orbit closure $\ell$ if and only if one of the following three conditions is satisfied. 
	\begin{enumerate}
		\item $|I_2\cap J_2|=r_2-1$, $I_1=J_1$, $I_1\cap I_2=\emptyset$, $J_1\cap J_2=\emptyset$,
        \item $|I_1\cap J_1|=r_1-1$, $I_2=J_2$, $I_1\cap I_2=\emptyset$, $J_1\cap J_2=\emptyset$,
		\item $|I_k\cap J_k|=r_k-1$, $k=1, 2$, and $I_1\cup I_2=J_1\cup J_2$, $I_1\cap I_2=\emptyset$, $J_1\cap J_2=\emptyset$.
	\end{enumerate}
	\end{lem}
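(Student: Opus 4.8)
The plan is to prove Lemma~\ref{lem:1 dim mut} by embedding $\mc Z$ into the product $Gr(r_1,N)\times Gr(r_2,N)$ and pulling back the GKM structure already established for Grassmannians in Lemma~\ref{1 dim Gr}. First I would recall that $\mc Z$ is the complete intersection $\{A_1A_2=0\}$ inside $X:=Gr(r_1,N)\times Gr(r_2,N)$, where $A_1$ represents a point in the dual Grassmannian and $A_2$ in the Grassmannian, and that the diagonal torus $S=(\C^*)^N$ acts componentwise. Since $X$ is a product of GKM spaces it is itself GKM: its one-dimensional $S$-orbit closures are exactly those $\ell_1\times\{q_{I_2}\}$ or $\{q_{I_1}\}\times\ell_2$, where $\ell_j$ is a one-dimensional orbit closure in the $j$-th factor. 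Any one-dimensional $S$-orbit closure in $\mc Z$ is, in particular, a one-dimensional $S$-orbit closure in $X$ lying entirely in the closed subvariety $\mc Z$; so it suffices to determine which of the one-dimensional orbit closures of $X$ are contained in $\{A_1A_2=0\}$, together with the (automatic) requirement $I_1\cap I_2=\emptyset$ at the endpoints coming from Lemma~\ref{lem:torusfixedpointgeneral}.

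Next I would analyze the three families. In the first family $I_1=J_1$ is fixed and the orbit closure moves only in the second factor; writing $A_2$ along the orbit via the explicit matrix $A_t$ of \eqref{1 dim orb}, the condition $A_1A_2=0$ holds identically along $\ell$ provided it holds at one point with $I_1\cap I_2=\emptyset$, because the only columns of $A_2$ that vary are those indexed by the two indices $i_s,j_s$ differing between $I_2$ and $J_2$, and the vanishing $A_1A_2=0$ only constrains the column span, which does not change; the transversality/complete-intersection hypothesis of Lemma~\ref{lem:phasevargeneralquiver} guarantees $\ell\subset\mc Z$ is then a genuine one-dimensional orbit closure. The second family is symmetric. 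For the third family, both $I_k$ and $J_k$ change by swapping a single index, and the constraint $I_1\cup I_2=J_1\cup J_2$ forces the swapped index to move from, say, $I_1$ into $I_2$ (and another out of $I_2$ into $I_1$) coherently, so that the combined one-dimensional orbit obtained by varying a common parameter $t$ in both factors simultaneously keeps the total span compatible with $A_1A_2=0$; one checks directly with the $A_t$-formulas that $A_1(t)A_2(t)=0$ for all $t$. Conversely, if $p_{(I_1,I_2)}$ and $p_{(J_1,J_2)}$ are joined by a one-dimensional orbit closure in $\mc Z$, then projecting to each factor it is either constant or a one-dimensional orbit in that Grassmannian, giving $|I_k\cap J_k|\in\{r_k, r_k-1\}$; ruling out the cases that violate $A_1A_2=0$ along the whole orbit (via the dimension-count argument in the proof of Lemma~\ref{1 dim Gr}, case $|I\cap J|\le r-2$, adapted to the product) leaves exactly the three listed configurations.

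The main obstacle I anticipate is the third case: showing that when both Grassmannian projections move, the only way for the one-parameter family to stay inside $\{A_1A_2=0\}$ is the coherent swap encoded by $I_1\cup I_2=J_1\cup J_2$, and that such a family is genuinely one-dimensional (not a union of two orbits or a higher-dimensional locus). This requires writing the matrix representative of a general point of $\mc Z$ near the fixed point $p_{(I_1,I_2)}$, imposing $A_1A_2=0$, and identifying the $S$-weights on the tangent directions to see which coordinate curves are $S$-invariant; the bookkeeping is essentially the same as in Lemma~\ref{1 dim Gr} but carried out on the fiber product, so I would present it as a direct computation with the explicit $A_t$ of \eqref{1 dim orb} rather than invoking abstract GKM machinery. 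Everything else — the two ``edge moves in a single factor'' families and the GKM property of $\mc Z$ itself — follows formally from the product structure and Lemma~\ref{1 dim Gr}.
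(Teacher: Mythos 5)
Your opening reduction is wrong, and it is wrong in a way that would actually preclude the third case of the lemma. You claim that $X = Gr(r_1,N)\times Gr(r_2,N)$ is GKM for the diagonal $S$-action and that its one-dimensional orbit closures move in only one factor at a time, i.e., are of the form $\ell_1\times\{q_{I_2}\}$ or $\{q_{I_1}\}\times\ell_2$. This is false here. With the action $A_1\mapsto A_1 t^{-1}$ on the first factor and $A_2\mapsto tA_2$ on the second (as specified in Section~\ref{sec:torusfixedpoints}), the tangent weight at $(I_1,I_2)$ for replacing $i\in I_1$ by $j\notin I_1$ is $\lambda_i-\lambda_j$, while the tangent weight for replacing $j\in I_2$ by $i\notin I_2$ is also $\lambda_i-\lambda_j$. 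When $i\in I_1$, $j\in I_2$, $I_1\cap I_2=\emptyset$, these two tangent directions carry the \emph{same} weight, so $X$ has a two-dimensional weight space at $(I_1,I_2)$ and its 1-skeleton is not one-dimensional — $X$ is not GKM. In particular there is a whole $\mathbb P^1$-family of one-dimensional $S$-orbits of $X$ through $(I_1,I_2)$ whose projections to both factors move; these are exactly the ``diagonal'' orbits that give rise to condition (3) of the lemma. So the statement you use to organize the proof contradicts the very case you then go on to describe, and the strategy ``classify orbits of $X$, then intersect with $\{A_1A_2=0\}$'' cannot be carried out as stated: orbits of $X$ in the GKM sense are not a discrete set of curves in this situation.

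The paper sidesteps this by never asserting GKM-ness of the ambient product. Instead it projects via the two equivariant maps $f_k:\mathcal Z\to Gr(r_k,N)$, separates into the cases where each $f_k(\ell)$ is a point or a curve, and in the genuinely two-factor case works directly with the explicit parametrizations \eqref{eqn:oneskeletonA1}--\eqref{eqn:oneskeletonA2}: it regards the product as a $Gr(r_2,N)$-bundle over $Gr(r_1,N)$, introduces a codimension-one subtorus $S'$ fixing each fiber, uses $S'$-invariance of $f_2(\ell)$ to constrain $A_2$, and then imposes $A_1A_2=0$ to force $a=-t$, $b=s$, thereby exhibiting the orbit as a genuine one-parameter family inside $\mathcal Z$ and reading off $I_1\cup I_2=J_1\cup J_2$. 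You gesture at this computation in your last paragraph (calling it the ``main obstacle''), but your first paragraph's reduction would have already declared such orbits nonexistent, so the argument as written is internally inconsistent. To fix it you must drop the ``$X$ is GKM'' claim, work with the projections $f_k$ and the subtorus $S'$ directly as the paper does, and in case (3) verify both that $A_1A_2=0$ singles out a one-dimensional curve in the two-dimensional repeated-weight direction and that the endpoints satisfy the stated combinatorics.
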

	\begin{proof}
		Consider maps which are composite of an inclusion and one projection $f_k: X\hookrightarrow Gr(r_1, N)\times Gr(r_2, N)\rightarrow Gr(r_k, N)$, $k=1, 2$, and they are $S$-equivariant.
        Suppose $\ell$ is an one-dimensional $S$-orbit closure in $\mc Z$. The images $f_1(\ell)$ and $f_2(\ell)$ have the following three situations.
		\begin{enumerate}
			\item The first case is when $f_1(\ell)$ is point in $Gr(r_1, N)$, then it must be an $S$-fixed point in $Gr(r_1,N)$, which we denote by $I\subset [N]$ with $\abs{I}=r_1$. Then we have $I_1=J_1=I$.
            The image $f_2(\ell)$ is an one-dimensional $S$-orbit closure in $Gr(r_2, N)$ containing two $S$-fixed points indexed by $I_2,J_2$. 
            According to the description for one-dimensional $S$-orbits in $Gr(r_2,N)$ in Lemma \ref{1 dim Gr}, the two  sets ${I_2}$ and ${J_2}$ satisfy $|I_2\cap J_2|=r_2-1$. 
            Furthermore, one can check that all $\ell$ in  $Gr(r_1, N)\times Gr(r_2, N)$ satisfying this condition entirely lie in $\mc Z$, 
            since $I\cap I_2=I\cap J_2=\emptyset$.
            Hence, we have proved the case (1). Similarly, one can find the case $(2)$ which is omitted.
			\item The complicated case is when both $f_1(\ell)$ and $f_2(\ell)$ are one-dimensional.
            Torus fixed points $p_{I_k},p_{J_k}$ are contained in $f_k(\ell)$.
            Without loss of generality, we assume that $I_1=\{1, 3, \ldots, r_1+1\}$, $J_1=\{2, 3, \ldots, r_1+1\}$. 
            Then a matrix representative of $f_1(\ell)=\mathbb P^1$  can be written as follows 
            \begin{equation}\label{eqn:oneskeletonA1}
            A_1=
            \begin{bmatrix}
            s&t&0&\cdots &0&\ldots&0\\
            0&0&1&\cdots &0&\ldots&0\\
            \vdots&\vdots&\vdots& \ddots&\vdots& &\vdots\\
            0&0&0&\cdots &1&\ldots&0
            \end{bmatrix}
            \end{equation}
            where $[s:t]$ are the homogeneous coordinates of $f_1(\ell)=\mathbb P^1$.
			
            We can view $Gr(r_1, N)\times Gr(r_2, N)$ as a total space of $Gr(r_2, N)$ fiber bundle over $Gr(r_1, N)$. 
            For an arbitrary point $p$ in $f_1(\ell)$, 
            there is a $S'=(\C^*)^{N-1}$-action on the $Gr(r_2, N)$-fiber over $p$ by the inclusion $S'\hookrightarrow S$:  $t\rightarrow (t, t, t_2, \ldots, t_{N-1})$. 
            Since $f_2(\ell)$ is invariant under $S'$-action, the matrix representation of $f_2(\ell)$ can be written as 
            \begin{equation}\label{eqn:oneskeletonA2}
            A_2=
            \begin{bmatrix}
            a&0&0&\cdots &0\\
            b&0&0&\cdots &0\\
            \vdots&\vdots&\vdots& \ddots&\vdots\\
            0&0&1&\cdots &0\\
            \vdots&\vdots&\vdots& \ddots&\vdots\\
            0&0&0&\cdots &1
            \end{bmatrix}.
            \end{equation}
            Since $\ell$ lies in $\mc Z$, $A_1A_2=0$. Then we have $a=-t$ and $b=s$. 
            By letting $t=0, s=1$, we obtain the two $S$-fixed points $(I_1,I_2)$ such that $1\in I_1$, $2\in I_2$, 
            and $I_1\cap I_2=\emptyset$.
            Similarly, by letting $t=1,s=0$, we get the other $S$-fixed point $(J_1,J_2)$, such that $2\in J_1, 1\in J_2$, $J_1\cap J_2=\emptyset$. At the same time, we have $I_1\backslash \{1\}=J_1\backslash \{2\}$, $I_2\backslash \{2\}=J_2\backslash \{1\}$ and   $I_1\cup I_2=J_1\cup J_2$.
            Hence, we have proved the third case. 
            \end{enumerate}
    \end{proof}
We consider a general decorated QP in $\Omega^A_n$. 
Let $\mc Z:=\{dW=0\}\sslash_{\theta}G$ be the associated variety.
We denote by $p_{\vec I}$ the torus fixed point indexed by a set of sets $\vec I=(I_v)_{v\in Q_0\backslash v_f}$. 
\begin{prop}\label{prop:oneskeletonAnmutation}
     Each one-dimensional $S$-orbit closure contains exactly two $S$-fixed points, and 
     two distinct $S$-fixed points $p_{\vec {I}}$ and $p_{\vec {J}}$ are contained in the closure of an one-dimensional $S$-orbit if and only if sets $\vec {I}$ and $\vec {J}$ satisfy the following three conditions: 
	    \begin{enumerate}
		\item There exists a unique node $v_0$ such that  for $v$ not smaller than $v_0$, $I_v=J_v$.
		\item 
        There exist at most two maximal nodes that are smaller than $v_0$. 
         If there is only one maximal node $v_1$ that is smaller than $v_0$, then $I_{v_1}\cap J_{v_1}=r_{v_1}-1$.
        If there are exactly two maximal nodes $v_1$ and $v_2$ that are smaller than $v_0$, then $(I_{v_k},J_{v_k}),\,k=1,2$ must satisfy one of the following three conditions:
		\begin{enumerate}
		\item $|I_{v_1}\cap J_{v_1}|=r_{v_1}-1$, $I_{v_2}=J_{v_2}$,
		\item $|I_{v_2}\cap J_{v_2}|=r_{v_2}-1$, $I_{v_1}=J_{v_1}$,
		\item $|I_{v_k}\cap J_{v_k}|=r_{v_k}-1$, $k=1, 2$, and $I_{v_1}\cup I_{v_2}=J_{v_1}\cup J_{v_2}$.
	   \end{enumerate}
       
        \item For $w\prec v_k$, $I_{w}=J_w$ or $I_{v_k}\backslash I_w=J_{v_k}\backslash J_w$, $k=1, 2$.	
        \end{enumerate}
\end{prop}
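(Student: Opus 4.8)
The plan is to prove the proposition by strong induction on $n$, using the recursive description of $\mathbf Q$ in Lemma \ref{lem:recursivepattern} and the embedding $\mc Z\hookrightarrow\mc Y$ of Lemma \ref{lem:phasevargeneralquiver}. For the base cases $n\leq 1$ the variety $\mc Z$ is a point or a Grassmannian, so the statement is trivial or is Lemma \ref{1 dim Gr}; in general the proposition interpolates between Lemma \ref{lem:1dimorbflag} (linear chains of nodes) and Lemma \ref{lem:1 dim mut} ($3$-cycle at the root), and the inductive step will combine exactly these two local pictures. First I would dispose of the two structural assertions: $\mc Y$ is a tower of Grassmann(-and-dual-Grassmann) bundles over a point, built node by node down the binary tree $\bar{\mathbf Q}$, and by induction on the tower (each stage a Grassmann bundle over a GKM base with compatible $S$-action) $\mc Y$ is a smooth projective GKM $S$-variety; since $\mc Z$ is closed and $S$-invariant in $\mc Y$, every one-dimensional $S$-orbit of $\mc Z$ is one of $\mc Y$, so its closure is a $\mathbb P^1$ with exactly two $S$-fixed points, both in $\mc Z$, and there are only finitely many. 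This reduces everything to the combinatorial criterion on the endpoints.

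For the criterion, peel off the root $v_f$. By Lemma \ref{lem:recursivepattern} there are two cases. If $v_f$ has a single neighbour $v_1$, then $\mc Z$ is an $S$-equivariant fibre bundle over the Grassmannian $\mc G$ parametrizing the subspace $\op{im}A_{v_1}\subseteq\C^{N_{n+1}}$ (or the dual quotient Grassmannian, if the arrow points out of $v_f$), with fibre the variety $\mc Z_1$ of the sub-QP $\mathbf Q_1=\mathbf Q\backslash v_f\in\Omega^A_{n-1}$ rooted at $v_1$; the residual torus acts on the fibre through the natural $(\C^*)^{r_{v_1}}$. If $v_f$ has two neighbours $v_1,v_2$ forming a $3$-cycle, the relation $A_{v_1}A_{v_2}=0$ forces $\op{im}A_{v_1}\subseteq\ker A_{v_2}$, so $\mc Z$ is an $S$-equivariant bundle over the two-step flag variety $\mc F=Fl(r_{v_1},\,N_{n+1}-r_{v_2},\,N_{n+1})$, with fibre $\mc Z_1\times\mc Z_2$, where $\mathbf Q_1\in\Omega^A_{l-1}$ and $\mathbf Q_2\in\Omega^A_{n-l-1}$ are the sub-QPs rooted at $v_1$ and $v_2$ (built on the subspace $\C^{r_{v_1}}$ and the quotient $\C^{r_{v_2}}$ respectively); the two factors see the disjoint coordinate subtori $(\C^*)^{r_{v_1}}$ and $(\C^*)^{r_{v_2}}$, and the one-neighbour case is the degeneration $l=0$ or $l=n$.

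Now let $\ell$ be the closure of a one-dimensional $S$-orbit of $\mc Z$, with endpoints $p_{\vec I},p_{\vec J}$, and project via $\pi$ to $\mc G$ (resp. $\mc F$). If $\pi(\ell)$ is a point, it is $S$-fixed, $\ell$ lies in $\pi^{-1}(q)\cong\mc Z_1$ (resp. $\mc Z_1\times\mc Z_2$); since the two factors see disjoint coordinate subtori, a one-dimensional orbit of a product is a one-dimensional orbit in one factor times a fixed point in the other, and the inductive hypothesis applied to that factor yields the node $v_0$ (strictly below $v_1$ or $v_2$) together with conditions (1)--(3) restricted to its subtree, while all other components of $\vec I$ and $\vec J$ agree, so (1)--(3) hold. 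If instead $\pi(\ell)$ is one-dimensional, it is a one-dimensional $S$-orbit of $\mc G$ or $\mc F$, described by Lemma \ref{1 dim Gr} or Lemma \ref{lem:1dimorbflag}; this forces $v_0=v_f$, and translating that Grassmannian/flag criterion into complements in $[N_{n+1}]$ produces precisely one of the alternatives (2)(a), (2)(b), (2)(c) (in the last case one checks $|I_{v_k}\cap J_{v_k}|=r_{v_k}-1$ for both $k$, using $I_{v_1}\sqcup I_{v_2}=J_{v_1}\sqcup J_{v_2}$). For the nodes below $v_1,v_2$ I would run the rigidity argument already used in the proofs of Lemmas \ref{lem:1dimorbflag} and \ref{lem:1 dim mut}: for generic $q\in\pi(\ell)$ the intersection $\ell\cap\pi^{-1}(q)$ is a single point, necessarily fixed by the codimension-one residual torus $\op{Stab}_S(q)$ and hence by the effective fibre torus; letting $q$ vary along $\pi(\ell)$ and tracking the reduced-echelon matrix representatives as in \eqref{1 dim orb} and \eqref{eqn:oneskeletonA1}--\eqref{eqn:oneskeletonA2} shows that each lower index $I_w$ either equals $J_w$ or satisfies $I_{v_k}\backslash I_w=J_{v_k}\backslash J_w$, which is condition (3). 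Uniqueness of $v_0$ is immediate, since the conditions force $I_v=J_v$ above it and a genuine difference at one of its children.

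The main obstacle is the middle step: establishing rigorously that the GIT quotient $\mc Z=\{A_{v_1}A_{v_2}=0\}\subseteq\mc Y$ is literally the $(\mc Z_1\times\mc Z_2)$-bundle over $Fl(r_{v_1},N_{n+1}-r_{v_2},N_{n+1})$ with the stated residual torus actions, and then carrying the rigidity argument through this bundle so that it extracts exactly condition (3) rather than something weaker or stronger. Once the fibration geometry and the residual-torus bookkeeping are pinned down, the remaining combinatorics is a bounded amount of set manipulation, essentially the $n\leq 2$ content of Lemmas \ref{lem:1dimorbflag} and \ref{lem:1 dim mut} applied one layer at a time.
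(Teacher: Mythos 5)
Your proof is correct and follows essentially the same inductive scheme as the paper: base cases from Lemmas \ref{1 dim Gr}, \ref{lem:1dimorbflag}, \ref{lem:1 dim mut}, the dichotomy on whether the root-level projection of the orbit closure is a point or a line, and the rigidity/continuity argument for condition (3). The only organizational difference is in the $3$-cycle case, where you fibre $\mc Z$ over the two-step flag $Fl(r_{v_1},N_{n+1}-r_{v_2},N_{n+1})$ in one shot rather than (as the paper does) projecting to the two overlapping sub-quiver varieties $\mc Z_1,\mc Z_2$ and then to their Grassmannian bases $\pi_k\circ f_k$ separately; both packagings extract the same combinatorics, and the obstacle you honestly flag --- making the $(\mc Z_1\times\mc Z_2)$-bundle structure and residual-torus action rigorous --- is handled at the same informal level in the paper.
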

\begin{proof}
	We prove by induction on the number of gauge nodes $n$. 
    For $n=1$ and $2$, this is proved by Lemmas \ref{1 dim Gr}, \ref{lem:1dimorbflag} and \ref{lem:1 dim mut}. 
    Assume that the Lemma is true for $n-1$.
    We're going to prove the case when the number of gauge node is $n$. 
    
    Let $\ell$ be an one-dimensional $S$-orbit closure in $\mc Z$.
    When the frozen node $v_f$ has only one maximal node $v_1$ smaller than it, then we have the following fiber bundle 
    \begin{equation*}
    \begin{tikzcd}
    \mc Z'\arrow[hookrightarrow]{r}{} 
     & \mc Z \arrow[]{d}{\pi}\\
    & Gr(r_{v_1}, N_{n+1})
     \end{tikzcd}
    \end{equation*}
    where the fiber $\mc Z'$ is a variety constructed from a quiver diagram in $\Omega^A_{n-1}$.
    We can prove the Lemma by the same induction argument with Lemma \ref{lem:1dimorbflag}.

    When the frozen node $v_f$ has two maximal nodes $v_1,v_2$ smaller than it, 
    we assume that the three nodes form a 3-cycle $v_1\rightarrow v_f\rightarrow v_2\rightarrow v_1$. 
    By deleting the arrow $v_2\rightarrow v_1$, the quiver $\mathbf Q$ is separated into two quivers $\mathbf Q_1,\mathbf Q_2$, and both quivers have one frozen node $v_f$. 
    If we assume that there are $n_1$ nodes smaller than $v_1$ and $n_2$ nodes smaller than $v_2$, then $\mathbf Q_1$ and $\mathbf Q_2$ are in $\Omega^A_{n_1+1}$ and $\Omega^A_{n_2+1}$. 
    Let $\mc Z_1$ and $\mc Z_2$ be the two critical loci of two potentials of quivers $\mathbf Q_1$ and $\mathbf Q_2$, and then $\mc Z$ is defined by additional equations $A_{v_1}A_{v_2}=0$.

    Consider two composite maps $f_k: \mc Z\hookrightarrow  \mc Z_1\times \mc Z_2 \rightarrow \mc Z_k$, $k=1, 2$. 
    The images of $f_k(\ell)$ have the following situations.
	\begin{enumerate}
		\item When $f_1(\ell)$ is a point $p$ in $\mc Z_1$, then $p$ is an $S$-fixed point in $\mc Z_1$. Then $f_2(\ell)$ is an $S$-invariant projective line in $\mc Z_2$. Then we can use the induction assumption on $\mc Z_2$ to check this case. Notice that now $v_0$ is a node in $\mathbf Q_2$ and all gauge nodes in $\mathbf Q_1$ are not smaller than $v_0$. 
		\item When $f_2(\ell)$ is a point in $\mc Z_2$, this is similar as above.
		\item When both $f_1(\ell)$ and $f_2(\ell)$ are projective lines, there is a fiber bundle structure on $\mc Z_1$ and $\mc Z_2$, 
        \begin{equation*}
         \begin{tikzcd}
         \mc Y_k\arrow[hookrightarrow]{r}{} 
        & \mc Z_k \arrow[]{d}{\pi_k}\\
        & Gr(r_{v_k}, N_{n+1})
         \end{tikzcd}
        \end{equation*}
        We first claim $\pi_k\circ f_k(\ell)$, for $k=1, 2$, are both lines.
        In fact, if $q=\pi_1\circ f_1$ is a point, then $\ell$ is a line in $\pi_1^{-1}(q)\times \mc Z_2$, and $\pi_1^{-1}(q)\iso \mc Y_1$. Let $p_0$ be a point in $f_2(\ell)\subset \mc Z_2$, one can find a $(\C^*)^{r_{v_1}}$-action on the $\mc Y_1$-fiber over $p_0$. This action forces the projection of $\ell$ to $\mc Y_1$ to be a $(\C^*)^{r_1}$-fixed point, since $\ell$ is one dimensional. Then $f_1(\ell)$ must be a point, which is a contradiction. 

        By a similar argument as in the proof of Lemma \ref{lem:1 dim mut}, we have \begin{align*}
			&|I_{v_k}\cap J_{v_k}|=r_{v_k}-1, \quad k=1, 2, \\
			& I_{v_1}\cup I_{v_2}=J_{v_1}\cup J_{v_2}.
		\end{align*}
		Fix a point $p_1$ in $\pi_1\circ f_1(\ell)$. Similar with the proof of Lemma \ref{lem:1dimorbflag},
        there is a $\C^{r_{v_1}}$-action on the fiber $\pi_1^{-1}(p_1)\cong \mc Y_1$, which makes $f_1(\ell)\cap \pi_1^{-1}(p_1)$ a $\C^{r_1}$-fixed point in $\mc Y_1$. 
        By continuity, we have for $w\prec v_1$, $I_{w}=J_w$ or $I_{v_1}\backslash I_w=J_{v_1}\backslash J_w$. 
        The same argument works for $v_2$. 
        Then we're done with the induction for the case $n$. 
		\end{enumerate}
\end{proof}
\subsection{Behavior of one-dimensional torus-fixed orbits under quiver mutations}
Adopting the same notations as the previous section, let $(\mathbf Q,\mathbf r)$ and $(\mathbf Q',\mathbf r')$ be two QPs related by a quiver mutation $\mu_v$, 
and $\mc Z$ and $\mc Z'$ be the corresponding critical loci of potentials. 
\begin{prop}\label{prop:torusfixedgraphmu}
   Let $p_{\vec I}$ and $p_{\vec J}$ be two $S$-fixed points in $\mc Z$ contained in a one-dimensional $S$-orbit closure $\ell$. 
   Then images $\varphi(p_{\vec I})$ and $\varphi(p_{\vec J})$ are also contained in a one-dimensional $S$-orbit closure in $\mc Z'$, which we denote by $\varphi(\ell)$.
\end{prop}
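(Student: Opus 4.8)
The plan is to reduce the assertion to combinatorics already established. Proposition~\ref{prop:oneskeletonAnmutation} characterises, purely in terms of the index tuples $\vec I=(I_u)_{u\in Q_0\setminus v_f}$, exactly which pairs of $S$-fixed points of a variety attached to a QP in $\Omega^A_n$ lie on a common one-dimensional $S$-orbit closure; it applies to both $\mc Z$ and $\mc Z'$. Lemma~\ref{lem:Sfixedpointsmu} describes $\varphi$ by an explicit set-theoretic recipe that changes only the $v$-coordinate, namely $I_v\mapsto I_v'=(I_{v_1}\setminus I_v)\sqcup I_{v_4}$ when $N_f(v)>N_a(v)$ (and by the inverse recipe otherwise). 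So the statement becomes: \emph{if $(\vec I,\vec J)$ satisfies conditions (1)--(3) of Proposition~\ref{prop:oneskeletonAnmutation} for $\mathbf Q$, then $(\varphi(\vec I),\varphi(\vec J))$ satisfies them for $\mathbf Q'$}. Distinctness of the images is automatic, since $\varphi$ is a bijection.

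First I would record the local change of data under $\mu_v$ in the universal model of Figure~\ref{fig:localbehavior}(a) (the two $3$-cycles $v\to v_1\to v_2\to v$ and $v_3\to v\to v_4\to v_3$, with $v_3,v_4\prec v\prec v_1$ and $v_2$ incomparable to $v$; the cases with fewer neighbours are specialisations, exactly as in the text following that figure). Comparing $r$-values one sees that in $\mathbf Q$ the parent of $v$ is $v_1$ and its children are $v_3,v_4$, while in $\mathbf Q'$ the parent of $v$ is again $v_1$ but the children of $v_1$ are now $v,v_3$ and the children of $v$ are $v_2,v_4$. For every node $w$ outside the cluster $\{v,v_1,v_2,v_3,v_4\}$ the set $\{u:u\prec w\}$ and the value $\varphi(\vec I)_w=I_w$ are unchanged.

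Next I would split according to the position of the distinguished node $v_0$ of Proposition~\ref{prop:oneskeletonAnmutation} (the unique node with $I_w=J_w$ for all $w\not\prec v_0$) relative to $v$. \textbf{(i)} If $v$ is strictly below $v_0$ or incomparable to it, then (1) and (2) involve only coordinates untouched by $\varphi$, so they persist with the same $v_0$; condition (3) persists because, where it constrains the $v$-coordinate, the quantities entering the formula for $I_v'$ are themselves governed by (3) at $v$, and one checks directly that $I_v'\setminus I_w=J_v'\setminus J_w$ whenever $I_v\setminus I_w=J_v\setminus J_w$. \textbf{(ii)} If $v$ is one of the two maximal nodes below $v_0$, then $v_0=v_1$; using $I_{v_1}=J_{v_1}$ together with the inclusions $I_{v_4}\subset I_v\subset I_{v_1}$ valid on $\mc Z^S$, I rewrite each of the set-identities in case~2(a)--(c) of Proposition~\ref{prop:oneskeletonAnmutation} for $(I_v',I_{v_3})$ versus $(J_v',J_{v_3})$ and verify (3) for the new children $v_2,v_4$ of $v$. \textbf{(iii)} If $v=v_0$, then $I_v=J_v$ and $I_{v_1}=J_{v_1}$, so $I_v'$ and $J_v'$ differ only through $I_{v_4}$ versus $J_{v_4}$; since the maximal nodes below $v_0=v$ are $v_3,v_4$, one shows the new distinguished node is $v_0'=v_1$ and deduces conditions (1)--(3) for $(\varphi(\vec I),\varphi(\vec J))$ at $v_1$ from those at $v$, treating separately the sub-cases 2(a) (role of $v_4$), 2(a) (role of $v_3$), and 2(c), using $I_{v_3}\sqcup I_{v_4}=J_{v_3}\sqcup J_{v_4}$ in the last. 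To organise the bookkeeping I would, as in the proof of Proposition~\ref{prop:oneskeletonAnmutation}, pass to the fibre-bundle/projection descriptions, so that (i)--(iii) are each reduced to the $A_2$- and $A_3$-mutation-equivalent local models of Lemmas~\ref{1 dim Gr}, \ref{lem:1dimorbflag}, \ref{lem:1 dim mut} and Example~\ref{ex:A2variety}, where the verification is a short finite check.

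The routine part is (i); the content of the proof lies in (ii) and (iii), and the main obstacle is that there the very \emph{statement} of Proposition~\ref{prop:oneskeletonAnmutation} changes under $\mu_v$: the partial order, and hence the distinguished node, moves, so one must correctly identify $v_0'$ and then check that the symmetric-difference recipe defining $\varphi$ transports each of the three set-conditions from $\mathbf Q$ to $\mathbf Q'$. The case $N_f(v)<N_a(v)$ then follows formally by applying the above to the inverse mutation, which takes $\mathbf Q'$ back to $\mathbf Q$ and whose index map is the inverse of $\varphi$ (Lemma~\ref{lem:Sfixedpointsmu}).
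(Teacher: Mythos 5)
Your plan is correct and follows the same overall strategy as the paper: reduce the assertion to the purely combinatorial characterization of Proposition~\ref{prop:oneskeletonAnmutation} via the explicit recipe for $\varphi$ in Lemma~\ref{lem:Sfixedpointsmu}, and then verify conditions (1)--(3) case by case for the image pair. The one substantive difference is in how the cases are organized: the paper splits first on the relation between $I_v$ and $J_v$ (equal, or differing by one element, subdivided further by whether $I_{v_2}=J_{v_2}$), a partition that tracks the definition of $\varphi$ at the mutated node directly; you instead split by the position of the distinguished node $v_0$ relative to $v$ (incomparable/strictly below, one of the maximal nodes below $v_0$, or $v_0$ itself). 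These partitions are not literally the same --- for example the paper's case $I_v=J_v$ sweeps across all three of your positions of $v_0$ --- but they carve up the same finite set of local configurations, and both force the same symmetric-difference computations with $I_{v_1}, I_{v_4}$ (or $I_{v_2}, I_{v_3}$ for $N_f<N_a$). Your explicit flagging of where the partial order and hence $v_0$ move under $\mu_v$ is a clean way to isolate the nontrivial content, and your reduction of the opposite sign case to the inverse mutation is exactly the paper's implicit ``without loss of generality.'' The only caveat is notational: in your case~(i) the preservation of condition (3) should be stated with the maximal node $v_k$ of $v_0$'s subtree, i.e.\ one needs $I_{v_k}\setminus I_v'=J_{v_k}\setminus J_v'$ (with $v_k$ unchanged by $\mu_v$), rather than $I_v'\setminus I_w=J_v'\setminus J_w$; both checks are in fact needed and both follow from the containment chain $I_{v_4}\subset I_v\subset I_{v_1}\subset I_{v_k}$, but the way you wrote it only addresses nodes below $v$.
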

\begin{proof}
    Denote the image $\varphi( p_{\vec I})$ and $\varphi( p_{\vec J})$ by $p_{\vec I'}$ and $p_{\vec J'}$ with $\vec I'=(I'_v)_{v\in Q_0}$ and $\vec J'=(J_v')_{v\in Q_0}$.
    Without loss of generality, we assume that at the gauge node $v$, we have $N_f(v)>N_a(v)$ and the node $v$ has four adjacent nodes $v_i,i=1,\cdots,4$ as shown in Figure \ref{fig:localbehavior} (a). 
    Then after the quiver mutation $\mu_v$, 
    we have $I_{v}'=(I_{v_1}\backslash I_{v})\cup I_{v_4}$, 
    and $I_{u}'=I_{u}$ for $u\neq v$, and the same for $\vec J'$.
    We then need to discuss several situations and 
    we will check in each situation, the two sets $\vec I'$ and $\vec J'$ satisfy the three conditions in Proposition \ref{prop:oneskeletonAnmutation}.
    \begin{enumerate}
    \item When $I_v=J_v$, 
    there are three different cases. The first case is when $v_0$ is no greater than $v$, and $I_{v_i}=J_{v_i}$ for each $i=1,\cdots,4$. 
    The second case is when $v_0$ is greater than $v$, and $I_v=J_v$, it is possible for $I_{v_2}$ to differ with $J_{v_2}$ by one element, $I_{v_k}=J_{v_k}$ for $k=3,4$. The third case is when $v_0=v$, with $I_{v_k}\cap J_{v_k}=r_{v_k}-1$ for $k=3$ or $k=4$. 
    It is easy to check that $\vec I'$ and $\vec J'$ satisfy the three conditions in Proposition \ref{prop:oneskeletonAnmutation} for all three cases and we omit the proof.  
    \item The second situation is when $I_{v}\cap J_{v}=r_v-1$, $I_{v_2}=J_{v_2}$. 
    Assume that $I_v\backslash J_v=\{i_1\}$ and $J_v\backslash I_v=\{j_1\}$. 
    Furthermore, we have one of the three cases holds in terms of relations of the sets $I_{v_k}, \,J_{v_k}$, $k=3,4$. 
    \begin{enumerate}
        \item If we further have $I_{v_3}=J_{v_3}, \,I_{v_4}=J_{v_4}$, then it is easy to check $I_v'\cap J_{v}'=r_v'-1$, $I_v'\backslash J_v'=\{j_1\}$, and $J_v'\backslash I_v'=\{i_1\}$. Hence $\vec I'$ and $\vec J'$ satisfy conditions in Proposition \ref{prop:oneskeletonAnmutation}. 
        \item If we further have $I_{v_3}=J_{v_3}, |I_{v_4}\cap J_{v_4}|=r_{v_4}-1$, then we have $I_{v_4}\backslash J_{v_4}=\{i_1\}$ and $J_{v_4}\backslash I_{v_4}=
        \{j_1\}$.
        Hence the two sets $I_v'=\left(I_{v_1}\backslash (I_v\cup J_v)\right)\cup \{j_1\}\cup (I_{v_4}\cap J_{v_4})\cup \{i_1\}$ and $J_v'=\left(J_{v_1}\backslash (I_v\cup J_v)\right)\cup \{i_1\}\cup (I_{v_4}\cap J_{v_4})\cup \{j_1\}$  are equal. 
        One can check that the two sets $\vec I'$ and $\vec J'$ satisfy conditions in Proposition \ref{prop:oneskeletonAnmutation}. 
        \item The case $I_{v_4}=J_{v_4}, |I_{v_3}\cap J_{v_3}|=r_{v_3}-1$ is similar as above and we omit it.
    \end{enumerate}
    \item The third situation is when
    $I_v\cap J_v=r_v-1$ and $I_{v_2}\cap J_{v_2}=r_{v_2}-1$.
    We assume that $I_{v}\backslash J_{v}=\{i_1\}$, $J_v\backslash I_v=\{j_1\}$, $I_{v_2}\backslash J_{v_2}=\{j_1\}$ and $J_{v_2}\backslash I_{v_2}=\{i_1\}$. 
    Then we still need to discuss the relations of sets $I_{v_k}$ and $J_{v_k}$ for $k=3,4$ as the above situation, and the proofs are exactly similar as above, and we omit them. 
\end{enumerate} 
Hence, we have proved that the two sets $\vec I'$ and $\vec J'$ satisfy conditions in Proposition \ref{prop:oneskeletonAnmutation} in each situation. 
\end{proof}
The above Lemma tells us that there is a bijection between the one-dimensional $S$-orbits which we denote by
\begin{equation}\label{eqn:phi1dim}
    \varphi:\mc Z^1\rightarrow (\mc Z')^1.
\end{equation}
Hence, for each torus fixed graph $\Upsilon\subset \mc  Z^1$, $\varphi(\Upsilon)\subset (\mc Z')^1$.
\subsection{Behavior of weights  under quiver mutations}
Fix a torus fixed graph $\Upsilon\subset \mc Z^1$ and denote $\Upsilon'=\varphi(\Upsilon)\subset (\mc Z')^1$. 
The image $(\varphi(\tau),\varphi(\sigma))$ of each flag is also a flag in $F(\Upsilon')$. 
Since the torus fixed loci $\mc Z^S$ is parameterized by sets $\vec I=(I_v)_{v\in Q_0\backslash\{v_f\}}$, we will alternatively use the set $\vec I$ to denote vertices of graphs  $\Upsilon$.

The main goal of this subsection is to investigate the behavior of weights of flags defined in \eqref{eqn:weights} under quiver mutations. 

\subsubsection{Grassmannian case}
Let $\sigma_1=I=\{i_1<\cdots<i_r\},\sigma_2=J=\{j_1<\cdots<j_r\}$ be two $S$-fixed points in $Gr(r,N)$ which, without loss of generality, satisfy $i_1\neq j_1,i_k=j_k$ for $k\neq 1$. Let $\tau$ be the 1-dimensional $S$-orbit connecting the two points whose matrix representation is
\begin{equation}\label{eqn:1dimensionalorbit}
    \tau=
    \begin{bmatrix}
        \mathbf 0&a&\mathbf 0&b&\mathbf 0\\
       \mathbf 0&  \mathbf 0 &\mathbf 0 & \mathbf 0&B
    \end{bmatrix}
\end{equation}
where $a$ and $b$ lie in $i_1$-th and $j_1$-th columns respectively, $\mathbf 0$ represents vectors or zero matrices of suitable size, and the matrix $B$ satisfies $B_{kj}=\delta_{j_k,j}$. 
The closure of $\tau$ is $\mathbb P^1$ whose homogeneous coordinate is $[a:b]$.
The torus $(\mathbb C^*)^N$ acts on the matrices representations of $Gr(r,N)$ as 
\begin{equation}\label{eqn:torusactsonGrone}
    t\cdot A=At^{-1}, \,A\in Gr(r,N), t\in (\mathbb C^*)^N\,.
\end{equation}
One can easily check that 
\begin{equation}\label{eqn:weightsGr}
    \mathbf{w}(\tau,\sigma_1)=c_1^S(T_{\sigma_1}\tau)=\lambda_{i_1}-\lambda_{j_1},\,\mathbf{w}(\tau,\sigma_2)=c_1^S(T_{\sigma_2}\tau)=\lambda_{j_1}-\lambda_{i_1}.
\end{equation}

Performing a quiver mutation $\mu$, the two points are mapped to $\varphi(\sigma_1)=I'=[N]\backslash I=([N]\backslash (I\cup J))\cup \{j_1\}$ and $\varphi({\sigma_2})=([N]\backslash (I\cup J))\cup \{i_1\}$. 
Denote $[N]\backslash (I\cup J)=\{i_2'<\ldots<i_{N-r}'\}$. 
Then the matrix representation of the one-dimensional $S$-orbit $\varphi(\tau)$ is
\begin{equation}\label{eqn:rcrf}
    \begin{bmatrix}
        \mathbf 0&\mathbf 0\\
        b&\mathbf 0\\
        \mathbf 0&\mathbf 0\\
        a&\mathbf 0\\
        \mathbf 0&C
    \end{bmatrix}
\end{equation}
where $b$  and $a$ lie in the $i_1$-th and $j_1$-th rows and entries of matrix $C$ are $C_{kl}=\delta_{ki_l'}$. 
When $a=1,b=0$ we get the point $\varphi(\sigma_1)$ and when $a=0,b=1$, we get the point $\varphi(\sigma_2)$.
The torus $S$ acts on $Gr(N-r,N)$ as 
\begin{equation}\label{eqn:torusactionGr1dimn}
    t\cdot A=tA,\,t\in S,\,A\in Gr(N-r,N). 
\end{equation}
The weights of the flag at vertices $\varphi(\sigma_1)$ and $\varphi(\sigma_2)$ are
\begin{equation}\label{eqn:weightsdualGr}
    \mathbf w(\varphi(\tau_1),\varphi(\sigma))=\lambda_{i_1}-\lambda_{j_1},\,\mathbf w(\varphi(\tau),\varphi(\sigma_2))=\lambda_{j_1}-\lambda_{i_1}.
\end{equation}
Compare equations \eqref{eqn:weightsGr} and \eqref{eqn:weightsdualGr}, and one can find 
\begin{equation*}
    \mathbf{w}(\tau,\sigma_1)=\mathbf w(\varphi(\tau_1),\varphi(\sigma)),\,\,
    \mathbf{w}(\tau,\sigma_2)=\mathbf w(\varphi(\tau),\varphi(\sigma_2))\,.
\end{equation*}
\subsubsection{General case}\label{subsec:1dorbitmutation}
\begin{prop}\label{prop:weightsmu}
    For each flag $(\tau, \sigma)\in \Upsilon$, we have
    \begin{equation*}
    \mathbf w(\tau,\sigma)= 
    \mathbf w(\varphi(\tau),\varphi(\sigma)).
    \end{equation*}
\end{prop}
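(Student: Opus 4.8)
The plan is to reduce Proposition~\ref{prop:weightsmu} to the explicit Grassmannian computation recorded in \eqref{eqn:weightsGr}--\eqref{eqn:weightsdualGr}, combined with the case analysis already carried out in the proof of Proposition~\ref{prop:torusfixedgraphmu}. Since a quiver mutation is a local operation, I would argue entirely within the local picture of Figure~\ref{fig:localbehavior}: only the node $v$ and its neighbours $v_1,\dots,v_4$ are affected, and by Lemma~\ref{lem:Sfixedpointsmu} the bijection $\varphi$ changes only the $v$-component $I_v$ of a fixed point $p_{\vec I}$, replacing it by $(I_{v_1}\sqcup I_{v_4})\setminus I_v$ (or the analogous set when $N_f(v)<N_a(v)$). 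Every other Grassmannian factor of the ambient binary-tree quiver variety $\mc Y$ of Lemma~\ref{lem:phasevargeneralquiver} is left untouched.

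The first step is to record how $\mathbf w(\tau,\sigma)$ is read off combinatorially. By Proposition~\ref{prop:oneskeletonAnmutation}, an edge $\tau$ joining $p_{\vec I}$ and $p_{\vec J}$ is supported on a unique node $v_0$, and for a maximal node $w\prec v_0$ one has $|I_w\cap J_w|=r_w-1$; set $\{i\}=I_w\setminus J_w$ and $\{j\}=J_w\setminus I_w$ (when two maximal nodes lie below $v_0$ the same unordered pair $\{i,j\}$ occurs, the two subspaces trading $i$ and $j$, exactly as in \eqref{eqn:oneskeletonA1}--\eqref{eqn:oneskeletonA2}). Reading the tangent line $T_{p_\sigma}\ell_\tau$ off the reduced echelon representative, just as in the Grassmannian case, its $S$-weight is $\lambda_i-\lambda_j$ at the endpoint whose relevant subspace contains $i$, and $\lambda_j-\lambda_i$ at the other endpoint. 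The output I need is that this weight depends only on the unordered moving pair $\{i,j\}$ and on the chosen endpoint; in particular it does not change if a factor $Gr(r_w,N_{n+1})$ is replaced by its dual $Gr(N_{n+1}-r_w,N_{n+1})$, which is precisely the coincidence of \eqref{eqn:weightsGr} with \eqref{eqn:weightsdualGr}.

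Given this, it remains to check that $\varphi$ preserves both the moving pair $\{i,j\}$ and the matching of endpoints, and I would do so by running through the same cases as the proof of Proposition~\ref{prop:torusfixedgraphmu}. When the mutated node $v$ is unrelated to $\tau$ --- neither $v$ nor any of $v_1,\dots,v_4$ being the support node $v_0$ nor a maximal node below $v_0$ where the swap occurs --- the Grassmannian factor in which $\ell_\tau$ moves is untouched by $\varphi$, and the tangent line and its weight are literally unchanged. In the remaining cases the mutation replaces $I_v$ by its complement relative to the nested pair $I_{v_4}\subseteq I_v\subseteq I_{v_1}$, and the computations displayed in that proof already exhibit $I_v'\setminus J_v'$ and $J_v'\setminus I_v'$ as $\{j_1\}$ and $\{i_1\}$ (so the ordered pair is at most swapped relative to $\mathbf Q$), whence the unordered moving pair survives. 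Plugging this into the weight formula of the previous paragraph, together with the fact established in Proposition~\ref{prop:torusfixedgraphmu} that $\varphi$ sends the vertex $\sigma$ of $\tau$ to the correct vertex of $\varphi(\tau)$, yields $\mathbf w(\tau,\sigma)=\mathbf w(\varphi(\tau),\varphi(\sigma))$.

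The real work is the case where the node $v$ itself carries the edge $\tau$ and both projections of $\ell_\tau$ are one-dimensional: there one has to rewrite the matrix representation of $\varphi(\tau)$ in reduced row or column echelon form after the mutation, keeping track of the constraint $A_{v_1}A_{v_2}=0$ and of the change of $S_v^*$ from $S_v$ to $S_v^\vee$, and then re-extract the tangent weight. Everything else is bookkeeping; and the dual-Grassmannian normalization \eqref{eqn:rcrf}, \eqref{eqn:weightsdualGr} is exactly the model for this computation, so no idea beyond a careful organization of the cases should be needed.
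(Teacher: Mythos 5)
Your proposal is correct and follows essentially the same route as the paper's proof: both reduce to the local Grassmannian weight computation around the mutated node $v$, organize the verification by the same case division as in the proof of Proposition~\ref{prop:torusfixedgraphmu} (with the paper writing out the matrix representatives \eqref{eqn:1dimensionalorbit}, \eqref{eqn:rcrf}, \eqref{eqn:oneskeletonA1}, \eqref{eqn:oneskeletonA2} explicitly), and both identify the invariance of the tangent weight under passing to the complementary Grassmannian --- the coincidence of \eqref{eqn:weightsGr} with \eqref{eqn:weightsdualGr} --- as the crucial mechanism. One caution for the write-up: your stated rule ``weight is $\lambda_i-\lambda_j$ at the endpoint whose subspace contains $i$'' holds in $Gr(r,N)$ with the action $A\mapsto At^{-1}$ but reverses in the dual $Gr(N-r,N)$ with the action $A\mapsto tA$ (the sign flips once from complementation and once from the opposite torus convention, and the two flips cancel), so the correct invariant to carry through the argument is the weight attached to the abstract vertex of $\Upsilon$ rather than to the combinatorial datum ``which endpoint contains $i$.''
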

\begin{proof}
Let $\sigma_1=(I_u)_{u\in Q_0\backslash v_f}$ and $\sigma_2=(J_u)_{u\in Q_0\backslash v_f}$  be two vertices in $\Upsilon$ in $\mc Z$ that are connected by an edge $\tau$.
Let $\sigma_1':=\varphi(\sigma_1)=(I_u')$ and $\sigma_2':=\varphi(\sigma_2)=(J_u')$ be images in $\Upsilon'$.

Without loss of generality, we assume that the node $v$ where we perform a quiver mutation has four adjacent nodes $v_1,v_2,v_3,v_4$ as shown in Figure \ref{fig:localbehavior} (a) with the assumption $N_f(v)>N_a(v)$. 
One can check that 
if $I_v=J_v$ as the situation (1) in the proof of Proposition \ref{prop:torusfixedgraphmu}, the quiver mutation $\mu_v$ preserves weights easily. 
We only have to check the change of weights when $I_v\cap J_v=r_v-1$. 
We restrict ourselves to $I_{v_2}=J_{v_2}$ as the situation $(2)$ in the proof of Proposition \ref{prop:torusfixedgraphmu}, and leave the case $|I_{v_2}\cap J_{v_2}|=r_{v_2}-1$ to readers. 
Without loss of generality, we assume that 
    $I_v\backslash J_v=\{i_1\}$,  $J_v\backslash I_v=\{j_1\}$, and $I_v\cap J_v=\{i_2<\cdots <i_{r_v}\}$.
Then we need further to consider the following cases.  
\begin{enumerate}
    \item When $I_{v_3}=J_{v_3}$, $I_{v_4}=J_{v_4}$, one matrix representative $(A_u)_{u\in Q_0\backslash \{v_f\}}$ can be written as  $A_u, u\neq v$ are in reduced row/column echelon forms whose columns/rows numbers of pivots are $I_u,u\neq v$. 
    Meanwhile, the matrix  $A_v$ can be written as the same formula as \eqref{eqn:1dimensionalorbit}. 
    We get the vertex $\sigma_1$ if $a=1,b=0$ and $\sigma_2$ is $a=0,b=1$. There is an induced torus action $(\mathbb C^*)^{r_{v_1}}$ on the matrix $A_v$ as \eqref{eqn:torusactsonGrone}. Hence, we have $\mathbf w(\tau, \sigma_1)=\lambda_{i_1}-\lambda_{j_1}$, and $\mathbf w(\tau, \sigma_2)=\lambda_{j_1}-\lambda_{i_1}$.

    After the quiver mutation $\mu_v$, we then have $I_{v}'\backslash J_v'=\{j_1\}$ and $J_{v}'\backslash I_v'=\{i_1\}$. The matrices representatives $A_u',\,u\neq v$ are the same with $A_u, \,u\neq v$, but $A_v'$ is changed to the formula \eqref{eqn:rcrf}, with the induced torus action $(\mathbb C^*)^{r_{v_1}}$ the same with \eqref{eqn:torusactionGr1dimn}. One can easily find that the weights of the flag at the vertex $\varphi(\sigma_1)$ and $\varphi(\sigma_2)$ are also the same with \eqref{eqn:weightsdualGr}.
    Hence, we have proved the Proposition. 
    
    \item When $|I_{v_3}\cap J_{v_3}|=r_{v_3}-1, I_{v_4}=J_{v_4}$, 
    the matrices representation for $\tau$ can be described as follows. Matrices $A_u$ for $u\neq v$ are reduced row/columns echelon forms with non-pivots entries vanishing and pivots columns/rows numbers are $I_u, u\neq v$.
    The matrix $A_v$ is in the formula \eqref{eqn:1dimensionalorbit}.
    Similar as the above situation, we still have
    \begin{equation}\label{eqn:weightsgeneral}
        \mathbf w(\tau,\sigma_1)=\lambda_{i_1}-\lambda_{j_1}, \mathbf w(\tau,\sigma_2)=\lambda_{j_1}-\lambda_{i_1}.
    \end{equation}

    After the quiver mutation $\mu_v$, the matrices representations for $\varphi(\tau)$ can be described as follows. 
    For $u\neq v, u\neq v_3$, matrices $A_u'$ are exactly the same with $A_u,\, u\neq v, \,u\neq v_3$. 
    The matrices $A_{v_3}'$ and $A_v'$ are in the formula \eqref{eqn:oneskeletonA1} and \eqref{eqn:oneskeletonA2}.
    The induced torus $(\mathbb C^*)^{r_{v_1}}$ acts on $\tau$  as \begin{equation*}
        t\cdot (A_{v_3}',A_v')=(A_{v_3}' t^{-1}, tA_{v}').
    \end{equation*}
    Hence 
    \begin{equation}\label{eqn:weightsgeneraldual}
        \mathbf w(\varphi(\tau),\varphi(\sigma_1))= \lambda_{i_1}-\lambda_{j_1}, \,
    \mathbf w(\varphi(\tau),\varphi(\sigma_2))= \lambda_{j_1}-\lambda_{i_1}.
    \end{equation}
    Comparing equations \eqref{eqn:weightsgeneral} and \eqref{eqn:weightsgeneraldual}, we can find that the Proposition is true in this situation. 
    
    \item The case for $I_{v_3}=J_{v_3}$, and $|I_{v_4}\cap J_{v_4}|=r_{v_4}-1$ is the similar as the above situation and we omit it. 
\end{enumerate} 
\end{proof}
\subsection{Contributions of decorated graphs under quiver mutations}
In this section, we will prove Theorem \ref{thm:GWinvarmu} by comparing the contributions of decorated graphs of stable maps to $\mc Z$ and $\mc Z'$ and hence their Gromov-Witten invariants.\\
\textbf{ Proof of theorem \ref{thm:GWinvarmu}}
    Utilizing localization Theorem \ref{thm:localization}, we only have to prove that the decorated graphs and
    the contributions of each decorated graph for both sides are the same. 
    According to Proposition \ref{prop:torusfixedgraphmu}, 
    each graph of $\mc Z^1$ is mapped to $(\mc Z')^1$, 
    so it is natural to know that decorated graphs are the same.  
    
    For each decorated graph $\vec \Gamma$, whose underlying graph is $\Gamma$, the ingredients in Theorem \ref{thm:localization} are weights of flags $\mathbf w(\tau, \sigma)$, the valency $val(v)$ of each vertex,  and $h(\tau,d_\tau)$ for each edge. 
    Flags $(\tau, \sigma)\in \Upsilon$  are mapped to flags in $\varphi(\Upsilon)$, so $val(v)=val(\varphi(v))$. 
    The weights $\mathbf w(\tau, \sigma)$ are preserved by Proposition \ref{prop:weightsmu}. 

    Let $\tau$ be an edge, and $\sigma_1,\sigma_2$ be two vertices contained in $\tau$.
    Let $E(\sigma_1)=\{\tau_1,\ldots,\tau_{h}\}$ be the set of edges containing the point $\sigma_1$. 
    Without loss of generality, we assume that $\tau_{h}=\tau$. 
    For each $i$, we have defined $a_i$ in Equation \eqref{eqn:ai}.
   We don't have to compute all the $a_i$, but instead we only have to know the behavior of $a_i$ under quiver mutations. 
   By Proposition \ref{prop:torusfixedgraphmu}, $E(\varphi(\sigma_1))=\{\varphi(\tau_1),\ldots,\varphi(\tau_{h})\}$, and $\varphi(\tau)$ contains points $\varphi(\sigma_1)$ and $\varphi(\sigma_2)$.
   By Proposition \ref{prop:weightsmu}, weights are also preserved, so the Equation \eqref{eqn:ai} still holds. We have
   \begin{equation*}
        \mathbf w(\varphi(\tau_i'),\varphi(\sigma_2))=\mathbf w(\varphi(\tau_i),\varphi(\sigma_1))-a_i\mathbf w(\varphi(\tau), \varphi(\sigma_1)). 
    \end{equation*}
    Hence, the $a_i$ are also preserved, so the ingredients $h(e, d_e)$ are. 

   Furthermore, the map $\varphi: \mc Z^1\rightarrow (\mc Z')^1$ induces a map $\varphi_*: H_2(\mc Z)\rightarrow H_2(\mc Z')$. We claim that $\varphi_*$ coincides with $\phi_*$ in Proposition \ref{prop:transformeff}. In fact, suppose $\ell$ is a curve in $\mc Z$ corresponding to an edge of a graph in $\mc Z^1$. For an arbitrary $\gamma\in H^2(\mc Z)$, 
   we apply the Atiyah-Bott localization formula \cite{ATIYAH19841} on the following two integrals $\int_{[\ell]}\gamma$ and $\int_{\varphi_*([\ell])}\phi(\gamma)$.
    Due to \eqref{eqn:gammaresP} and Proposition \ref{prop:weightsmu}, we know the above two integrals are equal after comparing the graph contribution on both sides. Therefore,   for an arbitrary  $\gamma\in H^2(\mc Z)$, we have
    \begin{equation*}
         \int_{[\ell]}\gamma=\int_{\varphi_*([\ell])}\phi^*(\gamma)
    \end{equation*}
    By Proposition \ref{prop:transformeff}, $\varphi_*$ satisfies the same property \eqref{eqn:homologicalchange}. Due to the Poincaré duality, we have $\varphi_*=\phi_*$.
   The proof of Theorem \ref{thm:GWinvarmu} is completed. 

\section{Proof of the cluster algebra conjecture for A-type}\label{sec:proofclusteralgconj}
Until now, we have introduced the story of the algebraic side (cluster algebra) and the geometric side (quantum cohomology). In this section, we are going to establish a connection between them.

We consider the $A_n$ cluster algebra constructed from the quiver in Figure \ref{fig:Anqv}. 
Denote the cluster algebra by $\mathscr A_{n}$ and denote the initial cluster variables by $(x_i)_{i=1}^{n+1}$.

On the other hand, we consider the flag variety $Fl:=Fl(N_1,\ldots,N_{n+1})$ with $N_1<\ldots<N_{n+1}$ discussed in Example \ref{ex:An}, which admits a torus action $S=(\mathbb C^*)^{N_{n+1}}$.
Let $S_1\rightarrow S_2\rightarrow\ldots\rightarrow S_{n+1}=\mathcal O^{N_{n+1}}$ be a sequence of tautological bundles over the flag variety.
For each $l<k$, there is a short exact sequence
\begin{equation}\label{eqn:sesquotient}
    0\rightarrow S_l\rightarrow S_k\rightarrow S_k\slash S_l \rightarrow 0,
\end{equation}
where we formally use $S_k\slash S_l$ to denote the quotient bundle.

Consider the quantum cohomology ring $QH^*_S(Fl)[t]:=H_S^*(Fl)\otimes\mathbb Q[q]\otimes \mathbb Q[t]$ with an additional formal variable $t$ where $q=(q_1,\ldots,q_{n})$ are the K\"ahler variables. We view $QH^*_S(Fl)[t]$ as a $\mathbb Q$-algebra.

Introduce another set of variables $\{\xi_i\}_{i=1}^{n+1}$ and let 
\begin{equation*}\label{xi and q}
    q_i=(-1)^{N_i+N_{i+1}}\xi_{i-1}\xi_{i+1}^{-1}.
\end{equation*}

\begin{thm}\label{thm:maincluster2}
    Define a map
    \begin{equation*}
        \psi:\mathscr A_{n}\rightarrow QH^*_S(Fl)[t]
    \end{equation*}
    as follows.
    \begin{enumerate}
        \item The map $\psi$ sends each initial cluster variable $x_i$ to the equivariant Chern polynomial of the $i$-th tautological bundle: $\psi(x_i)=(-1)^{N_i}\xi_ic_t^S(S_i)$ for $i=1\ldots n+1$.
        \item For each non-initial cluster variable $x_v'$, let $(\mathbf Q=(Q_0,Q_1,W),\mathbf c')$ be one non-initial seed that contains the cluster variable $x_v'$.
        Once the seed is chosen, the decorations assigned to the quiver $(r_v)_{v\in Q_0}$ is determined. 
        As discussed in Lemma \ref{lem:decorationsofQinP}, each integer $r_v=N_{j}-N_{i}$ for some $1\leq i<j\leq n+1$.
        The map $\psi$ sends the cluster variable $c_v'$ to $(-1)^{N_j-N_i}\xi_j\xi_i^{-1}c_t^S(S_j\slash S_i)$.
    \end{enumerate}
    The map $\psi$ is a well-defined, injective $\mathbb Q$-algebra homomorphism.
\end{thm}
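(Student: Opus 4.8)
The plan is to reduce the statement to the quantum cohomological cluster exchange relations of Theorem~\ref{introdthm:quantumcohexrel}(1) and then to run a single induction along mutation sequences; the only genuinely non‑formal input will be those relations. First I would extend $\psi$ to the ambient field. Adjoin the formal variables $\xi_1,\dots,\xi_{n+1}$ (with the conventions $\xi_0=1$, $N_0=0$, $S_0=0$, so that both formulas in Theorem~\ref{thm:maincluster2} become the single rule: a cluster variable with attached decoration $N_j-N_i$ is sent to $(-1)^{N_j-N_i}\xi_j\xi_i^{-1}c_t^S(S_j/S_i)$), and invert the equivariant Chern polynomials $c_t^S(S_i)$, each of which is monic of degree $N_i$ in $t$ and hence a non‑zero‑divisor; call the resulting ring $\mathcal R$. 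Then $x_i\mapsto(-1)^{N_i}\xi_i c_t^S(S_i)$ extends to a $\mathbb Q$-algebra homomorphism $\tilde\psi\colon\mathbb Q[x_1^{\pm1},\dots,x_{n+1}^{\pm1}]\to\mathcal R$. By the Laurent phenomenon $\mathscr A_n\subseteq\mathbb Q[x_1^{\pm1},\dots,x_{n+1}^{\pm1}]$, so it suffices to prove (i) $\tilde\psi(\mathscr A_n)\subseteq QH^*_S(Fl)[t]$ and (ii) $\tilde\psi|_{\mathscr A_n}$ is injective. For (ii) I would further compose with restriction to a torus-fixed point $p_{\vec I}$, where $\psi(x_i)\big|_{p_{\vec I}}=(-1)^{N_i}\xi_i\prod_{a\in I_i}(t-\lambda_a)$; since the $\xi_i$ are independent transcendentals multiplying nonzero polynomials in $t$ of pairwise distinct degrees $N_1<\dots<N_{n+1}$, these $n+1$ elements are algebraically independent over $\mathbb Q$, so $\tilde\psi$ is injective already on $\mathbb Q[x_1^{\pm1},\dots,x_{n+1}^{\pm1}]$.

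The core is the inductive claim that every non-initial cluster variable $x'$, with attached decoration $N_j-N_i$ for some $0\le i<j\le n+1$ (Lemma~\ref{lem:decorationsofQinP}), satisfies $\tilde\psi(x')=(-1)^{N_j-N_i}\xi_j\xi_i^{-1}c_t^S(S_j/S_i)$. I would prove this by induction on the minimal number of mutations from the initial seed, the base case ($S_i=S_i/S_0$) being immediate. For the inductive step, take a mutation $\mu_v$ inside $\Omega_n^A$; by Lemmas~\ref{lem:omegan} and~\ref{lem:recursivepattern} and the local analysis following Lemma~\ref{lem:phasevargeneralquiver}, the neighbourhood of $v$ is Figure~\ref{fig:localbehavior}(a) with $r_v=N_k-N_l$, $r_{v_1}=N_m-N_l$, $r_{v_2}=N_m-N_{k+1}$, $r_{v_3}=N_p-N_l$, $r_{v_4}=N_k-N_{p+1}$ for $m>k>p\ge l\ge 0$ and $r_v'=N_m-N_{p+1}$; the cases with fewer than four neighbours follow by specializing $m,k,p,l$ (Remark~\ref{rem:integersmkpl}), an absent neighbour contributing the empty factor $1$, which matches $c_t^S(S_l/S_l)=1$. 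The cluster exchange relation is then $x_vx_v'=x_{v_2}x_{v_3}+x_{v_1}x_{v_4}$, so applying $\tilde\psi$ and the induction hypothesis to every factor except $x_v'$ and dividing by $\tilde\psi(x_v)$, what remains to be checked is exactly \eqref{introdeqn:quantumcohexrel0}, after collecting the $\pm1$ and the monomials in the $\xi_i$. The bookkeeping is short: with $q_a=(-1)^{N_a+N_{a+1}}\xi_{a-1}\xi_{a+1}^{-1}$, the factor $\prod_{a=p+1}^{k}(-1)^{N_a+N_{a-1}}q_a$ telescopes to $(-1)^{N_p+N_{p+1}+N_k+N_{k+1}}\xi_p\xi_{p+1}\xi_k^{-1}\xi_{k+1}^{-1}$, and a two-line parity computation then shows that every sign and every power of $\xi$ on the two sides agree --- which is precisely the reason the variables $\xi_i$ were introduced. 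Given the claim, $\psi=\tilde\psi|_{\mathscr A_n}$ has image in $QH^*_S(Fl)[t]$, is a ring homomorphism, is well defined independently of the chosen non-initial seed (its value on a cluster variable is forced by $\tilde\psi$, hence intrinsic, so correctness of the formula in one seed gives it in all), and is injective by the previous paragraph.

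The genuine difficulty is Theorem~\ref{introdthm:quantumcohexrel}(1), the quantum cohomological cluster exchange relation in $QH^*_S(Fl)[t]$, which I would establish via the abelian/nonabelian correspondence. On the abelianization of $Fl(N_1,\dots,N_{n+1})$ --- the toric GIT quotient obtained by replacing each $GL(N_i)$ by its maximal torus --- the Chern roots of the quotient bundles $S_j/S_i$ become honest equivariant coordinates, and the asserted relation degenerates to an elementary twisted Pl\"ucker/Jacobi--Trudi identity of symmetric functions, the extra term $\prod_{a=p+1}^{k}(-1)^{N_a+N_{a-1}}q_a\,c_t^S(S_m/S_{k+1})*c_t^S(S_p/S_l)$ recording the unique degree-one contribution between the relevant strata. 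To transport this back to $QH^*_S(Fl)[t]$ I would use Ciocan-Fontanine--Kim--Sabbah's abelian/nonabelian correspondence for Frobenius manifolds together with Webb's comparison of quasimap $I$-functions \cite{abelian/nonabelian:CKB,abeliannonabelian:Webb,abelianizationlef:Webb}, matching the outcome against Ciocan-Fontanine's presentation of $QH^*(Fl)$ \cite{qcohflag:CF99}. This is the step I expect to be the main obstacle --- one must carefully track which quotient-bundle Chern classes survive abelianization and control the quantum corrections --- whereas the reductions in the first two paragraphs are essentially formal.
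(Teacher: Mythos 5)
Your reduction of the ring-homomorphism and well-definedness claims to the quantum cohomological exchange relation \eqref{introdeqn:quantumcohexrel0} is the same as the paper's (Theorem~\ref{thm:quantumcohexrel}(2)), and your organization via the Laurent-embedded extension $\tilde\psi$ and induction on mutation distance is a clean way to make well-definedness automatic. The telescoping sign and $\xi$-bookkeeping also check out. However, the injectivity argument has a genuine gap.

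You propose to prove injectivity by restricting to a torus-fixed point $p_{\vec I}$ and observing that the elements $\psi(x_i)\big|_{p_{\vec I}}=(-1)^{N_i}\xi_i\prod_{a\in I_i}(t-\lambda_a)$ are algebraically independent over $\mathbb Q$. But restriction $QH^*_S(Fl)[t]\to H^*_S(p_{\vec I})[t,\xi^{\pm1}]$ is \emph{not} a ring homomorphism for the quantum product, so $\bigl(f(\psi(x_1),\ldots,\psi(x_{n+1}))\bigr)\big|_{p_{\vec I}}$ is not $f\bigl(\psi(x_1)|_{p_{\vec I}},\ldots,\psi(x_{n+1})|_{p_{\vec I}}\bigr)$ under ordinary multiplication. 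Already for $Fl=\mathbb P^1$ one has
\[
\psi(x_1)*\psi(x_1)\big|_{p_0}=\xi_1^2\bigl[(t-\lambda_1)^2-\xi_2^{-1}\bigr]\neq\xi_1^2(t-\lambda_1)^2=\bigl(\psi(x_1)|_{p_0}\bigr)^2,
\]
because $q_1=(-1)^{N_1+N_2}\xi_2^{-1}$ contributes a correction. Thus algebraic independence of the restricted images (under cup product) does not yield algebraic independence of $\psi(x_1),\ldots,\psi(x_{n+1})$ in $\mathcal R$ (under $*$), and your conclusion ``$\tilde\psi$ is injective already on $\mathbb Q[x_1^{\pm1},\dots,x_{n+1}^{\pm1}]$'' does not follow. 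The paper's own argument handles exactly this difficulty: it introduces a partial order on $\Z^{n+1}$ for which $\mathbf i\geq\mathbf i'$ iff $\xi^{\mathbf i}/\xi^{\mathbf i'}=\pm\prod q_j^{\geq 0}$, picks a minimal exponent $\mathbf i_0$ in the support of a would-be kernel element, and observes that the quantum corrections (which always move $\xi$-monomials up in this order) cannot touch the coefficient of $\xi^{\mathbf i_0}t^{\mathbf r\cdot\mathbf i_0}$, forcing that coefficient to vanish. Your argument needs to be repaired along similar lines --- either by grading by $\xi$ and isolating the quantum-correction-free leading term, or by replacing naive restriction with a filtration compatible with $*$.
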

Since all relations in cluster algebra are from exchange relations \eqref{eqn:clusterrelation}, we need to prove that the map $\psi$ preserves cluster exchange relations. 

More precisely, let $(\mathbf Q=(Q_0,Q_1,W),\mathbf r=(r_v))$ and $(\mathbf Q'=(Q_0,Q_1',W'),\mathbf r'=(r_v'))$ be two decorated QPs in $\Omega^A_n$ related by a quiver mutation $\mu_v$.
Since the node $v$ has at most four adjacent nodes with two outgoing and two incoming, we denote the four nodes by $v_1,v_2,v_3,v_4$.  
According to the Lemma \ref{lem:decorationsofQinP}, we assume that $r_{v}=N_k-N_l$, $r_{v_1}=N_m-N_l$, $r_{v_2}=N_m-N_{k+1}$, $r_{v_3}=N_p-N_l$, and $r_{v_4}=N_k-N_{p+1}$  with $m>k>p\geq l$ as  shown in Figure \ref{fig:localbehavior} (a). The Figure \ref{fig:localbehavior} (b) is about the behavior of nodes adjacent to the node $v$ in QP $(\mathbf Q',\mathbf r')$. 
\begin{thm}\label{thm:quantumcohexrel}
\begin{enumerate}
    \item In the quantum cohomology ring of a flag variety $QH^*_S(Fl)[t]$, for $m> k> p\geq l\geq 0$, we have the following 
    \textit{quantum cohomological cluster exchange relations}. \begin{align}\label{eqn:quantumcohexrel0}
        c_t^S(S_m/S_{p+1})*c_t^S(S_k/S_l)&=
        c_t^S(S_m/S_l)*c_t^S(S_k/S_{p+1})\nonumber\\
        &+ \prod_{a=p+1}^{k}(-1)^{N_a+N_{a-1}}q_ac_t^S(S_m/S_{k+1})*c_t^S(S_p/S_l)\,.
    \end{align}
\item The map $\psi$ in Theorem \ref{thm:maincluster2} is an injective ring homomorphism if the quantum cohomological cluster exchange relations \eqref{eqn:quantumcohexrel0} hold.
\end{enumerate}
\end{thm}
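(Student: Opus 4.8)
The plan is to prove Theorem \ref{thm:quantumcohexrel} in two halves, exactly as the statement is split. Part (2) is largely formal: assuming the quantum cohomological cluster exchange relations \eqref{eqn:quantumcohexrel0}, I would check that the map $\psi$ of Theorem \ref{thm:maincluster2} is compatible with every cluster exchange relation \eqref{eqn:clusterrelation} arising in $\mathscr A_n$. Using the local description of a gauge node from Lemma \ref{lem:decorationsofQinP} and Figure \ref{fig:localbehavior}, the exchange relation at a node $v$ with $r_v = N_k - N_l$ reads $x_v x_v' = \prod_{u} x_u^{[b_{uv}]_+} + \prod_w x_w^{[b_{vw}]_+}$, and under $\psi$ the monomials become products of Chern polynomials $c_t^S(S_j/S_i)$ together with sign factors and powers of the $\xi_i$. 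The key computation is that the substitution $q_i = (-1)^{N_i + N_{i+1}} \xi_{i-1}\xi_{i+1}^{-1}$ makes the product $\prod_{a=p+1}^{k} (-1)^{N_a + N_{a-1}} q_a$ telescope to exactly the ratio of $\xi$-monomials needed to match the two sides; this is the role of the $\xi$ variables flagged in the introduction. So Part (2) reduces to bookkeeping of signs and $\xi$-exponents plus one appeal to \eqref{eqn:quantumcohexrel0}, and then injectivity follows since $\psi$ sends a transcendence basis (the initial cluster variables, via $\psi(x_i) = (-1)^{N_i}\xi_i c_t^S(S_i)$, whose Chern roots are algebraically independent in the localized equivariant ring) to algebraically independent elements.

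The real content is Part (1): establishing the identity \eqref{eqn:quantumcohexrel0} in $QH^*_S(Fl)[t]$. Here I would follow the route advertised in the introduction, using the abelian/nonabelian correspondence of Fontanine--Kim--Sabbah \cite{abelian/nonabelian:CKB} and Webb \cite{abeliannonabelian:Webb, abelianizationlef:Webb}. The plan is: (a) reduce the identity to a statement about quantum products of Chern classes of the tautological subquotients $S_j/S_i$ on the flag variety; (b) lift the computation to the abelianization — the flag variety $Fl(N_1,\dots,N_{n+1})$ is a nonabelian GIT quotient whose abelianization is an iterated projective-bundle-type toric-ish space where $c_t^S(S_k)$ factors into linear (Chern-root) factors $\prod_j (t - x_j^{(k)})$; (c) prove the analogue of \eqref{eqn:quantumcohexrel0} upstairs, where it should become a consequence of the known quantum cohomology presentation (the Givental--Kim / Ciocan-Fontanine presentation \cite{qcohflag:CF99}) together with the explicit $I$-function transformation formulas that Webb's abelianization-of-quasimap-$I$-functions results provide; (d) push the identity back down via the abelian/nonabelian comparison for Frobenius manifolds / quantum products, tracking how the Kähler variables $q_a$ and the Weyl-anti-invariant classes descend.

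Concretely, for Part (1) I expect the cleanest argument is to recognize each side of \eqref{eqn:quantumcohexrel0} as (the nonabelian descent of) a relation already visible in Ciocan-Fontanine's presentation of $QH^*(Fl)$: the quantum relations there express the "wrap-around" of the characteristic-polynomial factorization $c_t^S(S_{i+1}) \equiv c_t^S(S_i) \cdot (\text{quotient factor})$ up to a $q_i$-correction, and \eqref{eqn:quantumcohexrel0} is the two-step (indices $p$ to $k$) iterate of exactly this, with the telescoping product $\prod_{a=p+1}^k (-1)^{N_a+N_{a-1}} q_a$ recording the accumulated corrections. The abelian/nonabelian machinery is needed to make "the quantum product of $c_t^S(S_m/S_{p+1})$ with $c_t^S(S_k/S_l)$" — which is a priori a product of two non-Schubert, non-generating classes — computable: upstairs everything is a product of linear factors and the quantum products are governed by the Givental-style $J$/$I$-function, so the identity becomes an identity of explicit rational functions after applying Webb's transformation formula across the relevant wall-crossing.

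The main obstacle I anticipate is step (d): controlling the descent from the abelianization back to $Fl$ precisely enough to conclude an \emph{equality of quantum products} (not just of $J$-functions or of genus-zero invariants against a spanning set of classes), including the exact matching of the sign conventions $(-1)^{N_a+N_{a-1}}$ and the identification of the Kähler parameters $q_a$ on both sides. In other words, the hard part is not proving some version of the relation upstairs — that should be a finite, if intricate, computation with $I$-functions — but verifying that the abelian/nonabelian correspondence of \cite{abelian/nonabelian:CKB, abeliannonabelian:Webb} transports it to the stated normalization in $QH^*_S(Fl)[t]$, and that the quotient bundles $S_j/S_i$ (which are pullbacks of tautological bundles over the mutated quiver varieties, by Corollary \ref{cor:Aclustertrue} / Proposition \ref{prop:equicohmu}) have their Chern polynomials correctly tracked through this comparison. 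I would isolate this as a lemma about the compatibility of the abelianization map with the subquotient filtration and prove \eqref{eqn:quantumcohexrel0} by first checking it on the abelianization and then invoking that lemma.
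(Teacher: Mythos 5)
Your split into (1) and (2) matches the paper's proof structure, and your recognition that (2) is mostly bookkeeping plus one appeal to \eqref{eqn:quantumcohexrel0} is correct. However, there are two genuine discrepancies.

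For Part (1), the paper does \emph{not} try to transport the entire identity \eqref{eqn:quantumcohexrel0} across the abelian/nonabelian correspondence. Instead, the abelianization is used exactly once, to establish a single ``generating'' relation (Proposition~\ref{lem:special}):
\begin{equation*}
c^S_t(S_k)* c^S_t(S_l/ S_k)=c^S_t(S_l)+(-1)^{N_k+N_{k-1}}q_k\, c^S_t(S_l/ S_{k+1})* c^S_t(S_{k-1}),
\end{equation*}
where the left factor $c_t^S(S_k)$ has the clean lift $\prod_j(t-x^{(k)}_j)$, so the abelianized quantum correction is a computable Vandermonde/Schur manipulation in Batyrev's toric presentation. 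The full relation \eqref{eqn:quantumcohexrel0} is then derived \emph{purely algebraically} from that one identity and its consequence \eqref{eqn:special2} by a chain of inductions (Lemmas~\ref{lem:node1eqn}--\ref{lem:node4val}), using associativity and the non-zero-divisor property of the $c_t^S(S_i)$ in $QH_S^*(Fl)[t]$. This neatly sidesteps the obstacle you correctly flag as ``step (d)'': one never needs to descend the quantum product of two general subquotient classes from the abelianization, because by the time one meets $c_t^S(S_m/S_{p+1})*c_t^S(S_k/S_l)$, one is working entirely downstairs. If you try to prove the whole relation upstairs and push it down as you propose, you would face exactly the normalization and descent difficulties you anticipate, plus the substantial complication that the lift of $c_t^S(S_j/S_i)$ is a large sum of determinantal terms (the paper's \eqref{eqn:standard for quotient}), whose quantum product with another such lift has many correction terms to control.

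For Part (2), your injectivity argument is too quick. The codomain $QH_S^*(Fl)[t]$ (even after tensoring with $\mathbb Q(\xi)$ or localizing in the equivariant parameters) is not an integral domain, and $c_t^S(S_i)$ are polynomials over a finite-dimensional $H_S^*(pt)$-algebra, so ``algebraically independent Chern roots'' does not immediately give that a polynomial relation among the $\psi(x_i)$ forces zero coefficients -- especially because the specialization $q_i=(-1)^{N_i+N_{i+1}}\xi_{i-1}\xi_{i+1}^{-1}$ mixes $\xi$-monomials via quantum corrections. The paper's injectivity proof invokes the Laurent phenomenon to reduce to polynomials in the initial cluster variables, then introduces a partial order on $\mathbb Z^{n+1}$ adapted to the $\xi/q$ substitution and isolates the coefficient of the $\prec$-minimal monomial by looking at the top $t$-degree term, where no quantum reduction can interfere. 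You would need something of this sort; the transcendence-basis argument as stated has a gap.
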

\begin{proof}
    We only prove the part $(2)$ in this proof and leave the part (1) to the remaining of this section. 
    We denote cluster variables of Figure \ref{fig:localbehavior} (a) by 
    $\{x_u'\}_{u\in Q_0}$. 
    Performing a quiver mutation, we get a new cluster variable $\tilde x_v$, and the remaining ones are invariant. 
    In $\mathscr A_n$, the cluster exchange relation is 
    \begin{equation*}
        x_v'\tilde x_v=x_{v_1}'x_{v_4}'+x_{v_2}'x_{v_4}'.
    \end{equation*}
   In order to prove that $\psi$ is a well-defined ring homomorphism, we only have to prove that
   \begin{align*}
       &(-1)^{N_k-N_l+N_m-N_{p+1}}\xi_k\xi_{l}^{-1}c_t^S(S_k/S_l)*\xi_m\xi_{p+1}^{-1}c_t^S(S_m/S_{p+1})\nonumber\\
       &=(-1)^{N_m-N_l+N_k-N_{p+1}} \xi_m\xi_l^{-1}c_t^S(S_m/S_l)*\xi_k\xi_{p+1}^{-1}c_t^S(S_k/S_{p+1})\nonumber\\
       &+(-1)^{N_m-N_{k+1}+N_p-N_{l}}\xi_m\xi_{k+1}^{-1}c_t^S(S_m/S_{k+1})*\xi_p\xi_{l}^{-1}c_t^S(S_p/S_l)\,.
   \end{align*}
   Cancel the factor $(-1)^{N_k-N_l+N_m-N_{p+1}}\xi_k\xi_{l}^{-1}\xi_m\xi_{p+1}^{-1}$, and we get 
   \begin{align*}
       &c_t^S(S_k/S_l)* c_t^S(S_m/S_{p+1})=
       c_t^S(S_m/S_l)*
       c_t^S(S_k/S_{p+1})
       \nonumber\\
       &
       +(-1)^{N_k-N_{k+1}+N_p-N_{p+1}}\xi_p\xi_{p+1}\xi_k^{-1}\xi_{k+1}^{-1}c_t^S(S_m/S_{k+1})*c_t^S(S_p/S_l)\,,
   \end{align*}
  By letting $q_a=(-1)^{N_a-N_{a+1}}\xi_{a-1}\xi_{a+1}^{-1}$, the second term of the right-hand side is exactly $\prod_{a=p+1}^{k}(-1)^{N_a+N_{a-1}} q_ac_t^S(S_m/S_{k+1}) *c_t^S(S_p/S_l)$.
   Thus $\psi$ is a ring homomorphism. 

   The map $\psi$ is injective. 
   Suppose $\alpha=f(x_1, x_2, \ldots, x_{n+1}, \mathbf{x}')\in \mathscr A_{n}$ is in the kernel of $\psi$. We will prove that $\alpha=0$ in $\mathscr A_{n}$. 
   By the Laurent phenomenon of cluster variables \cite[Theorem 3.1]{clusteralg:1}, any cluster variable can be expressed as a Laurent polynomial of initial cluster variables.
  Therefore, there exists a polynomial $P$ and a monomial $Q$ in $\mathbb Z(x_1,x_2,\cdots,x_{n+1})$ such that
   $Q(x_1,\ldots, x_{n+1})\cdot \alpha=P(x_1,\ldots, x_{n+1})$.
   Since there is no zero divisor in $\mathscr A_{n}$, we only have to prove that $P=0$ in $\mathscr A_{n}$.
   We can assume $\alpha=P(x_1,\ldots, x_{n+1})$ without loss of generality. 
   Suppose $\alpha$ is expressed as the following finite sum
   \begin{equation*}
       \alpha=\sum_{\mathbf{i}=(i_1, \ldots, i_{n+1})\in \Z_{\geq 0}^{n+1}}a_{\mathbf{i}}\prod_{k=1}^{n+1}x_k^{i_k}
   \end{equation*}
   with $a_{\mathbf{i}}\in \Q$. We claim that $\psi(\alpha)=0$ implies $a_{\mathbf{i}}=0$ for arbitrary $\mathbf{i}$. We introduce the following partial order $\ge$ in the set $\Z^{n+1}$: for $\mathbf{i}$ and $\mathbf{i}'$ in $\Z^{n+1}$
   \begin{equation*}
     \mathbf{i}\ge \mathbf{i}' \quad\text{if} \quad \prod_{k=1}^{n+1}\xi_k^{i_k}\big/\prod_{k=1}^{n+1}\xi_k^{i_k'}=\pm\prod_{j=1}^{n}q_j^{\geq 0},
   \end{equation*}
   where $\xi$ and $q$ satisfy \eqref{xi and q}. Let $ CS_{\alpha}$ be the following finite set
   \begin{equation*}
       CS_{\alpha}:=\{\mathbf{i}\in  \Z_{\geq 0}^{n+1}|a_{\mathbf{i}}\neq 0\}.
   \end{equation*}
   It suffices to check $CS_{\alpha}$ is empty when $\psi(\alpha)=0$. Suppose $CS_{\alpha}$ is not empty, then there is a minimal element $\mathbf{i}_0$ in $ CS_{\alpha}$ with respect to $\ge$.
   We consider the term $\xi^{\mathbf{i}_0}t^{\mathbf{r}\cdot \mathbf{i}_0}$ in $\psi(\alpha)$ after the quantum reduction. 
   Since $\mathbf{i}_0$ is minimal, this term will not be affected by the quantum reduction, which means it is actually the highest order term of $t$ in  $\psi(a_{\mathbf{i}_0}\mathbf{x}^{{\mathbf{i}_0}})$. Therefore, the coefficient of $\xi^{\mathbf{i}_0}t^{\mathbf{r}\cdot \mathbf{i}_0}$ in $\psi(\alpha)$ is exactly $a_{\mathbf{i}_0}$. 
   Hence $\psi(\alpha)=0$ implies $a_{\mathbf{i}_0}=0$, contradicting to $a_{\mathbf{i}_0}\in CS_{\alpha}$. 
\end{proof}
\begin{rem}\label{rem:integersmkpl}
\begin{enumerate}
    \item As discussed in Section \ref{sec:quivervarietieswithmu}, if we take special values for integers $m,k,p,l$, we can get situations when the node $v$ has less adjacent nodes.
    Actually, the quantum cohomological exchange relations are also reduced to special formulas in such situations. 
   In Section \ref{sec:proofofmaintheorem}, we will discuss those special cases in detail and give the corresponding quantum cohomological cluster exchange relations.
\item We can formally assume that $l$    can be zero and let $N_0=0$. 
In such situations the integers are $r_{v}=N_k, r_{v_1}=N_m, r_{v_3}=N_p$.
\item 
Although the map $\psi$ is injective, it is not surjective as $\mathbb Q$-algebra homomorphism. 
For instance, the image doesn't contain pure cohomology classes in $QH^*_S(Fl)[t]$.
\item 
In the second author's talk at Shanghai Jiao Tong University about this project, Professor Fomin informed us that they included a similar equation with Equation \eqref{eqn:quantumcohexrel0} in their book \cite{fomin:cluster4-5}, within the context of a different ring representation of the quantum cohomology ring of flag varieties.
\end{enumerate}
\end{rem}
In the remaining part of this section, we will prove that the Theorem \ref{thm:quantumcohexrel} holds in $QH^*_S(Fl)[t]$.
In Section \ref{sec:abnonabforcoh}-\ref{sec:cohVG}, we will use the abelian/nonabelian correspondence for cohomology rings to study the cohomology and quantum cohomology rings of flag varieties. 
We want to emphasize that the ring representation
is different from that in  \cite{flag:quantumcohomology, qcohflag:CF99}. 
In Sections \ref{sec:abnonabqcoh}-\ref{sec:proofofmaintheorem}, we will give a detailed proof of the Theorem \ref{thm:quantumcohexrel}.

\subsection{Abelian/nonabelian correspondence}\label{Sec:ab/nonabcoh}\label{sec:abnonabforcoh}
\subsubsection{Abelian/nonabelian correspondence for cohomology rings}
We first review the abelian/nonabelian correspondence for cohomology rings, and refer to \cite{Ab_nab:EG,Ab_nab:Mar}.

Let $\mc X=V\sslash_\theta G$ be a fixed smooth projective variety, $T\subset G$ be the maximal torus and $W=N(T)/T$ be the Weyl group. The character $\theta\in \chi(G)$ is also in $\chi(T)$ by restriction, so we can define an abelian variety $V\sslash_{\theta}T$.
The variety $V\sslash_{\theta}G$ and $V\sslash_{\theta}T$ are related by the following diagram,
\begin{equation}\label{ab/nonab}
\begin{tikzcd}
V^s(G)/T \arrow[hookrightarrow]{r}{a} \arrow[]{d}{p}
  & V^s(T)/T \\
V^s(G)/G &
\end{tikzcd}
\end{equation}
where $a$ is injective and $p$ is a $G/T$-fibration.

Assume that there is a torus action $S$ on $V$ which commutes with the action of $G$, and then the torus acts on $V\sslash_\theta G$ and $V\sslash_\theta T$.

Let $\Phi=\{\rho\}$ be the set of positive roots of $G$, and $\{\mathbb C_{\rho}\}_{\rho\in \Phi}$ the one dimensional representations of $T$ with weights $\rho$.
These one dimensional representations define line bundles on $V\sslash_\theta T$, which are denoted by $L_{\rho}=V\times_T \mathbb C_{\rho}$. Denote their equivariant first Chern classes by $c^S_1(L_{\rho})\in H_S^*(V\sslash_\theta T)$.
Define $\omega:=\prod_{\rho\in \Phi}c^S_1(L_{\rho})$.
Then we have the following facts \cite[Section 3.1]{abelian/nonabelian:CKB}:
\begin{itemize}
    \item $p^*: H^*_S(V\sslash_\theta G)\rightarrow H^*_S(V^s(G)/T)^W$ is an isomorphism.
    \item There is a short exact sequence,
    \begin{equation}\label{eqn:ses}
        0\rightarrow \ker(\omega)\rightarrow H^*_S(V\sslash_\theta T)^W\xrightarrow[]{(p^*)^{-1}\circ a^*} H^*_S(V\sslash_\theta G)\rightarrow 0,
    \end{equation}
\end{itemize}
where $\ker(\omega)\subset H^*_S(V\sslash_\theta T)^W$ is the ideal generated by $\{\alpha \in H^*_S(V\sslash_\theta T)^W| \alpha\cup \omega=0\}$. 

\subsubsection{Abelian/nonabelian correspondence for quantum cohomology rings}
There is a quantum version of \eqref{eqn:ses} which we review by following the work \cite{abelian/nonabelian:CKB}.
Let $U$ be a subspace in $H^*(V\sslash_\theta T)$ which is a lift of $H^*(V\sslash_\theta G)$.
Denote the lifting of an element $\gamma$ by $\tilde\gamma$. 
Let $\mathbf t=\{t_i\}$
be the coordinates on $H^*(X\sslash_\theta G)$ by choosing a basis, and 
$\tilde{\mathbf t}=
\{\tilde{t}_i\}$ be the coordinates on $U$ with respect to the lifted basis. 
Let $(\tilde{\mathbf t}, \mathbf y)$ be an extension of $\tilde{\mathbf t}$ to the entire space $H^*(X\sslash_\theta T)$.

Let $N(V\sslash_\theta G)$ and $N(V\sslash_\theta T)$ be the Novikov rings for the two quotients. 
There is a natural specialization of Novikov variables 
\begin{equation*}
    \pi: N(V\sslash_\theta T)\rightarrow
N(V\sslash_\theta G)
\end{equation*}
 which takes into consideration that there are more curve classes in $V\sslash_\theta T$. 
We denote by  “$*_g$” the \emph{big} quantum product on $V\sslash_\theta T$ with the Novikov
variables being specialized via the map $\pi$ or on $V\sslash_\theta G$ depending on where the cohomology classes are.
The abelian/nonabelian correspondence for quantum cohomology ring is as follows \cite[Conjecture 3.7.1]{abelian/nonabelian:CKB}.

\begin{conjecture}(\cite[Conjecture 3.7.1]{abelian/nonabelian:CKB})
Given $\gamma, \gamma'\in H^*(V\sslash_\theta G)$, 
there are classes
$\zeta, \zeta'\in U\otimes_\mathbb{C} N(V\sslash_\theta G)[[\tilde{\mathbf t}]]$, 
uniquely determined by relations $\zeta *_g \omega = \tilde{\gamma}\cup\omega$ and
 $\zeta' *_g \omega = \tilde{\gamma}'\cup\omega$.
 There is an equality
\begin{equation}\label{ab-nab for Qcoh}
  (\widetilde{(\gamma *_g \gamma'
)} \cup \omega)(\mathbf t) = \zeta*_g \zeta'*_g \omega(\tilde{\mathbf t}, 0) = (\zeta*_g (\widetilde{\zeta'}\cup \omega))(\tilde{\mathbf t}, 0),
\end{equation}
with an explicit change of variables $\tilde{\mathbf t}=\tilde{\mathbf t}(\mathbf t)$.
\end{conjecture}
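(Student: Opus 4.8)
The plan is to deduce this conjecture in our setting from the abelian/nonabelian correspondence for quasimap $I$-functions, as established by Webb in \cite{abeliannonabelian:Webb, abelianizationlef:Webb}, together with the reconstruction results of Fontanine--Kim--Sabbah on Frobenius manifolds \cite{abelian/nonabelian:CKB}. The first step is to record the quasimap $I$-function version: for the GIT quotients $\mc X = V \sslash_\theta G$ and its abelianization $V \sslash_\theta T$, Webb's theorem relates the small (and then, via the Birkhoff factorization and the string/divisor equations, the big) quasimap $I$-functions of the two quotients by the operation of cup product with $\omega = \prod_{\rho \in \Phi} c_1^S(L_\rho)$ followed by a Weyl antisymmetrization. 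Since our $V \sslash_\theta G$ (the flag variety $Fl$, or more generally the critical locus $\mc Z$) satisfies the conditions needed for quasimaps and Gromov--Witten theory to agree in genus zero (it is a smooth projective GIT quotient with the semistable locus equal to the stable locus, and the relevant wall-crossing is trivial here since the target is Fano or the wall-crossing transformation is explicit), the quasimap $I$-function determines the genus-zero Gromov--Witten theory, hence the big quantum product.

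The second step is to feed this into the Frobenius-manifold reconstruction of \cite[Section 3.7]{abelian/nonabelian:CKB}. Their framework shows that once one knows (i) the abelian/nonabelian relation at the level of $I$-functions / Givental cones, and (ii) that the ambient cohomology classes generate the relevant part of the quantum $D$-module, then the relation \eqref{ab-nab for Qcoh} for the big quantum product follows: the classes $\zeta, \zeta'$ are characterized by $\zeta *_g \omega = \tilde\gamma \cup \omega$, and this characterization is exactly the image under the $I$-function correspondence of the defining property of $\gamma *_g \gamma'$ on $V\sslash_\theta G$. One must check that the change of variables $\tilde{\mathbf t} = \tilde{\mathbf t}(\mathbf t)$ is the one induced by the mirror map / Birkhoff factorization comparison between the $J$-function of $V\sslash_\theta G$ and the restriction of the abelian $J$-function, which is precisely the content of \cite[Theorem 3.4 and its corollaries]{abelian/nonabelian:CKB} combined with Webb. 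I would carry this out by first treating the case with no quantum corrections to identify the classical part (which is just the short exact sequence \eqref{eqn:ses}), then adding the Novikov variables and verifying that the specialization map $\pi$ and the quotient of ideals $\ker(\omega)$ intertwine correctly with the quantum products $*_g$ on both sides.

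The third step is specific to our application: we only need the conjecture for the classes $\gamma = c_t^S(S_j/S_i)$ (equivalently for Chern classes of tautological subquotient bundles), and on the abelian side these lift to symmetric functions in the Chern roots $x^{(k)}_1, \dots, x^{(k)}_{N_k}$ of the tautological line bundles over $V\sslash_\theta T = \prod_k (\mathbb P^{N_{n+1}-1})^{N_k} \sslash (\cdots)$ — a product of projective-space-like quotients whose quantum cohomology is completely explicit (a quantum Leray--Hirsch / Batyrev-type presentation). Thus $\zeta$ and $\zeta'$ can be computed by hand in this case, and \eqref{ab-nab for Qcoh} reduces to an identity among symmetric polynomials that we can verify directly; this is what will be used downstream in Theorem \ref{thm:quantumcohexrel}.

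\textbf{The main obstacle} I anticipate is step two: carefully matching the change of variables and the Novikov-variable specialization so that the \emph{big} quantum product identity — not merely the small one — holds. Webb's results give the Givental cone / $I$-function statement, but promoting it to the Frobenius-manifold identity \eqref{ab-nab for Qcoh} requires that the lift $U \subset H^*(V\sslash_\theta T)$ be chosen compatibly and that the reconstruction argument of \cite{abelian/nonabelian:CKB} applies verbatim; one must check their hypotheses (semisimplicity or, more robustly, the generation statement for the quantum $D$-module by $\omega$-divisible classes) are met for $Fl$ and for $\mc Z$. For the flag variety this is known, but for the general critical locus $\mc Z \in \Omega_n^A$ it may require the complete-intersection description in Lemma \ref{lem:phasevargeneralquiver} together with a quantum Lefschetz argument, which is why in the body of the paper we restrict the full proof to the $A$-type case where the explicit symmetric-function computation of step three bypasses this difficulty.
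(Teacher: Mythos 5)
The statement you were asked about is not proved in this paper at all: it is quoted verbatim as \cite[Conjecture 3.7.1]{abelian/nonabelian:CKB}, and immediately after stating it the paper simply records where it is known — for flag varieties by \cite[Theorem 4.1.1]{abelian/nonabelian:CKB}, and under a Fano-index and divisor-generation hypothesis by \cite[Corollary 6.4.5]{abeliannonabelian:Webb} together with \cite[Theorem 4.3.6]{abelian/nonabelian:CKB}. The only thing the paper actually establishes itself is Proposition \ref{prop:ab-nab for small Qcoh}, the restriction to the small quantum product, whose sole content is that the mirror-type change of variables $\tilde{\mathbf t}=\tilde{\mathbf t}(\mathbf t)$ is trivial on the small parameter space; everything downstream (Proposition \ref{lem:special} onward) uses only this small-phase statement for $Fl$.

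Your outline correctly identifies the route the cited literature takes (Webb's quasimap $I$-function correspondence fed into the Frobenius-manifold reconstruction of Ciocan-Fontanine--Kim--Sabbah), so as a description of where the proof lives it is accurate. But as a proof it has a genuine gap that you yourself flag and then do not close: the passage from the $I$-function/Givental-cone statement to the big-quantum-product identity \eqref{ab-nab for Qcoh}, including the identification of the change of variables and the verification of the reconstruction hypotheses, \emph{is} the theorem being cited — restating it as "one must check" does not prove it. Two smaller points. First, your step three inverts the paper's logic: the paper does not verify the conjecture by explicit symmetric-function computations for the classes $c_t^S(S_j/S_i)$; it takes the already-proven flag-variety case as input and \emph{derives} the symmetric-function identities (the quantum cohomological exchange relations) from it. Second, the conjecture as used in the paper is only ever invoked for $Fl$, never for the general critical locus $\mc Z$, so the quantum Lefschetz discussion in your final paragraph addresses a case the paper deliberately avoids needing. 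If your goal was to match what the paper does, the honest answer is: state the conjecture, cite the flag-variety case, and prove only the small-phase reduction of Proposition \ref{prop:ab-nab for small Qcoh}.
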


The conjecture \ref{ab-nab for Qcoh} is proved in the case of flag varieties $Fl$ in \cite[Theorem 4.1.1]{abelian/nonabelian:CKB}, and is proved under the assumptions that the GIT quotient $V\sslash_\theta G$ is Fano of index greater than or equal to two and the equivariant cohomology ring $H^*_R(V\sslash_\theta G)$ is generated by divisor classes after localization in \cite[Corrolary 6.4.5]{abeliannonabelian:Webb} and \cite[Theorem 4.3.6]{abelian/nonabelian:CKB}. 
See \cite{rimhookrule} for an interesting application of abelian/nonabelian correspondence. 
We will only use the restriction of \eqref{ab-nab for Qcoh} to \emph{small} quantum cohomology ring in our case  $V\sslash_\theta G=Fl=Fl(N_1,\cdots,N_{n+1})$. 
We will use $*$ to denote the small quantum products in $V\sslash_\theta G$ and $V\sslash_\theta T$ with Novikov variables being specialized via the map $\pi$. 
\begin{prop}\label{prop:ab-nab for small Qcoh}
In small quantum cohomology ring $QH^*(Fl)$, the abelian/nonabelian correspondence is reduced to the following equation, 
\begin{equation}\label{ab-nab for small Qcoh}
 ( \widetilde{\gamma * \gamma'
}) \cup \omega
= \xi* \xi'* \omega = 
\xi* (\widetilde{\gamma'}\cup \omega),
\end{equation}
Notice that $\gamma * \gamma'$ is uniquely determined in \eqref{ab-nab for small Qcoh} according to the exact sequence \eqref{ab/nonab}.
\end{prop}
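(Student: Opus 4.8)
The plan is to obtain \eqref{ab-nab for small Qcoh} as the specialization of the big-quantum abelian/nonabelian correspondence \eqref{ab-nab for Qcoh} to vanishing insertion parameters. For $V\sslash_\theta G=Fl$ the identity \eqref{ab-nab for Qcoh} — together with the uniqueness of $\zeta,\zeta'$ and the explicit change of variables $\tilde{\mathbf t}=\tilde{\mathbf t}(\mathbf t)$ — is \cite[Theorem 4.1.1]{abelian/nonabelian:CKB}, and it is also covered by \cite[Corollary 6.4.5]{abeliannonabelian:Webb} since $Fl$ is Fano of index $\ge 2$ with equivariant cohomology generated by divisor classes after localization. So the only thing left to do is to track what each ingredient becomes once we set $\tilde{\mathbf t}=0$ (equivalently $\mathbf t=0$): the change of variables $\tilde{\mathbf t}(\mathbf t)$ preserves the origin up to a divisor-class shift, and such a shift is absorbed into a rescaling of Novikov variables by the divisor equation, so evaluating at $\tilde{\mathbf t}=0,\ \mathbf y=0$ costs no generality.

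At $\tilde{\mathbf t}=0,\ \mathbf y=0$ every big quantum product $*_g$ in \eqref{ab-nab for Qcoh} — on $V\sslash_\theta G$ and on $V\sslash_\theta T$ with Novikov variables specialized via $\pi$ — collapses to the corresponding small quantum product $*$, directly from the definition of the big product plus the string and divisor equations. Hence $(\gamma *_g \gamma')(\mathbf t)$ becomes $\gamma * \gamma'$ and the classes $\zeta,\zeta'\in U\otimes N(V\sslash_\theta G)[[\tilde{\mathbf t}]]$ specialize to $\xi:=\zeta(0,0)$, $\xi':=\zeta'(0,0)$ in $U\otimes N(V\sslash_\theta G)$. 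Evaluating the defining relation $\zeta *_g \omega=\widetilde{\gamma}\cup\omega$ at $\tilde{\mathbf t}=0$ shows that $\xi$ solves $\xi * \omega=\widetilde{\gamma}\cup\omega$ in the small quantum ring, and likewise for $\xi'$; this is precisely the characterization of $\xi,\xi'$ claimed in the statement.

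Next I would settle the uniqueness claims. Small quantum multiplication by $\omega$ is injective on $U\otimes N(V\sslash_\theta T)$: modulo the Novikov ideal it is ordinary cup product by $\omega$, which is injective on the lift $U$ because $U\cap\ker(\omega)=0$ by the short exact sequence \eqref{eqn:ses}; the quantum correction is $q$-adically nilpotent (for the Fano variety $Fl$ one may even work over $\mathbb Q[q]$), so injectivity persists, and existence follows by solving order by order in $q$ with classical leading term $\widetilde{\gamma}$. Reading \eqref{ab-nab for Qcoh} at $\tilde{\mathbf t}=0,\ \mathbf y=0$ with all these substitutions then gives exactly
\[
(\widetilde{\gamma*\gamma'})\cup\omega \;=\; \xi * \xi' * \omega \;=\; \xi * (\widetilde{\gamma'}\cup\omega),
\]
and the injectivity just established, combined with the isomorphism $(p^*)^{-1}\circ a^*$ of \eqref{ab/nonab}–\eqref{eqn:ses}, recovers $\gamma*\gamma'\in QH^*(Fl)$ unambiguously.

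The step I expect to be the main obstacle is the first one: making certain that on the degree-$\le 2$ locus the big product genuinely reduces to the small product after the correct identification of Novikov variables via $\pi$ and after accounting for the mirror-type change of variables $\tilde{\mathbf t}(\mathbf t)$. For flag varieties this bookkeeping is exactly what is carried out inside the proof of \cite[Theorem 4.1.1]{abelian/nonabelian:CKB} when restricted to divisor insertions, so in practice this amounts to quoting that restriction rather than reproving it.
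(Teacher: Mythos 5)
Your proof is correct and takes essentially the same approach as the paper: restrict the big-quantum abelian/nonabelian correspondence to the small phase and observe that the mirror change of variables is trivial there. The paper makes this step by directly citing \cite[Theorem 4.3.6]{abelian/nonabelian:CKB} (the small $J$-function correspondence with no parameter change), whereas you reach the same conclusion through a heuristic about divisor shifts being absorbed into Novikov rescaling before ultimately appealing to the same bookkeeping; quoting the small-$J$-function statement directly would make the argument tighter, though your added details on injectivity of $\omega$-multiplication are a welcome expansion of what the paper leaves implicit.
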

\begin{proof}
  The variables change $\widetilde{t}=\widetilde{t}(t)$ in \eqref{ab-nab for Qcoh} is the only thing we need to care about. This change is trivial after being restricted to the small phase subspace since the abelian/nonabelian correspondence for small J-functions holds for $V\sslash_\theta G$ without parameter change, see \cite[Theorem 4.3.6]{abelian/nonabelian:CKB} or \cite[Section 4.2]{rimhookrule}.
\end{proof}

\subsection{Quantum cohomology of the abelianization of a flag variety}\label{sec:qcohVT}
    Consider the flag variety $Fl:=Fl(N_1,\ldots, N_{n+1})$, which can be written as $V\sslash_\theta G$ as in Example \ref{ex:An}.
    The gauge group is $G=\prod_{i=1}^{n}GL(N_i)$,
    the maximal torus is $T=\prod_{i=1}^{n}(\mathbb C^*)^{N_i}$, and the Weyl group $W=\prod_{i=1}^{n}S_{N_i}$ where $S_{N_i}$ is the permutation group of $N_i$ elements.
    The abelianization $V\sslash_\theta T$ is a toric variety, which can be viewed as a tower of products of projective spaces, $(\mathbb P^{N_2-1})^{N_1}\rightarrow (\mathbb P^{N_3-1})^{N_2}\rightarrow\ldots\rightarrow (\mathbb P^{N_{n+1}-1})^{N_{n}}$. Let $[\ell_{k, i}]$ denote the rational curve class lying inside the $i$-th $\mathbb P^{N_{k+1}-1}$-fiber of the $k$-th tower $(\mathbb P^{N_{k+1}-1})^{N_k}$.
    Then $N(V\sslash_\theta T)=\bigoplus_{k, i}\mathbb{Z}_{\geq 0}[\ell_{k, i}]$. Set the Novikov variable $q_{k, i}:=q^{[\ell_{k,i}]}$.

    The natural torus action on $V$ that commutes with $G$ is $(\mathbb C^*)^{N_{n+1}}$ coming from the frozen node. Denote the equivariant parameters by $\lambda_1,\ldots,\lambda_{N_{n+1}}$.

    Consider characters $\chi_{j}^{(i)}:T\rightarrow \mathbb C^*$ which map $(\vec t^i)_{i=1}^{n}\in T$ to the $j$-th component of the $\vec t^i=\{t^i_1,\cdots, t^i_{N_i}\}\in (\mathbb C^*)^{N_i}$.
    These characters define a set of line bundles $L_{\chi^{(i)}_j}$ over $V\sslash_\theta T$. 
    Denote
    \begin{equation}\label{eqn:chernroots}
        x^{(i)}_j= c_1^S(L_{\chi^{(i)}_j})\in H^*_S(V \sslash_\theta T), \text{ and } \vec x^{(i)}=\{x^{(i)}_1,\cdots x^{(i)}_{N_i}\}\,.
    \end{equation}
    Actually, for each $i=1,\cdots,n,j=1,\cdots,N_i$,  the non-equivariant limit of $x^{(i)}_j$ is the hyperplane class of the $j$-th projective space of $(\mathbb P^{N_{i+1}-1})^{N_i}$. 

It is a classical result for toric variety that $\{x^{(i)}_j\}$ generate $ H^*_S(V\sslash_\theta T)$ and satisfy the following relations,
    \begin{equation}\label{eqn:cohrelation}
        \prod_{l=1}^{N_{i+1}}(x^{(i)}_j-x^{(i+1)}_l)=0,\,\,j=1,\ldots,N_i;\,\,i=1,\ldots,n,
    \end{equation}
    where we $x^{(n+1)}_j=\lambda_j$ for $j=1,\ldots,N_{n+1}$.
Let $I^{ab}$ be the ideal generated by the relations $\eqref{eqn:cohrelation}$, and then
\begin{equation*}
    H^*_S(V\sslash_\theta T)=\mathbb Q[\{x^{(i)}_{j}\}_{i=1,\cdots,n+1,j=1,\cdots,N_i}]\slash I^{ab}.
\end{equation*}

 The quantum cohomology rings of toric varieties have been investigated by Batyrev \cite{toric}. We can directly write down the quantum cohomology of $V\sslash_\theta T$ as follows. 
\begin{prop}\label{qcoh toric}
 The quantum cohomology $QH^*_S(V\sslash_\theta T)$ is a $\mathbb Q[\lambda_1,\ldots,\lambda_{N_{n+1}}][q_{k, i}]$-module generated by a set
\begin{equation}\label{standard basis}
 \left\{\prod_{k=1}^n \prod_{j=1}^{N_k} \left(x^{(k)}_j\right)^{a_{kj}}\big|0\leq a_{kj}\leq N_{k+1}-1\right\}
\end{equation}
with the following relations
\begin{align}\label{eqn:qcohrelation}
  \left(x^{(k)}_i\right)^{N_{k+1}-1}* x^{(k)}_i&=\left(x^{(k)}_i\right)^{N_{k+1}}-\prod_{j=1}^{N_{k+1}} \left(x^{(k)}_i-x^{(k+1)}_j\right) +q_{k,i}\prod_{l=1}^{N_{k-1}}\left(x^{(k-1)}_l-x^{(k)}_i\right)\nonumber\\
&=-\sum_{l=0}^{N_{k+1}-1}  e_{N_{k+1}-l}(-\vec x^{(k+1)})\left(x^{(k)}_i\right)^{l}+q_{k,i}\prod_{l=1}^{N_{k-1}}\left(x^{(k-1)}_l-x^{(k)}_i\right)\,.
\end{align}
where $e_i(\vec x^{(k+1)})$ is the $i$-th elementary symmetric polynomial with variable set $\vec x^{(k+1)}$.
\end{prop}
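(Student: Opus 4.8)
The plan is to obtain the quantum cohomology ring of the abelianization $V\sslash_\theta T$ directly from Batyrev's description of the quantum cohomology of a smooth toric variety \cite{toric}, using the explicit tower-of-projective-bundles structure $(\mathbb P^{N_2-1})^{N_1}\to\cdots\to(\mathbb P^{N_{n+1}-1})^{N_n}$. First I would record the fan data: the Picard group of $V\sslash_\theta T$ has rank $\sum_{k=1}^n N_k$, with one Novikov variable $q_{k,i}$ for each primitive curve class $[\ell_{k,i}]$ (the line in the $i$-th $\mathbb P^{N_{k+1}-1}$ fiber of the $k$-th layer), and the toric divisors are the classes $x^{(k)}_j$ together with the pulled-back hyperplane classes from the base; the classical relations \eqref{eqn:cohrelation} are exactly the Stanley--Reisner relations of this fan, reorganized via the exact sequences of the projective bundles. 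Since each $x^{(k)}_i$ satisfies a monic degree-$N_{k+1}$ polynomial relation classically, the monomials in \eqref{standard basis} clearly span, and a dimension count against $\prod_{k=1}^n (N_{k+1})^{N_k}=\dim H^*(V\sslash_\theta T)$ shows they form a basis over the coefficient ring; this gives the module statement.

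Next I would derive the quantum relations \eqref{eqn:qcohrelation}. Batyrev's theorem states that the quantum relations are obtained from the classical Stanley--Reisner relations by replacing each monomial $\prod_\rho D_\rho$ indexed by a primitive collection with the corresponding monomial times the Novikov variable of the associated primitive relation. Concretely, for the toric variety at hand the relevant primitive collection for the node $(k,i)$ consists of the $N_{k+1}$ divisors cutting out the $i$-th fiber $\mathbb P^{N_{k+1}-1}$ in layer $k$, whose product is classically $\prod_{j=1}^{N_{k+1}}(x^{(k)}_i - x^{(k+1)}_j)$; the complementary primitive relation contributes the monomial $q_{k,i}\prod_{l=1}^{N_{k-1}}(x^{(k-1)}_l - x^{(k)}_i)$ coming from the divisors meeting $[\ell_{k,i}]$ from the previous layer. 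Reorganizing, one gets
\begin{equation*}
(x^{(k)}_i)^{N_{k+1}} = \prod_{j=1}^{N_{k+1}}(x^{(k)}_i-x^{(k+1)}_j) + q_{k,i}\prod_{l=1}^{N_{k-1}}(x^{(k-1)}_l - x^{(k)}_i),
\end{equation*}
which after subtracting the top-degree term and rewriting $\prod_{j}(x^{(k)}_i-x^{(k+1)}_j)=\sum_{l=0}^{N_{k+1}} e_{N_{k+1}-l}(-\vec x^{(k+1)})(x^{(k)}_i)^l$ yields precisely the stated form of \eqref{eqn:qcohrelation}, with the term $(x^{(k)}_i)^{N_{k+1}-1}* x^{(k)}_i$ on the left because this is the lowest quantum-corrected product.

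I would also need to check that the equivariant version holds, i.e.\ that setting $x^{(n+1)}_j=\lambda_j$ is consistent: the torus $S=(\mathbb C^*)^{N_{n+1}}$ acts through the last layer only, so equivariantly the "base" of the final projective bundle is replaced by $BS$ and its hyperplane-type classes become the equivariant parameters $\lambda_j$; Batyrev-type presentations are known to lift to the equivariant setting for toric varieties with torus actions of this form (equivariant mirror theorem / Givental's toric computations), and no quantum correction involves the $\lambda$'s beyond what appears through $e_{N_{n+1}-l}(-\vec\lambda)$. The main obstacle I anticipate is purely bookkeeping: correctly matching Batyrev's primitive-collection combinatorics to the nested projective-bundle fan so that the Novikov variable $q_{k,i}$ attaches to the factor $\prod_{l=1}^{N_{k-1}}(x^{(k-1)}_l-x^{(k)}_i)$ with the right sign conventions, and confirming there are no further quantum corrections (which holds because all other primitive collections in this fan have empty quantum contribution, the fiber classes $[\ell_{k,i}]$ being the only primitive relations with positive intersection against the relevant divisors). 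Once the fan-combinatorics dictionary is set up, the proposition follows by direct substitution; I would present the $n=1$ case (a product of projective spaces, where the relation is the classical $h^{N_2}=\prod_j(h-\lambda_j)+q$) as the sanity check and then note the general case is the layer-by-layer iteration.
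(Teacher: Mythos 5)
Your approach coincides with the paper's: Proposition \ref{qcoh toric} is asserted there with only a citation to Batyrev's presentation of the quantum cohomology of smooth toric varieties, applied to the tower-of-projective-bundles description of $V\sslash_\theta T$, and your module-basis argument, your identification of the primitive collections layer by layer, and your equivariant specialization $x^{(n+1)}_j=\lambda_j$ are precisely the details the paper leaves implicit.

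There is, however, a sign error in your intermediate display that, taken literally, produces the wrong relation. Batyrev's quantum Stanley--Reisner replacement gives
\begin{equation*}
\prod_{j=1}^{N_{k+1}}\bigl(x^{(k)}_i - x^{(k+1)}_j\bigr) \;=\; q_{k,i}\prod_{l=1}^{N_{k-1}}\bigl(x^{(k-1)}_l - x^{(k)}_i\bigr),
\end{equation*}
whereas you wrote $(x^{(k)}_i)^{N_{k+1}} = \prod_{j}(\cdots) + q_{k,i}\prod_{l}(\cdots)$. Expanding $\prod_{j}$ and ``subtracting the top-degree term'' from your version gives $0 = \sum_{l=0}^{N_{k+1}-1}e_{N_{k+1}-l}(-\vec x^{(k+1)})(x^{(k)}_i)^l + q_{k,i}\prod_l(\cdots)$, which carries the wrong sign on the elementary-symmetric sum relative to \eqref{eqn:qcohrelation}. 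The same slip recurs in your $n=1$ sanity check, which should read $\prod_j(h-\lambda_j)=q$, not $h^{N_2}=\prod_j(h-\lambda_j)+q$. The fix is immediate: substitute $\prod_{j=1}^{N_{k+1}}(x^{(k)}_i-x^{(k+1)}_j) = (x^{(k)}_i)^{N_{k+1}} + \sum_{l=0}^{N_{k+1}-1}e_{N_{k+1}-l}(-\vec x^{(k+1)})(x^{(k)}_i)^l$ into the correct Batyrev relation and solve for $(x^{(k)}_i)^{N_{k+1}}$; this reproduces \eqref{eqn:qcohrelation} exactly, with the left-hand side interpreted as the quantum product $(x^{(k)}_i)^{N_{k+1}-1}*x^{(k)}_i$ expressed in the module basis \eqref{standard basis}.
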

From \eqref{eqn:cohrelation} and \eqref{eqn:qcohrelation}, one can find that $*|_{q\rightarrow 0}=\cup$. 
For any $\alpha, \beta\in H^*_S(V\sslash_\theta T)$, we call $\alpha\cup \beta$ the \emph{classical part} and $\alpha* \beta-\alpha\cup \beta$ the \emph{quantum correction} of $\alpha* \beta$. 
Then we know from \eqref{eqn:qcohrelation} that the quantum correction of $\left(x^{(k)}_i\right)^{N_{k+1}-1}* x^{(k)}_i$ is $q_{k,i}\prod_{l=1}^{N_{k-1}}\left(x^{(k-1)}_l-x^{(k)}_i\right)$, and the classical part is $-\sum_{l=0}^{N_{k+1}-1}  e_{N_{k+1}-l}(-\vec x^{(k+1)})\left(x^{(k)}_i\right)^{l}$.

  From Proposition \ref{qcoh toric}, we can easily get that the quantum correction of $\alpha* \beta$ is nontrivial only if the sum of the degree of $x^{(k)}_i$ in $\alpha$ and $\beta$ is greater than or equal to $ N_{k+1}$ for some $k$ and $i$.

\subsection{Quantum cohomology rings of flag varieties}\label{sec:cohVG}
We first introduce Schur polynomials. For arbitrary distinct $N$ variables $(x_1,\ldots,x_N)$, the
Vandermonde determinant of $(x_1,\ldots,x_N)$ is defined by
\begin{equation}\label{Vandermonde}
    \prod_{1\leq i<j\leq N}(x_i-x_j)=
    \det
    \begin{bmatrix}
        x_1^{N-1}&x_2^{N-1}&\ldots&x_{N}^{N-1}\\
        x_1^{N-2}&x_{2}^{N-2}&\ldots&x_{N}^{N-2}\\
        \vdots&\vdots&\ddots&\vdots\\
        1&1&\ldots&1
    \end{bmatrix}.
\end{equation}
For each partition ${\eta}=(\eta_1,\ldots,\eta_N)$,
such that $\eta_1\geq \eta_2\geq \cdots\geq \eta_N$, the Schur polynomial $s_{\eta}(x_1,\ldots,x_N)$ is a symmetric polynomial of degree $\abs{\eta}=\sum_{i=1}^{N}\eta_i$ defined as follows,
\begin{equation}\label{Schur}
    s_{\eta}(x_1,\ldots,x_N)\prod_{1\leq i<j\leq N}(x_i-x_j)=
    \det
    \begin{bmatrix}
        x_1^{\eta_1+N-1}&x_2^{\eta_1+N-1}&\cdots&x_{N}^{\eta_1+N-1}\\
        x_1^{\eta_2+N-2}&x_{2}^{\eta_2+N-2}&\cdots&x_{N}^{\eta_2+N-2}\\
        \vdots&\vdots&\ddots&\vdots\\
    x_1^{\eta_N}&x_2^{\eta_N}&\cdots&x_N^{\eta_N}
    \end{bmatrix}.
\end{equation}
In particular, when $\eta_1=\cdots=\eta_i=1$, and $\eta_{i+1}=\cdots=\eta_N=0$, the corresponding Schur polynomial is the degree $i$ elementary symmetric polynomial $e_i(x_1,\ldots,x_N)=\sum_{1\leq a_1<a_2<\cdots<a_i\leq N}\prod_{l=1}^ix_{a_l}$.

Another special partition is when $\eta_1=i$ and $\eta_2=\ldots=\eta_N=0$. Denote such a Schur polynomial by $h_i$. Then we have
\begin{equation*}
    h_i(x_1,\cdots,x_N)=\sum_{j_1+\ldots+j_N=i}\prod_{l=1}^{N}{x_l^{j_l}}.
\end{equation*}
We denote $\omega_m=\prod_{1\leq i<j\leq N}(x_i^{(m)}-x_j^{(m)})$ where $x^{(m)}_i$ are Chern roots of line bundles defined in \eqref{eqn:chernroots}.

Consider the equivariant cohomology ring of $Fl$.
 Let $\omega=\prod_{k=1}^{n}\omega_k=\prod_{k=1}^{n}\prod_{1\leq i<j \leq N_k}(x^{(k)}_{i}-x^{(k)}_{j})$. By the short exact sequence \eqref{eqn:ses}, the equivariant cohomology ring of $Fl$ is
\begin{equation*}
   H^*_S(Fl)\iso \mathbb Q[\{x^{(i)}_{j}\}_{i=1,\cdots,n+1,j=1,\cdots,N_i}]^W\slash I
\end{equation*}
where $$I=\{f\in \mathbb Q[\lambda_1,\ldots,\lambda_{N_{n+1}}][\{x^{(i)}_{j}\}_{i=1,\cdots,n+1,j=1,\cdots,N_i}]^W \big{|}\, f\cup \omega\in I^{ab}\}.$$

According to the definition of $S_k$ and $x^{(k)}_i$, 
one lift of $c_t^S(S_k)$ in  $H^*_S(V\sslash_\theta T)[t]$ is
\begin{equation}\label{eqn:lifting}
   \widetilde{ c_t^S(S_k)}=\prod_{j=1}^{N_k}(t-x^{(k)}_j)=\sum_{l=0}^{N_k}t^{N_k-l}e_l(-\vec x^{(k)}).
\end{equation}
In $H^*_S(Fl)$, the Chern polynomial of the quotient bundle $S_l/S_k$ which is defined in \eqref{eqn:sesquotient} satisfies the following relation
\begin{equation}\label{eqn:tautquot}
    c_t^S(S_k)\cup c_t^S(S_l\slash S_k)=c_t^S(S_{l}).
\end{equation}
We can use this relation to get an explicit expression for the lift of $c_t^S(S_l\slash S_k)$.

\begin{lem}\label{lem:vanishingformulaincoh}
  In $H^*_S(V\sslash_\theta T)$, we have, for $l>k$
\begin{equation}\label{eqn:cohrelation2}
  \prod_{j=1}^{N_{l}} \left(x^{(k)}_i-x^{(l)}_j\right)\prod_{m=k+1}^{l-1} \omega_m=0, \,\, i=1, 2, \ldots, N_k.
\end{equation}
\end{lem}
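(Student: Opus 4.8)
The plan is to prove the vanishing relation
\[
\prod_{j=1}^{N_l}\bigl(x^{(k)}_i - x^{(l)}_j\bigr)\prod_{m=k+1}^{l-1}\omega_m = 0
\qquad\text{in }H^*_S(V\sslash_\theta T)
\]
by induction on $l-k$, bootstrapping from the defining relations \eqref{eqn:cohrelation} of $H^*_S(V\sslash_\theta T)$. The base case $l = k+1$ is precisely relation \eqref{eqn:cohrelation} (with the empty product $\prod_{m=k+1}^{k}\omega_m = 1$), so there is nothing to prove there. The key idea for the inductive step is that $\prod_{j=1}^{N_{l-1}}(x^{(k)}_i - x^{(l-1)}_j) = 0$ already, by the inductive hypothesis multiplied by suitable factors — but this is too strong and not quite what we need; instead I would run the induction in the other direction, fixing $k$ and increasing $l$, and exploit the relation \eqref{eqn:cohrelation} at level $l-1$, namely $\prod_{j=1}^{N_l}(x^{(l-1)}_a - x^{(l)}_j) = 0$ for each $a$.

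The heart of the argument is a polynomial divisibility observation in the free polynomial ring before passing to the quotient. Working in $\mathbb{Q}[\lambda][\{x^{(i)}_j\}]$, I want to show that $\prod_{j=1}^{N_l}(x^{(k)}_i - x^{(l)}_j)\,\prod_{m=k+1}^{l-1}\omega_m$ lies in the ideal $I^{ab}$ generated by the relations $\prod_{j=1}^{N_{s+1}}(x^{(s)}_a - x^{(s+1)}_j)$. The mechanism: expand $\prod_{j=1}^{N_l}(x^{(k)}_i - x^{(l)}_j)$ and use the antisymmetrizing presence of $\omega_{l-1} = \prod_{1\le a<b\le N_{l-1}}(x^{(l-1)}_a - x^{(l-1)}_b)$. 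The product $\bigl(\prod_{j}(x^{(k)}_i - x^{(l)}_j)\bigr)\cdot \omega_{l-1}$, viewed as a polynomial in the variables $\vec x^{(l-1)}$, is divisible by the Vandermonde $\omega_{l-1}$, and when one writes $\prod_j(x^{(k)}_i - x^{(l)}_j)$ in terms of the power sums / elementary symmetric functions of $\vec x^{(l)}$ and substitutes the level-$(l-1)$ relations, each monomial $x^{(l)}_{j_1}\cdots$ can be traded — modulo $I^{ab}$ — for symmetric expressions in $\vec x^{(l-1)}$ via $\prod_j(x^{(l-1)}_a - x^{(l)}_j)\equiv 0$. More precisely, I would use that for any $a$, $\prod_{j=1}^{N_l}(T - x^{(l)}_j)$ evaluated at $T = x^{(l-1)}_a$ is zero mod $I^{ab}$, so the polynomial $\prod_j(X - x^{(l)}_j)$ agrees mod $I^{ab}$, after multiplication by $\omega_{l-1}$ and antisymmetrization over $\vec x^{(l-1)}$, with a Lagrange-interpolation-type expression supported on $X \in \{x^{(l-1)}_a\}$; substituting $X = x^{(k)}_i$ and then invoking the inductive hypothesis $\prod_{a}(x^{(k)}_i - x^{(l-1)}_a)\prod_{m=k+1}^{l-2}\omega_m = 0$ closes the loop.

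Concretely, the cleanest route is: (i) write $P := \prod_{j=1}^{N_l}(x^{(k)}_i - x^{(l)}_j)$ and observe that modulo $I^{ab}$ one has, for each $a = 1,\dots,N_{l-1}$, the congruence $\prod_{j=1}^{N_l}(x^{(l-1)}_a - x^{(l)}_j)\equiv 0$; (ii) therefore the two polynomials $\prod_{j=1}^{N_l}(X - x^{(l)}_j)$ and its "reduction" — the unique polynomial in $X$ of degree $< N_{l-1}$ agreeing with it at $X = x^{(l-1)}_1,\dots,x^{(l-1)}_{N_{l-1}}$, obtained by polynomial division by $\prod_a(X - x^{(l-1)}_a)$ — differ by a multiple of $\prod_{a=1}^{N_{l-1}}(X - x^{(l-1)}_a)$ plus an element of $I^{ab}$, and that reduction has coefficients that are symmetric in $\vec x^{(l-1)}$; (iii) substitute $X = x^{(k)}_i$ and multiply by $\prod_{m=k+1}^{l-1}\omega_m$: the $\prod_a(x^{(k)}_i - x^{(l-1)}_a)$ piece times $\prod_{m=k+1}^{l-2}\omega_m$ vanishes by the inductive hypothesis, while the reduction piece, times $\omega_{l-1}$, vanishes because — and this is the crux — each of its symmetric-in-$\vec x^{(l-1)}$ coefficient polynomials, paired against $\omega_{l-1}$ and with $x^{(k)}_i$ frozen, can be pushed down one more level using \eqref{eqn:cohrelation} at level $l-2$ and re-invoking the inductive hypothesis. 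I expect the main obstacle to be the bookkeeping in step (iii): making the "push down one more level" rigorous requires carefully tracking which $\omega_m$ factors are consumed at each stage and verifying the degree bounds so that the interpolation argument genuinely terminates; I would organize this by proving the slightly stronger statement that $\prod_{j=1}^{N_l}(x^{(k)}_i - x^{(l)}_j)\prod_{m=k+1}^{l-1}\omega_m \in I^{ab}$ as an identity of polynomials, which is what is actually needed downstream for the lift of $c_t^S(S_l/S_k)$, and which makes the induction self-contained.
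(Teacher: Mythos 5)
Your overall plan — induction on $l-k$, polynomial division of $\prod_j(X-x^{(l)}_j)$ by $\prod_a(X-x^{(l-1)}_a)$, then specialize at $X=x^{(k)}_i$ — is sound and is essentially a repackaging of what the paper does through the determinant identity \eqref{eqn:Amatrix}. Your handling of the quotient piece $Q(x^{(k)}_i)\prod_a(x^{(k)}_i-x^{(l-1)}_a)$ is correct: it is killed by the inductive hypothesis applied to the pair $(k,l-1)$ after absorbing $\omega_{l-1}$. But the crux step (iii), the treatment of the remainder $R$, is misstated and as written would not close.

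The remainder piece requires no level-$(l-2)$ relations and no re-invocation of the inductive hypothesis. Substituting $X=x^{(l-1)}_a$ into the division identity gives $R(x^{(l-1)}_a)=\prod_{j}(x^{(l-1)}_a - x^{(l)}_j)\in I^{ab}$ directly from \eqref{eqn:cohrelation}. Since $\deg_X R < N_{l-1}$, the Lagrange formula cleared of denominators is a polynomial identity
\begin{equation*}
R(X)\,\omega_{l-1} \;=\; \sum_{a=1}^{N_{l-1}} R(x^{(l-1)}_a)\,\bigl(\pm 1\bigr)\,\omega_{l-1}^{(\hat a)}\prod_{b\ne a}\bigl(X-x^{(l-1)}_b\bigr),
\end{equation*}
where $\omega_{l-1}^{(\hat a)}$ is the Vandermonde of $\vec x^{(l-1)}$ with the $a$-th variable removed. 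Every summand has a factor $R(x^{(l-1)}_a)\in I^{ab}$, so $R(X)\omega_{l-1}\in I^{ab}$; set $X=x^{(k)}_i$ and multiply by $\prod_{m=k+1}^{l-2}\omega_m$ to finish the inductive step in one shot. Your stated mechanism — that the symmetric-in-$\vec x^{(l-1)}$ coefficients of $R$ can be "pushed down one more level using \eqref{eqn:cohrelation} at level $l-2$" — cannot work: those coefficients involve $\vec x^{(l-1)}$ and $\vec x^{(l)}$ but contain no $\vec x^{(l-2)}$ at all, so the level-$(l-2)$ relations have nothing to act on, and symmetry in $\vec x^{(l-1)}$ by itself does not put anything in $I^{ab}$. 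You correctly flagged step (iii) as the obstacle; the fix is the interpolation identity above, which is precisely what the paper's two expansions of the determinant \eqref{eqn:Amatrix} are computing.
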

\begin{proof}
  We use an induction argument on the difference $l-k$. For $l-k=1$, the equation \eqref{eqn:cohrelation2} holds by \eqref{eqn:cohrelation}. 
  We assume \eqref{eqn:cohrelation2} holds for $l-k\leq p-1$. 
  For $l-k=p$, we consider
\begin{equation}\label{eqn:Amatrix}
A:=
    \det
    \begin{bmatrix}
       \prod_{j=1}^{N_l}\left(x^{(k)}_i-x^{(l)}_j\right)&\prod_{j=1}^{N_l}\left(x_1^{(l-1)}-x^{(l)}_j\right)&\cdots&\prod_{j=1}^{N_l}\left(x_{N_{l-1}}^{(l-1)}-x^{(l)}_j\right)\\
        \left(x_i^{(k)}\right)^{N_{l-1}-1}&\left(x_1^{(l-1)}\right)^{N_{l-1}-1}&\cdots&\left(x_{N_{l-1}}^{(l-1)}\right)^{N_{l-1}-1}\\
        \vdots&\vdots&\ddots&\vdots\\
    x_i^{(k)}&x_1^{(l-1)}&\cdots&x_{N_{l-1}}^{(l-1)}\\
   1& 1&\cdots& 1\\
    \end{bmatrix}
\cdot \prod_{m=k+1}^{l-2} \omega_m
\end{equation}
Then expanding along the first row we have
\begin{equation*}
  A=\sum_{a=1}^{N_l}e_a(-\vec x^{(l)})h_{N_l-N_{l-1}-a}(x^{(k)}_i, \vec x^{(l-1)})\prod_{j=1}^{N_{l-1}}\left(x^{(k)}_i-x^{(l-1)}_j\right)\prod_{m=k+1}^{l-1} \omega_m
\end{equation*}
which is zero by induction.

On the other hand, expanding \eqref{eqn:Amatrix} along the first row, by \eqref{eqn:cohrelation}, \eqref{Vandermonde} we have
\begin{equation*}
  A= \prod_{j=1}^{N_l}\left(x^{(k)}_i-x^{(l)}_j\right)\prod_{m=k+1}^{l-1} \omega_m
\end{equation*}
So \eqref{eqn:cohrelation2} holds for $l-k=p$.
\end{proof}

\begin{lem}
  For $l>k$, one lift of $c_t^S(S_l\slash S_k)$ in  $H^*_S(V\sslash_\theta T)[t]$ can be written as follows,
\begin{equation}\label{eqn:lifting for quotient}
   \widetilde{ c_t^S(S_l\slash S_k)}=\sum_{a=0}^{N_l-N_k}t^{a} \left[\sum_{b=0}^{N_l-N_k-a}e_b(-\vec x^{(l)})h_{N_l-N_k-a-b}(\vec x^{(k)})\right].
\end{equation}
\end{lem}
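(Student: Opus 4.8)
The plan is to verify \eqref{eqn:lifting for quotient} by checking that the claimed expression, when multiplied by $\widetilde{c_t^S(S_k)}$ from \eqref{eqn:lifting}, reproduces $\widetilde{c_t^S(S_l)}$ modulo the ideal $I^{ab}$, since by the short exact sequence \eqref{eqn:ses} (equivalently \eqref{ab/nonab}) a lift of $c_t^S(S_l/S_k)$ is determined by the relation \eqref{eqn:tautquot} up to elements annihilating $\omega$, and any $W$-invariant lift satisfying the product relation in $H^*_S(V\sslash_\theta T)$ will do. So the core identity to establish is
\begin{equation*}
\left(\sum_{c=0}^{N_k}t^{N_k-c}e_c(-\vec x^{(k)})\right)\cdot\left(\sum_{a=0}^{N_l-N_k}t^{a}\sum_{b=0}^{N_l-N_k-a}e_b(-\vec x^{(l)})h_{N_l-N_k-a-b}(\vec x^{(k)})\right)\equiv \prod_{j=1}^{N_l}(t-x^{(l)}_j)
\end{equation*}
in $H^*_S(V\sslash_\theta T)[t]$.

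First I would handle the purely algebraic half: the identity $\sum_{b}e_b(-\vec y)h_{m-b}(\vec x) = $ (coefficient extraction from) $\prod(t-y_j)/\prod(t-x_i)$ is the standard generating-function relation $\sum_{i\ge 0}h_i(\vec x)s^i = \prod_i(1-x_i s)^{-1}$ together with $\sum_b e_b(-\vec y)s^b = \prod_j(1-y_j s)$. Concretely, writing everything in the variable $s=t^{-1}$ and clearing denominators, the asserted formula for $\widetilde{c_t^S(S_l/S_k)}$ is exactly the truncation to degrees $0,\dots,N_l-N_k$ of the power series $\prod_{j=1}^{N_l}(t-x^{(l)}_j)\big/\prod_{i=1}^{N_k}(t-x^{(k)}_i)$. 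So in a polynomial ring in independent variables the product of the two truncated series differs from $\prod_j(t-x^{(l)}_j)$ only by terms supported in $t$-degrees below $0$ after the $s$-substitution — i.e.\ by a genuinely "tail" correction — and I must show this tail lies in $I^{ab}$.

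The key step, and the one I expect to be the main obstacle, is showing that this tail term vanishes modulo $I^{ab}$. This is precisely what Lemma \ref{lem:vanishingformulaincoh} is built for: the correction terms are combinations of $\prod_{j=1}^{N_l}(x^{(k)}_i - x^{(l)}_j)$-type expressions (and their symmetrizations over the $x^{(k)}$'s), which are annihilated in $H^*_S(V\sslash_\theta T)$ after multiplying by the intermediate Vandermonde factors $\prod_{m=k+1}^{l-1}\omega_m$; the matrix-expansion argument in the proof of Lemma \ref{lem:vanishingformulaincoh} is the model computation. Thus the plan is: (i) expand $\widetilde{c_t^S(S_k)}\cup\widetilde{c_t^S(S_l/S_k)}$ by the generating-function bookkeeping above; (ii) isolate the overflow terms (those that would occupy $t$-degrees exceeding $N_l$ or require $h$- and $e$-indices outside the truncation range); (iii) recognize each such term, after multiplication by $\omega$, as an instance of \eqref{eqn:cohrelation2} with appropriate $k<l$, hence zero; (iv) conclude that $\widetilde{c_t^S(S_k)}\cup\widetilde{c_t^S(S_l/S_k)} - \widetilde{c_t^S(S_l)}$ lies in $\ker(\omega)\subset I^{ab}$, which by \eqref{eqn:tautquot} and the definition of the lift identifies \eqref{eqn:lifting for quotient} as a valid lift of $c_t^S(S_l/S_k)$. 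A secondary bookkeeping point worth care is checking $W$-invariance of the proposed expression, but that is immediate since only elementary symmetric and complete homogeneous polynomials in the full variable sets $\vec x^{(k)},\vec x^{(l)}$ appear.
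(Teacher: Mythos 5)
Your proposal is correct and follows essentially the same route as the paper: both reduce to verifying the Whitney-type product relation $\widetilde{c_t^S(S_k)}\cup\widetilde{c_t^S(S_l/S_k)}\cup\omega=\widetilde{c_t^S(S_l)}\cup\omega$, and both identify Lemma~\ref{lem:vanishingformulaincoh} as the tool that kills the error terms. The paper organizes the cancellation by rewriting $\prod_i(t-x^{(k)}_i)\,h_{N_l-N_k-b}(t,\vec x^{(k)})\,\omega_k$ as a Vandermonde/bialternant determinant with $t=x^{(k)}_0$ and then showing all columns $j\ge 2$ of the first row vanish by \eqref{eqn:cohrelation2}, which is just a cleaner packaging of your ``truncation of $\prod_j(t-x^{(l)}_j)/\prod_i(t-x^{(k)}_i)$ plus vanishing tail'' picture; the one small notational slip in your write-up is the assertion ``$\ker(\omega)\subset I^{ab}$'' --- the correct statement is that the discrepancy lies in $\ker(\omega)$ (the ideal in $H^*_S(V\sslash_\theta T)^W$ from \eqref{eqn:ses}), equivalently that the discrepancy cup $\omega$ lies in $I^{ab}$.
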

\begin{proof}
  By \eqref{eqn:ses} and \eqref{eqn:tautquot}, it suffices to check
\begin{equation*}
  \widetilde{c_t^S(S_k)}\cup\sum_{a=0}^{N_l-N_k}t^{a} \left[\sum_{b=0}^{N_l-N_k-a}e_b(-\vec x^{(l)})h_{N_l-N_k-a-b}(\vec x^{(k)})\right]\cup\omega= \widetilde{c_t^S(S_l)}\cup\omega.
\end{equation*}

By simply exchanging indices of summations,
the left hand side can be written as 
\begin{equation}\label{eqn:quotientbundleeqn1}
    \prod_{i=1}^{N_k}(t-x^{(k)}_i)\sum_{b=0}^{N_l-N_k}e_b(-\vec x^{(l)})\left[\sum_{a=0}^{N_l-N_k-b}t^{a} h_{N_l-N_k-a-b}(\vec x^{(k)})\right]\omega\,.
\end{equation}
The formula inside the bracket is actually $h_{N_l-N_k-b}(t, \vec x^{(k)})$.
We view $t$ as $x_0^{(k)}$, and then $\prod_{i=1}^{N_k}(t-x^{(k)}_i)\omega_k$ is the Vandermonde formula of variables $(x^{(k)}_i)_{i=0}^{N_k}$.
The formula \eqref{eqn:quotientbundleeqn1} can then be written as follows,
\begin{equation}\label{eqn:quotientbundleeqn2}
    \sum_{b=0}^{N_l-N_k}e_b(-\vec x^{(l)})\det
    \begin{bmatrix}
      t^{N_l-b}&\left(x_1^{(k)}\right)^{N_{l}-b}&\cdots&\left(x_{N_k}^{(k)}\right)^{N_{l}-b}\\
       t^{N_{k}-1}&\left(x_1^{(k)}\right)^{N_{k}-1}&\cdots&\left(x_{N_k}^{(k)}\right)^{N_{k}-1}\\
        \vdots&\vdots&\ddots&\vdots\\
   1& 1&\cdots& 1\\
    \end{bmatrix}\prod_{m\neq k}\omega_m
\end{equation}
Notice that the determinant is zero once $b$ is greater than $N_l-N_k$, so we can change the summation index from $0$ to $N_l$.
Multiply $e_b(-\vec x^{(l)})$ and $\prod_{m\neq k}\omega_m$ to the first row of the determinant. 
Then entries in the 1st row and $j$-th column for $j\geq 2$ are $\prod_{i=1}^{N_l}(x^{(k)}_{j-1}-x^{(l)}_i)\omega_{m\neq k}$ which are all zero by Lemma \ref{lem:vanishingformulaincoh}. 
The formula \eqref{eqn:quotientbundleeqn2} becomes 
\iffalse
becomes
\begin{equation*}
    \det
    \begin{bmatrix}
     \prod_{i=1}^{N_l}\left(t-x_i^{(l)}\right)& 0&\cdots&0\\
       t^{N_{k}-1}&\left(x_1^{(k)}\right)^{N_{k}-1}&\cdots&\left(x_{N_k}^{(k)}\right)^{N_{k}-1}\\
        \vdots&\vdots&\ddots&\vdots\\
   1& 1&\cdots& 1\\
    \end{bmatrix}\prod_{m\neq k}\omega_m
\end{equation*}
which is 
\fi
$\prod_{i=1}^{N_l}(t-x^{(l)}_i)\omega=\widetilde{c_t^S(S_l)}\cup \omega$.
\end{proof}

For our purpose, we rewrite the lifting $\widetilde{c_t^S(S_l\slash S_k)}\omega$ as a linear combination of elements in \eqref{standard basis}.
For $\vec x=(x_1, x_2, \ldots, x_n)$ and $\vec m=(m_1, m_2,\ldots, m_n)\in \mathbb{Z}^n$, denote
\begin{equation*}
D^{\vec m}(\vec x):=
    \det
    \begin{bmatrix}
      x_1^{m_n}&x_2^{m_n}&\cdots&x_n^{m_n}\\
         x_1^{m_{n-1}}&x_2^{m_{n-1}}&\cdots&x_n^{m_{n-1}}\\
        \vdots&\vdots&\ddots&\vdots\\
 x_1^{m_1}&x_2^{m_1}&\cdots&x_n^{m_1}\\
    \end{bmatrix}\,.
\end{equation*}

\begin{lem}
  In $H^*_S(V\sslash_\theta T)[t]$,
\begin{equation}\label{eqn:standard for quotient}
\widetilde{c_t^S(S_l\slash S_k)}\omega=\sum_{a=0}^{N_l-N_k}t^{a} \left[\sum_{b=0}^{N_l-N_k-a}e_b(-\vec x^{(l)})\sum_{0\leq m_i< N_{i+1}; i=k,\ldots, l-1}(-1)^\bullet D^{\vec m_k}(\vec x^{(k)})D^{\vec m_{k+1}}(\vec x^{(k+1)})\cdots D^{\vec m_{l}}(\vec x^{(l)})\right]\prod_{j<k \text{ or } j>l} \omega_j.
\end{equation}
where
\begin{align*}
  &(-1)^\bullet=(-1)^{N_{k+1}+\cdots+N_{l}-m_k-\cdots-m_{l-1}-l+k}\,,\nonumber\\
  &\vec m_k=(0, 1, \ldots, N_k-2, m_k),\\
  &\vec m_{i+1}=(0, 1, \ldots, m_i-1, m_i+1,\ldots, N_{i+1}-1, m_{i+1}),\quad i=k,\ldots, l-2,\\
  &\vec m_{l}=(0, 1, \ldots, m_{l-1}-1, m_{l-1}+1,\ldots, N_{l}-1, N_l-a-b-1).
\end{align*}
\end{lem}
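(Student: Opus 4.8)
\emph{Proof proposal.} Since $\omega=\prod_{m=1}^{n}\omega_{m}$ and the factor $\prod_{j<k\text{ or }j>l}\omega_{j}$ is common to both sides, it follows from the formula \eqref{eqn:lifting for quotient} for $\widetilde{c_{t}^{S}(S_{l}/S_{k})}$ proved just above that the assertion is equivalent to the identity
\begin{equation*}
  h_{d}(\vec x^{(k)})\,\omega_{k}\omega_{k+1}\cdots\omega_{l}
  =\sum_{0\leq m_{i}<N_{i+1};\,i=k,\ldots,l-1}(-1)^{\bullet}\,
  D^{\vec m_{k}}(\vec x^{(k)})\,D^{\vec m_{k+1}}(\vec x^{(k+1)})\cdots D^{\vec m_{l}}(\vec x^{(l)})
\end{equation*}
in $H^{*}_{S}(V\sslash_{\theta}T)$, for every $d\geq 0$, where $d=N_{l}-N_{k}-a-b$, the last exponent of $\vec m_{l}$ equals $N_{l}-a-b-1=N_{k}-1+d$, and the $\vec m_{i}$ and $(-1)^{\bullet}$ are as in the statement. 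This is an equality in the cohomology ring, not a polynomial identity: it uses the relations \eqref{eqn:cohrelation} crucially. One preliminary check is that the exponents $m_{i}$ telescope in the degree count, so both sides are homogeneous of the same degree $\binom{N_{k}-1}{2}+\sum_{j=k+1}^{l}\binom{N_{j}}{2}+N_{l}-a-b-1$; this incidentally pins down the degree of $\widetilde{c_{t}^{S}(S_{l}/S_{k})}$ as well.

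I would prove the displayed identity by induction on $l-k$, the common tool being the bialternant formula \eqref{Schur}: for $N$ variables $\vec y$ and $e\geq 0$ one has $h_{e}(\vec y)\prod_{i<j}(y_{i}-y_{j})=D^{(0,1,\ldots,N-2,\,N-1+e)}(\vec y)$, so multiplying a complete homogeneous symmetric function by the Vandermonde simply raises the top exponent. When $l=k+1$ this is essentially the whole story: $h_{d}(\vec x^{(k)})\omega_{k}=D^{(0,\ldots,N_{k}-2,\,N_{l}-1-a-b)}(\vec x^{(k)})$ has its top exponent already in $\{0,\ldots,N_{k+1}-1\}$, the $m_{k}$-sum collapses to the single value $m_{k}=N_{k+1}-a-b-1$ (all other values force a repeated exponent in $\vec m_{k+1}$, whence $D^{\vec m_{k+1}}=0$), and for that value $D^{\vec m_{k+1}}(\vec x^{(k+1)})=\omega_{k+1}$ and $D^{\vec m_{k}}(\vec x^{(k)})=h_{d}(\vec x^{(k)})\omega_{k}$, so the two sides agree after matching $(-1)^{\bullet}$.

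For the inductive step ($l-k\geq 2$) I would write $h_{d}(\vec x^{(k)})\omega_{k}=D^{\vec n}(\vec x^{(k)})$ with $\vec n=(0,\ldots,N_{k}-2,\,N_{l}-1-a-b)$, whose top exponent may now exceed $N_{k+1}-1$, and then carry that exponent across levels using \eqref{eqn:cohrelation}---equivalently the vanishing Lemma \ref{lem:vanishingformulaincoh}, which is precisely designed to move an $\vec x^{(k)}$-exponent past $\omega_{k+1}\cdots\omega_{l}$. Expanding by the Laplace rule produces a sum over a new index $m_{k}\in\{0,\ldots,N_{k+1}-1\}$ in which the $\vec x^{(k)}$-factor becomes $D^{(0,\ldots,N_{k}-2,\,m_{k})}(\vec x^{(k)})$, a new alternant $D^{\vec m_{k+1}}(\vec x^{(k+1)})$ appears with exponent set $\{0,\ldots,N_{k+1}-1\}\setminus\{m_{k}\}$ enlarged by the reduced top exponent, and the remaining factor is exactly the displayed identity for the pair $(k+1,l)$, to which the inductive hypothesis applies. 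The ``gap'' shape $\vec m_{i+1}=(0,\ldots,m_{i}-1,m_{i}+1,\ldots,N_{i+1}-1,m_{i+1})$ then propagates automatically, the missing exponent at level $i+1$ being the one consumed by the reduction at level $i$. Restoring the factors $\prod_{j<k\text{ or }j>l}\omega_{j}$, $e_{b}(-\vec x^{(l)})$ and $t^{a}$ and summing over $a,b$ gives \eqref{eqn:standard for quotient}; expanding each $D^{\vec m_{i}}(\vec x^{(i)})$ into monomials then displays $\widetilde{c_{t}^{S}(S_{l}/S_{k})}\omega$ in the standard basis \eqref{standard basis}, since every exponent occurring is $<N_{i+1}$.

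The part I expect to be the real work is the sign: tracking the row transpositions accumulated in the iterated Laplace expansions and verifying that the total equals $(-1)^{\bullet}=(-1)^{N_{k+1}+\cdots+N_{l}-m_{k}-\cdots-m_{l-1}-l+k}$. A secondary nuisance is bookkeeping the degenerate contributions---whenever a choice of the $m_{i}$ creates a repeated exponent in some $\vec m_{j}$ the factor $D^{\vec m_{j}}$ vanishes, so the sum as written over the full ranges $0\leq m_{i}<N_{i+1}$ silently discards it, and the induction must be set up compatibly with this convention. There is essentially no analytic content: everything takes place in the explicitly presented ring $H^{*}_{S}(V\sslash_{\theta}T)=\mathbb{Q}[\{x^{(i)}_{j}\}]/I^{ab}$, and the content is the combinatorics of alternants together with the relations \eqref{eqn:cohrelation}.
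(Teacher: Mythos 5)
Your overall strategy is the same as the paper's: start from \eqref{eqn:lifting for quotient}, convert $h_d(\vec x^{(k)})\omega_k$ to an alternant via the bialternant formula \eqref{Schur}, and reduce the excess top exponent level by level using \eqref{eqn:cohrelation} through a Laplace expansion of a vanishing $(N_{k+1}+1)\times(N_{k+1}+1)$ determinant. The base case and the observations about silent vanishing of degenerate $D^{\vec m_j}$ are fine. But the induction on $l-k$ as you formulate it does not close. Your inductive hypothesis for $(k+1,l)$ concerns $h_e(\vec x^{(k+1)})\omega_{k+1}\cdots\omega_l = D^{(0,\ldots,N_{k+1}-2,\,N_{k+1}-1+e)}(\vec x^{(k+1)})\omega_{k+2}\cdots\omega_l$, an alternant whose ``gap'' sits at $N_{k+1}-1$. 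Yet after one reduction step at level $k$, the factor at level $k+1$ is $D^{(0,\ldots,m_k-1,m_k+1,\ldots,N_{k+1}-1,\,M)}(\vec x^{(k+1)})\omega_{k+2}\cdots\omega_l$ with $M=N_l-a-b-1$ and the gap at $m_k$, which ranges over all of $\{N_k-1,\ldots,N_{k+1}-1\}$ (the values making $D^{\vec m_k}\neq 0$). Only the single term $m_k=N_{k+1}-1$ is of the $h_e$ form; the remaining terms correspond to two-row Schur polynomials, about which the hypothesis as stated says nothing. So the claim that ``the remaining factor is exactly the displayed identity for the pair $(k+1,l)$'' fails for the generic $m_k$.

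The paper avoids this by not inducting on $l-k$ at all: it formulates the one-level reduction $\left(x_i^{(j)}\right)^{M}\omega_{j+1}=\sum_{m_j}(-1)^{N_{j+1}-m_j-1}\left(x_i^{(j)}\right)^{m_j}D^{(0,\ldots,\widehat{m_j},\ldots,N_{j+1}-1,\,M)}(\vec x^{(j+1)})$, substitutes it into the top row of whatever alternant is currently at level $j$, and iterates from $j=k$ to $j=l-1$; this works uniformly on gap alternants, so the ``gap shape propagation'' you describe is indeed automatic. To salvage your induction you must strengthen the hypothesis to cover all gap alternants, i.e.\ prove, for every $\mu<N_k$ and $M\geq 0$, that $D^{(0,\ldots,\widehat{\mu},\ldots,N_k-1,\,M)}(\vec x^{(k)})\omega_{k+1}\cdots\omega_l$ admits the analogous expansion; the original statement is the $\mu=N_k-1$ case. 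With that change the inductive step goes through and your outline matches the paper's argument.
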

\begin{proof}
By \eqref{eqn:lifting for quotient}, we have
  \begin{align}\label{eqn1}
   \widetilde{c_t^S(S_l\slash S_k)}\omega=\sum_{a=0}^{N_l-N_k}t^{a} \left[\sum_{b=0}^{N_l-N_k-a}e_b(-\vec x^{(l)})D^{(0, 1, \ldots, N_k-2, N_l-a-b-1)}(\vec x^{(k)})\right]\prod_{j\neq k}\omega_j.
\end{align}
Consider the Laplace expansion of the following determinant down the first column,
\begin{align*}
    &\det
    \begin{bmatrix}
      \left(x_i^{(k)}\right)^{N_l-a-b-1}& \left(x_1^{(k+1)}\right)^{N_l-a-b-1}&\cdots& \left(x_{N_{k+1}}^{(k+1)}\right)^{N_l-a-b-1}\\
        \left(x_i^{(k)}\right)^{N_{k+1}-1}&\left(x_1^{(k+1)}\right)^{N_{k+1}-1}&\cdots&\left(x_{N_{k+1}}^{(k+1)}\right)^{N_{k+1}-1}\\
        \vdots&\vdots&\ddots&\vdots\\
 1&1&\cdots&1\\
    \end{bmatrix}\nonumber\\
=&\left(x_i^{(k)}\right)^{N_l-a-b-1}\omega_{k+1}+\sum_{m=0}^{N_{k+1}-1}(-1)^{N_{k+1}-m} \left(x_i^{(k)}\right)^{m}\cdot D^{(0, 1,\ldots, m-1, m+1, \ldots, N_{k+1}-1, N_l-a-b-1)}(\vec x^{(k+1)})
\end{align*}
Notice that the left-hand side of the above equation vanishes by  \eqref{eqn:cohrelation}, so does the right-hand side. 
Then we have
\begin{equation*}
  \left(x_i^{(k)}\right)^{N_l-a-b-1}\omega_{k+1}=\sum_{m_k=0}^{N_{k+1}-1}(-1)^{N_{k+1}-m_k-1} \left(x_i^{(k)}\right)^{m_k}\cdot D^{(0, 1,\ldots, m_k-1, m_k+1, \ldots, N_{k+1}-1, N_l-a-b-1)}(\vec x^{(k+1)})\,.
\end{equation*}
Substituting the above formula into the first row of $D^{(0, 1, \ldots, N_k-2, N_l-a-b-1)}(\vec x^{(k)})\prod_{j\neq k}\omega_j$, we get
 \begin{align*}
   &D^{(0, 1, \ldots, N_k-2, N_l-a-b-1)}(\vec x^{(k)})\cdot\prod_{j\neq k}\omega_j\nonumber\\
=&\sum_{m_k=0}^{N_{k+1}-1}(-1)^{N_{k+1}-m_k-1}D^{\vec m_k}(\vec x^{(k)})D^{(0, 1,\ldots, m_k-1, m_k+1, \ldots, N_{k+1}-1, N_l-a-b-1)}(\vec x^{(k+1)})\prod_{j\neq k, k+1}\omega_j
\end{align*}
Perform a similar argument on $D^{(0, 1,\ldots, m_k-1, m_k+1, \ldots, N_{k+1}-1, N_l-a-b-1)}(\vec x^{(k+1)})$ and iterate this procedure. We finally obtain
 \begin{equation}\label{eqn2}
   D^{(0, 1, \ldots, N_k-2, N_l-a-b-1)}(\vec x^{(k)})\prod_{j\neq k}\omega_j=\sum_{\substack{0\leq m_i< N_{i+1};\\ i=k,\ldots, l-1}}(-1)^\bullet D^{\vec m_k}(\vec x^{(k)})D^{\vec m_{k+1}}(\vec x^{(k+1)})\cdots D^{\vec m_{l}}(\vec x^{(l)})\prod_{j<k \text{ or } j>l} \omega_j.
\end{equation}
Finally, we only need to substitute \eqref{eqn2} into \eqref{eqn1}.
\end{proof}
Although, the right-hand side of \eqref{eqn:standard for quotient} looks complicated,
one can check that it is a linear combination of elements in the set \eqref{standard basis}. 
In particular, the degree of $x^{(i)}_j$ is less than $N_{i+1}$ for any $i, j$.
\subsection{Fundamental relations in quantum cohomology rings of flag varieties }\label{sec:abnonabqcoh}
Recall that $Fl$ is a tower of Grassmannian $Gr(N_1, N_2)\rightarrow Gr(N_2, N_3)\rightarrow \cdots \rightarrow Gr(N_n, N_{n+1})$ (cf. \eqref{Gr-fiber}). 
Let $[\ell_{k}]$ denote the rational curve class lying inside the $k$-th tower $Gr(N_k,N_{k+1})$.  
Then the Novikov ring of $Fl$ is $N(Fl)=\bigoplus_{k}\mathbb{Z}_{\geq 0}[\ell_{k}]$. Set the Novikov variable $q_{k}:=q^{[\ell_k]}$. By \cite{rimhookrule} the specialization of Novikov variable in Proposition \ref{prop:ab-nab for small Qcoh} is
\begin{equation}\label{Novikov-spe}
 q_{k,i}\mapsto (-1)^{N_k-1}q_k,
\end{equation}
where $q_{k, i}$ are the Novikov variables of $V\sslash_{\theta} T$.

\begin{prop}\label{lem:special}
  In $QH^*_S(Fl)[t]$, we have for $k<l$,
\begin{equation}\label{eqn:special1}
  c^S_t(S_k)* c^S_t(S_l\slash S_k)=c^S_t(S_l)+(-1)^{N_k+N_{k-1}}q_k c^S_t(S_l\slash S_{k+1})* c^S_t(S_{k-1}).
\end{equation}
\end{prop}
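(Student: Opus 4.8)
\textbf{Proof proposal for Proposition \ref{lem:special}.}

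The plan is to transport the whole identity to the abelianization $V\sslash_\theta T$ via the abelian/nonabelian correspondence for small quantum cohomology (Proposition \ref{prop:ab-nab for small Qcoh}), where the quantum product is completely explicit by Batyrev's presentation (Proposition \ref{qcoh toric}). More precisely, to prove an identity $\alpha * \beta = \gamma$ in $QH^*_S(Fl)[t]$ it suffices, by \eqref{ab-nab for small Qcoh}, to show that the corresponding lifts satisfy $\widetilde{\alpha} * (\widetilde{\beta}\cup\omega) = \widetilde{\gamma}\cup \omega$ in $QH^*_S(V\sslash_\theta T)[t]$, with the Novikov specialization \eqref{Novikov-spe} $q_{k,i}\mapsto (-1)^{N_k-1}q_k$ understood. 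So the first step is to write down the lifts: $\widetilde{c_t^S(S_k)}=\prod_{j=1}^{N_k}(t-x_j^{(k)})$ from \eqref{eqn:lifting}, and $\widetilde{c_t^S(S_l/S_k)}\,\omega$ in the ``standard basis'' form \eqref{eqn:standard for quotient}; likewise for $c_t^S(S_l)$, $c_t^S(S_l/S_{k+1})$ and $c_t^S(S_{k-1})$ on the right-hand side.

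The second step is to compute $\widetilde{c_t^S(S_k)} * \big(\widetilde{c_t^S(S_l/S_k)}\cup \omega\big)$ in the toric quantum cohomology ring. Here I would isolate the only place where a quantum correction can occur: by the last remark of Section \ref{sec:qcohVT}, the quantum correction of a product in $QH^*_S(V\sslash_\theta T)$ is nontrivial only when the combined degree of some $x_i^{(m)}$ reaches $N_{m+1}$. In the classical cup product $c_t^S(S_k)\cup c_t^S(S_l/S_k) = c_t^S(S_l)$ already holds by \eqref{eqn:tautquot}; the degrees of the variables $x^{(m)}$ with $m\ne k$ never reach the threshold (this is exactly the content of the parenthetical after \eqref{eqn:standard for quotient}, that the degree of $x^{(i)}_j$ in the standard-basis expansion is $<N_{i+1}$), so the only quantum correction comes from the variables $x^{(k)}_i$, through the relation \eqref{eqn:qcohrelation} with $k$ fixed: $(x^{(k)}_i)^{N_{k+1}-1}*x^{(k)}_i = -\sum_{l}e_{N_{k+1}-l}(-\vec x^{(k+1)})(x^{(k)}_i)^l + q_{k,i}\prod_{l=1}^{N_{k-1}}(x^{(k-1)}_l - x^{(k)}_i)$. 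Plugging the lift \eqref{eqn:lifting} of $c_t^S(S_k)$, which supplies exactly one extra power of each $x^{(k)}_i$, into the standard-basis expansion \eqref{eqn:standard for quotient} of $\widetilde{c_t^S(S_l/S_k)}\omega$, the classical part reassembles into $\widetilde{c_t^S(S_l)}\cup \omega$, and each term that would push some $x^{(k)}_i$ from degree $N_{k+1}-1$ to $N_{k+1}$ contributes a quantum correction proportional to $q_{k,i}\prod_{l=1}^{N_{k-1}}(x^{(k-1)}_l - x^{(k)}_i)$.

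The third step is to recognize that quantum correction, after the Novikov specialization $q_{k,i}\mapsto(-1)^{N_k-1}q_k$, as the lift of $(-1)^{N_k+N_{k-1}}q_k\, c_t^S(S_l/S_{k+1})*c_t^S(S_{k-1})$ cupped with $\omega$. The factor $\prod_{l=1}^{N_{k-1}}(x^{(k-1)}_l - x^{(k)}_i)$ is, up to sign, exactly what couples $x^{(k)}_i$ to the Chern roots $\vec x^{(k-1)}$ of $S_{k-1}$, so summing over $i$ and over the allowed standard-basis indices should rebuild $\widetilde{c_t^S(S_{k-1})}$ times the lift of $c_t^S(S_l/S_{k+1})$ (the latter because deleting the extra $x^{(k)}$-power and shifting the ``$k$-block'' from $N_k$ to $N_k$ variables with the pivot removed is precisely the combinatorics producing $S_l/S_{k+1}$ rather than $S_l/S_k$). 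Tracking the sign $(-1)^{N_k+N_{k-1}}$ requires care: it comes from the $(-1)^{N_k-1}$ in \eqref{Novikov-spe}, from the $(-1)^\bullet$ sign bookkeeping in \eqref{eqn:standard for quotient}, and from reindexing the Vandermonde-type determinants $D^{\vec m}$. I expect \textbf{this sign/determinant bookkeeping to be the main obstacle} — the existence and rough shape of the correction term is forced by degree reasons, but matching it on the nose with the claimed coefficient, uniformly in $k$ and $l$, is where the real work lies. As a sanity check I would first verify the identity in the Grassmannian case $n=1$ (where $S_l=S_{n+1}=\mathcal O^{N_{n+1}}$ and the formula degenerates), and then run the induction on $l-k$ using Lemma \ref{lem:vanishingformulaincoh} to handle the $\omega_m$ factors exactly as in the proof of \eqref{eqn:lifting for quotient}.
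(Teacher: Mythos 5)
Your proposal uses the same strategy as the paper: lift both sides to $QH^*_S(V\sslash_\theta T)$ via the small abelian/nonabelian correspondence, compute the quantum correction of $\widetilde{c_t^S(S_k)}*\bigl(\widetilde{c_t^S(S_l/S_k)}\cup\omega\bigr)$ using Batyrev's presentation \eqref{eqn:qcohrelation} with the Novikov specialization \eqref{Novikov-spe}, and identify the correction with $(-1)^{N_k+N_{k-1}}q_k\,\widetilde{c_t^S(S_{k-1})}*\widetilde{c_t^S(S_l/S_{k+1})}\,\omega$. The decomposition into ``classical part $=\widetilde{c_t^S(S_l)}\cup\omega$'' plus ``correction from $x^{(k)}$-variables'' is exactly what the paper does.

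However, there is a genuine gap in your step 2: you infer from the fact that the standard-basis expansion \eqref{eqn:standard for quotient} has all $x^{(i)}_j$-degrees below $N_{i+1}$, and that $\widetilde{c_t^S(S_k)}$ involves only $x^{(k)}$, that ``the only quantum correction comes from the variables $x^{(k)}_i$.'' That statement is correct for the \emph{first} application of the toric relation \eqref{eqn:qcohrelation}, but it does not by itself dispose of \emph{cascaded} corrections. When you reduce $(x^{(k)}_i)^{N_{k+1}}$ using \eqref{eqn:qcohrelation}, the classical part of that relation injects $e_{N_{k+1}-l}(-\vec{x}^{(k+1)})$-type factors; multiplying these against the $D^{\vec m_{k+1}}(\vec x^{(k+1)})$ block in \eqref{eqn:standard for quotient} (which already carries degree up to $N_{k+2}-1$ in each $x^{(k+1)}_j$) can push some $x^{(k+1)}_j$ to degree $N_{k+2}$, triggering a further quantum reduction at level $k+1$, and in principle onward. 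The paper's proof spends roughly half its length precisely on this point: it isolates the secondary term \eqref{term2} and shows it contributes nothing because the would-be correction carries a factor $q_{k+1}\prod_{l=1}^{N_k}(x^{(k)}_i-x^{(k)}_l)$, which vanishes identically. Your proposal never confronts this, and it is not a sign-bookkeeping issue but a structural one --- without it, the leading-order identification in your step 3 is not yet a proof that the \emph{entire} quantum correction equals $(-1)^{N_k+N_{k-1}}q_k\,c_t^S(S_{k-1})*c_t^S(S_l/S_{k+1})$. You correctly flag the sign accounting as delicate, but the cascade-vanishing is the more essential missing piece; once it is in place, the Lagrange-interpolation/Vandermonde manipulation you sketch (and the induction via Lemma \ref{lem:vanishingformulaincoh}) does the rest.
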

\begin{proof}
We only have to prove that the quantum correction of the quantum product is $(-1)^{N_k+N_{k-1}}q_k c^S_t(S_l\slash S_{k+1})* c^S_t(S_{k-1})$ by utilizing
    Proposition \ref{prop:ab-nab for small Qcoh}. 
Z  For $\gamma=c^S_t(S_k)$ and the lifting $\widetilde \gamma$ in \eqref{eqn:lifting}, we need to find $\zeta\in H_S^*(V\sslash_{\theta} T)^W$ such that $\zeta* \omega=\widetilde{\gamma}\cup \omega$. 
  Actually, we have $\widetilde{\gamma}* \omega=\widetilde{\gamma}\cup \omega$ by degree reason. Therefore,
\begin{equation*}
  \zeta= \widetilde{\gamma}=\prod_{j=1}^{N_k}(t-x^{(k)}_j)=\sum_{l=0}^{N_k}t^{N_k-l}e_l(-\vec x^{(k)}).
\end{equation*}

For $\gamma'= c^S_t(S_l\slash S_k)$,  by \eqref{eqn:standard for quotient} we have
\begin{equation*}
  \widetilde{\gamma'}\cup\omega=\sum_{a=0}^{N_l-N_k}t^{a} \left[\sum_{b=0}^{N_l-N_k-a}e_b(-\vec x^{(l)})\sum_{\substack{0\leq m_i< N_{i+1};\\ i=k,\ldots, l-1}}(-1)^\bullet D^{\vec m_k}(\vec x^{(k)})D^{\vec m_{k+1}}(\vec x^{(k+1)})\cdots D^{\vec m_{l}}(\vec x^{(l)})\right]\prod_{j<k \text{ or } j>l} \omega_j.
\end{equation*}

The quantum correction $\xi* (\widetilde{\gamma'}\cup\omega)$ in $QH^*(V\sslash_{\theta} T)$ under the specialization \eqref{Novikov-spe} can be computed as follows.
The quantum correction only happens  when $m_k=N_{k+1}-1$, 
\begin{align}\label{principle term}
  &\sum_{l=0}^{N_k}t^{N_k-l}e_l(-\vec x^{(k)})* \sum_{a=0}^{N_l-N_k}t^{a} \left(\sum_{b=0}^{N_l-N_k-a}e_b(-\vec x^{(l)}) \det
 \begin{bmatrix}
     \left(x_1^{(k)}\right)^{N_{k+1}-1}&\cdots&\left(x_{N_{k}}^{(k)}\right)^{N_{k+1}-1}\\
      \left(x_1^{(k)}\right)^{N_{k}-2}&\cdots&\left(x_{N_k}^{(k)}\right)^{N_{k}-2}\\
        \vdots&\ddots&\vdots\\
   1&\cdots& 1\\
    \end{bmatrix}\cup\right.\nonumber\\
&\left.\sum_{\substack{0\leq m_i< N_{i+1};\\ i=k+1,\ldots, l-1}}(-1)^\bullet D^{\vec m_{k+1}}(\vec x^{(k+1)})\cdots D^{\vec m_{l}}(\vec x^{(l)})\right)\prod_{j<k \text{ or } j>l} \omega_j.
\end{align}
We consider one term in \eqref{principle term}
\begin{align}\label{term1}
  \sum_{l=0}^{N_k}t^{N_k-l}e_l(-\vec x^{(k)})* \det\begin{bmatrix}
     \left(x_1^{(k)}\right)^{N_{k+1}-1}&\cdots&\left(x_{N_{k}}^{(k)}\right)^{N_{k+1}-1}\\
      \left(x_1^{(k)}\right)^{N_{k}-2}&\cdots&\left(x_{N_k}^{(k)}\right)^{N_{k}-2}\\
        \vdots&\ddots&\vdots\\
   1&\cdots& 1\\
    \end{bmatrix}
\end{align}
which is the same as the quantum correction of the following term, 
  \begin{align}\label{eqn:computquantumcorrection}
  &\sum_{l=0}^{N_k}t^{N_k-l}e_l(-\vec x^{(k)})* \sum_{i=0}^{N_{k+1}-N_k}t^i\det\begin{bmatrix}
     \left(x_1^{(k)}\right)^{N_{k+1}-1-i}&\cdots&\left(x_{N_{k}}^{(k)}\right)^{N_{k+1}-1-i}\\
      \left(x_1^{(k)}\right)^{N_{k}-2}&\cdots&\left(x_{N_k}^{(k)}\right)^{N_{k}-2}\\
        \vdots&\ddots&\vdots\\
   1&\cdots& 1\\
    \end{bmatrix}\,.
    \end{align}
    The formula in \eqref{eqn:computquantumcorrection} is equal to 
\begin{align}\label{eqn:quantumprodeqn}
&=\prod_{j=1}^{N_k}(t-x^{(k)}_j)*\prod_{1\leq i<j\leq N_k}(x^{(k)}_i-x^{(k)}_j)h_{N_{k+1}-N_k}(t, \vec x^{(k)}) \text{ (one changes $\cup$ to $*$ by degree reason)}\nonumber
    \\
    &=\det
    \begin{bmatrix}
   t^{N_{k+1}}&   \left(x_1^{(k)}\right)^{N_{k+1}}&\cdots&\left(x_{N_{k}}^{(k)}\right)^{N_{k+1}}\\
   t^{N_{k}-1}&   \left(x_1^{(k)}\right)^{N_{k}-1}&\cdots&\left(x_{N_k}^{(k)}\right)^{N_{k}-1}\\
    \vdots&    \vdots&\ddots&\vdots\\
  1& 1&\cdots& 1\\
    \end{bmatrix}^* \text{(the upper  $*$ means all product  is $*$)}
\end{align}
The quantum correction of the above formula \eqref{eqn:quantumprodeqn} is 
\begin{align}\label{eqn:quantumproductcomp}
    -\sum_{i=1}^{N_k} \left(\left(x_i^{(k)}\right)^{*N_{k+1}}-\left(x_i^{(k)}\right)^{N_{k+1}}
   \right)\det A_i(t)=\sum_{i=1}^{N_k}(-1)^{N_k}q_k\prod_{a=1}^{N_{k-1}}(x^{(k-1)}_a-x^{(k)}_i)\det A_i(t)
\end{align}
where the matrix $A_i(t)$ is 
\begin{align*}
    \begin{bmatrix}
        \left(x^{(k)}_1\right)^{N_k-1}&\cdots&t^{N_k-1}&\cdots&\left(x^{(k)}_{N_k}\right)^{N_k-1}\\
        \vdots&\ddots&\vdots&\ddots& \vdots\\
        1&\cdots&1&\cdots&1
    \end{bmatrix}
\end{align*}
where the $\begin{bmatrix} t^{N_k-1}&\cdots& 1\end{bmatrix}^t$ lies in the $i$-th column.
 
The formula \eqref{eqn:quantumproductcomp} and hence the quantum correction of \eqref{term1} by Lagrangian interpolation is $(-1)^{N_k+N_{k-1}}q_k\widetilde{c^S_t(S_{k-1})}\omega_{k}$. 
The remaining part of \eqref{principle term} excluding \eqref{term1} is $\widetilde{c^S_t(S_{l}\slash S_{k+1})}(\omega\slash \omega_k)$. Then one gets the quantum correction of \eqref{principle term}  is $(-1)^{N_k+N_{k-1}}q_k\widetilde{c^S_t(S_{k-1})}*\widetilde{c^S_t(S_{l}\slash S_{k+1})}\omega$. 

The reduction of \eqref{term1} via \eqref{eqn:qcohrelation} will also produce a classical part. A quantum correction may appear in the quantum multiplication between this classical part and the remaining part of \eqref{principle term}. We claim this quantum correction vanishes. In fact, the classical part is a summation of the following polynomials,
\begin{equation*}
  f(t, x^{(k)}_1, \ldots, x^{(k)}_{i-1},  x^{(k)}_{i-1}, x^{(k)}_{i-1},\ldots, x^{(k)}_{N_k} )\cdot\sum_{l=1}^{N_k}\left(x_i^{(k)}\right)^{N_k-l}e_l(-\vec x^{(k+1)}), \quad i=1, 2,\ldots, N_k
\end{equation*}
where the degree of $x^{(k)}_{\bullet}$ in $f$ is less than $N_{k}$. Hence, the quantum correction only happens in the case $m_{k+1}=N_{k+2}-1$, which is
\begin{align}\label{term2}
  \sum_{l=1}^{N_{k+1}}\left(x_i^{(k)}\right)^{N_{k+1}-l}e_l(-\vec x^{(k+1)})* \begin{bmatrix}
     \left(x_1^{(k+1)}\right)^{N_{k+2}-1}&\cdots&\left(x_{N_{k}}^{(k+1)}\right)^{N_{k+2}-1}\\
      \left(x_1^{(k+1)}\right)^{N_{k+1}-2}&\cdots&\left(x_{N_k}^{(k+1)}\right)^{N_{k+1}-2}\\
        \vdots&\ddots&\vdots\\
   1&\cdots& 1\\
    \end{bmatrix}
\end{align}
Via the same procedure as in \eqref{term1} (just by replacing $t$ by $x_i^{(k)}$), one can prove that the quantum correction has a factor $(-1)^{\bullet}q_{k+1}\prod_{l=1}^{N_{k}}(x_i^{(k)}-x^{(k)}_l)=0$. (One may be concerned about the summation index in \eqref{term2} starting from $l=1$, which is different from \eqref{term1}. In fact, one can change $l=0$ to $l=1$ in \eqref{term1} without changing the quantum correction).
Hence all quantum corrections in the reduction procedure vanish except the first one. Therefore the quantum correction of $ c^S_t(S_k)* c^S_t(S_l\slash S_k)$ is $(-1)^{N_k+N_{k-1}}q_kc^S_t(S_{k-1})* c^S_t(S_{l}\slash S_{k+1})$. By \eqref{eqn:tautquot}, the classical part is $c^S_t(S_l)$, which completes the proof.     
\end{proof}
\begin{rem}
In the special case $l=k+1$, the formula \eqref{eqn:special1} originally appears in a physical literature \cite{Witten} for $Fl=Gr(k, N)$, and it is also discussed in \cite[Section 4.6]{MR4701783} for general partial flag varieties.
\end{rem}
\begin{cor}
	 In the quantum cohomology ring $QH^*_S(Fl)[t]$, for each $n+1\geq m>k\geq 1$, the following relation holds, 
\begin{equation}\label{eqn:special2}
		c_t^S(S_k/S_{k-1})* c_t^S(S_{m}/S_k)=c^S_t(S_m/S_{k-1})+(-1)^{N_k+N_{k-1}}q_kc^S_t(S_m/S_{k+1}).
\end{equation}
\end{cor}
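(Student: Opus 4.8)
The plan is to derive \eqref{eqn:special2} as a formal consequence of Proposition \ref{lem:special}, avoiding any fresh geometric input. Recall that \eqref{eqn:special1} reads
$c^S_t(S_k)* c^S_t(S_l/S_k)=c^S_t(S_l)+(-1)^{N_k+N_{k-1}}q_k c^S_t(S_l/S_{k+1})* c^S_t(S_{k-1})$,
and the classical (multiplicative) relation \eqref{eqn:tautquot} gives $c^S_t(S_{k-1})\cup c^S_t(S_k/S_{k-1})=c^S_t(S_k)$ and more generally $c^S_t(S_j)\cup c^S_t(S_l/S_j)=c^S_t(S_l)$. First I would observe that $c^S_t(S_{k-1})$ is not a zero divisor in $QH^*_S(Fl)[t]$ (its classical leading term in $t$ is $1$, and $QH^*_S(Fl)[t]$ is free over $\mathbb Q[\lambda][q][t]$ with the quantum parameters acting without torsion), so it suffices to prove the identity after multiplying both sides of \eqref{eqn:special2} by $c^S_t(S_{k-1})$.

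The key step is then a bookkeeping computation. Multiply the left side of \eqref{eqn:special2} by $c^S_t(S_{k-1})$ and use $c^S_t(S_{k-1})\cup c^S_t(S_k/S_{k-1})=c^S_t(S_k)$ (the product here can be taken to be the cup product by degree reasons, since the total $x^{(k-1)}$-degree is too small to trigger a quantum correction — this is the same mechanism used throughout Section \ref{sec:abnonabqcoh}), obtaining $c^S_t(S_k)* c^S_t(S_m/S_k)$. By \eqref{eqn:special1} with $l=m$ this equals $c^S_t(S_m)+(-1)^{N_k+N_{k-1}}q_k c^S_t(S_m/S_{k+1})* c^S_t(S_{k-1})$. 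On the right side of \eqref{eqn:special2}, multiplying by $c^S_t(S_{k-1})$ gives $c^S_t(S_{k-1})\cup c^S_t(S_m/S_{k-1}) + (-1)^{N_k+N_{k-1}}q_k\, c^S_t(S_{k-1})* c^S_t(S_m/S_{k+1})$; the first term is $c^S_t(S_m)$ by \eqref{eqn:tautquot}, and the second matches the quantum correction just produced. Hence both sides agree after multiplication by $c^S_t(S_{k-1})$, and cancellation finishes the proof.

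The main obstacle is purely the justification that the two products flagged as "cup = quantum" above really carry no quantum correction, and the non-zero-divisor claim. For the first, I would invoke the degree count from Proposition \ref{qcoh toric}: a quantum correction in $\alpha*\beta$ requires the combined degree in some Chern root $x^{(i)}_j$ to reach $N_{i+1}$, and in $c^S_t(S_{k-1})\cup c^S_t(S_k/S_{k-1})$ the only shared variables are $x^{(k-1)}$, appearing with degree $\le N_{k-1}-1 < N_k$ in each factor via the liftings \eqref{eqn:lifting} and \eqref{eqn:lifting for quotient}, so no correction arises — this is literally the argument already used to pass from $\cup$ to $*$ inside the proof of Proposition \ref{lem:special}. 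For the non-zero-divisor property, one notes that setting $q=0$ recovers the ordinary cohomology ring $H^*_S(Fl)$, in which $c_t^S(S_{k-1})$ is a unit up to the invertible leading coefficient $1$, so multiplication by it is injective on the free module $QH^*_S(Fl)[t]$ over $\mathbb Q[\lambda][[q]][t]$. Once these two points are granted, \eqref{eqn:special2} drops out of \eqref{eqn:special1} by the algebraic manipulation above.
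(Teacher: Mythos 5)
Your overall strategy of multiplying by $c_t^S(S_{k-1})$ and cancelling agrees with the paper, and the non-zero-divisor observation is fine. The gap is in the claim that $c_t^S(S_{k-1})* c_t^S(S_k/S_{k-1})$ equals the classical $c_t^S(S_k)$ ``by degree reasons.'' This is false: the quantum correction \emph{is} triggered here. Recall that in the standard basis \eqref{standard basis}, the exponent of each $x^{(k-1)}_j$ runs up to $N_{k}-1$ (not $N_{k-1}-1$); rewriting $\widetilde{c^S_t(S_k/S_{k-1})}\,\omega$ via \eqref{eqn:standard for quotient} one sees the $m_{k-1}$-index can reach $N_k-1$, and multiplying by the degree-one factor $\widetilde{c^S_t(S_{k-1})}=\prod_j(t-x^{(k-1)}_j)$ pushes the exponent to $N_k$, which is exactly the threshold in Proposition \ref{qcoh toric}. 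Indeed \eqref{eqn:special1} with the index $k-1$ and $l=k$ gives
\begin{equation*}
c^S_t(S_{k-1})*c^S_t(S_k/S_{k-1})=c^S_t(S_k)+(-1)^{N_{k-1}+N_{k-2}}q_{k-1}\,c^S_t(S_{k-2}),
\end{equation*}
and the correction term is generically nonzero. The same issue affects your identification of $c^S_t(S_{k-1})*c^S_t(S_m/S_{k-1})$ with $c^S_t(S_m)$ on the right-hand side.

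Your final conclusion is nonetheless salvageable, because the two $q_{k-1}$-corrections cancel, but you did not show this and the bookkeeping as written does not track them. The paper's proof of this corollary applies \eqref{eqn:special1} three times rather than once: once to expand $c^S_t(S_{k-1})*c^S_t(S_k/S_{k-1})$ (producing a $q_{k-1}c^S_t(S_{k-2})$ term), once with $l=m$ to $c^S_t(S_k)*c^S_t(S_m/S_k)$ (producing the desired $q_k$-term), and once more to $c^S_t(S_{k-1})*c^S_t(S_m/S_{k-1})$ on the right-hand side (producing a matching $q_{k-1}$-term), and then observes the extraneous $q_{k-1}$-terms agree via the formula $c_t^S(S_{k-2})*c_t^S(S_m/S_k)$. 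To repair your argument you would need to replace the ``cup $=$ quantum'' step with an explicit application of \eqref{eqn:special1} on both sides and verify the cancellation, which in effect reproduces the paper's calculation.
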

\begin{proof}
In the ring $QH^*_S(Fl)[t]$, the Chern polynomials $c_t^S(S_i)$ are not zero divisors, since $t$ is a formal variable and the coefficient of the leading term of $c_t(S_i)$ as a polynomial of $t$ is 1.
We only need to prove that 
\begin{equation*}
  c_t^S(S_{k-1})* c_t^S(S_k/S_{k-1})* c_t^S(S_{m}/S_k)=c_t(S_{k-1})* \left[c_t^S(S_{m}/S_{k-1})+(-1)^{N_k+N_{k-1}}q_k c_t^S(S_m/S_{k+1})\right].
\end{equation*}
According to \eqref{eqn:special1}, the left-hand side is equal to 
\begin{align*}
  &c_t^S(S_{k-1})* c_t^S(S_k/S_{k-1})* c_t^S(S_{m}/S_k)\\
=&\left[c_t^S(S_k)+(-1)^{N_{k-1}+N_{k-2}}q_{k-1}c_t^S(S_{k-2})\right]* c_t^S(S_{m}/S_k)\\
=&c_t^S(S_k)* c_t^S(S_{m}/S_k)+(-1)^{N_{k-1}+N_{k-2}}q_{k-1}c_t^S(S_{k-2})* c_t^S(S_{m}/S_k)\\
=& c_t^S(S_{m})+(-1)^{N_{k}+N_{k-1}}q_k c_t^S(S_{m}\slash S_{k+1})* c_t^S(S_{k-1})+(-1)^{N_{k-1}+N_{k-2}}q_{k-1}c_t^S(S_{k-2})* c_t^S(S_{m}/S_k)\\
=&c_t^S(S_{k-1})* c_t^S(S_{m}/S_{k-1})+(-1)^{N_k+N_{k-1}}q_k c_t^S(S_{k-1})* c_t^S(S_m/S_{k+1}),
\end{align*}
which is equal to the right-hand side. 
\end{proof}

\subsection{proof of the Theorem \ref{thm:quantumcohexrel}(1)}\label{sec:proofofmaintheorem}
As discussed in the beginning of Section \ref{sec:proofclusteralgconj}, in order to prove that the $\psi$  in Theorem \ref{thm:maincluster2} is a ring homomorphism, we only have to prove that it preserves the cluster exchange relations 
in  Theorem \ref{thm:quantumcohexrel}. 

The quantum cohomological cluster exchange relation in Theorem \ref{thm:quantumcohexrel} is the most general case when the node $v$ has four adjacent nodes. For special values of $m,k,p,l$ in Figure \ref{fig:localbehavior}, we can get other cases when the node $v$ has less adjacent nodes. 
In this section, we will prove the Theorem \ref{thm:quantumcohexrel} by discussing the number of node $v$'s adjacent nodes.
\subsubsection{Case one: the node $v$ has one adjacent node}
Letting $m=k+1$, $l=p$, and $k=p+1$, we get the local picture where the $v$ has only one adjacent node. 
Then the node $v$ is either a source or a sink. 
We assume that it is a source, and the case when the node $v$ is a sink can be obtained by performing a quiver mutation at the node $v$, which satisfy the same quantum cohomological exchange relation with the case when the node $v$ is a source.
\iffalse
The two diagrams in Figure \ref{fig:localproof} are now in a special situation as in Figure \ref{fig:node1valmuN}.
\begin{figure}[H]
    \centering
    \includegraphics[width=3.6in]{node1valmuN.png}
    \caption{}
    \label{fig:node1valmuN}
\end{figure}
\noindent
\fi
Then in this situation, the Theorem \ref{thm:quantumcohexrel} is reduced to the following Lemma.
\begin{lem}\label{lem:node1eqn}
In $QH^*_S(Fl)[t]$, we have the following relation,
\begin{equation*}
     c_t^S(S_k/S_{k-1})* c_t^S(S_{k+1}/S_k)=c_t^S(S_{k+1}/S_{k-1})+(-1)^{N_k+N_{k-1}}q_k.
\end{equation*}
\end{lem}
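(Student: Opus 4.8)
The plan is to derive Lemma \ref{lem:node1eqn} as the specialization $m = k+1$, $l = p = k-1$ of Proposition \ref{lem:special}, or equivalently as a direct consequence of the already-established relation \eqref{eqn:special2}. First I would observe that \eqref{eqn:special2}, which reads
\[
c_t^S(S_k/S_{k-1})* c_t^S(S_m/S_k)=c^S_t(S_m/S_{k-1})+(-1)^{N_k+N_{k-1}}q_k c^S_t(S_m/S_{k+1}),
\]
specializes at $m = k+1$ to
\[
c_t^S(S_k/S_{k-1})* c_t^S(S_{k+1}/S_k)=c^S_t(S_{k+1}/S_{k-1})+(-1)^{N_k+N_{k-1}}q_k c^S_t(S_{k+1}/S_{k+1}).
\]
So the entire content of the Lemma reduces to the identification $c_t^S(S_{k+1}/S_{k+1}) = 1$ in $QH^*_S(Fl)[t]$, which is immediate from the conventions in Section \ref{sec:cohVG}: the quotient bundle $S_{k+1}/S_{k+1}$ is the zero bundle, whose Chern polynomial $c_t^S$, being an empty product $\prod_{\varnothing}(t - x_j)$, equals the constant $1$. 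One can also see this from the explicit lift \eqref{eqn:lifting for quotient}: when $l = k+1$ and the lower index is also $k+1$, the range $N_l - N_k$ collapses to $0$ and the sum contributes only the $a = b = 0$ term $e_0(-\vec x)h_0(\vec x) = 1$.

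Second, I would record the translation between this algebraic statement and the cluster-theoretic content, so that the reader sees why this is the relevant instance. When the node $v$ has a single adjacent node and is a source, the parameters in Figure \ref{fig:localbehavior} degenerate as $m = k+1$, $l = p$, $k = p+1$ (hence $p = l = k-1$), so $r_v = N_k - N_{k-1}$, the unique neighbor $v_1$ carries $r_{v_1} = N_{k+1} - N_{k-1}$, and the mutated node carries $r_v' = N_{k+1} - N_k$. The cluster exchange relation $x_v x_v' = x_{v_1} + 1$ (the second monomial being empty because $v$ has no incoming arrows) then corresponds under $\psi$ precisely to the displayed identity, after cancelling the $(-1)^{\bullet}$ and $\xi$ prefactors exactly as in the proof of Theorem \ref{thm:quantumcohexrel}(2); the constant term $1$ on the cluster side matches $c_t^S(S_{k+1}/S_{k+1}) = 1$. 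I would note that the case where $v$ is a sink follows because a further mutation $\mu_v$ converts sink to source and, by Corollary \ref{cor:qcohisomorphic}, the quantum cohomology rings agree up to a change of Kähler variables, under which the same relation holds.

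There is essentially no obstacle here: the Lemma is a corollary, not an independent computation. The only thing to be careful about is bookkeeping with the conventions — confirming that the empty-range Chern polynomial is $1$ rather than $0$, and that the sign $(-1)^{N_k + N_{k-1}}$ and the Novikov/Kähler variable $q_k$ appear with the normalization fixed in \eqref{Novikov-spe} and in the statement of Proposition \ref{lem:special}. I would therefore present the proof in two lines: invoke \eqref{eqn:special2} with $m = k+1$, and then simplify $c_t^S(S_{k+1}/S_{k+1}) = 1$. If for some reason one prefers a self-contained argument not routing through \eqref{eqn:special2}, the alternative is to apply Proposition \ref{lem:special} directly with $l = k+1$, giving $c_t^S(S_k)*c_t^S(S_{k+1}/S_k) = c_t^S(S_{k+1}) + (-1)^{N_k+N_{k-1}}q_k c_t^S(S_{k+1}/S_{k+1})*c_t^S(S_{k-1})$, then multiply the desired identity by the non-zero-divisor $c_t^S(S_{k-1})$, use $c_t^S(S_{k-1})*c_t^S(S_k/S_{k-1}) = c_t^S(S_k) + (-1)^{N_{k-1}+N_{k-2}}q_{k-1}c_t^S(S_{k-2})$ from \eqref{eqn:special1}, and check the two sides agree term by term — but the first route is cleaner.
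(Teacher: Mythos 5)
Your proof is correct and follows essentially the same route as the paper: the paper's proof of Lemma~\ref{lem:node1eqn} is literally the one-line observation that it is the special case $m=k+1$ of \eqref{eqn:special2}. You usefully make explicit the implicit simplification $c_t^S(S_{k+1}/S_{k+1})=1$, which the paper leaves to the reader.
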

\begin{proof}
This is just a special case of \eqref{eqn:special2} when $m=k+1$.
\end{proof}
\subsubsection{Case two: the node $v$ has two adjacent nodes}
There are two situations when a node has two valences.
The first situation is when the node is a source. 
In this situation, $m=k+1$, and $p=l$.
\begin{lem}\label{lem:quantumclsuter1}
    In $QH^*_S(Fl)[t]$, the following relation holds
    \begin{equation*}
    c_t^S(S_k/S_p)* c_t^S(S_{k+1}/S_{p+1}) =c_t^S(S_{k+1}/S_p)* c_t^S(S_k/S_{p+1})+(-1)^{N_k+N_p}\prod_{l=p+1}^{k}q_{l}.
    \end{equation*}
\end{lem}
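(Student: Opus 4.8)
The plan is to deduce Lemma~\ref{lem:quantumclsuter1} from the already-established relations \eqref{eqn:special1} and \eqref{eqn:special2} by a telescoping/induction argument on the gap $k-p$, exactly as Proposition~\ref{lem:special} was leveraged in the proof of \eqref{eqn:special2}. Observe first that the desired identity is the specialization of the general quantum cohomological cluster exchange relation \eqref{eqn:quantumcohexrel0} at $m=k+1$, $p=l$: indeed, $c_t^S(S_m/S_{k+1})=c_t^S(S_{k+1}/S_{k+1})=1$ and $c_t^S(S_p/S_l)=c_t^S(S_p/S_p)=1$, so the quantum correction term collapses to $\prod_{a=p+1}^{k}(-1)^{N_a+N_{a-1}}q_a$, and since $\sum_{a=p+1}^k(N_a+N_{a-1})\equiv N_k+N_p \pmod 2$ this matches the stated sign $(-1)^{N_k+N_p}$. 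So the content is purely to verify this one special case directly rather than via the (not-yet-proved in this subsection) general relation.

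First I would set up the induction on $d:=k-p\geq 1$. The base case $d=1$, i.e. $p=k-1$, reads $c_t^S(S_k/S_{k-1})*c_t^S(S_{k+1}/S_k)=c_t^S(S_{k+1}/S_{k-1})+(-1)^{N_k+N_{k-1}}q_k$, which is exactly Lemma~\ref{lem:node1eqn} (itself a case of \eqref{eqn:special2} with $m=k+1$). For the inductive step, the idea is to use associativity of the quantum product together with \eqref{eqn:special2} to ``peel off'' one tautological subquotient at a time. Concretely, multiply the target relation by the non-zero-divisor $c_t^S(S_{p+1}/S_p)$ and rewrite $c_t^S(S_{p+1}/S_p)*c_t^S(S_k/S_{p+1})$ using \eqref{eqn:special2} (with the roles $k\leftsquigarrow p+1$, $m\leftsquigarrow k$), and similarly $c_t^S(S_{p+1}/S_p)*c_t^S(S_{k+1}/S_{p+1})$ via \eqref{eqn:special2} (with $k\leftsquigarrow p+1$, $m\leftsquigarrow k+1$); each substitution reduces the gap in one factor from $k-p$ to $k-p-1$, at the cost of an explicit $q_{p+1}$-term. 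Collecting terms and invoking the induction hypothesis (for gap $d-1$) on the resulting products should reproduce the claimed formula, with the product $\prod_{l=p+1}^k q_l$ assembling from the single $q_{p+1}$ extracted at this step together with the $\prod_{l=p+2}^k q_l$ supplied by the hypothesis. Throughout, one repeatedly uses that $c_t^S(S_j/S_i)$ are not zero divisors in $QH^*_S(Fl)[t]$ (because the leading $t$-coefficient is $1$), exactly as in the proof of \eqref{eqn:special2}, to cancel common factors.

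Alternatively — and this may be the cleaner route — one can avoid induction entirely by arguing as in the Corollary preceding this section: start from $c_t^S(S_{p+1}/S_p)*c_t^S(S_k/S_{p+1})=c_t^S(S_k/S_p)+(-1)^{N_{p+1}+N_p}q_{p+1}c_t^S(S_k/S_{p+2})$ from \eqref{eqn:special2}, multiply the whole target identity by a suitable product $\prod_{a}c_t^S(S_{a+1}/S_a)$ so that both quotient factors telescope down to tautological bundles $c_t^S(S_\bullet)$, apply \eqref{eqn:special1}/\eqref{eqn:special2} to each telescoped product, and compare. The main obstacle I anticipate is purely bookkeeping: tracking the accumulated signs $(-1)^{N_a+N_{a-1}}$ and the $q_a$-monomials through the repeated applications of \eqref{eqn:special2}, and making sure the ``cross terms'' (products of two $q$'s) cancel and only the single clean monomial $\prod_{l=p+1}^k q_l$ survives — this is where the authors' choice of normalization of the $\xi_i$ and the sign conventions in \eqref{eqn:special1} must conspire correctly. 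There is no conceptual difficulty beyond what is already present in the proof of \eqref{eqn:special2}; it is a controlled computation, so I would present the base case explicitly and then indicate the inductive step, leaving the sign/monomial reconciliation as a routine verification.
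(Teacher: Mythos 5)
Your overall plan — induction on the gap $k-p$ with base case Lemma~\ref{lem:node1eqn} and \eqref{eqn:special2} as the workhorse — matches the paper's, and your sign-checking observation ($\sum_{a=p+1}^k(N_a+N_{a-1})\equiv N_k+N_p$) is correct. However, the specific manipulation you describe for the inductive step does not close. If you multiply the target identity (gap $k-p$) by $c_t^S(S_{p+1}/S_p)$ and substitute both $c_t^S(S_{p+1}/S_p)*c_t^S(S_{k+1}/S_{p+1})$ and $c_t^S(S_{p+1}/S_p)*c_t^S(S_k/S_{p+1})$ via \eqref{eqn:special2}, the classical terms $c_t^S(S_k/S_p)*c_t^S(S_{k+1}/S_p)$ cancel, and after dividing by $(-1)^{N_{p+1}+N_p}q_{p+1}$ you are left with
\begin{equation*}
c_t^S(S_k/S_p)*c_t^S(S_{k+1}/S_{p+2}) - c_t^S(S_{k+1}/S_p)*c_t^S(S_k/S_{p+2}) = (-1)^{N_k+N_{p+1}}\Bigl(\prod_{l=p+2}^{k}q_{l}\Bigr)c_t^S(S_{p+1}/S_p)\,.
\end{equation*}
This is \emph{not} an instance of the Lemma you are trying to prove: the two quotient factors on the left no longer have matching bottom indices shifted by one (they jump from $S_p$ to $S_{p+2}$), and there is an extraneous factor $c_t^S(S_{p+1}/S_p)$ on the right. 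What you have actually derived is a special case of \eqref{eqn:node3val2}, which is established \emph{after} this Lemma in the paper and whose proof depends on it; so taking your manipulation at face value would be circular. Your claim that ``each substitution reduces the gap in one factor from $k-p$ to $k-p-1$'' is also off — applying \eqref{eqn:special2} to $c_t^S(S_{p+1}/S_p)*c_t^S(S_k/S_{p+1})$ produces a main term $c_t^S(S_k/S_p)$ of gap $k-p$ (unchanged) and a correction term $c_t^S(S_k/S_{p+2})$ of gap $k-p-2$.

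The paper's inductive step uses a subtly different maneuver. To prove gap $a+1$ (say indices $(k+1,p)$), one does not multiply the target by a new factor; instead one \emph{expands} the single factor $c_t^S(S_{k+1}/S_p)$ via \eqref{eqn:special2} as $c_t^S(S_{p+1}/S_p)*c_t^S(S_{k+1}/S_{p+1})-(-1)^{N_{p+1}+N_p}q_{p+1}c_t^S(S_{k+1}/S_{p+2})$, then \emph{reassociates} so that the extracted $c_t^S(S_{p+1}/S_p)$ is paired with the \emph{other} factor $c_t^S(S_{k+2}/S_{p+1})$, and applies \eqref{eqn:special2} a second time to absorb it into $c_t^S(S_{k+2}/S_p)$. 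This ``transfer'' of $c_t^S(S_{p+1}/S_p)$ from one factor to the other is exactly what makes the remainder collapse into the $q_{p+1}$-multiple of the difference
\begin{equation*}
c_t^S(S_{k+1}/S_{p+1})*c_t^S(S_{k+2}/S_{p+2}) - c_t^S(S_{k+2}/S_{p+1})*c_t^S(S_{k+1}/S_{p+2}),
\end{equation*}
which is the induction hypothesis at indices shifted up by one and gap $a$. Your second, ``telescope everything down to $c_t^S(S_\bullet)$'' alternative would also encounter an uncontrolled proliferation of quantum-correction cross-terms, so it is not merely bookkeeping. You should rework the inductive step along the expand–reassociate–reabsorb lines above.
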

\begin{proof}
Prove by induction on the difference $k-p$. \\
When $k-p=1$, the situation is reduced to Lemma \ref{lem:node1eqn}, which has been proved. \\
Assume the equation holds for some $a=k-p$. Then when $k-p=a+1$, we can apply Equation \eqref{eqn:special2} to $c_t^S(S_{k+1}/S_p)$ and get
\begin{align*}
	&c_t^S(S_{k+1}/S_{p})* c_t^S(S_{k+2}/S_{p+1}) \nonumber \\
 &=c_t^S(S_{k+2}/S_{p+1})* \left(c_t^S(S_{k+1}/S_{p+1})* c_t^S(S_{p+1}/S_{p}) -(-1)^{N_{p+1}+N_p}q_{p+1}c_t^S(S_{k+1}/S_{p+2})\right).
\end{align*}
Again by Equation \eqref{eqn:special2}, the first term is
\begin{align*}
	c_t^S(S_{k+1}/S_{p+1})* \left(c^S_t(S_{k+2}/S_{p})+(-1)^{N_{p+1}+N_p}q_{p+1}c_t^S(S_{k+2}/S_{p+2}) \right).
\end{align*}
Then
\begin{align*}
	&c_t^S(S_{k+1}/S_{p})* c_t^S(S_{k+2}/S_{p+1})-c_t^S(S_{k+1}/S_{p+1})* c_t^S(S_{k+2}/S_{p})\nonumber\\
	&=(-1)^{N_{p+1}+N_p}q_{p+1}\left(c^S_t(S_{k+1}/S_{p+1})* c_t^S(S_{k+2}/S_{p+2})-c_t^S(S_{k+2}/S_{p+1})* c_t^S(S_{k+1}/S_{p+2})\right),
\end{align*}
whose right hand side by induction on $a=k+1-(p+1)$ is equal to
\begin{equation*}
	(-1)^{N_{p+1}+N_p}q_{p+1}(-1)^{N_{k+1}+N_{p+1}}\prod_{l=p+2}^{k+1}q_l=(-1)^{N_{k+1}+N_p}\prod_{l=p+1}^{k+1}q_l.
\end{equation*}
Hence, we have proved the Lemma.
\end{proof}

The second situation for a node admitting two valences is when there is a path passing through the node and no other arrows are adjacent to the node. 
There are two case. The first is when $m=k+1,\,p=k-1$ and the second case is when $p=k-1,\,l=k-1$. 
\iffalse
\begin{figure}[H]
    \centering
    \includegraphics[width=5in]{node2valmuN.png}
   \caption{}
    \label{fig:node2valmuN}
\end{figure}

\begin{figure}[H]
    \centering
    \includegraphics[width=5in]{node2valmuN2.png}
    \caption{}
    \label{fig:node2valmuN2}
\end{figure}
\fi
Then the theorem \ref{thm:quantumcohexrel} is reduced to the following Lemma.
\begin{lem}\label{lem:node2case2}
    In the quantum cohomology ring $QH^*(Fl)[t]$, we have
    \begin{subequations}
    \begin{align}
         & c_t^S(S_k/S_l)* c_t^S(S_{k+1}/S_k)=c_t^S(S_{k+1}/S_l)+(-1)^{N_{k-1}+N_k}q_k c_t^S(S_{k-1}/S_l),\,\label{subeqn:node2valcase1}\\
         & c_t^S(S_k/S_{k-1})* c_t^S(S_{m}/S_{k})= c_t^S(S_m/S_{k-1}) +(-1)^{N_k+N_{k-1}}q_kc_t^S(S_m/S_{k+1})\label{subeqn:node2valcase2}
    \end{align}
    \end{subequations}
The index $l$ in Equation \eqref{subeqn:node2valcase1} is allowed to be zero. 
\end{lem}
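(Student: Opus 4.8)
The strategy is to reduce both identities in Lemma \ref{lem:node2case2} to relations already established in this section, namely Proposition \ref{lem:special} (Equation \eqref{eqn:special1}) and its Corollary (Equation \eqref{eqn:special2}). First I would observe that \eqref{subeqn:node2valcase1} is \emph{exactly} Equation \eqref{eqn:special2} after a trivial change of indices: set $m = k+1$ in \eqref{eqn:special2} and use $c_t^S(S_{k+1}/S_{k+1}) = 1$, so that $c_t^S(S_k/S_l) * c_t^S(S_{k+1}/S_k) = c_t^S(S_{k+1}/S_l) + (-1)^{N_k + N_{k-1}} q_k\, c_t^S(S_{k+1}/S_{k+1}) * c_t^S(S_{k-1}/S_l)$, which is the claim. (Here I also need the remark from Remark \ref{rem:integersmkpl} that $l$ may be taken to be $0$ with $N_0 = 0$; this causes no difficulty since all the identities in Proposition \ref{lem:special} were stated for general $k < l$ and the degenerate slice $S_0 = 0$ simply makes $S_j/S_0 = S_j$.) I should double-check the orientation conventions — in Figure \ref{fig:localbehavior} the specialization $m = k+1,\ p = k-1$ collapses two of the adjacent nodes, and one must verify that the surviving relation is indeed read off \eqref{eqn:quantumcohexrel0} with these values, giving precisely \eqref{subeqn:node2valcase1} with $l$ in the role of the lower slice.

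Second, \eqref{subeqn:node2valcase2} is literally Equation \eqref{eqn:special2} itself (with $m$ in place of the upper index), so the proof is a one-line citation once the index dictionary between Figure \ref{fig:localbehavior} and the slices $S_i$ is fixed. Concretely, the case $p = k-1,\ l = k-1$ in \eqref{eqn:quantumcohexrel0} forces the product $\prod_{a=p+1}^{k}$ to be the single factor $a = k$, and the quotient $S_m/S_{k+1}$ on the right is exactly what appears in \eqref{subeqn:node2valcase2}; the term $c_t^S(S_p/S_l) = c_t^S(S_{k-1}/S_{k-1}) = 1$ drops out. So both displayed equations of Lemma \ref{lem:node2case2} follow from the Corollary to Proposition \ref{lem:special}, with no new quantum-cohomology computation required.

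The only genuine content — and the place where I would be most careful — is the \emph{bookkeeping}: matching the combinatorial data $(m,k,p,l)$ attached to the four (or two) adjacent nodes in Figure \ref{fig:localbehavior} to the tautological-subbundle indices, and checking that the signs $(-1)^{N_a + N_{a-1}}$ and the Novikov variables $q_a$ are assigned consistently with the specialization of $\Omega^A_n$-data in Lemma \ref{lem:decorationsofQinP}. In particular one must confirm that when a node degenerates (e.g.\ $N_p - N_l = 0$), deleting it from the quiver corresponds exactly to setting the relevant slice $S_i = S_j$, so that the general relation \eqref{eqn:quantumcohexrel0} specializes term-by-term to \eqref{subeqn:node2valcase1} and \eqref{subeqn:node2valcase2}. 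Once this dictionary is pinned down, the proof is:
\begin{enumerate}
\item For \eqref{subeqn:node2valcase1}: apply \eqref{eqn:special2} with upper index $k+1$ and simplify $c_t^S(S_{k+1}/S_{k+1}) = 1$.
\item For \eqref{subeqn:node2valcase2}: this is \eqref{eqn:special2} with upper index $m$ verbatim.
\end{enumerate}
I expect no obstacle beyond the index/sign verification, since the substantive analytic work (the abelian/nonabelian correspondence computation) was already carried out in the proof of Proposition \ref{lem:special}.
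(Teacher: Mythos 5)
Your handling of \eqref{subeqn:node2valcase2} is correct: it is indeed literally Equation \eqref{eqn:special2}, so a one-line citation suffices. But your reduction of \eqref{subeqn:node2valcase1} has a genuine gap.

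You claim that setting $m=k+1$ in \eqref{eqn:special2} yields \eqref{subeqn:node2valcase1}. However, the first factor on the left of \eqref{eqn:special2} is the \emph{adjacent} quotient $c_t^S(S_k/S_{k-1})$, not the general $c_t^S(S_k/S_l)$. Substituting $m=k+1$ there produces only
\[
  c_t^S(S_k/S_{k-1}) * c_t^S(S_{k+1}/S_k) = c_t^S(S_{k+1}/S_{k-1}) + (-1)^{N_k+N_{k-1}} q_k,
\]
which is the $l=k-1$ special case of \eqref{subeqn:node2valcase1} (i.e.\ Lemma \ref{lem:node1eqn}). There is no free lower index $l$ anywhere in \eqref{eqn:special2} to set, so the general statement with $l<k-1$ does not drop out of it. Your displayed intermediate equation $c_t^S(S_k/S_l) * c_t^S(S_{k+1}/S_k) = c_t^S(S_{k+1}/S_l) + (-1)^{N_k+N_{k-1}} q_k\, c_t^S(S_{k-1}/S_l)$ is in fact the conclusion you are trying to reach, not something you have derived; if anything it resembles the later Equation \eqref{eqn:node3val4} of Lemma \ref{lem:3nodeseqn}, but that lemma is proved \emph{using} Lemma \ref{lem:node2case2}, so citing it here would be circular.

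What is actually needed for \eqref{subeqn:node2valcase1} is a fresh induction on $k-l$. The base case $k-l=1$ is Lemma \ref{lem:node1eqn}. For the step, one multiplies both sides by $c_t^S(S_l)$ (a non-zero-divisor in $QH^*_S(Fl)[t]$) and applies Proposition \ref{lem:special}, Equation \eqref{eqn:special1}, to rewrite $c_t^S(S_l) * c_t^S(S_{k+1}/S_l)$ and similarly the right-hand side products; after cancellation the claim at level $k+1-l$ reduces to the claim at level $k-(l+1)$, i.e.\ to a shorter interval. This inductive argument, which leverages \eqref{eqn:special1} rather than \eqref{eqn:special2}, is the missing content of your proof; the sign and Novikov-variable bookkeeping you flagged as ``the only genuine content'' is real but is downstream of first having a correct derivation scheme.
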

\begin{proof}
	The Equation \eqref{subeqn:node2valcase2} is exactly the Equation \eqref{eqn:special2}. We only have to prove the Equation \eqref{subeqn:node2valcase1}.  We prove it by induction on the difference $k-l$.
	When $k-l=1$, it is exactly the equation in Lemma \ref{lem:node1eqn}. \\
	For fixed $l$, assume that the equation holds for $k$. 
   Then for $k+1$, it suffices to prove
\begin{align}\label{eqn:2nodesproof}
   &c_t^S(S_l)* c_t^S(S_{k+1}/S_l)* c_t^S(S_{k+2}/S_{k+1})\nonumber\\
   &= c_t^S(S_l)* \left[c_t^S(S_{k+2}/S_l)+(-1)^{N_{k}+N_{k+1}}q_{k+1} c_t^S(S_{k}/S_l)\right]
\end{align}
By utilizing \eqref{eqn:special1}, we expand the left hand side to
\begin{align}\label{eqn:2nodes1proof}
  c_t^S(S_{k+2})+(-1)^{N_{k+1}+N_{k}}q_{k+1} c_t^S(S_{k})+(-1)^{N_{l}+N_{l-1}}q_l c_t^S(S_{l-1})*\left[c_t^S(S_{k+1}/S_{l+1})* c_t^S(S_{k+2}/S_{k+1})\right]. 
\end{align}
Again, by utilizing Equation \eqref{eqn:special1}, we expand the quantum product $c_t^S(S_{k+2}/S_l)*c_t^S(S_l)$ and $c_t^S(S_{k}/S_l)*c_t^S(S_l)$ and rewrite the right hand side as 
\begin{align}\label{eqn:2nodes2proof}
 &c_t^S(S_{k+2})+(-1)^{N_{k+1}+N_{k}}q_{k+1}c_t^S(S_{k})+(-1)^{N_{l}+N_{l-1}}q_l c_t^S(S_{l-1})*\nonumber\\
 &\left[c_t^S(S_{k+2}/S_{l+1})+(-1)^{N_{k+1}+N_{k}}q_{k+1} c_t^S(S_{k}/S_{l+1})\right].
\end{align}
The terms inside brackets in Equations \eqref{eqn:2nodes1proof} and \eqref{eqn:2nodes2proof} are equal by the induction assumption. Hence the two sides of \eqref{eqn:2nodesproof} are equal and the Lemma has been proved.
\end{proof}

\subsubsection{Proof of the Theorem \ref{thm:quantumcohexrel}: the node $v$ has three adjacent nodes}
When a node has exactly three adjacent nodes, two of them are in a 3-cycle, and the remaining one is not in any $3$-cycle.
The local behavior of the quiver diagram can be classified into the following three cases.
\begin{itemize}
    \item The first case is  when $m=k+1$, and the node $v_2$ disappears. 
    \item The second case is when $p=k-1$, and then the node $v_4$ disappears.
    \item The third case is when $p=l$ and the node $v_3$ disappears. 
    \end{itemize}
    Then the Theorem \ref{thm:quantumcohexrel} is reduced to the following three equations which correspond to the above three cases. 
\begin{lem}\label{lem:3nodeseqn}
In $QH^*_S(Fl)[t]$, the following relations hold.
For $k>p>l$, we have 
\begin{align}\label{eqn:node3val2}
    &c_t^S(S_k/S_l)* c_t^S(S_{k+1}/S_{p+1})
	\nonumber\\
   & =c_t^S(S_{k+1}/S_l)* c_{t}^S(S_{k}/S_{p+1})+(-1)^{N_p+N_k}(\prod_{a=p+1}^k q_a) c_t^S(S_p/S_l)\,.
\end{align}
For $m>k>l$, we have
\begin{align}\label{eqn:node3val4}
    &c_t^S(S_k/S_l)* c_t^S(S_m/S_{k})\nonumber\\
	&=c_t^S(S_m/S_l)+(-1)^{N_k+N_{k-1}}q_kc_t^S(S_m/S_{k+1})* c_t^S(S_{k-1}/S_l)\,.
\end{align}
For $m>k>l$,
\begin{align}\label{eqn:node3val1}
    &c_t^S(S_k/S_l)* c_t^S(S_m/S_{l+1})\nonumber\\
	&= c_t^S(S_m/S_{l})* c_t^S(S_{k}/S_{l+1})+ (-1)^{N_k+N_{l}} (\prod_{a=l+1}^{k}q_a) c_t^S(S_m/S_{k+1})\,.
\end{align}
In all the above three equations, the index $l$ can be zero.
\end{lem}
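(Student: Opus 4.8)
\textbf{Proof proposal for Lemma \ref{lem:3nodeseqn}.}

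The plan is to derive all three identities from the two fundamental relations \eqref{eqn:special1} and \eqref{eqn:special2}, which were already established via the abelian/nonabelian correspondence, together with the two-adjacent-node identities of Lemma \ref{lem:quantumclsuter1} and Lemma \ref{lem:node2case2}. In each case the strategy is an induction mirroring the proof of Lemma \ref{lem:quantumclsuter1}: isolate one step of the "staircase" by applying \eqref{eqn:special1} or \eqref{eqn:special2} to split off a single tautological bundle $S_j$ or quotient $S_{j+1}/S_j$, then recognize that the remaining bracketed expression is handled by the inductive hypothesis. Since the $c_t^S(S_j)$ are not zero divisors in $QH^*_S(Fl)[t]$ (the leading $t$-coefficient is $1$), it suffices throughout to verify each identity after multiplying by a suitable $c_t^S(S_j)$, which converts quotient-bundle statements into statements about honest tautological bundles where \eqref{eqn:special1} and \eqref{eqn:special2} apply directly.

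For \eqref{eqn:node3val2} I would induct on $k-p$: the base case $k-p=1$ reduces (after cancelling $c_t^S(S_l)$ or using \eqref{eqn:tautquot}) to \eqref{subeqn:node2valcase1} of Lemma \ref{lem:node2case2}, and the inductive step applies \eqref{eqn:special2} to both $c_t^S(S_{k+1}/S_l)$ and $c_t^S(S_{k+1}/S_{p+1})$, peeling off the factor $c_t^S(S_{p+1}/S_p)$ and producing a $q_{p+1}$-term whose bracket matches the hypothesis with $(k,p)$ replaced by $(k, p+1)$; the product $\prod_{a=p+1}^{k}q_a$ then assembles telescopically exactly as in Lemma \ref{lem:quantumclsuter1}. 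For \eqref{eqn:node3val4} the cleanest route is an induction on $m-k$ combined with \eqref{eqn:special1}: multiply through by $c_t^S(S_l)$, expand $c_t^S(S_l)*c_t^S(S_k/S_l)=c_t^S(S_k)+(-1)^{N_l+N_{l-1}}q_l c_t^S(S_{l-1})$, then apply \eqref{eqn:special1} again to $c_t^S(S_k)*c_t^S(S_m/S_k)$; the $q_k$-correction $(-1)^{N_k+N_{k-1}}q_k c_t^S(S_m/S_{k+1})*c_t^S(S_{k-1})$ is produced directly, and the $q_l$-terms on both sides cancel after invoking the hypothesis for the pair $(l-1, \text{same }m,k)$. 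The identity \eqref{eqn:node3val1} is the mirror image of \eqref{eqn:node3val2} under the "source" configuration: I would induct on $k-l$, base case $k-l=1$ being \eqref{eqn:node3val4} with $m$ arbitrary (or directly Lemma \ref{lem:quantumclsuter1} combined with Lemma \ref{lem:node1eqn}), and the step applying \eqref{eqn:special1} to split off $c_t^S(S_{l+1}/S_l)$, shifting the induction to $(l+1)$ and accumulating the factor $\prod_{a=l+1}^{k}q_a$.

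The main obstacle I anticipate is bookkeeping of the sign prefactors $(-1)^{N_a+N_{a-1}}$ as they telescope across the inductive steps: in \eqref{eqn:special1} the correction carries $(-1)^{N_k+N_{k-1}}q_k$, and when several such steps are composed one must check that the accumulated sign collapses to the advertised $(-1)^{N_p+N_k}$ (resp. $(-1)^{N_k+N_l}$) — the internal terms $N_{p+1}, \ldots, N_{k-1}$ must each appear an even number of times. This is exactly the cancellation $(-1)^{N_{p+1}+N_p}(-1)^{N_{k+1}+N_{p+1}}=(-1)^{N_{k+1}+N_p}$ seen at the end of Lemma \ref{lem:quantumclsuter1}, so the pattern is already established; the work is to confirm it survives the extra $S_m/S_{\bullet}$ or $S_\bullet/S_l$ factors present in the three-node case. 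A secondary subtlety is the degenerate indices $l=0$ (with $N_0=0$) and coincidences like $p=l$ inside the inductions, which I would handle by checking that the base cases of Lemmas \ref{lem:node1eqn} and \ref{lem:node2case2} already accommodate these conventions, so no separate argument is needed.
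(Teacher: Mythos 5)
You correctly identified the tools (the fundamental relations \eqref{eqn:special1}, \eqref{eqn:special2}, the two-node lemmas \eqref{subeqn:node2valcase1} and \eqref{subeqn:node2valcase2}) and the general strategy (induction via "peeling" one subquotient factor and invoking a smaller instance), and your base-case identifications are essentially right. However, two of your inductive steps contain genuine gaps.

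For \eqref{eqn:node3val2}, you propose to peel off $c_t^S(S_{p+1}/S_p)$ from $c_t^S(S_{k+1}/S_l)$ and $c_t^S(S_{k+1}/S_{p+1})$ via \eqref{eqn:special2}. But \eqref{eqn:special2} peels the \emph{bottom} rung $S_k/S_{k-1}$ from a quotient $S_m/S_{k-1}$ — the rung being removed must sit at the lower end of the interval. Since $l < p < p+1 < k+1$, the rung $S_{p+1}/S_p$ sits in the \emph{interior} of $[l, k+1]$, so \eqref{eqn:special2} does not apply to $c_t^S(S_{k+1}/S_l)$. If instead you multiply the whole $(k,p)$ identity by $c_t^S(S_{p+1}/S_p)$, you generate terms like $c_t^S(S_k/S_l)*c_t^S(S_{k+1}/S_p)$ that belong to the $(k,p-1)$ instance — a \emph{larger} difference $k-p$, wrong direction for your induction. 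The paper's route is to induct on $k$ and peel the \emph{top} rung $c_t^S(S_{k+2}/S_{k+1})$ via \eqref{subeqn:node2valcase1}; this works directly because that rung sits at the upper end of both intervals $[l, k+2]$ and $[p+1, k+2]$ appearing on the two sides, so both products expand compatibly and the resulting bracket is exactly the inductive hypothesis at $k$.

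For \eqref{eqn:node3val4}, the expansion you quote, namely $c_t^S(S_l)*c_t^S(S_k/S_l)=c_t^S(S_k)+(-1)^{N_l+N_{l-1}}q_l\, c_t^S(S_{l-1})$, omits the factor $c_t^S(S_k/S_{l+1})$: \eqref{eqn:special1} actually gives $c_t^S(S_l)*c_t^S(S_k/S_l)=c_t^S(S_k)+(-1)^{N_l+N_{l-1}}q_l\, c_t^S(S_k/S_{l+1})*c_t^S(S_{l-1})$. With that factor restored your step does go through, but it reduces $(m,k,l)$ to $(m,k,l+1)$, i.e.\ it is a descending induction on $k-l$ with base $l=k-1$ (where the statement degenerates to \eqref{eqn:special2}) — not an induction on $m-k$ as you announce. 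This inconsistency between the declared and the executed induction variable is also what makes your sketch of \eqref{eqn:node3val1} hard to pin down (the claimed base case "\eqref{eqn:node3val4} with $m$ arbitrary" is a roundabout description of \eqref{eqn:special2}; Lemma \ref{lem:quantumclsuter1} is not of the right shape to serve as the base). The paper instead inducts on $m$ for both \eqref{eqn:node3val4} and \eqref{eqn:node3val1}, with base $m=k+1$, peeling the top rung $c_t^S(S_{m+1}/S_m)$; this gives a uniform presentation across all three identities. You should either (a) adopt the paper's uniform top-rung induction, or (b) carry out your $l$-induction carefully, in which case you must correct the dropped factor, state the induction variable consistently as $k-l$, and check the sign bookkeeping (which, as you anticipate, is where errors would otherwise hide).
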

\begin{proof}
	\textbf{Proof of Equation \eqref{eqn:node3val2}:} We prove it by induction on the difference $k$. \\
	When $k=p+1$, the equation is reduced to the Equation \eqref{subeqn:node2valcase1} which has been proved to be true. \\
	Assume that the equation holds for $k$. 
    For $k+1$, utilizing \eqref{subeqn:node2valcase1} we get
    \begin{align}\label{eqn:node3val1proof}
		&c_t^S(S_{k+1}/S_l)* c_t^S(S_{k+2}/S_{p+1})\nonumber\\
		&=c_t^S(S_{k+1}/S_{l})* \left(c_t^S(S_{k+2}/S_{k+1})* c_t^S(S_{k+1}/S_{p+1})-(-1)^{N_{k+1}+N_k}q_{k+1}c_t^S(S_k/S_{p+1})\right)\,.
	\end{align}
	Again by applaying \eqref{subeqn:node2valcase1}, we rewrite the first term as
	\begin{equation}\label{eqn:node3val1proofterm11}
		c_t^S(S_{k+1}/S_{p+1})* \left(c_t^S(S_{k+2}/S_{l})+(-1)^{N_{k+1}+N_k}q_{k+1}c_t^S(S_k/S_l)\right).
	\end{equation}
	Combine \eqref{eqn:node3val1proof} and \eqref{eqn:node3val1proofterm11}, and we get
	\begin{align}\label{eqn:node3val1proof2}
		&c_t^S(S_{k+1}/S_l)* c_t^S(S_{k+2}/S_{p+1})-
  c_t^S(S_{k+1}/S_{p+1})* c_t^S(S_{k+2}/S_{l})
  \nonumber\\
		&=(-1)^{N_{k+1}+N_k}q_{k+1}\left(c_t^S(S_{k+1}/S_{p+1})* c_t^S(S_k/S_l)-c_t^S(S_{k+1}/S_l)*c_t^S(S_k/S_{p+1})\right).
	\end{align}
	By the assumption that the equation \eqref{eqn:node3val2} holds for $k$,  the two terms in the parentheses equal $(-1)^{N_p+N_k}\prod_{a=p+1}^k q_ac_t^S(S_p/S_l)$. Therefore,
	\begin{align*}
		&c_t^S(S_{k+1}/S_l)* c_t^S(S_{k+2}/S_{p+1})
\nonumber\\
		&=c_t^S(S_{k+1}/S_{p+1})* c_t^S(S_{k+2}/S_{l})+(-1)^{N_{k+1}+N_p}\prod_{a=p+1}^{k+1} q_ac_t^S(S_p/S_l)\,.
	\end{align*}
	which proves \eqref{eqn:node3val2} for $k+1$.\\
	
	\textbf{Proof of Equation \eqref{eqn:node3val4} and Equation \eqref{eqn:node3val1}:} 
 
 The proof is similar with that of \eqref{eqn:node3val2}.
 One can prove by induction on $m$. When $m=k+1$, the Equation \eqref{eqn:node3val4} is reduced to \eqref{subeqn:node2valcase1}, and the Equation \eqref{eqn:node3val1} is reduced to  a special case of Equation \eqref{eqn:node3val2}. 
Assume that they hold for index smaller than or equal to $m$. For $m+1$, one needs to repeatedly use the Equation \eqref{subeqn:node2valcase1} and prove the two equations. We leave the detail to readers. 
 \end{proof}
\subsubsection{Proof of the Theorem \ref{thm:quantumcohexrel}: the node $v$ has four adjacent nodes}
We finally arrive at the most general case when the node $v$ has four adjacent nodes. 
Actually, there are two situations for a node admitting four adjacent arrows, and one can find that they are related by a quiver mutation in Figure \ref{fig:localbehavior}. 
\begin{lem}\label{lem:node4val}
In $QH^*_S(Fl)[t]$, we have the following relation. For $m>k>p>l\geq 0$, we have
\begin{align}\label{eqn:proofnode4val}
     &c_t^S(S_k/S_l)* c_t^S(S_m/S_{p+1})= c_t^S(S_m/S_l)* c_t^S(S_k/S_{p+1})\nonumber\\
     &+(-1)^{N_p+N_{k}}\prod_{a=p+1}^k q_ac_t^S(S_m/S_{k+1})* c_t^S(S_{p}/S_l).
    \end{align}
\end{lem}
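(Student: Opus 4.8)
The plan is to prove the general four-adjacent-node relation \eqref{eqn:proofnode4val} by reducing it to the already-established relations of Lemma \ref{lem:3nodeseqn} via an induction on the difference $m-k$, mirroring the bootstrap pattern used for Equations \eqref{eqn:node3val2}, \eqref{eqn:node3val4}, \eqref{eqn:node3val1}. The base case $m=k+1$ makes the factor $c_t^S(S_m/S_{k+1})$ trivial (equal to $1$ since $N_m-N_{k+1}=0$), so \eqref{eqn:proofnode4val} collapses to a three-node relation, specifically \eqref{eqn:node3val2} with the roles of the indices suitably matched; this is the case where the node $v_2$ disappears. So the induction starts from something already proved.

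For the inductive step, I would assume \eqref{eqn:proofnode4val} holds whenever the difference $m-k$ (for fixed $k,p,l$) is at most some value, and then prove it for the next value by rewriting $c_t^S(S_m/S_k)$-type factors using the fundamental two-term relation \eqref{eqn:special2} (equivalently \eqref{subeqn:node2valcase2}), which lets one peel off one step of the tower: $c_t^S(S_{m}/S_{m-1})*c_t^S(S_{m'}/S_m) = c_t^S(S_{m'}/S_{m-1}) + (-1)^{N_m+N_{m-1}}q_m c_t^S(S_{m'}/S_{m+1})$. Concretely, multiply both sides of the desired identity by an appropriate $c_t^S(S_m/S_{m-1})$ (legitimate since the Chern polynomials are not zero divisors in $QH^*_S(Fl)[t]$, as noted in the proof of Equation \eqref{eqn:special2}), expand each quantum product via \eqref{eqn:special2}, collect the terms without a $q_m$ factor into an instance of \eqref{eqn:proofnode4val} at the smaller difference $m-1-k$ (the induction hypothesis), and collect the $q_m$-terms into another instance of \eqref{eqn:proofnode4val}, again at a strictly smaller difference. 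The telescoping of the product $\prod_{a=p+1}^{k}q_a$ and the signs $(-1)^{N_a+N_{a-1}}$ should work out exactly as in the proof of \eqref{eqn:node3val2}.

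I would organize the calculation so that the two sides differ by exactly the two invocations of the induction hypothesis: after the expansions, the classical parts match by \eqref{eqn:tautquot}, and each residual term is visibly a three- or four-adjacent-node relation already verified or covered by a smaller induction instance. One has to be slightly careful about the degenerate sub-cases $p=l$ (delete $v_3$), $p=k-1$ (delete $v_4$), $k=m-1$ (delete $v_2$): in each the relation \eqref{eqn:proofnode4val} degenerates to a relation of Lemma \ref{lem:3nodeseqn} or \ref{lem:node2case2}, so these can be taken as additional base cases rather than obstructions. The index $l=0$, $N_0=0$ case needs no separate treatment since \eqref{eqn:special1}--\eqref{eqn:special2} already allow it.

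The main obstacle I anticipate is purely bookkeeping: tracking the accumulated signs $\prod (-1)^{N_a+N_{a-1}}$ and the Novikov monomials $\prod q_a$ through the double expansion so that the $q_m$-term produced on the left exactly matches the one produced on the right plus the one coming from the induction hypothesis on the other side. This is the same kind of sign/monomial telescoping that appears in \eqref{eqn:node3val1proof2} and its sequel, and the cleanest way to handle it is to fix the convention $q_a=(-1)^{N_a-N_{a+1}}\xi_{a-1}\xi_{a+1}^{-1}$ throughout and verify the identity at the level of the $\xi$-monomials, where consecutive factors cancel transparently; but I would not grind through this in the write-up, instead pointing to the identical mechanism already exhibited in the proof of Lemma \ref{lem:3nodeseqn}.
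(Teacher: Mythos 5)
Your proposal follows essentially the same route as the paper's proof: induct on $m$, with base case $m=k+1$ collapsing to Lemma~\ref{lem:3nodeseqn}, and inductive step obtained by expanding $c_t^S(S_{m+1}/S_{p+1})$ via the two-valence relation \eqref{subeqn:node2valcase1} and invoking the hypothesis at indices $m$ and $m-1$. The only correction is a small index slip: the peel-off factor is $c_t^S(S_{m+1}/S_m)$ (the top rank-one quotient) rather than $c_t^S(S_m/S_{m-1})$, and the paper expands $c_t^S(S_{m+1}/S_{p+1})$ directly rather than multiplying both sides by a non-zero-divisor and later cancelling, but neither discrepancy changes the substance of the argument.
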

\begin{proof} We prove by induction on $m$.
When $m=k+1$, the equation is reduced to the Equation \eqref{eqn:node3val2}. 
Assume that the Equation \ref{eqn:proofnode4val} holds for the index less than or equal to $m$. For $m+1$, we expand the $c_t^S(S_{m+1}/S_{p+1})$ by Equation \eqref{subeqn:node2valcase1}. Then we have 
\begin{align}\label{eqn:proofnode41}
    &c_t^S(S_k/S_l)* c_t^S(S_{m+1}/S_{p+1})
    =c_t^S(S_k/S_l)*\nonumber\\
   & \left(
    c_t^S(S_{m+1}/S_m)* c_t^S(S_{m}/S_{p+1})-(-1)^{N_m+N_{m-1}}q_mc_t^S(S_{m-1}/S_{p+1})
    \right)
\end{align}
By applying the induction assumption for $m$, the first term is equal to 
\begin{align}\label{eqn:proofnode42}
    c_t^S(S_{m+1}/S_m)*
    \left(
    c_t^S(S_m/S_l)* c_t^S(S_k/S_{p+1})+(-1)^{N_p+N_{k}}\prod_{a=p+1}^k q_ac_t^S(S_m/S_{k+1})* c_t^S(S_{p}/S_l)
    \right)
\end{align}
The first term of 
\eqref{eqn:proofnode42} again by Equation \eqref{subeqn:node2valcase1} can be rewritten as 
\begin{align}\label{eqn:proofnode43}
    c_t^S(S_k/S_{p+1})*
    c_t^S(S_{m+1}/S_{l})+(-1)^{N_m+N_{m-1}}q_mc_t^S(S_{m-1}/S_{l})*c_t^S(S_k/S_{p+1}).
\end{align}
The second term of \eqref{eqn:proofnode43} and the second term of the right hand side of the Equation \eqref{eqn:proofnode41}, by the induction assumption on $m-1$, is equal to
\begin{equation}\label{eqn:proofnode44}
    -(-1)^{N_p+N_{k}+N_m+N_{m+1}}(q_m\prod_{a=p+1}^kq_a)c_t^S(S_{m-1}/S_{k+1})*c_t^S(S_p/S_l).
\end{equation}
The second term of the formula \eqref{eqn:proofnode42}, by using Equation \eqref{subeqn:node2valcase1}  is equal to 
\begin{equation}\label{eqn:proofnode45}
   (-1)^{N_p+N_{k}}
   (\prod_{a=p+1}^k q_a)c_t^S(S_{p}/S_l)*
   \left(c_t^S(S_{m+1}/S_{k+1})+(-1)^{N_m+N_{m+1}}q_mc_t^S(S_{m-1}/S_{k+1}) 
   \right)
\end{equation}
whose second term cancels with the formula in \eqref{eqn:proofnode44}.
Combine \eqref{eqn:proofnode41}-\eqref{eqn:proofnode45}, we have proved the Equation \eqref{eqn:proofnode4val}.
\end{proof}

\begin{cor}\label{cor:Aclustertrue}
    The images $\{c_t^S(S_k/S_l)\}$ of cluster variables for $A$-type quivers coincide with
    the conjectural images of cluster variables in the quantum cohomology rings for general quivers in  Conjecture \ref{conj:clusteralgconj}. 
    That is to say, the Chern polynomials of quotient bundles $c_t^S(S_k/S_l)$ are pullbacks of Chern polynomials of the dual of tautological bundles over varieties of $QPs$ in $\Omega^A_n$. 
\end{cor}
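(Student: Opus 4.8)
The plan is to prove Corollary \ref{cor:Aclustertrue} by combining the quantum cohomological cluster exchange relations just established (Theorem \ref{thm:quantumcohexrel}(1), i.e. Lemma \ref{lem:node4val}) with the equivariant Seiberg duality isomorphism of Proposition \ref{prop:equicohmu} and its all-genus upgrade Theorem \ref{thm:GWinvarmu}. Fix a non-initial cluster variable $\tilde x_v$ sitting in a seed $(\tilde{\mathbf Q},\tilde{\mathbf x})$ reached from the initial $A_n$-seed by a sequence of mutations $\mathbf Q\xrightarrow{\mu_{v_1}}\cdots\xrightarrow{\mu_{v_i}}\tilde{\mathbf Q}$, and let $\tilde\Phi\colon QH^*_S(Fl)[t]\to QH^*_S(\tilde{\mc Z})[t]$ be the composite of the isomorphisms from Corollary \ref{cor:qcohisomorphic}. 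By Lemma \ref{lem:decorationsofQinP} the integer assigned to the node $v$ in $\tilde{\mathbf Q}$ is $r_v=N_j-N_i$ for some $0\le i<j\le n+1$. I need to show that the Chern polynomial $c_t^S(S_j/S_i)\in QH^*_S(Fl)[t]$ is exactly $\tilde\Phi^{-1}$ of a scaling (by $\pm1$ and by the formal variables $\xi_u$) of $c_t^S(\tilde S_v^\vee)$ when $\tilde\sigma_v<0$, respectively of $c_t^S(\tilde S_v)$ when $\tilde\sigma_v>0$; equivalently, that the map $\psi$ of Theorem \ref{thm:maincluster2} agrees with the conjectural recipe of Conjecture \ref{conj:clusteralgconj}(3) for every non-initial cluster variable.

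First I would argue by induction on the length $i$ of the mutation sequence. For $i=0$ the statement is the defining clause $\psi(x_k)=(-1)^{N_k}\xi_k c_t^S(S_k)$ together with the observation that $S_k=S_k/S_0$ (here $N_0=0$, $S_0=0$), which is the $i=0$ case of the recipe since $\tilde\sigma_k>0$ in the initial $A_n$-quiver. For the inductive step, suppose the non-initial cluster variable $\tilde x_v$ lies one mutation beyond a seed for which the claim is known, i.e. $\tilde{\mathbf Q}=\mu_v(\mathbf Q')$ for some $\mathbf Q'\in\Omega^A_n$. In $\mathbf Q'$ the node $v$ has at most four adjacent nodes $v_1,v_2,v_3,v_4$ with decorations governed by Lemma \ref{lem:decorationsofQinP}: writing $r_v=N_k-N_l$, $r_{v_1}=N_m-N_l$, $r_{v_3}=N_p-N_l$, $r_{v_2}=N_m-N_{k+1}$, $r_{v_4}=N_k-N_{p+1}$ with $m>k>p\ge l\ge0$, the cluster exchange relation at $v$ reads $x_v'\tilde x_v=x_{v_1}'x_{v_4}'+x_{v_2}'x_{v_3}'$. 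Applying $\psi$ to the four known variables $x_{v_1}',\dots,x_{v_4}'$ via the inductive hypothesis, and applying $\psi(x_v')=(-1)^{N_k-N_l}\xi_k\xi_l^{-1}c_t^S(S_k/S_l)$, the relation $\psi(x_v')\psi(\tilde x_v)=\psi(x_{v_1}')\psi(x_{v_4}')+\psi(x_{v_2}')\psi(x_{v_3}')$ together with Lemma \ref{lem:node4val} forces $\psi(\tilde x_v)=(-1)^{N_m-N_{p+1}}\xi_m\xi_{p+1}^{-1}c_t^S(S_m/S_{p+1})$, since $c_t^S(S_m/S_{p+1})$ is not a zero divisor in $QH^*_S(Fl)[t]$ (its top $t$-coefficient is $1$). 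This identifies $\psi(\tilde x_v)$ with $\pm\xi$-scaled $c_t^S(S_{j}/S_{i})$ for the correct $i,j=p+1,m$ attached to $v$ in $\tilde{\mathbf Q}$; the special cases where $v$ has fewer than four neighbors are handled by the degenerate exchange relations in Lemmas \ref{lem:node1eqn}, \ref{lem:quantumclsuter1}, \ref{lem:node2case2}, \ref{lem:3nodeseqn}, exactly as in the proof of Theorem \ref{thm:quantumcohexrel}(2).

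Next I would match $c_t^S(S_j/S_i)$ with the pullback described in Conjecture \ref{conj:clusteralgconj}(3). By Proposition \ref{prop:equicohmu}, each mutation $\mu_{v_a}$ sends $c_k^S(Q_{v_a})\mapsto c_k^S((\tilde S_{v_a}')^\vee)$, sends $c_k^S(S_u^\vee)\mapsto c_k^S(S_u')$ when the sign of $\sigma_u$ flips, and fixes the remaining tautological Chern classes; consequently the composite $\tilde\Phi$ carries $c_t^S(\tilde S_v^{*})$ (with $S_v^*=S_v$ if $\tilde\sigma_v>0$, $S_v^\vee$ if $\tilde\sigma_v<0$) to a definite class in $QH^*_S(Fl)[t]$. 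I claim this class is precisely $c_t^S(S_j/S_i)$. To see this one compares restrictions to torus-fixed points: by Lemma \ref{lem:Sfixedpointsmu} and the computation inside the proof of Proposition \ref{prop:equicohmu}, if $p_{\vec I}\in Fl^S$ corresponds under the bijection to $p_{\vec J}\in\tilde{\mc Z}^S$ with $J_v=I_{v_1}\cup I_{v_4}\setminus I_v$ (or the dual version), then $c_t^S(\tilde S_v)|_{p_{\vec J}}=\prod_{a\in J_v}(t\pm\lambda_a)$, and unwinding the recursion $J_v$ is exactly the complement $I_j\setminus I_i$ of the relevant flag steps, so this equals $c_t^S(S_j/S_i)|_{p_{\vec I}}$ up to the sign dictated by whether one uses $S_v$ or $S_v^\vee$. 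Since equivariant cohomology injects into the product of fixed-point localizations (Atiyah--Bott, Equation \eqref{eqn:atiyahbottlocalization}), this forces the desired equality of classes; tracking the $\xi$-monomials through the definition of $\psi$ and the relations $q_i=(-1)^{N_i+N_{i+1}}\xi_{i-1}\xi_{i+1}^{-1}$ shows the scalars agree with those produced recursively by Conjecture \ref{conj:clusteralgconj}(3). I expect the main obstacle to be bookkeeping: verifying that the signs $(-1)^{N_a+N_{a-1}}$ appearing in Lemma \ref{lem:node4val} combine with the $\xi$-substitution so as to cancel exactly against the product $\prod_{a=p+1}^k q_a$, uniformly over all the degenerate neighbor configurations, and that the resulting scalar matches $\tilde\Phi$ seed-independently — the latter following, once the fixed-point computation is in place, from the fact that $QH^*_S(Fl)[t]$ has no zero divisors among the Chern polynomials so the image of $\tilde x_v$ is uniquely pinned down by the exchange relations regardless of the chosen path.
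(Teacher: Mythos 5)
Your core idea---comparing equivariant restrictions at torus fixed points through the bijection of Lemma \ref{lem:Sfixedpointsmu}---is the same approach the paper takes to prove the identity $\Phi(c_t^S(\tilde S_v))=c_t^S(S_k/S_l)$. But there is a genuine gap where you assert that ``unwinding the recursion, $J_v$ is exactly the complement $I_j\setminus I_i$.'' After a sequence of mutations, the weight set $\mc I'_v$ at a node $v$ is obtained from the flag-variety weight sets $\mc I_1\subset\cdots\subset\mc I_{n+1}$ by iterating the operation $(\mc I_{v_1}\sqcup\mc I_{v_4})\setminus\mc I_v$ (or its dual), and directly tracking this combinatorics to the closed form $\mc I_k\setminus\mc I_l$ is not immediate. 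The paper avoids the explicit bookkeeping with a cleaner argument: the composed operation is a universal set-theoretic combination, \emph{independent of the chosen decorations} $N_1<\cdots<N_{n+1}$; since $|\mc I'_v|=N_k-N_l$ holds for every such choice, cardinality alone forces $\mc I'_v=\mc I_k\setminus\mc I_l$. You need to supply that universality-and-cardinality argument (or an explicit combinatorial induction establishing the set identity) rather than simply assert the conclusion.

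Two further remarks. Your first half---inducting on the mutation sequence to re-derive $\psi(\tilde x_v)=(-1)^{N_j-N_i}\xi_j\xi_i^{-1}c_t^S(S_j/S_i)$ from the exchange relations---is redundant: that formula \emph{is} the definition of $\psi$ in Theorem \ref{thm:maincluster2}(2), and its consistency is already the content of Theorem \ref{thm:quantumcohexrel}(2). Also, your closing phrase (``tracking the $\xi$-monomials\ldots shows the scalars agree'') does not actually verify the required compatibility of K\"ahler variables: the paper checks, using the bases of $H_2$ dual to first Chern classes of tautological bundles together with a monotonicity property of the decorations under $\prec$, that $\phi_*(\tilde\beta_v)=\beta_{p+1}+\cdots+\beta_k$, and it is precisely this identity that makes the pulled-back cluster relation hold in $QH^*_S(\mc Z)$ with the correct Novikov variable $\tilde q_v$. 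That step should be carried out, not merely gestured at.
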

\begin{proof}
    Assume that $x_v$ lies in a seed $( {\mathbf Q},  {\mathbf{x}})$ 
        which can be obtained by a sequence of quiver mutations $\mu_{v_i}\cdots\mu_{v_1}$ from the initial $A_n$-type quiver. 
        Let $\mc Z$ be the quiver variety 
        associated with
        the decorated QP $(\mathbf Q,\mathbf r)$, and we assume that the integer assigned to the node $v$ is $N_k-N_l$. 
        Let $\tilde S_v$ be the tautological bundle over $\mc Z$ at the node $v$. 
        We consider the pullback of the Chern polynomial $c_t^S(\tilde S_v)$ via the map
       \begin{equation*}
            \Phi: H^*(\mc Z)\rightarrow H^*(Fl(N_1, N_2, \ldots, N_{n+1}))
        \end{equation*}
        where $\Phi$ is a ring isomorphism  according to Corollary  \ref{cor:qcohisomorphic}. 
        
        We claim that
        \begin{equation}\label{eqn:5.71}
            \Phi(c_t^S(\tilde S_v))=c_t^S(S_k/S_l)
        \end{equation}
       This can be checked by localization. 
       We only have to prove that for any $P=(I_u)_{u=1}^n\in Fl^S$,
    \begin{equation}\label{eqn:5.72}
        c_t^S(S_k/S_l) \big{|}_{P} =c_t^S(\tilde S_v)\big{|}_{P'}\,,
    \end{equation}
    where $P'=(I_u')_{u\in Q_0\backslash Q_f}$ is the image of $P$ in $(\mc Z)^S$ by performing the rule in Lemma \ref{lem:Sfixedpointsmu} along the quiver mutations $\mu_{v_i}\cdots\mu_{v_1}$. 
    Denote the set of equivariant weights of of tautological bundles of flag variety $S_i|_P$ by $\mathcal I_i$, 
    for $i=1, 2,\ldots, n+1$, and the set of weights  $\tilde S_v|_{P'}$ by $\mc I_v'$. 
    Due to the rule in Lemma \ref{lem:Sfixedpointsmu}, $\mc I_v'$ can be obtained from $\{\mc I_i\}_{i=1,\ldots, n+1}$ via a composition of disjoint unions and complements. Furthermore, the composition procedure is independent of the choice of integers $N_1,\ldots, N_{n+1}$. 
    For arbitrary $N_1,\ldots, N_{n+1}$, 
    we have $|\mc I_i|=N_i$, and $|\mc I'_v|=N_k-N_l$, this implies $\mc I_v'=\mc I_k\backslash\mc I_l$. So the \eqref{eqn:5.72} is proved.

    Suppose the local behavior of $\mc Z$ near node $v$ is as Figure \ref{fig:localbehavior} (a). 
    Now we consider the pullback of the cluster relation \eqref{eqn:quantumcohexrel0} via the map $\Phi^{-1}$. We claim that it becomes the following cluster relation in $QH^*(\mc Z)$
    \begin{equation}\label{eqn:quantumcohclusterinZ}
        c_t^S(\tilde S_v)*c_t^S(\tilde Q_v)=
        c_t^S(\tilde S_{v_1})*c_t^S(\tilde S_{v_4}^*)+ (-1)^{N_k-N_l}\tilde q_vc_t^S(\tilde S_{v_2}^*)*c_t^S(\tilde S_{v_3})\,.
    \end{equation}
    where $Q_v$ is defined in Section \ref{sec:seibergduality}.
    Let $\tilde \beta=(\tilde \beta_u)\in H_2(\mc Z, \mathbb Z)$ and $\beta=(\beta_i)\in H_2(Fl, \mathbb Z)$ be the dual basis of first Chern classes of tautological bundles, as defined in \eqref{eqn:basisofeff}. 
    The $\tilde q_v$ in \eqref{eqn:quantumcohclusterinZ} is the K\"ahler variable tracking $\tilde \beta_v$. 
    By \eqref{eqn:5.71}, the equation \eqref{eqn:quantumcohclusterinZ} holds if we can prove that
    \begin{equation*}
        \phi_*(\tilde \beta_v)=\beta_{p+1}+\cdots+\beta_k,   
    \end{equation*}
    It is equivalent to say that, for any node $w$ in ${\mathbf Q}$,
    \begin{equation}\label{eqn:5.73}
        \int_{ \tilde \beta_v}c_1(\tilde S_w)=\int_{\beta_{p+1}+\cdots+\beta_k}\Phi^{-1}(c_1(\tilde S_w))
    \end{equation}
    For $w=v, v_1, v_2, v_3, v_4$, the equation \eqref{eqn:5.73} is easily checked by \eqref{eqn:5.71} and \eqref{eqn:basisofeff}. For other nodes, we use the following observation implied by Lemma \ref{lem:decorationsofQinP}.
   For each nodes $u, w\in \mathbf Q$, the subscripts $a,b,c,d$ of the associated integer $r_w=N_a-N_b$, $r_u=N_c-N_d$ satisfying the following condition 
\begin{equation}\label{eqn:5.77}
        \begin{cases}
            d\leq a, b\leq c, &\text{  if }w\prec u\\
          a, b\notin (d, c) &\text { otherwise }\,.
        \end{cases}
    \end{equation}   
 By applying \eqref{eqn:5.77} for nodes  $u=v_1, v_2, v_3, v_4$, we can easily check the equation \eqref{eqn:5.73} by \eqref{eqn:5.71} and \eqref{eqn:basisofeff}. 
\end{proof}
\printbibliography 
\noindent
Weiqiang He, Department of Mathematics, Sun Yat-sen University, Xingang West Road No.135, Haizhu District, Guangzhou, China. \\
Email: hewq@mail2.sysu.edu.cn. \\
\noindent
 Yingchun Zhang, Room 221, School of Mathematical Sciences, Shanghai Jiao Tong University, Shanghai, China. \\
 Email: yingchun.zhang@sjtu.edu.cn
\\

\end{document}